\newtheorem{proposition}{Proposition}
\newtheorem{corollary}[proposition]{Corollary}
\newtheorem{lemma}[proposition]{Lemma}
\newtheorem{theorem}[proposition]{Theorem}
\newtheorem{conjecture}[proposition]{Conjecture}
\newtheorem*{conjecture*}{Conjecture}
\newtheorem*{theorem*}{Theorem}
\newtheorem*{corollary*}{Corollary}
\newtheorem*{proposition*}{Proposition}
\newtheorem*{lemma*}{Lemma}
\theoremstyle{definition}
\newtheorem{definition}[proposition]{Definition}
\newtheorem*{definition*}{Definition}
\newtheorem*{construction*}{Construction}
\theoremstyle{remark}
\newtheorem{remark}[proposition]{Remark}
\newtheorem{example}[proposition]{Example}
\newcommand{\id}{\operatorname{id}}
\newcommand{\Z}{\mathbb{Z}}
\newcommand{\Hh}{\mathbb{H}}
\let\scr=\mathcal
\let\bb=\mathbb
\newcommand{\Gm}{{\mathbb{G}_m}}
\newcommand{\Gmp}[1]{{\mathbb{G}_m^{\wedge #1}}}
\def\A{\bb A}
\def\P{\bb P}
\newcommand{\eff}{{\text{eff}}}
\newcommand{\veff}{{\text{veff}}}
\newcommand{\DM}{\mathcal{DM}}
\newcommand{\SH}{\mathcal{SH}}
 \newcommand{\HH}{\mathcal{H}}
\newcommand{\Spc}{\mathrm{Spc}}
\DeclareMathOperator*{\colim}{colim}
\let\lim=\relax
\DeclareMathOperator*{\lim}{lim}
\def\Map{\mathrm{Map}}
\def\PSh{\mathcal{P}}
\def\Spc{\mathcal{S}\mathrm{pc}{}}
\def\Fun{\mathrm{Fun}}
\def\Fr{\mathrm{Fr}}
\def\red{\mathrm{red}}
\def\Ab{\mathrm{Ab}}
\newcommand{\spk}{\mathop{\mathrm{Spec} \, k}\nolimits}
\newcommand{\wequi}{\simeq}
\newcommand{\Mod}{\text{-}\mathcal{M}od}
\def\adj{\leftrightarrows}
\DeclareRobustCommand{\ul}{\underline}
\DeclareRobustCommand{\wh}{\widehat}
\DeclareRobustCommand{\wt}{\widetilde}
\newcommand{\heart}{\heartsuit}
\newcommand{\tr}{\mathrm{tr}}
\def\op{\mathrm{op}}
\def\pr{\mathrm{pr}}
\newcommand{\HI}{\mathbf{HI}}
\let\cat=\mathrm
\def\Sm{{\cat{S}\mathrm{m}}}
\def\Nis{\mathrm{Nis}}
\def\Zar{\mathrm{Zar}}
\def\mot{\mathrm{mot}}
\numberwithin{proposition}{section}
\newcommand{\NB}[1]{\todo[color=gray!40]{#1}}
\newcommand{\TODO}[1]{\todo[color=red]{#1}}
\newcommand{\tom}[1]{\todo[color=green]{#1}}
\newcommand{\mura}[1]{\todo[color=yellow]{#1}}
\newcommand{\NB}[1]{}
\newcommand{\TODO}[1]{}
\newcommand{\tom}[1]{}
\newcommand{\mura}[1]{}
\renewcommand{\todo}[1]{}
\newcommand{\lra}[1]{\langle #1 \rangle}
\newcommand{\SHS}{\mathcal{SH}^{S^1}\!}
\newcommand{\qgood}{\text{ $q$-good}}
\newcommand{\Atr}{\mathrm{\A^1tr}}
\newcommand{\tw}{\mathrm{tw}}
\newcommand{\fr}{\mathrm{fr}}
\newcommand{\gp}{\mathrm{gp}}
\newcommand{\codim}{\mathrm{codim}}
\newcommand{\E}{\mathcal{E}}
\newcommand{\CH}{\mathrm{CH}}
\def\Cor{\mathrm{Corr}}
\def\h{\mathrm h}
\def\Pre{\mathrm{Pre}}
\newcommand{\sslash}{\mathbin{/\mkern-6mu/}}
\def\ph{\mathord-}
\title{Towards conservativity of $\Gm$-stabilization}
\author{Tom Bachmann}
\address{Department of Mathematics, Massachusetts Institute of Technology, Cambridge, MA, USA}
\email{tom.bachmann@zoho.com}
\author{Maria Yakerson}
\address{Fakultät Mathematik, Universität Regensburg, Regensburg, Germany}
\email{maria.yakerson@ur.de}
\urladdr{\url{https://www.muramatik.com}}
\thanks{M.Y.\ was supported by SFB/TR 45 ``Periods, moduli spaces and arithmetic of algebraic varieties"}
\begin{document}
\maketitle

\begin{abstract}
We study the interplay of the homotopy coniveau tower, the Rost-Schmid complex of a strictly homotopy invariant sheaf, and homotopy modules. For a strictly homotopy invariant sheaf $M$, smooth $k$-scheme $X$ and $q\ge0$ we construct a new cycle complex $C^*(X, M, q)$ and we prove that in favorable cases, $C^*(X,M,q)$ is equivalent to the homotopy coniveau tower $M^{(q)}(X)$. To do so we establish moving lemmas for the Rost-Schmid complex. As an application we deduce a cycle complex model for Milnor-Witt motivic cohomology. Furthermore we prove that if $M$ is a strictly homotopy invariant sheaf, then $M_{-2}$ is a homotopy module. Finally we conjecture that for $q>0$, $\ul{\pi}_0(M^{(q)})$ is a homotopy module, explain the significance of this conjecture for studying conservativity properties of the $\Gm$-stabilization functor $\SHS(k) \to \SH(k)$, and provide some evidence for the conjecture.
\end{abstract}

\tableofcontents

\section{Introduction}

\subsection*{Motivation}

Classically, a very helpful fact for understanding invariants of topological spaces is the following: the functor $\Spc_* \to \mathcal{D}(\mathrm{Ab})$ sending a space to its singular chain complex is conservative on simply connected spaces.\footnote{Here and below, whenever we call a functor conservative we always mean that the induced functor on homotopy categories is conservative.}
Using this fact, one reduces homotopy-theoretic questions  to questions in homological algebra, which are in general more accessible. 
This conservativity statement can be split in two: the functor $\Sigma^\infty \colon \Spc_* \to \SH$ is conservative on simply connected spaces, and the canonical  functor $\SH \to \mathcal{D}(\mathrm{Ab})$ is conservative on bounded below spectra.

We would like to address the analogous question in the motivic context. We work over a perfect base field $k$. Recall the category of pointed motivic spaces $\HH(k)_*$, given by pointed $\A^1$-invariant Nisnevich sheaves (of spaces) on the category of smooth $k$-schemes $\Sm_k$. By stabilizing $\HH(k)_*$, one obtains the category of motivic $S^1$-spectra $\SHS(k)$ \cite[Section 4]{morel-trieste}, and then the motivic stable homotopy category $\SH(k)$, by further stabilization with respect to $\Gm$. 
Work of the first author \cite{bachmann-hurewicz} investigates the problem of conservativity of the functor of taking the associated motive, i.e. the functor $\SH(k) \to \DM(k)$. As in the classical situation, the functor $\Sigma^\infty_{S^1}$ is conservative on simply connected motivic spaces \cite[Corollary 2.23]{wickelgren2014simplicial}. Hence the main remaining question is the following: up to which extent is the functor 
$\sigma^\infty := \Sigma^\infty_{\Gm} \colon \SHS(k) \to \SH(k)$ conservative? 
We believe that this functor is \emph{conservative after $\Gm$-suspension}, i.e. that $\Sigma^{\infty-1}_{\Gm} \colon \SHS(k)(1) \to \SH(k)$ is conservative on bounded below objects, where $\SHS(k)(n)$ for $n \geqslant 0$ denotes the localizing subcategory of 
$\SHS(k)$, generated by $\Sigma_{\Gm}^n \Sigma^\infty_{S^1} X_+$ for $X \in \Sm_k$. By a standard argument using $t$-structures (see e.g. \cite[Corollary 4]{bachmann-hurewicz}) this would be a corollary of the following statement.

\begin{conjecture}[See Conjecture \ref{conj:equivalence}] \label{conj:intro}
Let $n \geqslant 1$. Then the canonical functor \[ \Sigma^{\infty-n\heart}_{\Gm} \colon \SHS(k)(n)^\heart \to \SH(k)^{\eff\heart} \] is an equivalence of abelian categories.
\end{conjecture}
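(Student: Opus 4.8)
The plan is to realize the asserted equivalence inside an adjunction; as will become clear, this reduces the statement to the homotopy-module conjecture announced in the abstract, and that reduction is as far as the argument can currently be pushed. The functor $F := \Sigma^{\infty-n\heart}_\Gm$ is additive, right exact and preserves coproducts --- being the zeroth homotopy sheaf of a cocontinuous functor of stable $\infty$-categories --- hence cocontinuous, so by the special adjoint functor theorem it has a right adjoint $G \colon \SH(k)^{\eff\heart} \to \SHS(k)(n)^\heart$. It then suffices to prove that $F$ is fully faithful and that $G$ is conservative: a fully faithful left adjoint whose right adjoint is conservative is an equivalence (full faithfulness makes the unit invertible, and the triangle identity together with conservativity then makes the counit invertible). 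Conservativity of $G$ is formal here: $\SH(k)^{\eff\heart}$ is generated, as a Grothendieck abelian category, by the heart-truncations $\ul{\pi}_0(\Sigma^\infty_+ X)$ of the compact generators of $\SH(k)^{\eff}$; since $\Sigma^{\infty-n}_\Gm(\Sigma^n_\Gm\Sigma^\infty_{S^1}X_+) \simeq \Sigma^\infty_+ X$ and $\Sigma^\infty_\Gm$ is right $t$-exact, these generators lie in the image of $F$, and hence no nonzero object can be annihilated by $G$. Thus the only substantive point is the full faithfulness of $F$.

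Using the compact generators $\Sigma^n_\Gm\Sigma^\infty_{S^1}X_+$ of $\SHS(k)(n)$ and a standard d\'evissage, full faithfulness of $F$ reduces to a comparison of mapping objects. For a strictly homotopy invariant sheaf $M$ --- in practice a twist of a homotopy sheaf of some $\Sigma^\infty_{S^1}Y_+$ --- Levine's identification of the homotopy coniveau tower with the $\Gm$-effective filtration of $\SHS(k)$ expresses the relevant $\SHS(k)$-side mapping groups through the cycle complex $M^{(n)}(X)$, hence --- by the main theorems of this paper --- through the explicit complex $C^*(X, M, n)$; applying $\Sigma^\infty_\Gm$ carries the homotopy coniveau tower of $M$ to the effective filtration of $\Sigma^\infty_\Gm M$ in $\SH(k)$, so the $\SH(k)$-side mapping groups are expressed through the $\Gm$-stable counterpart of the same complex. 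Full faithfulness of $F$ then amounts to the canonical map between these two cycle complexes being an equivalence in the range computing $\ul{\pi}_0$, and the moving lemmas for the Rost-Schmid complex proved here are precisely the tool for attacking it.

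The obstacle is that this comparison is an equivalence only once one knows that $\ul{\pi}_0(M^{(n)})$ --- which a priori is merely a strictly homotopy invariant sheaf carrying the $n$-fold partial $\Gm$-delooping coming from $n$-effectivity in $\SHS(k)$ --- in fact underlies a genuine homotopy module, so that it is ``already $\Gm$-stable'' and matches the effective homotopy heart exactly. This is precisely the conjecture that $\ul{\pi}_0(M^{(q)})$ is a homotopy module for $q > 0$; equivalently, it is the assertion that $\Sigma_\Gm$ is already fully faithful on $\ul{\pi}_0$ of the categories $\SHS(k)(n)$ with $n \geq 1$, the heart-level shadow of conservativity of $\Gm$-stabilization after one $\Gm$-suspension. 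Everything else in the plan --- the bookkeeping for the induced homotopy $t$-structures, the appeal to Levine's theorem, and the cycle-complex comparison granting that input --- we expect to be routine given the moving lemmas of this paper. The paper establishes the degree-independent shadow of the missing ingredient, namely that the double contraction $M_{-2}$ of an arbitrary strictly homotopy invariant sheaf $M$ underlies a homotopy module; but the uniform statement for all positive twists appears to require a cancellation property for $\Gm$ at the level of $\ul{\pi}_0$ that the present moving lemmas supply only in the ``$q$-good'' situations, which is why the statement is recorded here as a conjecture rather than proved.
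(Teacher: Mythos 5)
You are addressing Conjecture \ref{conj:intro}, which the paper does not prove: it is restated as Conjecture \ref{conj:equivalence} and left open, with only partial evidence supplied. Your proposal does not prove it either, and you say so explicitly, so the right comparison is between your reduction and the paper's. Your formal steps are correct: $\Sigma^{\infty-n\heart}_\Gm$ preserves colimits because $\sigma^{\infty-n}$ is right-$t$-exact (Lemma \ref{lemm:SHSn-t}), its right adjoint on hearts is $\omega^{\infty-n\heart}$ (by $t$-exactness of $\omega^{\infty-n}$, same lemma), conservativity of this right adjoint is formal (it already follows from Lemma \ref{lemm:SHSn-basics}(1) together with $t$-exactness), and a fully faithful left adjoint whose right adjoint is conservative is indeed an equivalence. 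Moreover the residual obstruction you isolate --- that the relevant $\ul{\pi}_0$'s be genuine homotopy modules --- coincides with the paper's own reformulation at the end of Section \ref{sec:Gm-stab}: the conjecture holds if and only if $\ul{\pi}_0(\Gmp{n} \wedge X_+)$ is a homotopy module for every $X \in \Sm_k$.

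The difference is in which half of the adjunction carries the substance, and here your plan stops short of what the paper actually establishes. The paper's nontrivial evidence is Theorem \ref{thm:automatic-transfer-preservation}: the right adjoint $\omega^{\infty-n}$ is \emph{fully faithful} on hearts. That is not formal; it rests on the identification of $\SH(k)^{\eff\heart}$ with framed sheaves (Theorem \ref{thm:identify-effective-heart}), on expressing framed transfers through twisted transfers (Lemma \ref{lemm:framed-transfer-via-twisted}), and on the ``virtual transfer'' argument using Levine's surjection $\epsilon\colon C^0(K,M,1) \to \ul{\pi}_0(f_1 M)(K)$ (Lemma \ref{lemm:virtual-transfers-functorial}, Proposition \ref{prop:virtual-transfers-correct}), which shows that any morphism of sheaves between homotopy modules automatically respects transfers and the $\ul{GW}$-action. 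Your decomposition instead proves only the easy conservativity statement and shifts all the difficulty onto full faithfulness of the left adjoint, to be attacked via the cycle-complex comparison; but the paper explains concretely why the moving lemmas alone cannot close that gap (Remark \ref{rmk: helpless}): for $M$ not known to be a homotopy module, the sheaf-theoretic closed pullback on $H^q_Z$ has not been identified with the Rost--Schmid construction of Section \ref{subsec:closed-pullback}, so one cannot conclude that $\ul{\pi}_0(M^{(q)})$ is a homotopy module even when $M_{-q}$ is. So your outline is consistent with the paper and correctly locates the open point, but it reproduces only the formal portion of the paper's partial progress toward the conjecture, not its actual theorem.
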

Here $\SH(k)^{\eff}$ is the localizing subcategory generated by the image of $\SHS(k)$ in $\SH(k)$ under $\Sigma^{\infty}_{\Gm}$, and the hearts are taken with respect to homotopy $t$-structures on these categories (note that $\SH(k)^{\eff\heart}$ is not equivalent to $\SH(k)^{\eff} \cap \SH(k)^{\heart}$). In this work, among other things, we show some evidence for this conjecture.

\begin{remark}
The functor $\Sigma^{\infty\heart}_{\Gm}\colon \SHS(k)^\heart \to \SH(k)^{\eff\heart}$ is not an equivalence (i.e. Conjecture \ref{conj:intro} is false with $n=0$), since its right adjoint $\omega^{\infty\heart}$ is not essentially surjective: there exists a strictly homotopy invariant sheaf that does not admit framed transfers (a minor modification of the arguments from \cite[Section 2]{levine2010slices} provides an example). It could be possible that $\Sigma^\infty_\Gm\colon \SHS(k)_{\ge 0} \to \SH(k)$ (or even its extension to unbounded spectra) is nonetheless conservative, but that seems somewhat unlikely\NB{does it?}.
\end{remark}

\begin{remark}
To the best of our knowledge, Conjecture \ref{conj:intro} is the first concrete suggestion for a ``Freudenthal $\Gm$-suspension theorem''.\footnote{Levine's result \cite[Theorem 7.4.2 implying Conjecture 7.4.1]{levine2008homotopy} is arguably a ``$\Gm$-(-1)-connected'' Freudenthal $\Gm$-suspension theorem: it states that for $E \in \SHS(k)(n)$ the map $E \to \Omega_\Gm \Sigma_\Gm E$ induces an isomorphism on slices $s_i$ for $i < n$. In this terminology Conjecture \ref{conj:intro} asks for a ``$S^1$-0-connected'' Freudenthal $\Gm$-suspension theorem.}
\end{remark}

\subsection*{Some recollections about motivic stable homotopy theory}
The heart of the $t$-structure on $\SHS(k)$ is given by $\SHS(k)^\heart \wequi \HI(k)$, the category of \emph{strictly homotopy invariant (Nisnevich) sheaves} \cite[Lemma 4.3.7]{morel-trieste}. Furthermore $\SH(k)^\heart \wequi \HI_*(k)$, the category of \emph{homotopy modules} \cite[Theorem 5.2.6]{morel-trieste}. An object in $\HI_*(k)$ is given by a sequence of strictly homotopy invariant sheaves $\{M_i\}_{i \in \Z}$ with isomorphisms $M_i \simeq (M_{i+1})_{-1}$, where $M_{-1}$ is the so called \emph{contraction} of a sheaf of abelian groups $M$, which models $\Omega_\Gm M$. The functor $\omega^\infty\colon \SH(k) \to \SHS(k)$ right adjoint to $\sigma^\infty$ induces a ``forgetful'' functor $\omega^\infty\colon \HI_*(k) \to \HI(k)$ which sends $\{M_i\}_{i \in \Z}$ to $M_0$. We say that a strictly homotopy invariant sheaf is a homotopy module if it belongs to the essential image of $\omega^\infty$.

For any $E \in \SHS(k)$, $X \in \Sm_k$ and $q \ge 0$, Levine defines a simplicial spectrum $E^{(q)}(X, \bullet)$ called the ($q$-th level of the) \emph{homotopy coniveau tower of $E$} (at $X$) \cite{levine2008homotopy}. This is a generalization of Bloch's cycle complex. Without recalling the entire construction of $E^{(q)}(X, \bullet)$ here, let us note that if $E = M \in \SHS(k)^\heart \wequi \HI(k)$ corresponds to a strictly homotopy invariant sheaf, then $E^{(q)}(X, n)$ is built out of complexes of the form $R\Gamma_Z(Y, M)$, where $Y$ is a smooth scheme, $Z \subset Y$ is closed of codimension $\ge q$, and $R\Gamma_Z$ denotes derived global sections with support. The starting point of this article is the observation that there is a very efficient complex which can be used to compute $R\Gamma_Z(Y, M)$, namely the Rost-Schmid complex (with support) $C^*_Z(Y, M)$ constructed by Morel \cite[Chapter 5]{A1-alg-top}. Defined by means of explicit transfer maps on contractions of $M$, the Rost-Schmid complex is canonically isomorphic to the Gersten complex. We recall features of the Rost-Schmid complex in Section \ref{sec:RS}.

\subsection*{The Bloch-Levine-Rost-Schmid complex}
For $M \in \HI(k)$, $X \in \Sm_k$ and $q \ge 0$, in Section \ref{sec:BLRS} we construct an explicit complex $C^*(X, M, q)$ and establish some functoriality properties. It is obtained by splicing together the Rost-Schmid complex $C^{* \ge q}(X, M)$ with a degreewise truncated version of the homotopy coniveau tower (see Definition \ref{def:BLRS} for details). Here is our first main result, which provides an explicit model for the homotopy coniveau tower of a homotopy module.

\begin{theorem}[Simplified version; see Theorem \ref{thm:comparison}] \label{thm:intro-1}
Let $k$ be a perfect field, $M$ a homotopy module and $X \in \Sm_k$ be affine with trivial canonical line bundle $\omega_X$. Then there is a canonical equivalence of spectra \[M^{(q)}(X) \wequi C^*(X, M, q),\]
where $M^{(q)}(X)$ denotes the geometric realization of the simplicial spectrum $M^{(q)}(X, \bullet)$, and the complex $C^*(X, M, q)$ is considered as an $H\Z$-module.
\end{theorem}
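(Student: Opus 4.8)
The plan is to compare the homotopy coniveau tower $M^{(q)}(X)$ and the complex $C^*(X,M,q)$ level by level, exploiting the fact that both are built from cohomology-with-support groups $R\Gamma_Z(Y,M)$ and that the Rost--Schmid complex $C^*_Z(Y,M)$ is an explicit, functorial model for the latter. The first step is to recall (from Section \ref{sec:RS}, which we may assume) that for $M$ strictly homotopy invariant, $Y$ smooth and $Z\subset Y$ closed of codimension $\ge q$, the Rost--Schmid complex with support $C^*_Z(Y,M)$ computes $R\Gamma_Z(Y,M)$, and that this identification is natural in the pair $(Y,Z)$ for the kinds of maps (flat pullbacks, closed immersions, smooth morphisms) that appear in Levine's cosimplicial scheme $X\times\Delta^\bullet$ and in the face/degeneracy structure of $M^{(q)}(X,\bullet)$. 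Since $X$ is affine with trivial canonical bundle, $X\times\Delta^n$ is affine with trivial canonical bundle too, so there are no twists by line bundles to track and the Rost--Schmid complexes involved are the untwisted ones.

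The second step is to use this to replace, term by term, the spectrum $M^{(q)}(X,n)=\colim_{W}R\Gamma_{W}(X\times\Delta^n,M)$ (colimit over closed $W\subset X\times\Delta^n$ in "good position", of codimension $\ge q$) by the corresponding colimit of Rost--Schmid complexes $C^*_W(X\times\Delta^n,M)$. Because filtered colimits of Rost--Schmid complexes along open immersions/shrinking of support are computed degreewise and the transition maps are the obvious restrictions, this colimit is just the subcomplex $C^{*\ge q}(X\times\Delta^n,M)$ of the full Rost--Schmid complex consisting of cycles supported in codimension $\ge q$ and in good position. Thus $M^{(q)}(X,\bullet)$ is identified, as a simplicial object in $\mathcal{D}(\mathrm{Ab})$ (or in $H\Z$-modules), with the simplicial complex $n\mapsto C^{*\ge q}(X\times\Delta^n,M)$, and its geometric realization $M^{(q)}(X)$ is the totalization of the associated double complex.

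The third step is then purely combinatorial/homological: one shows that the totalization of the bicomplex $(n,p)\mapsto C^{p}_{\ge q}(X\times\Delta^n,M)$ is quasi-isomorphic to the spliced complex $C^*(X,M,q)$ of Definition \ref{def:BLRS}. The point is that in simplicial degree $n$ the Rost--Schmid complex $C^{*\ge q}(X\times\Delta^n,M)$ only "sees" codimension $\ge q$, i.e.\ it starts in cohomological degree $q$; by the moving lemmas for the Rost--Schmid complex established earlier in the paper, the rows of the bicomplex in cohomological degrees $>q$ are acyclic as simplicial abelian groups (the simplicial direction contracts them), so the totalization is computed by the "edge": the degree-$q$ row gives the (shifted) homotopy-coniveau part, which is precisely the truncated piece appearing in $C^*(X,M,q)$, while for the rows of cohomological degree $<q$ — which do not appear in $C^{*\ge q}(X\times\Delta^\bullet,M)$ but do appear in the full Rost--Schmid complex $C^{*\ge q}(X,M)$ — one uses that $X\times\Delta^n\to X$ is an $\A^1$-bundle and $M$ is strictly $\A^1$-invariant to see the inclusion $C^{*<q}(X,M)\hookrightarrow C^{*<q}(X\times\Delta^n,M)$ is a quasi-isomorphism, so splicing in $C^{*\ge q}(X,M)$ exactly reproduces the missing low-degree part. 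Assembling these identifications gives the desired equivalence $M^{(q)}(X)\simeq C^*(X,M,q)$, compatibly with the $H\Z$-module structures.

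The main obstacle, I expect, is the third step: making the "splicing" comparison honest requires the moving lemmas for the Rost--Schmid complex — i.e.\ that after passing to the simplicial direction one may move an arbitrary closed support into good position with respect to the faces of $\Delta^\bullet$, and that doing so does not change the homology — and it requires care about which colimits are filtered and commute with the relevant constructions. One must also verify that the comparison is canonical, i.e.\ independent of the choices (of trivialization of $\omega_X$, of cofinal systems of supports), and natural enough to upgrade from an equivalence of underlying spectra to an equivalence of $H\Z$-modules; this is where the explicit transfer-map description of the Rost--Schmid differentials, together with Morel's identification with the Gersten complex, does the heavy lifting.
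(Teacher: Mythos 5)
Your overall strategy (model each level of the coniveau tower by Rost--Schmid complexes with support, assemble a bicomplex, and use the moving lemmas to collapse it onto the spliced complex) is in the same spirit as the paper, but your second step contains a genuine gap. You assert that $M^{(q)}(X,\bullet)$ can be identified with the simplicial object $n \mapsto C^{*\ge q}_{\scr C, q}(X\times\Delta^n,M)$ ``as a simplicial object'' with the obvious transition maps. The simplicial structure maps are pullbacks along closed immersions of faces, and the Rost--Schmid closed pullback is \emph{not} strictly functorial: it depends on a choice of equations for the face, and $(i\circ j)^*\ne j^*\circ i^*$ on the level of complexes (this is exactly Remark \ref{rem: coniveau model}). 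So the simplicial Rost--Schmid model you want to totalize does not exist as stated, and producing a coherent replacement is precisely the hard point. The paper's proof (Proposition \ref{prop:second-comparison}) is engineered to avoid this: it never builds a simplicial Rost--Schmid object, but instead truncates the actual semisimplicial coniveau spectrum (over the fat simplex category, in $H\Z\Mod$) at $\tau_{\ge -q}$ in positive simplicial degrees, where each term collapses to the single group $H^q_Z(X\times\Delta^i,M)$ --- for which functoriality is just functoriality of cohomology with support --- and only identifies the result with $C^*(X,M,q)$ afterwards; the Rost--Schmid tail enters on the nose in simplicial degree $0$, where $\colim_{\codim Z\ge q}C^*_Z(X,M)=C^{*\ge q}(X,M)$ and no face-map coherence is needed.

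Your third step is also off in its mechanism. The rows of the relevant spectral sequence in cohomological degree $i>q$ are not acyclic; the correct statement, and what the hard moving lemma (Theorem \ref{thm:hard-moving}, which is where the hypotheses ``$X$ affine, $\omega_X$ trivial, $M$ a homotopy module'' actually enter --- not merely to avoid twists) plus strict homotopy invariance give, is that $h^i$ of the good-position complex on $X\times\Delta^r$ maps isomorphically to $H^i(X\times\Delta^r,M)\cong H^i(X,M)$, i.e.\ these rows are \emph{constant} semisimplicial abelian groups; constancy then concentrates their contribution in simplicial degree $0$, matching the truncated object, and a spectral-sequence comparison (plus a boundedness check for convergence) finishes the argument. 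Relatedly, your description of the splicing is inverted: in $C^*(X,M,q)$ the Rost--Schmid part $C^{*\ge q}(X,M)$ occupies degrees $\ge q$ and the Bloch--Levine part $\colim_Z H^q_Z(X\times\Delta^i,M)$ occupies degrees $<q$; there is no step in which an inclusion $C^{*<q}(X,M)\hookrightarrow C^{*<q}(X\times\Delta^n,M)$ plays a role, since supports of codimension $\ge q$ force those terms to vanish. With the constancy statement in place of acyclicity and with the truncation-first architecture replacing your strict simplicial model, your outline becomes the paper's proof.
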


Let us point out that if $M = \ul{K}_q^M$, then $C^*(X, M, q)$ is Bloch's cycle complex, and in this case our theorem recovers the comparison of higher Chow groups and motivic cohomology for such $k$-schemes $X$. 
As a further application, we provide a cycle-complex presentation of Milnor-Witt motivic cohomology (also called ``generalized motivic cohomology"), defined in~\cite{calmes2014finite}. 
\begin{corollary}[see Corollary \ref{corr:MW-mot-coho}]
Let $X$ be a smooth scheme over a perfect field $k$ of characteristic $\ne 2$. Then for any $q \ge 0$, $i \in \Z$ there is a canonical isomorphism
\[ H^{q+i}(X, \tilde{\Z}(q)) \wequi \Hh^i_\Zar(X, C^*(\ph, \ul{K}_q^{MW}, q)). \]
\end{corollary}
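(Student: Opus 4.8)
The plan is to deduce the corollary from Theorem~\ref{thm:intro-1} applied to the homotopy module $M = \ul{K}_q^{MW}$, using the known identification of Milnor–Witt motivic cohomology with the homotopy coniveau tower of this sheaf. First I would recall that, by \cite{calmes2014finite}, the generalized motivic cohomology $H^{q+i}(X, \tilde\Z(q))$ is represented by the effective Milnor–Witt motivic cohomology spectrum, whose $q$-th slice-type piece is computed by the homotopy coniveau tower $M^{(q)}$ for $M = \ul{K}_q^{MW} = \ul{\pi}_0(\1)_{-q}$ viewed as a homotopy module (this is the analogue of the Bloch / Voevodsky–Levine story, where for $M = \ul K_q^M$ one gets Bloch's complex and recovers higher Chow groups). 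Concretely, one has a canonical equivalence $\tilde\Z(q)(X)[2q] \simeq M^{(q)}(X)$ of spectra functorial in $X \in \Sm_k$, so that $H^{q+i}(X,\tilde\Z(q)) \cong \pi_{-i} M^{(q)}(X)$; I would cite this from the literature (Déglise–Fasel, or \cite{calmes2014finite}) rather than reprove it.

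Next I would invoke Theorem~\ref{thm:intro-1} to replace $M^{(q)}(X)$ with $C^*(X,M,q)$. The subtlety is that Theorem~\ref{thm:intro-1} requires $X$ affine with trivial canonical bundle, whereas the corollary is stated for arbitrary smooth $X$; this is exactly why the right-hand side of the corollary is Zariski hypercohomology $\Hh^i_\Zar(X, C^*(\ph, \ul K_q^{MW}, q))$ of the \emph{presheaf of complexes} $U \mapsto C^*(U, M, q)$ rather than the complex evaluated at $X$. So the argument is: both sides are the hypercohomology, over the small Zariski site of $X$, of a presheaf of spectra/complexes; on affines with trivial canonical bundle the two presheaves $U \mapsto M^{(q)}(U)$ and $U \mapsto C^*(U,M,q)$ are canonically equivalent by Theorem~\ref{thm:intro-1}, and a Zariski-local-to-global (descent) argument upgrades this to an equivalence of Zariski sheafifications, hence an isomorphism on hypercohomology. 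One should check that every $U$ in a basis of the Zariski topology of $X$ can be taken affine with trivial canonical bundle — true since a smooth scheme is locally (in the Zariski topology) a principal-open affine, and shrinking further trivializes the line bundle $\omega$; and that $M^{(q)}(-)$ satisfies Zariski (even Nisnevich) descent as a presheaf, which is part of Levine's theory of the homotopy coniveau tower \cite{levine2008homotopy}.

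The main obstacle I expect is the bookkeeping around functoriality and the $\omega_X$-twist: the Rost–Schmid complex, and hence $C^*(X,M,q)$, genuinely involves orientations/twists by the canonical bundle, so I must make sure that over the chosen affine basis the chosen trivializations of $\omega_U$ are compatible under restriction up to the relevant correcting automorphisms (units), and that this compatibility is absorbed when passing to the sheafification — i.e. that the \emph{complex of Zariski sheaves} $C^*(\ph, \ul K_q^{MW}, q)$ is well-defined independently of local trivializations. This is where the hypotheses of the ambient theorems (characteristic $\ne 2$, so that $\ul K_q^{MW}$ behaves well, and perfectness of $k$) get used. Once the sheaf-level comparison is in place, the corollary follows by taking $\Hh^i_\Zar$, together with the fact that the homotopy coniveau tower computes (generalized) motivic cohomology with the expected degree shift ($q+i$ versus $i$, accounting for the $[2q]$ shift and the convention that $C^*$ starts in cohomological degree~$q$). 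I would also remark that for $M = \ul K_q^M$ (equivalently in the ``$+$''-part) this recovers the classical statement $H^{q+i}(X,\Z(q)) \cong \Hh^i_\Zar(X, C^*(\ph, \ul K_q^M, q))$, i.e.\ Bloch's higher Chow groups agree with motivic cohomology for smooth $X$, serving as a sanity check.
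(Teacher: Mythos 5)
Your proposal is correct and follows essentially the same route as the paper: identify Milnor--Witt motivic cohomology with the homotopy coniveau tower of $\ul{K}_q^{MW}$ via its representing spectrum $f_0\ul{K}^{MW}$ and Levine's ``coniveau tower = slice tower'' theorem, then upgrade the affine comparison of Theorem \ref{thm:comparison} by Zariski sheafification, which is exactly what the paper packages as Corollary \ref{corr:coniveau hom module}. Your worry about compatible trivializations of $\omega_U$ is moot: the presheaf of complexes $C^*(\ph,\ul{K}_q^{MW},q)$ and the comparison map to $M^{(q)}$ are defined intrinsically (the twists $\omega_{x/X}$ are built into the Rost--Schmid terms), and the trivialization of $\omega_X$ enters only when applying the moving lemma to see that the map is an equivalence on suitable affines.
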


The key ingredients for the proof of Theorem \ref{thm:intro-1}  are moving-lemma type results for the Rost-Schmid complex of a homotopy module, which we prove in Section \ref{sec:q-good-RS}. The proofs follow the familiar pattern established by Levine in \cite{levine2006chow}, suitably adapted for the Rost-Schmid complex, of course. Let us mention for the expert reader that the usual proof of the so-called ``hard moving lemma'' uses transfers. This is why we need to ask that $M$ should be a homotopy module. Moreover, transfers on homotopy modules depend on certain orientations; this is why we need to ask that $\omega_X$ should be trivial.

\subsection*{Contractions and homotopy modules}
Both the homotopy coniveau tower $M^{(q)}$ and the complex $C^*(X, M, q)$ make sense for $M \in \HI(k)$, not just $M \in \HI_*(k)$. It seems natural to ask if the equivalence $C^*(X, M, q) \wequi M^{(q)}(X)$ might extend to such more general $M$. Indeed one has the feeling that, in essence, $C^*(X, M, q)$ only depends on the $q$-fold contraction $M_{-q}$, and for $q>0$, $M_{-q}$ already has some kinds of transfers \cite[Chapter 4]{A1-alg-top}. Instead of re-doing the moving lemmas in this more complicated context, we make two observations: (1) the same proof as before shows that $C^*(X, M, q) \wequi M^{(q)}(X)$, provided that $M_{-q}$ is a homotopy module (even if $M$ is not); and (2) $M_{-q}$ is often a homotopy module, for $q > 0$.

We should justify claim (2); in fact this is the content of Section \ref{sec:generalized-transfers}. 
Using the recent developments in motivic infinite loop space theory, in particular \cite{EHKSY}, it is possible to identify the homotopy modules among strictly homotopy invariant sheaves. To explain this, recall the subcategory of effective motivic spectra $\SH(k)^\eff \subset \SH(k)$ \cite[Section 2]{voevodsky-slice-filtration}. 
This subcategory also has a $t$-structure (see~\cite[Proposition~4(4)]{tom2017slices}), with heart $\SH(k)^{\eff\heart} \wequi \HI_0(k)$ consisting of so-called ``effective homotopy modules'' (this is a full subcategory of $\HI_*(k)$). Moreover, using results of \cite{EHKSY} we prove that $\HI_0(k) \wequi \prod^\fr(k)$, where $\prod^\fr(k)$ denotes the category of (strictly) homotopy invariant sheaves with \emph{framed transfers} (see Theorem \ref{thm:identify-effective-heart}). In other words, in order to prove that $M_{-q}$ is a homotopy module, it suffices to exhibit a structure of framed transfers on it. Our second main result provides an infinite delooping for the two-fold contraction of a strictly homotopy invariant sheaf.
\begin{theorem}[Simplified version; see Example \ref{ex:M-1-transfers} and Theorem \ref{thm:M-2-delooping}] \label{thm:intro-2}
Let $char(k)=0$ and $M \in \HI(k)$. Then $M_{-2}$ is a homotopy module.
\end{theorem}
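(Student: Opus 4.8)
The plan is to prove Theorem~\ref{thm:intro-2} by combining the identification $\HI_0(k) \wequi \prod^\fr(k)$ (Theorem~\ref{thm:identify-effective-heart}) with an explicit construction of framed transfers on $M_{-2}$. By that identification, it suffices to exhibit, for each finite syntomic map with trivialized (or suitably framed) conormal data, a transfer on $M_{-2}$, compatible with base change and composition, and satisfying the normalization and additivity axioms that cut out $\prod^\fr(k) \subset \HI(k)$. The starting input is Morel's observation \cite[Chapter~4]{A1-alg-top} that for $q > 0$ the sheaf $M_{-q}$ already carries \emph{Gersten-style} transfers: the Rost--Schmid complex equips $M_{-q}$ with transfer maps along finite field extensions (twisted by the relevant determinant of the cotangent space), and $M_{-1}$ can be seen (Example~\ref{ex:M-1-transfers}) to be a sheaf with a weak form of transfers. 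The task is to upgrade this to genuine framed transfers after contracting once more.

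The key steps I would carry out, in order, are as follows. First, recall from Section~\ref{sec:generalized-transfers} the precise shape of the data needed: a framed transfer structure amounts to an extension of $M_{-2}$ along $\Sm_k \to \Cor^\fr(k)$ (or equivalently a module structure over the framed correspondence monad), and by the Garkusha--Panin--Neshitov--type recognition results used in \cite{EHKSY} it is enough to produce transfers along \emph{finite} framed correspondences satisfying homotopy invariance (which $M_{-2}$ has for free, being a contraction of a strictly homotopy invariant sheaf). Second, I would use the extra contraction: $M_{-2} = (M_{-1})_{-1}$, and the point is that the \emph{twist} appearing in Morel's transfers for $M_{-1}$ — the determinant line $\det(\omega)$ of the cotangent bundle of the finite extension — becomes \emph{canonically trivialized} once one contracts again, because contracting corresponds to tensoring with $\Gm$, i.e.\ with the generator of $\underline{K}^{MW}_1$, and this precisely cancels the rank-one twist. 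Concretely, a framed correspondence of degree $n$ from $X$ to $Y$ is (roughly) a finite closed $Z \subset \A^n_X$ with an étale-ish map to $Y$ together with $n$ functions trivializing its conormal sheaf; Morel's support transfer $C^n_Z(\A^n_X, M) \to C^0(X, M_{-n}) $ combined with the trivialization datum produces the desired map $M_{-n}(Y) \to M_{-n}(X)$, and for $n \ge 2$ the orientation ambiguity that obstructed the case $n=1$ has disappeared. Third, I would verify the axioms: base change and the Nisnevich-sheaf conditions follow from the corresponding compatibilities of the Rost--Schmid complex (established in Section~\ref{sec:RS} and Section~\ref{sec:q-good-RS}), while composition/associativity of framed transfers reduces to the functoriality of supports under composition of finite maps, again a property of $C^*_Z(-,-)$.

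The main obstacle I expect is \textbf{additivity and independence of the chosen trivializations}: one must check that the transfer built from a framed correspondence depends only on its class in $\pi_0$ of the space of framed correspondences (equivalently, that it is invariant under the equivalence relation generated by adding a ``free'' variable and by naive $\A^1$-homotopy of the trivializing functions), and that it is additive on disjoint unions of supports. For $M_{-1}$ this fails at the level of the twist, which is exactly why Theorem~\ref{thm:intro-2} is stated for $M_{-2}$ and not $M_{-1}$; the delicate part is to show that after the second contraction the relevant obstruction class — living in something like $H^1$ with $\underline{K}^{MW}$-coefficients of the relevant configuration space, or concretely in the group of units modulo squares controlling the orientation — vanishes. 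I would isolate this in a lemma saying that the two transfers associated to two trivializations of the same conormal bundle differ by the action of the corresponding unit in $\underline{GW}$, and then observe that on $M_{-2}$ this action is trivial because the extra $\Gm$-loop absorbs $\underline{GW}^\times$; the characteristic-zero hypothesis enters through the use of \cite{EHKSY} (resolution of singularities / the comparison of framed and naive framed transfers) in the recognition step, and possibly through Gabber-type presentation lemmas needed to reduce general finite syntomic maps to the model case handled above.
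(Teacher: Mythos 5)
Your overall strategy is the paper's: reduce via Theorem \ref{thm:identify-effective-heart} to exhibiting framed transfers on $M_{-2}$, and build the action of a framed correspondence $(Z,U,f_\bullet,g)$ by multiplying the pulled-back section by the Koszul class $c(f_\bullet)\in H^n_Z(U,\ul{K}^{MW}_n)$ and applying Morel's twisted transfer. However, two of your key justifications are wrong, and they are exactly where the real work lies. First, the reason $M_{-2}$ works and $M_{-1}$ does not is \emph{not} that the second contraction ``cancels the rank-one twist'' or that ``the extra $\Gm$-loop absorbs $\ul{GW}^\times$'': the twist by $\omega_{L/K}$ does not disappear, and units certainly act nontrivially on $M_{-2}$. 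What the second contraction buys (Morel, Section 5.1 of \cite{A1-alg-top}; Example \ref{ex:twisted-transfers-M-2}) is that the transfers become independent of the chosen generators and satisfy the first projection formula, which is what allows one to twist them by arbitrary line bundles and compose them; your proposed lemma ``two trivializations differ by a unit, which acts trivially'' would fail, whereas the correct statement is that the construction is set up so that the unit discrepancies are accounted for by the $\ul{GW}(\omega)$-valued class $c(f_\bullet)$ and the projection formulas (Corollary \ref{corr:twisted-projection}). Second, the characteristic-zero hypothesis does not come from \cite{EHKSY} or resolution of singularities: the recognition principle and Theorem \ref{thm:identify-effective-heart} hold over any perfect field. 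It enters in verifying compatibility of the construction with composition, concretely in Proposition \ref{prop:transfer-specialization}(3), where one must commute the twisted specialization $s^a$ with transfers along a finite extension of a henselian dvr; in characteristic $0$ such an extension factors into monogeneous ones, while in positive characteristic the argument only goes through for $M=M'_{-1}$, which is precisely why the paper proves the unconditional statement for $M_{-3}$ and needs $char(k)=0$ for $M_{-2}$.

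Beyond these two points, your outline skips the steps that actually carry the proof. Defining $\alpha^*(a)$ at generic points is easy; the substantive claims are (a) that these sections are unramified, i.e.\ extend over codimension-one points (Lemma \ref{lemm:framed-correspondence-integral}), which uses that $c(f_\bullet)$ is a \emph{cycle} in the Rost--Schmid complex computing $H^n_Z(U,\ul{K}^{MW}_n)$ together with the boundary and projection formulas of Section \ref{subsec:projection-formulas}; and (b) compatibility with composition of framed correspondences (Lemma \ref{lemm:framed-correspondences-compose}), which needs the smooth base change formula (Lemma \ref{lemm:smooth-base-change}) and the specialization--transfer commutation just mentioned. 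By contrast, the issue you single out as the main obstacle --- invariance under the equivalence relation on framed correspondences and $\A^1$-homotopy of framings --- is not where the difficulty sits in this approach: one only needs a presheaf on the $1$-category $\Fr_*(k)$ of equationally framed correspondences, so well-definedness is built into the construction once (a) and (b) are established. So: same route as the paper, but with the decisive lemmas either missing or supported by incorrect reasons.
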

The above theorem is also an immediate consequence of Conjecture \ref{conj:intro}. Indeed given $M \in \HI(k)$ there exists $f_2^\heart M \in \SHS(k)(2)^\heart$ with $M_{-2} \wequi (f_2^\heart M)_{-2}$.
\NB{to check, evaluate both sides on a scheme and apply adjunctions}
By the conjecture, $f_2^\heart M$ is a homotopy module, and hence so is its two-fold contraction.

\subsection*{Effective covers and transfers}
In the final Section \ref{sec:Gm-stab} we re-interpret some parts of the above results in more abstract terms, closing the circle with the motivational section from the beginning of this introduction. The inclusion $i_q\colon \SHS(k)(q) \hookrightarrow \SHS(k)$ preserves colimits and hence has a right adjoint $r_q\colon \SHS(k) \to \SHS(k)(q)$. By the main theorem of \cite{levine2008homotopy}, at least if $k$ is infinite, the functor $E \mapsto i_q r_q E$ is equivalent to the functor $E \mapsto E^{(q)}$. In other words the homotopy coniveau tower is inextricably linked with $\Gm$-stabilization.

The functor $\sigma_q\colon \SHS(k) \to \SHS(k)(q), E \mapsto E \wedge \Gmp{q}$ has a right adjoint $\omega_q\colon \SHS(k)(q) \to \SHS(k), E \mapsto \Omega_\Gm^q i_q(E)$ which is easily seen to be monadic (in the $\infty$-categorical sense). The composite $\omega_q r_q$ is equivalent to $\Omega_\Gm^q$, and consequently, for $M \in \HI(k)$, it must be possible to think of the spectrum $i_q r_q M$ as some extra structure on $\Omega_\Gm^q M \wequi M_{-q}$. The equivalence $i_q r_q M(X) \wequi M^{(q)}(X) \wequi C^*(X, M, q)$ (for $X$ affine with $\omega_X \wequi \scr O_X$) reveals what this structure is. Indeed, $C^*(X, M, q)$ is built out of groups $H^q_Z(Y, M)$ for certain smooth schemes $Y$ and closed subschemes $Z \subset Y$ of codimension $q$; inspection of the Rost-Schmid complex shows that $H^q_Z(Y, M)$ only depends on $M_{-q}$. However, the boundary maps in $C^*(X, M, q)$ are built out of the following types of maps: given a closed immersion $i\colon Y' \to Y$ such that $Z' := i^{-1}(Z) \subset Y'$ still has codimension $\ge q$, there is a pullback $i^*\colon H^q_Z(Y, M) \to H^q_{Z'}(Y', M')$. The results of Section \ref{sec:BLRS} (in particular Remark \ref{rmk:reconstruction}) imply that this is precisely the extra structure needed to recover $M^{(q)}$ from $M_{-q}$.\footnote{Technically speaking, we also need the $\ul{GW}$-module structure on $M_{-q}$.}

On the other hand, we already have a significant amount of extra structure on $M_{-q}$ (at least for $q \ge 2$): it is a homotopy module. Could there really be \emph{further} extra structure? Our instinct would be to guess that this is not the case (see Remark~\ref{rmk: helpless}). Upon further reflection this is equivalent to the following statement, rephrasing Conjecture \ref{conj:intro} that we started with: the functor $\omega^{\infty - q}\colon \HI_0(k) \to \SHS(k)(q)^\heart$ (obtained by factoring $\omega^\infty\colon \SH(k)^\eff \to \SHS(k)$ through $\omega_q\colon \SHS(k)(q) \to \SHS(k)$) is an equivalence, for $q > 0$ (see Conjecture \ref{conj:equivalence}).
Our third main result is some more progress towards establishing this conjecture. 

\begin{theorem}[See Theorem~\ref{thm:automatic-transfer-preservation}] \label{thm:intro-3}
Let $k$ be a perfect field, and $q>0$. Then the functor $\omega^{\infty - q}\colon \HI_0(k) \to \SHS(k)(q)^\heart$ is fully faithful.
\end{theorem}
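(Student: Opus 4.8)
The plan is to prove faithfulness and fullness separately, fullness being the substantial part. Faithfulness is formal: the functor $\omega_q$ induces $\omega_q^\heart\colon\SHS(k)(q)^\heart\to\HI(k)$, and since $\omega_q\circ\omega^{\infty-q}\simeq\omega^\infty$ the composite $\omega_q^\heart\circ\omega^{\infty-q}$ is the forgetful functor $\HI_0(k)\to\HI(k)$, $\{N_i\}_{i\in\Z}\mapsto N_0$. A morphism of homotopy modules is determined by its level-$0$ component, so this functor, hence also $\omega^{\infty-q}$, is faithful.

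For fullness, fix $N,N'\in\HI_0(k)$ with underlying sheaves $\mathcal N=\omega^\infty N$ and $\mathcal N'=\omega^\infty N'$; by Theorem~\ref{thm:identify-effective-heart} these are homotopy modules carrying framed transfers and $\HI_0(k)\simeq\prod^\fr(k)$. A morphism $\phi\colon\omega^{\infty-q}N\to\omega^{\infty-q}N'$ in $\SHS(k)(q)^\heart$ induces, via $\omega_q^\heart$, a $\ul{GW}$-linear sheaf map $\bar\phi\colon\mathcal N\to\mathcal N'$, and by faithfulness $\phi$ is the unique morphism over $\bar\phi$; hence $\omega^{\infty-q}$ is full if and only if every such $\bar\phi$ lifts to $\prod^\fr(k)\simeq\HI_0(k)$, i.e.\ is compatible with the framed transfers on $\mathcal N$ and $\mathcal N'$.

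To see which $\bar\phi$ occur I would use the explicit model. By the reconstruction of Remark~\ref{rmk:reconstruction} --- which rests on Levine's identification $i_qr_q\simeq(\ph)^{(q)}$ from \cite{levine2008homotopy} and the Bloch--Levine--Rost--Schmid complex of Section~\ref{sec:BLRS} --- the group $\Hom_{\SHS(k)(q)^\heart}(\omega^{\infty-q}N,\omega^{\infty-q}N')$ is identified with the group of $\ul{GW}$-linear maps $\mathcal N\to\mathcal N'$ that commute with all the maps assembling the complex $C^*(\ph,\mathcal N,q)$: the Rost--Schmid residues, and the pullbacks $i^\ast\colon H^q_Z(Y,\ph)\to H^q_{Z'}(Y',\ph)$ along closed immersions $Y'\hookrightarrow Y$ with $Z':=Y'\cap Z$ of codimension $\ge q$ in $Y'$, together with their higher-codimensional analogues. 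So fullness reduces to the assertion that a $\ul{GW}$-linear map compatible with all these Rost--Schmid residues and pullbacks automatically commutes with the framed transfers.

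This ``automatic transfer preservation'' is the crux and, I expect, the hardest step. The strategy is to recognize each framed transfer as a composite of operations already present in the Rost--Schmid complex. Working in the model of framed correspondences of \cite{EHKSY} (as one must in order to treat positive characteristic), a generating transfer is attached to a finite framed correspondence --- a closed $Z\subseteq Y\times\A^n$ finite over $Y$, an étale neighbourhood $U\supseteq Z$, a framing $\varphi\colon U\to\A^n$ with $Z=\varphi^{-1}(0)$, and a map $g\colon U\to X$ --- and the operation it induces on $\mathcal N$ is built from the restriction $g^\ast$, passage to sections supported on $Z$ (of codimension $n$), the homotopy-purity isomorphism determined by the framing $d\varphi$, and a Gysin/trace map, each of which is a constituent of the Rost--Schmid complex of $\mathcal N_{-q}$ once $n\ge q$; one arranges $n\ge q$ by stabilizing the correspondence, and the hypothesis $q>0$ is used in organizing this reduction. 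Matching these ingredients carefully, keeping track of the $\ul{GW}$-module structure and of the stabilization, is the main technical work; granting it, $\bar\phi$ commutes with each constituent, hence with the composite, hence with the framed transfer, so $\bar\phi$ underlies a morphism of $\prod^\fr(k)\simeq\HI_0(k)$, and fullness follows. A last point: Levine's theorem and part of the moving-lemma input require $k$ infinite; for finite perfect $k$ one passes to an infinite perfect extension $k\hookrightarrow L$, proves fullness there, and descends by conservativity of base change on the relevant hearts.
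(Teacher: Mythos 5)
Your formal reduction is in line with the paper: faithfulness via the conservative $t$-exact $\omega_q$, and fullness reduced, via Theorem~\ref{thm:identify-effective-heart} and Corollary~\ref{corr:omega-infty-hat-ff}, to showing that a morphism of underlying sheaves between (the relevant) homotopy modules automatically respects the transfer and $\ul{GW}$-structure. But your crux step has a genuine gap. First, the claimed identification of $\Hom_{\SHS(k)(q)^\heart}(\omega^{\infty-q}N,\omega^{\infty-q}N')$ with $\ul{GW}$-linear maps commuting with the maps assembling $C^*(\ph,\mathcal N,q)$ is nowhere justified: Remark~\ref{rmk:reconstruction} reconstructs the presheaf $M^{(q)}$ (for affine $X$ with $\omega_X\wequi\scr O_X$, after Nisnevich localization, and using Levine's theorem), it does not compute morphisms in the heart, and turning it into such a computation would itself require a strictification/naturality argument you don't supply. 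Second, and more seriously, the ``automatic transfer preservation'' step fails as designed. The constituents of a framed transfer are restriction, multiplication by $c(f_\bullet)$, and the \emph{twisted transfer} $\tr_{Z_\red/X}$ on $\mathcal N$ itself (this is exactly Lemma~\ref{lemm:framed-transfer-via-twisted}); that last constituent is part of the homotopy-module structure on $\mathcal N$ at level $0$, i.e.\ precisely the datum whose preservation by $\bar\phi$ you are trying to prove. It is not one of the Rost--Schmid/BLRS structure maps: those (differentials, flat and closed pullbacks on cohomology with support in codimension $\ge q$) are functorial in \emph{arbitrary} morphisms of $\HI(k)$ (Section~\ref{sec:RS-functoriality-M}, Example~\ref{ex:M-1-transfers}), because they only involve the canonical transfers on the contractions $\mathcal N_{-1},\mathcal N_{-2},\dots$. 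So commuting with them carries no information beyond being a sheaf morphism, and no amount of stabilizing the correspondence to level $n\ge q$ moves the problem off level $0$. (Your unproved assertion that $\bar\phi$ is automatically $\ul{GW}$-linear is a smaller instance of the same issue.)

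The missing idea, which is how the paper closes the gap, is a bridge from the contractions back up to level $0$: for $q\ge 1$ the sheaf $\mathcal N$ lies in the image of $i_1^\heart$, so by Corollary~\ref{corr:SHSn-heart}(2) together with Levine's result the edge map $\epsilon\colon C^0(K,\mathcal N,1)\to\mathcal N(K)$ is \emph{surjective}; the source is a sum of groups $H^1_x(\A^1_K,\mathcal N)\wequi\mathcal N_{-1}(x)$ carrying ``virtual'' transfers and $\ul{GW}$-action that every sheaf morphism respects (Lemma~\ref{lemm:virtual-transfers-functorial}); and the explicit computation $\epsilon(a)=\tau_z([1-z^{-1}]a)$ (Proposition~\ref{prop:epsilon-computation}) shows that for homotopy modules $\epsilon$ intertwines, up to units, the virtual structure with the actual one (Proposition~\ref{prop:virtual-transfers-correct}). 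Surjectivity of $\epsilon$ then forces any sheaf morphism to preserve the level-$0$ transfers and $\ul{GW}$-action. Without this (or an equivalent mechanism expressing every element of $\mathcal N(K)$ through the contractions compatibly with transfers), your constituent-by-constituent argument does not get off the ground; note also that the paper's route needs no passage to an infinite extension at this stage, whereas your final descent step is left unsubstantiated.
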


To prove this result, we first show that the ``forgetful'' functor $i_q^\heart\colon \SHS(k)(q)^\heart \to \SHS(k)^\heart \wequi \HI(k)$ is fully faithful and has as essential image those sheaves $F$ such that $\ul{\pi}_0(f_q F) \to F$ is an isomorphism (Corollary \ref{corr:SHSn-heart}(2)). We thus need to show that sheaves of the latter form carry a unique structure of framed transfers. For this we use the homotopy coniveau tower again. Consider the case $q=1$. It is shown in \cite{levine-slice} that for a field $K$, there is a surjection $\epsilon\colon C^0(K, M, 1) \to \ul{\pi}_0(f_1 F)(K)$. The group $C^0(K, M, 1)$ is built out of contractions of strictly homotopy invariant sheaves, and in particular has a structure of transfers and $\ul{GW}$-module. Surjectivity of $\epsilon$ implies that there is at most one compatible structure on $\ul{\pi}_0(f_1 F)(K)$. We show that if $F$ is a homotopy module then this compatible structure exists and is indeed given by the canonical transfers on $F$ (Proposition \ref{prop:virtual-transfers-correct}). Theorem \ref{thm:intro-3} follows from this and our study of homotopy invariant framed sheaves in Section \ref{sec:generalized-transfers}.

\subsection*{Acknowledgments}
Our gratitude to Marc Levine cannot be overstated. Without his geometric intuition this paper would not have been possible. His guidance is particularly visible in Section \ref{sec:q-good-RS}.
We furthermore thank Jean Fasel for helpful discussions about the Rost-Schmid complex, Wataru Kai for insights on moving lemmas, and Marc Hoyois for comments on a draft of this paper.

\subsection*{Use of $\infty$-categories}
We think of the categories $\Spc$,  $\mathcal{D}(\Ab)$, $\HH(k)$, $\SH(k)$ etc. as $\infty$-categories (see \cite[\S2.2 and \S4.1]{bachmann-norms} for a definition of $\HH(k)$ and $\SH(k)$ as $\infty$-categories); concretely we have in mind the model of quasi-categories as set out in \cite{HTT,HA}.
For most parts of this paper this is irrelevant, and the reader can instead safely think of homotopy categories of the corresponding model categories. 
Certain isolated proofs really do use the higher structures; we will point this out explicitly each time.

\subsection*{Notation and conventions}
We use cohomological notation for complexes (cochain complexes), but we call them ``chain complexes" for brevity.

Whenever we write ``sheaf" we mean a Nisnevich sheaf, unless other is specified. Given a sheaf $M$, a morphism of schemes $f \colon X \to Y$ and $m \in M(Y)$, we sometimes denote $f^*(m)$ by $m|_X$.

If $M \in \Ab(X_\Nis)$ is a sheaf of $\ul{GW}$-modules, and $\scr L$ is a line bundle on $X$, we denote by $M(\scr L) := M \times_{\Gm} \scr L^\times = M \otimes_{\Z[\Gm]} \Z[L^\times]$  the twist of $M$ by $\scr L$. We also denote its sections by $M(X, \scr L)$. We write $X^{(d)} \subset X$ for the set of points of codimension $d$, i.e. those $x \in X$ such that $\dim X_x = d$.

For $X$ an essentially smooth $k$-scheme, we denote by $\omega_X$ the line bundle $\Omega^{max}_{X/k}$, i.e. the highest non-vanishing exterior power of the sheaf of Kähler differentials.  For $f\colon X \to Y$ a morphism of essentially smooth schemes, $\omega_{X/Y} = \omega_f := \omega_X \otimes f^*\omega_Y^{-1}$. Note that $\omega_{f}$ is isomorphic to the determinant of the cotangent complex $L_f$ of $f$.

For a point $x \in X$ with residue field $K$, we view $x$ as a scheme isomorphic to $Spec(K)$, coming with a canonical morphism of schemes $x \to X$.
If $X$ is a smooth scheme over a perfect field, then $x$ is an essentially smooth scheme.

We extend presheaves defined on smooth schemes to essentially smooth schemes by taking colimits; this is well-defined by \cite[Proposition 8.13.5]{EGAIV}.

Given objects $E, F$ in some (higher) category $\scr C$, we denote the set of (homotopy classes of) maps from $E$ to $F$ by $[E, F]$, at least when no confusion can arise.

We follow Morel's convention for the indexing of homotopy sheaves: given $E \in \SH(k)$, we denote by $\ul{\pi}_i(E)_j$ the sheaf associated with $X \mapsto [\Sigma^\infty_+ X[i], E \wedge \Gmp{j}]$.

We write $\HI(k)$ for the category of strictly homotopy invariant sheaves on $\Sm_k$ and $\HI_0(k)$ for the category of effective homotopy modules. For $M \in \HI_0(k)$ we abusively denote by $M$ also its image in $\HI(k)$ under the forgetful functor $\omega^\infty \colon \HI_0(k) \to \HI(k)$, which can be interpreted as forgetting the structure of framed transfers. For a sheaf $M \in \HI(k) \simeq  \SHS(k)^\heart$ we denote the corresponding motivic $S^1$-spectrum also by $M$. 

Given a vector bundle $V$ on a smooth variety $X$, we denote by $Th(V) = V/V \setminus 0$ the associated Thom space object, in various contexts. If $Z \subset X$ is a smooth closed subvariety with normal bundle $N_{Z/X}$, then in all our situations there will be a purity equivalence $X/X \setminus Z \wequi Th(N_{Z/X})$ \cite[Theorem 3.23]{hoyois-equivariant} \cite[Theorem 2.23]{A1-homotopy-theory} which we will use without further comment.

Unless stated otherwise, we will assume that the base field $k$ is perfect.

\section{The Rost-Schmid complex}
\label{sec:RS}

Let $M$ be a strictly homotopy invariant sheaf on $\Sm_k$ and $X \in \Sm_k$. Recall the Rost-Schmid complex $C^*(X, M)$ \cite[Definitions 5.7 and 5.11]{A1-alg-top}: this is a chain complex of abelian groups with \[ C^p(X, M) = \bigoplus_{x \in X^{(p)}} M_{-p}(x, \omega_{x/X}). \] Recall that for $M \in \HI(k)$ its \emph{contraction} $M_{-1}$ is the sheaf $X \mapsto \ker(M(X \times (\A^1 \setminus 0)) \to M(X \times 1))$ \cite[Definition 4.3.10]{morel-trieste}, and $M_{-n}$ for $n \ge 1$ denotes the iteration of this operation. This is canonically a $\ul{GW}$-module (the natural $\Gm$-action factors through a $\ul{GW}$-action by \cite[Lemma 3.49 and paragraph before]{A1-alg-top}), so the twisting makes sense.\footnote{Morel twists by $\Lambda_x^X := \det(T_x X)$. Since for the relative cotangent complex we have $L_{x/X} = C_{x/X}[1]$, where $C_{x/X}$ denotes the conormal sheaf, we find that $\omega_{x/X} = \det(L_{x/X}) = \det(C_{x/X})^\vee \wequi \det(T_x X)$. In other words, our twist is canonically isomorphic to Morel's.} Recall also that we identify the point $x$ with the essentially smooth scheme $Spec(k(x))$; so we write $M(x)$ for what is often denoted $M(k(x))$. The Rost-Schmid complex computes the (Zariski or Nisnevich) cohomology of $M$ on $X$, and is in fact isomorphic to the Gersten complex \cite[Corollary 5.44]{A1-alg-top}. Let us quickly recall the definition of the differentials. For every $x \in X^{(d)}$ and $y \in X^{(d+1)}$ there is a differential $\partial^x_y\colon M_{-d}(x, \omega_{x/X}) \to M_{-d-1}(y, \omega_{y/X})$, and the total differential is the sum of these local differentials. If $y \not\in \bar{x}$ we have $\partial^x_y = 0$. Otherwise $\partial^x_y$ is built out of certain canonical transfer and boundary maps \cite[Definition 5.11]{A1-alg-top}.
 
For an open subset $U \subset X$ with closed complement $Z$, let $C_Z^*(X, M)$ denote the kernel of the evident surjection of complexes $C^*(X, M) \to C^*(U, M)$. Since $C^*(X, M)$ computes $H^*(X, M)$ and similarly for $U$, we find that $C^*_Z(X, M)$ computes $H^*_Z(X, M)$.

We note that if $\scr L$ is a line bundle on $X$, then one may twist all the terms in the Rost-Schmid complex in an evident way, except for $C^0$. If $M$ is a homotopy module then this problem goes away and we obtain a new complex which we denote by $C^*(X, M(\scr L))$.

\subsection{Pullbacks}
\label{sec:RS-pullbacks}

We shall need to make some use of the functoriality of the Rost-Schmid complex.

\subsubsection{Flat pullback}
\label{ssec: flat pullback}
Recall that by~\cite[Corollary~5.44]{A1-alg-top}, there is a canonical isomorphism of chain complexes $C^*(X, M) \simeq E_{1}^{*, 0}(X, M)$, where right-hand side is the Gersten (Cousin) complex. It is defined as the $q=0$ line of the $E_1$-page of the coniveau spectral sequence (see~\cite[Section~1]{bloch-ogus-gabber}): 
\[
E_{1}^{p, q}(X, M) = \bigoplus_{x \in X^{(p)}} H_x^{p+q} (X, M) \Rightarrow H^{p+q} (X, M),
\]
where $H_x^n (X, M) = \colim_W H^n_{\overline{x}  \cap W} (W, M)$, the colimit being over open neighborhoods $W$ of $x$ in $X$. This spectral sequence arises from the fact that when $\dim X = d$, one has \[\oplus_{x \in X^{(p)}} H_x^{p+q} (X, M) \simeq \colim_{\{Z\}} H^{p+q}_{Z_p \setminus Z_{p+1}} (X \setminus Z_{p+1}, M)\] where $\{Z\}$ runs through chains of closed subsets $\emptyset = Z_{d+1} \subset Z_d  \subset \dots \subset Z_0 = X$ such that $\codim(Z_p, X) \ge p$. The $E_1$-page is then built from long exact sequences, associated with chains $\{Z\}$:
\[
\dots \to H^{p+q}_{Z_{p+1}} (X, M) \to H^{p+q}_{Z_p} (X, M) \to H^{p+q}_{Z_p \setminus Z_{p+1}} (X \setminus Z_{p+1}, M) \to H^{p+q+1}_{Z_{p+1}} (X, M) \to \dots
\]

A morphism $f \colon Y \to X$ induces a pullback map 
\[
f^* \colon H^{n}_{Z_p \setminus Z_{p+1}} (X \setminus Z_{p+1}, M) \to 
H^{n}_{f^{-1}(Z_p) \setminus f^{-1}(Z_{p+1})} (Y \setminus f^{-1}(Z_{p+1}), M).
\]
If $f$ is flat, $\codim(f^{-1}(Z_p), Y) \ge \codim(Z_p, X)$  (see \cite[Tags 02NM and 02R8]{stacks}), so for a chain $\{Z\}$ in $X$ its preimage $\{f^{-1}(Z)\}$ gives a chain in $Y$ with the same condition on codimensions. Hence for $x \in X^{(p)}$ we obtain $f^* \colon H_x^n (X, M) \to  \bigoplus_{y \in Y^{(p)}} H_y^n (Y, M)$ which is $0$ when $f^{-1}(x) = \emptyset$ and otherwise the induced by the pullback $H_x^n (X, M) \to H_{f^{-1}(x)}^n (Y, M)$. 

\begin{lemma} \label{lemm:flat-pullback-works}
Let $f \colon Y \to X$ be a flat morphism. 
\begin{enumerate}
\item The induced pullback map $f^* \colon E_{1}^{*, 0}(X, M) \to E_{1}^{*, 0}(Y, M)$ is a map of chain complexes.
\item For composable flat morphisms $f$ and $g$ one has $(f \circ g)^* = g^* \circ f^*$.
\end{enumerate}
\end{lemma}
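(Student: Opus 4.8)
The plan is to work throughout with the coniveau-spectral-sequence description $E_1^{*,0}(X,M)$ and to track how flat pullback interacts with the building blocks of its differential. Recall from the discussion above that $E_1^{p,q}(X,M) \simeq \colim_{\{Z\}} H^{p+q}_{Z_p\setminus Z_{p+1}}(X\setminus Z_{p+1},M)$, the colimit being over chains $\emptyset = Z_{d+1}\subset\dots\subset Z_0 = X$ with $\codim(Z_p,X)\ge p$, and that $d_1^{p,0}$ is the colimit over such chains of the connecting maps $H^{p}_{Z_p\setminus Z_{p+1}}(X\setminus Z_{p+1},M) \to H^{p+1}_{Z_{p+1}}(X,M) \to H^{p+1}_{Z_{p+1}\setminus Z_{p+2}}(X\setminus Z_{p+2},M)$ extracted from the localization long exact sequences attached to $\{Z\}$. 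Since the term-level map $f^*\colon H^n_x(X,M) \to \bigoplus_{y\in Y^{(p)}} H^n_y(Y,M)$ has already been constructed before the statement, for (1) it suffices to check it commutes with $d_1^{*,0}$, and for this I would establish two points: (a) for flat $f\colon Y\to X$ and a chain $\{Z\}$ in $X$, the preimage $\{f^{-1}(Z)\}$ is again a chain of the required type in $Y$, so that $f$ induces a functor between the indexing categories of chains compatible with the colimit systems; and (b) the localization long exact sequences are natural along the morphisms of pairs induced by $f$. Point (a) is exactly the codimension estimate $\codim(f^{-1}(Z_p),Y)\ge\codim(Z_p,X)\ge p$ recalled above, and is the only place flatness is used; point (b) is what forces $f^*$ to commute with the connecting homomorphisms, hence with $d_1^{*,0}$ after passing to the colimit over chains.

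For (b) I would argue as follows. For closed $Z'\subset Z$ in a smooth (or essentially smooth) $k$-scheme $X$ with open complement $U = X\setminus Z'$, the localization long exact sequence is the one attached to the cofiber sequence $R\Gamma_{Z'}(X,M) \to R\Gamma_Z(X,M) \to R\Gamma_{Z\setminus Z'}(U,M)$ in $\mathcal{D}(\Ab)$. A morphism $f\colon Y\to X$ induces a pullback natural transformation on cohomology with supports, $H^n_W(X,M)\to H^n_{f^{-1}(W)}(Y,M)$, compatible with restriction to opens and with enlarging the support; this fits the cofiber sequences for $(X,Z',Z)$ and for $(Y,f^{-1}Z',f^{-1}Z)$ into a commuting ladder, whence commutativity of every square involving the boundary map. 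Note there is nothing special about flat $f$ here: flatness re-enters only via (a). One consistency check worth recording is that a point $y\in f^{-1}(x)$ with $\codim_Y(y)>p$ contributes $H^p_y(Y,M)=0$ to $E_1^{p,0}(Y,M)$ by the Gersten/Rost-Schmid vanishing for strictly homotopy invariant sheaves, so the colimit of the term-level maps is indeed the expected map landing in $\bigoplus_{y\in Y^{(p)}} H^n_y(Y,M)$.

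Part (2) is then formal. Composable flat morphisms compose to a flat morphism, so all three pullbacks are defined. On each cohomology-with-supports group, $(f\circ g)^* = g^*\circ f^*$ is the functoriality of the pullback on $M$-cohomology for morphisms of schemes, together with $g^{-1}(f^{-1}(W)) = (f\circ g)^{-1}(W)$ on closed subsets; and the indexing of the colimit by chains is itself strictly functorial in the same manner, $(f\circ g)^{-1}\{Z\} = g^{-1}(f^{-1}\{Z\})$. Since every structure map defining $E_1^{*,0}$, and the chain-map structure produced in (1), is obtained by transporting these group-level pullbacks along the colimit over chains, the claimed equality follows. The main (though essentially routine) obstacle is the naturality in (b) of the localization triangles under arbitrary pullback, i.e. the standard functoriality of cohomology with supports; everything else is bookkeeping, with the codimension estimate of (a) the one genuine use of the flatness hypothesis.
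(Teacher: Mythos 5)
Your proposal is correct and is essentially an expanded version of the paper's own (one-line) proof, which simply invokes functoriality of the pullback on cohomology with support: your points (a) and (b) — flatness guaranteeing that chains pull back to chains of the right codimension, and naturality of the localization long exact sequences under pullback — are exactly the content being appealed to, and part (2) is the same formal functoriality. No gap; the extra bookkeeping you record matches the construction given just before the lemma.
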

\begin{proof}
This follows from functoriality of the pullback on cohomology with support.
\end{proof}

Let $x \in X^{(d)}$. Then since $f$ is flat we have $f^*\omega_{x/X} \wequi \omega_{f^{-1}(x)/Y}$ \cite[Tag 08QQ]{stacks}. If $f$ is furthermore smooth then $f^{-1}(x)$ is essentially smooth, and for every $y \in Y^{(d)}$ with $f(y) = x$ we have a canonical isomorphism $\omega_{y/Y} \wequi f^*\omega_{x/X}|_y$. Hence there is a canonical map $C^*(X, M) \to C^*(Y, M)$ sending $M_{-d}(x, \omega_{x/X})$ to $M_{-d}(y, \omega_{y/Y})$ via $f^*\colon M_{-d}(x) \to M_{-d}(y)$ and the previous isomorphism.

\begin{lemma} \label{lemm:compute-flat-pullback}
Let $f \colon Y \to X$ be a smooth morphism. Then the flat pullback $f^*\colon C^*(X, M) \to C^*(Y, M)$ is given by the above explicit map.
\end{lemma}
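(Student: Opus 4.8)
The plan is to reduce the statement to a Zariski-local computation and then invoke the naturality of the purity isomorphism. Recall from the discussion preceding the lemma that, in degree $d$, both the flat pullback $f^*\colon E_1^{*,0}(X,M) \to E_1^{*,0}(Y,M)$ and the explicit map are direct sums of components indexed by pairs $(x,y)$ with $x \in X^{(d)}$, $y \in Y^{(d)}$ and $f(y)=x$ (the remaining components vanishing on both sides). So it suffices to fix such a pair and to check that the two induced maps $H^d_x(X,M) \to H^d_y(Y,M)$ agree, where source and target are identified with $M_{-d}(x,\omega_{x/X})$ and $M_{-d}(y,\omega_{y/Y})$ by the purity isomorphisms $H^d_x(X,M) \cong M_{-d}(x,\omega_{x/X})$ underlying the identification $C^*(\ph,M) \cong E_1^{*,0}(\ph,M)$.

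First I would pass to a local situation. Cohomology with supports, the pullback on it, and the colimits defining $H^d_x$ and $H^d_y$ all commute with restriction to open subschemes, so we may shrink $X$ so that $Z := \overline{\{x\}}$ is a smooth closed subscheme of codimension $d$ (the smooth locus of $Z$ over the perfect field $k$ is a dense open containing $x$). We may then shrink $Y$ so that $f^{-1}(Z) = \overline{\{y\}} =: Z'$: indeed $f^{-1}(Z)$ is smooth of pure codimension $d$ in $Y$, all of its irreducible components dominate $Z$ since $f$ is flat, and $\overline{\{y\}}$ is one of them (as $y$ maps to the generic point of $Z$ and has codimension $d$), so we discard the rest. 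Now $Z' \to Z$ is smooth, $H^d_x(X,M) = \colim_W H^d_{Z\cap W}(W,M)$ over open neighbourhoods $W$ of $x$, likewise on the $Y$-side, and the map in question is induced by the pullbacks $f^*\colon H^d_{Z\cap W}(W,M) \to H^d_{Z'\cap f^{-1}(W)}(f^{-1}(W),M)$.

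The heart of the matter is then the claim that, under the purity isomorphisms $H^d_{Z\cap W}(W,M) \cong M_{-d}(Z\cap W,\omega_{(Z\cap W)/W})$ and its analogue on $Y$ — which are exactly the isomorphisms assembling to the Gersten identification in the colimit — the pullback $f^*$ corresponds to $f^*\colon M_{-d}(Z\cap W) \to M_{-d}(Z'\cap f^{-1}(W))$ twisted by the canonical isomorphism $\omega_{(Z'\cap f^{-1}(W))/f^{-1}(W)} \cong f^*\omega_{(Z\cap W)/W}$ coming from the flat base-change isomorphism $N_{Z'/Y} \cong f^*N_{Z/X}|_{Z'}$. This follows from the naturality of the purity equivalence $W/(W\setminus Z) \simeq Th(N_{Z/W})$ applied to the Cartesian square determined by $Z' = f^{-1}(Z)$ and the smooth morphism $f$: the deformation-to-the-normal-cone spaces base change along $f$, so the purity equivalences are compatible, and taking $M$-cohomology (maps into the corresponding $S^1$-spectrum $M$) and unwinding the Thom twist gives the displayed compatibility. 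Passing to the colimit over $W$ produces precisely the explicit map from the statement.

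The hard part will be making this last step fully rigorous, i.e. pinning down the naturality of the purity isomorphism, and of the identification in \cite[Corollary 5.44]{A1-alg-top}, under flat base change, while keeping careful track of the various twisting line bundles (the reconciliation $\omega_{Z/W} \cong \det N_{Z/W}$ being the one already recorded in the footnote above). Depending on what one is willing to cite, this is either a reference to a sufficiently general functoriality statement for Gysin/purity maps, or an unwinding of Morel's construction in \cite[Chapter 5]{A1-alg-top} by hand; the residual compatibility of the base-change isomorphisms on normal bundles with the chosen identifications of twists is then routine.
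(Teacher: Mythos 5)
Your proposal is correct and follows essentially the same route as the paper's proof: localize around $x$ and $y$ so that the closures become smooth closed subschemes with the one on $Y$ equal to the preimage of the one on $X$ (using smoothness of $f$), then identify the pullback on cohomology with support with the thomification of the map on normal bundles via functoriality of the purity equivalence for morphisms of smooth closed pairs, and pass to the colimit over neighbourhoods. The step you flag as needing care is exactly what the paper handles by citing the functoriality of purity from \cite[Section 3.5]{hoyois-equivariant}, which is the deformation-to-the-normal-cone naturality you describe.
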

\begin{proof}
Let $y \in Y^{(d)}$ and $x = f(y)$. The component $\alpha\colon M_{-d}(x, \omega_{x/X}) \to M_{-d}(y, \omega_{y/Y})$ of the map $f^*\colon C^*(X, M) \to C^*(Y, M)$ is obtained as follows. Let $V$ be an open neighbourhood of $y \in Y$ and $f(V) \subset U$ an open neighbourhood of $x \in X$. Shrinking $U$ and $V$ sufficiently, we may assume that $C := \bar{x} \cap U$ and $D := \bar{y} \cap V$ are smooth and $D = f^{-1}(C) \cap V$ (for this latter condition we use that $f$ is smooth). Then there is an induced map $\alpha_{U,V}\colon Th(N_{D/V}) \wequi V/V \setminus D \to U/U \setminus C \wequi Th(N_{C/U})$. Applying $M$ and taking the colimit over $U, V$, we obtain the map $\alpha$. Note that $f\colon (V, D) \to (U, C)$ is a morphism of smooth closed pairs (see \cite[Section 3.5]{hoyois-equivariant}). It follows from functoriality of the purity equivalence in morphisms of smooth closed pairs that $\alpha_{U,V}$ is the thomification of the canonical map $\beta_{U,V}\colon N_{D/V} \to N_{C/U}$. The colimit of $M(Th(\beta_{U,V}))$ is the explicit pullback constructed above. This concludes the proof.
\end{proof}

\subsubsection{Action by $\ul{K}_1^{MW}$}
From now on we assume that $M \in \HI_0(k)$, i.e. $M$ is an (effective) homotopy module \cite[Section 5.2]{morel-trieste}. In particular we assume given strictly homotopy invariant sheaves $M_{+1}$, $M_{+2}$ and so on together with isomorphisms $(M_{+(n+1)})_{-1} \wequi M_{+n}$ (and $M_{+0} = M$).

Suppose given $A \in \ul{K}_1^{MW}(X)$, e.g. $A = [a]$ for some $a \in \scr O(X)^\times$. We define a morphism of graded abelian groups \[A \times\colon C^{*}(X, M) \to C^{*}(X, M_{+1}).\] The morphism $A\times$ in components is just given by $M_{-d}(x, \omega_{x/X}) \to M_{-d+1}(x, \omega_{x/X}), m \mapsto A \times m$ coming from the action of $\ul{K}_1^{MW}(X) \to \ul{K}_1^{MW}(x)$ (see~\cite[Lemma 3.48]{A1-alg-top} for details). 

\begin{lemma} \label{lemm:Utimes-commutativity}
Let $M \in \HI_0(k)$. The morphism of graded abelian groups $U\times$ commutes with the boundary up to multiplication by $\epsilon := -\lra{-1}$.
\end{lemma}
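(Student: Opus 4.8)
The plan is to reduce the commutativity statement to the level of the local differentials $\partial^x_y$ appearing in the Rost-Schmid complex, and to check the required identity ``point by point''. Fix $x \in X^{(d)}$ and $y \in X^{(d+1)}$ with $y \in \bar{x}$ (otherwise $\partial^x_y = 0$ and there is nothing to prove). Writing $A|_x \in \ul{K}_1^{MW}(x)$ for the restriction and using that the action $U\times$ is by definition restriction-of-$A$ followed by the $\ul{K}_1^{MW}$-module multiplication, one must compare the two composites $M_{-d}(x, \omega_{x/X}) \to M_{-d}(y, \omega_{y/X})$, one given by ``multiply by $A|_x$, then $\partial^x_y$'' and the other by ``$\partial^x_y$, then multiply by $A|_y$''. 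The claim is that these differ by the sign $\epsilon$. Here one should use that $A|_y = (A|_x)|_y$, so the two sides really only differ in the order in which the transfer/boundary maps of $\partial^x_y$ and the $\ul{K}_1^{MW}$-multiplication are applied.

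First I would recall the precise recipe for $\partial^x_y$ from \cite[Definition 5.11]{A1-alg-top}: after choosing a suitable neighbourhood, $\partial^x_y$ is a finite sum over the generic points $z$ of $\bar{x} \cap (\text{normalization data})$ of composites $\tr_{z} \circ \partial_{z}$, where $\partial_z$ is the boundary map associated to a discrete valuation and $\tr_z$ is the geometric transfer along a finite field extension. So it suffices to prove two sub-lemmas: (a) the $\ul{K}_1^{MW}$-multiplication by a class pulled back from the base commutes with the residue/boundary map $\partial_z$ up to $\epsilon$, and (b) it commutes strictly with the transfer maps $\tr_z$ (projection formula). For (b), the relevant projection formula for transfers on contractions of homotopy modules is in \cite[Chapter 4]{A1-alg-top} (Morel's geometric transfers are $\ul{K}^{MW}_*$-linear up to the twist), and since $A$ is pulled back from $X$ it restricts compatibly along the finite extension, so no sign appears. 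For (a), the key input is the graded-commutativity of the boundary map in Milnor--Witt $K$-theory: for a discretely valued field with residue field $\kappa$, $\partial(\alpha \cdot u) = \langle -1\rangle\,\bar\alpha \cdot \partial(u) + (\text{term with }\partial\alpha)$; when $\alpha = A$ comes from the global ring it is a unit in the valuation ring, so $\partial A = 0$ and we are left with exactly $\partial(A \cdot m) = \epsilon\, \bar A \cdot \partial m$ — this is the source of the sign $\epsilon = -\langle -1\rangle$. (Equivalently one invokes \cite[Lemma 3.48, Lemma 3.49]{A1-alg-top} and Morel's computations of the Rost--Schmid differential, where this sign is already visible in the rank-one case $M = \ul{K}^{MW}_*$.)

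The main obstacle I expect is bookkeeping of the twists by $\omega_{x/X}$ and $\omega_{y/X}$ and checking that the $\ul{GW}$-module structure used to define the twist is compatible with the $\ul{K}_1^{MW}$-action in the way needed for the projection formula; one has to make sure the canonical isomorphism $\omega_{y/X} \wequi \omega_{y/\bar x} \otimes \omega_{\bar x/X}|_y$ (implicit in the definition of $\partial^x_y$) is respected when passing $A$ through the transfer. Once the two sub-lemmas (a) and (b) are in place, I would assemble them: $\partial \circ (A\times) = \sum_z \tr_z \circ \partial_z \circ (A\times) = \sum_z \tr_z \circ (\epsilon\, A\times) \circ \partial_z = \epsilon\, \sum_z (A\times) \circ \tr_z \circ \partial_z = \epsilon\,(A\times)\circ\partial$, where in the middle step I used (a) and in the next step (b). Summing over all pairs $(x,y)$ gives the statement for the total differential. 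I would end by remarking that the sign is unavoidable and already occurs for $M$ of rank one, so that $U\times$ is a chain map only after the twist $C^*(X,M) \to C^*(X, M_{+1})$ is corrected by $\epsilon$ in each degree (or, depending on conventions, induces a map of complexes $C^*(X,M) \to C^*(X, M_{+1})[\epsilon]$ in the obvious sense).
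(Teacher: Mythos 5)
Your argument is essentially correct, but it is worth noting that the paper does not prove this lemma at all: its ``proof'' is a one-line citation to Morel (the statement just before Lemma 5.36 of \emph{$\A^1$-algebraic topology over a field}). What you have written is the expected unwinding of that citation, and it is consistent with the ingredients the paper itself records elsewhere: your sub-lemma (a) is literally Lemma \ref{lemm:boundary-product-formula}(1) of the paper ($\partial(am)=\epsilon\,\bar a\,\partial(m)$ for $a$ defined over the local scheme, which is where the sign $\epsilon=-\lra{-1}$ enters), and your sub-lemma (b) is the untwisted/twisted projection formula of Lemma \ref{lemm:untwisted-projection} and Corollary \ref{corr:twisted-projection}, whose validity in all the relevant degrees is exactly why the hypothesis $M\in\HI_0(k)$ (homotopy module) is imposed in the statement — for a general $M\in\HI(k)$ the projection formula is only available after enough contractions. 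Your reduction of $\partial^x_y$ to a sum of composites $\tr_z\circ\partial_z$ over codimension-one points $z$ of the normalization of $\bar x$ lying over $y$ matches Morel's Definition 5.11, and the twist bookkeeping you flag is handled by the $\ul{GW}$-linearity of the action (Morel, Lemma 3.49), so no further sign arises there. The only blemish is a small internal inconsistency in your statement of (a): you first write the residue formula with a factor $\langle -1\rangle$ and then conclude with $\epsilon=-\langle -1\rangle$; the correct coefficient is $\epsilon$, as in Lemma \ref{lemm:boundary-product-formula}(1), and your final assembly uses the correct one. In short: your route is a legitimate self-contained proof where the paper simply outsources the statement to Morel; what it buys is transparency about exactly which structures (residue Leibniz rule with sign, projection formula for transfers) force the hypothesis $M\in\HI_0(k)$ and produce the factor $\epsilon$.
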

\begin{proof}
This is stated in \cite[just before Lemma 5.36]{A1-alg-top}.
\end{proof}

\subsubsection{Boundary morphism}
Let $Z \subset X$ be closed, smooth and everywhere of codimension $1$, and $U \subset X$ the open complement. There is a morphism of graded abelian groups \[\partial = \partial^U_Z\colon C^*(U, M) \to C^*(Z, M_{-1}(\omega_{Z/X})),\] defined as follows. Given $x \in U^{(d)}$ and $y \in \bar{x}^{(1)} \cap Z$ (so $y \in Z^{(d)}$), we have the Rost-Schmid boundary map $\partial^x_y\colon M_{-d}(x, \omega_{x/X}) \to M_{-d-1}(y, \omega_{y/X})$. We compose this with the isomorphism ${\omega_{y/X} \wequi \omega_{y/Z} \otimes \omega_{Z/X}}$, which is given by $(-1)^d$ times the canonical isomorphism. Altogether we obtain a morphism $M_{-d}(x, \omega_{x/X}) \to M_{-d-1}(y, \omega_{y/Z} \otimes \omega_{Z/X})$, as needed.

\begin{remark}\NB{I'm still not entirely happy about this. The map is basically thom iso following the boundary in cohomology with support. There is always a $\pm 1$ sign in the latter map. But I cannot see a natural reason why the thom iso should be given a sign $\lra{\pm 1}$.}
The peculiar choice of isomorphism above is somewhat justified by Lemma \ref{lemm:closed-pullback} below. Also the same factor occurs in the definition of $r$ in \cite[Proof of Theorem 5.38]{A1-alg-top}, and in \cite[p. 10]{asok2016comparing}.
\end{remark}

\begin{lemma} \label{lemm:boundary-commutativity}
Let $M \in \HI(k)$. The morphism of graded abelian groups $\partial^U_Z\colon C^*(U, M) \to C^*(Z, M_{-1}(\omega_{Z/X}))$ commutes with the boundary up to multiplication by $\epsilon$.
\end{lemma}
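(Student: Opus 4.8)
The plan is to exhibit $\partial^U_Z$ as, up to a sign, one of the matrix blocks of the Rost--Schmid differential $d_X$ of $C^*(X, M)$, and then to read off the desired (anti)commutativity from the relation $d_X^2 = 0$.

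First I would use that $Z$ has codimension $1$ everywhere to split $X^{(p)} = U^{(p)} \sqcup Z^{(p-1)}$, which yields a decomposition of abelian groups
\[ C^p(X, M) \;\cong\; C^p(U, M)\ \oplus\ C^{p-1}\!\left(Z, M_{-1}(\omega_{Z/X})\right), \]
where the second summand is identified via the \emph{canonical} isomorphism $\omega_{y/X} \wequi \omega_{y/Z} \otimes \omega_{Z/X}$ (for $y \in Z$) together with $M_{-p} = (M_{-1})_{-(p-1)}$; note that $M_{-1}$ is a $\ul{GW}$-module, so this twist makes sense even though $M$ need not be a homotopy module. With respect to this decomposition I claim that $d_X$ is lower triangular. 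Its upper-right block vanishes, since a local differential $\partial^y_x$ with $y \in Z$ and $x \in U$ is zero (it requires $x \in \bar y \subseteq Z$). Its upper-left diagonal block is $d_U$, by the local (colimit over open neighbourhoods) nature of the Rost--Schmid differentials $\partial^x_{x'}$. Its lower-right diagonal block is $\epsilon \cdot d_Z$, where $d_Z$ denotes the Rost--Schmid differential of $C^*(Z, M_{-1}(\omega_{Z/X}))$; this is Morel's identification of $C^*_Z(X, M)$ with a shift of the Rost--Schmid complex of the contracted, conormally twisted sheaf on $Z$ (cf.\ \cite[Lemma 5.36 and proof of Theorem 5.38]{A1-alg-top}), the unit $\epsilon$ appearing when the $\omega_{Z/X}$-twist is commuted past the extra contraction (cf.\ the Remark above). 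Finally, its lower-left block, restricted to $C^p(U, M)$, equals $(-1)^p\,\partial^U_Z$, the sign being exactly the discrepancy between the canonical isomorphism used just above and the $(-1)^p$-twisted one built into the definition of $\partial^U_Z$.

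Granting this, I would expand $d_X \circ d_X = 0$ blockwise; the component sending the $C^*(U, M)$-summand to the $C^*(Z, M_{-1}(\omega_{Z/X}))$-summand reads, in cohomological degree $p$,
\[ (-1)^{p+1}\,\partial^U_Z \circ d_U \ +\ (-1)^p\, \epsilon\, d_Z \circ \partial^U_Z \ =\ 0, \]
which rearranges to $\partial^U_Z \circ d_U = \epsilon\, d_Z \circ \partial^U_Z$, as desired.

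The one genuinely delicate step is the identification of the lower-right block: unwinding Morel's comparison of $C^*_Z(X, M)$ with $C^{*-1}(Z, M_{-1}(\omega_{Z/X}))$ and keeping track of the $\ul{GW}$-module reordering signs produced by the conormal twist and by the contraction, which is what forces the coefficient there to be the unit $\epsilon = -\lra{-1}$ rather than a plain sign. Everything else --- the block-triangularity, the locality of $d_U$, and the arithmetic of the $(-1)^p$'s --- is routine bookkeeping.
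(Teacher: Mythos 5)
Your proof is correct and is essentially the paper's own argument: the paper expands $(\partial)^2|^x_z = 0$ pointwise, splitting the intermediate points $y \in \bar x^{(1)}$ into those lying in $U$ and those lying in $Z$, which is exactly the cross-term of your blockwise expansion of $d_X^2 = 0$ under the decomposition $C^p(X,M) \cong C^p(U,M) \oplus C^{p-1}(Z, M_{-1}(\omega_{Z/X}))$. The delicate step you flag --- that the lower-right block is $\epsilon\, d_Z$ under the canonical twist identification --- is precisely the assertion the paper itself makes (``the second term is $\lra{-1}\,\partial \circ \partial^U_Z$, because of our choice of isomorphism''), so you are deferring to the same Morel-style unit bookkeeping that the paper does.
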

\begin{proof}
Let $x \in U$ and $z \in \bar{x}^{(2)} \cap Z$. Since $C^*(X, M)$ is a chain complex, we have
\[ 0 = (\partial)^2|^x_z = \sum_{y \in \bar{x}^{(1)} \cap U} \partial^y_z \partial^x_y + \sum_{y \in \bar{x}^{(1)} \cap Z} \partial^y_z \partial^x_y. \]
The first term is $\partial^U_Z \circ \partial$, and the second term is $\lra{-1} \partial \circ \partial^U_Z$, because of our choice of isomorphism. The result follows.
\NB{here the first sum is without extra sign because codimension of closed points in the sum got shifted by 1}
\end{proof}

\subsubsection{Closed pullback}
\label{subsec:closed-pullback}
Now suppose we are given $t \in \mathcal{O}(X)$ is such that $Z := Z(t)$ is smooth.  Write $i\colon Z \hookrightarrow X$ for the closed immersion.  Then for any $M \in \HI_0(k)$ there is a pullback morphism $i^*\colon C^{*}(X, M) \to C^{*}(Z, M)$, defined as the composite
\[ C^*(X, M) \to C^*(X \setminus Z, M) \xrightarrow{[t] \times } C^*(X \setminus Z, M_{+1}) \xrightarrow{\partial^{X\setminus Z}_Z} C^*(Z, M), \]
where in the last step we have trivialized $\omega_{Z/X}$ via $t$.  See also \cite[Section 3]{rost1996chow} \cite[Section 5.3]{A1-alg-top}. 

\begin{lemma} \label{lemm:closed-pullback}
Let $M \in \HI_0(k)$.
\begin{enumerate}
\item For $i\colon Z \to X$ as above, $i^*\colon C^{*}(X, M) \to C^{*}(Z, M)$ is a morphism of chain complexes.
\item $i^*$ is compatible with open immersions.
\item Let $f\colon X \to Z$ be smooth of relative dimension $1$, and suppose that also $f \circ i = \id_Z$. Then $i^*f^* = \id$.
\end{enumerate}
\end{lemma}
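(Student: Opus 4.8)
\emph{The plan is to treat the three parts separately; parts (1) and (2) are formal, and (3) is where the work is.}

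For (1), recall that $i^*$ is the composite of the restriction $C^*(X,M)\to C^*(X\setminus Z,M)$, the multiplication $[t]\times\colon C^*(X\setminus Z,M)\to C^*(X\setminus Z,M_{+1})$, and the boundary $\partial^{X\setminus Z}_Z\colon C^*(X\setminus Z,M_{+1})\to C^*(Z,M)$. The first map is a strict morphism of chain complexes; by Lemma~\ref{lemm:Utimes-commutativity} the second commutes with the differential up to the factor $\epsilon$, and by Lemma~\ref{lemm:boundary-commutativity} so does the third. Hence $i^*$ commutes with the differential up to $\epsilon^2 = \lra{-1}^2 = 1$, i.e.\ it is a morphism of chain complexes. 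For (2), given an open immersion $j\colon U\hookrightarrow X$ put $t_U := t|_U$ and $Z_U := Z\cap U = Z(t_U)$; each of the three constituent maps is compatible with passing to $U$ (restriction of cohomology with support is transitive, $[t]\times$ is natural in the scheme and $[t]|_U = [t_U]$, and $\partial$ together with the trivialization of $\omega_{Z/X}$ by $t$ are defined purely from data local at points of $Z$), so the composites agree.

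For (3) we argue componentwise. Fix $z\in Z^{(d)}$ with residue field $E := k(z)$ and $m\in M_{-d}(z,\omega_{z/Z})$, a summand of $C^d(Z,M)$. Since $f$ is smooth of relative dimension $1$, the fibre $C_z := f^{-1}(z)$ is an essentially smooth, hence regular, $E$-curve; the section point $p := i(z)$ has residue field $E$, codimension $d+1$ in $X$ and codimension $d$ in $Z$, and lies on a unique component of $C_z$, with generic point $\eta\in X^{(d)}$. By Lemma~\ref{lemm:compute-flat-pullback}, $f^*m$ is, on this component, the image of $m$ under the canonical map $M_{-d}(z,\omega_{z/Z})\to M_{-d}(\eta,\omega_{\eta/X})$ coming from $f^*\omega_{z/Z}\wequi\omega_{\eta/X}$, and its remaining components sit at the generic points of the other components of $C_z$. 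Because $f\circ i = \id_Z$ and $i(Z) = Z$, any $w\in Z^{(d)}$ that receives a contribution under $\partial^{X\setminus Z}_Z$ from a point of $f^{-1}(z)$ satisfies $w = i(f(w))$ with $f(w)\in\ol{\{z\}}$ of codimension $d$ in $Z$, forcing $f(w) = z$ and hence $w = p$; and only $\eta$ specializes to $p$. Thus $i^*f^*m$ is concentrated at the $z$ summand and equals the image of $m$ under
\[ M_{-d}(z,\omega_{z/Z})\xrightarrow{f^*}M_{-d}(\eta,\omega_{\eta/X})\xrightarrow{[t]\times}(M_{+1})_{-d}(\eta,\omega_{\eta/X})\xrightarrow{\partial^{\eta}_{p}}M_{-d}(p,\omega_{p/X})\wequi M_{-d}(z,\omega_{z/Z}), \]
the last equivalence combining $\omega_{p/X}\wequi\omega_{p/Z}\otimes\omega_{Z/X}$ (with the sign $(-1)^d$ from the definition of $\partial^{X\setminus Z}_Z$), the trivialization of $\omega_{Z/X}$ by $t$, and $\omega_{p/Z} = \omega_{z/Z}$. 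To finish, localize $C_z$ at $p$ to get a discrete valuation ring with residue field $E$ and uniformizer $\bar t$ — here one uses that $Z = Z(t)$ meets $C_z$ transversally at $p$, i.e.\ $T_pC_z$ and the image of $T_{i(z)}Z$ span $T_pX$ — so that the composite becomes the residue at $p$ of $[\bar t]$ times the constant section $m$ in the homotopy module $(M_{+1})_{-d}$. By Morel's description of the contraction of a homotopy module \cite{A1-alg-top}, in which the inclusion $N\hookrightarrow N_{+1}(\Gm)$ is $m\mapsto[\bar t]\times m$ over $\A^1$ and the residue is its retraction (homotopy invariance reducing the DVR case to $\A^1_E$ at the origin), this residue is $m$. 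Hence $i^*f^* = \id$.

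The main obstacle is the bookkeeping inside part (3): verifying the transversality of $Z(t)$ with the fibres of $f$ so that $\bar t$ is genuinely a uniformizer, tracking the line-bundle twists $\omega_{\eta/X}$, $\omega_{p/X}$, $\omega_{p/Z}$, $\omega_{Z/X}$ through the four maps, and checking that all the signs — the $(-1)^d$ in $\partial^{X\setminus Z}_Z$ (which, as the remark after its definition anticipates, is present precisely for this lemma), the $\epsilon$'s, and any sign in the residue — cancel to give a clean identity. The underlying analytic input, that the residue of $[\bar t]$ times a constant section is that section, is essentially Morel's and is not the difficulty.
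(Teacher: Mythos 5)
Parts (1) and (2) of your argument coincide with the paper's (the same two commutation lemmas plus $\epsilon^2=1$ for (1); construction-level compatibility for (2)). For (3) you take a genuinely different route: the paper disposes of it in one line by citing \cite[Lemma 5.36]{A1-alg-top} together with Lemma \ref{lemm:flat-pullback-works}(2), while you unwind the definition and compute componentwise, which amounts to re-proving Morel's lemma by hand. Your structural analysis is correct: by Lemma \ref{lemm:compute-flat-pullback} the pullback $f^*m$ of a class at $z\in Z^{(d)}$ sits at the generic points of $f^{-1}(z)$; the only summand of $C^d(Z,M)$ that can receive a boundary contribution is the one at $z$, and only the component of $f^{-1}(z)$ through $p=i(z)$ contributes; the local ring $\scr O_{\ol{\{\eta\}},p}$ agrees with $\scr O_{f^{-1}(z),p}$ and is a DVR with residue field $k(z)$; and $\bar t$ is a uniformizer --- in fact the transversality you worry about is automatic, since $f\circ i=\id_Z$ gives $i(Z)\times_X f^{-1}(z)\wequi Spec(k(z))$ scheme-theoretically, so $\bar t$ generates $\mathfrak m_p$. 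The residue identity you invoke, $\partial^{\bar t}([\bar t]\,m')=\ol{m'}$ for $m'$ defined over the DVR, is Morel's $\partial^\pi([\pi])=1$ combined with the product formula for the boundary (cf.\ Lemma \ref{lemm:boundary-product-formula}(2) and Corollary \ref{corr:pullback-formula}, whose proofs rest only on Morel, so there is no circularity in using them here). What the citation buys the paper is brevity; what your route buys is an explicit picture of why the statement is local along the section.

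The one place where your write-up stops short of a proof is exactly the point you flag at the end: you assert, but do not verify, that the chain of twist identifications --- $f^*\omega_{z/Z}\wequi\omega_{\eta/X}$ from flat pullback, the trivialization of the conormal of $p$ in $\ol{\{\eta\}}$ by $\bar t$ inside the residue map, the isomorphism $\omega_{p/X}\wequi\omega_{p/Z}\otimes\omega_{Z/X}$ rescaled by $(-1)^d$, and the trivialization of $\omega_{Z/X}$ by $t$ --- composes to the identity rather than to multiplication by some unit $\lra{u}$. This is not empty bookkeeping: the factor $(-1)^d$ was inserted into the definition of $\partial^U_Z$ precisely so that it cancels the Koszul-type sign arising when one reorders the fibre and base factors of the determinants, and without carrying out that comparison your computation only yields $i^*f^*=\lra{\pm 1}\cdot\id$ on each summand. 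So (3) is a sound strategy with all the genuinely geometric steps in place, but the decisive sign/twist cancellation still needs to be written out before it counts as a complete argument.
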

\begin{proof}
(1) follows from Lemmas \ref{lemm:Utimes-commutativity} and \ref{lemm:boundary-commutativity} (together with $\epsilon^2 = 1$). (2) is clear by construction. For (3), apply \cite[Lemma 5.36]{A1-alg-top} together with Lemma~\ref{lemm:flat-pullback-works}(2).
\end{proof}

\begin{remark}\label{rmk:closed-pullback-correct}
It follows from Lemma \ref{lemm:closed-pullback} that for $M \in \HI_0(k)$ we have a commutative diagram of chain complexes of sheaves on the small Nisnevich site of $X$
\begin{equation*}
\begin{CD}
M @>>> \ul{C}^*(X, M) \\
@VVV     @Vi^*VV      \\
i_* M @>>> i_* \ul{C}^*(Z, M).
\end{CD}
\end{equation*}
This implies that the map $i^*\colon H^*(X, M) \wequi h^*(C^*(X,M)) \to h^*(C^*(Z,M)) \wequi H^*(Z,M)$ is the canonical pullback map on cohomology, and similarly for cohomology with support.
\end{remark}

\begin{remark}
Let $M_* \in \HI_*(k)$.
Since $C^*(X, \omega^\infty M_*)$ only depends on $M_0$, Lemma \ref{lemm:closed-pullback} holds for $M_0$ in place of $M$ as well.
In fact there exists $M' \in \HI_0(k)$, an effective cover of $M_*$, with a map $M' \to M_* \in \HI_*(k)$ inducing an isomorphism $M'_0 \wequi M_0$.
Similar observations apply for some other results below, where we forgo apparent extra generality by working with effective homotopy modules only.
\end{remark}

\subsection{Transfers}
\label{sec:RS-transfers}
Let $f\colon X \to Y$ be a finite flat morphism of essentially smooth $k$-schemes\NB{i.e. $f$ is finite and for every component $Y_0$ of $Y$ and every component $X_0$ of $f^{-1}(Y_0)$ we have $\dim Y_0 = \dim X_0$ (miracle flatness).}. Then for any $M \in \HI_0(k)$ there is a transfer map \cite[Corollary 5.30]{A1-alg-top}
\[ \tr_f\colon C^{*}(X, M(\omega_{X/Y})) \to C^{*}(Y, M). \]
For the convenience of the reader, we recall its definition. Given $x \in X^{(d)}$ let $y = f(x)$. Then $y \in Y^{(d)}$ since $f$ is finite and flat. The component of $\tr_f$ from $M_{-d}(x, \omega_{x/X} \otimes \omega_{X/Y})$ to $M_{-d}(y, \omega_{y/Y})$ is given by the absolute transfer $\tr_{x/y}\colon M_{-d}(x, \omega_{x/y}) \to M_{-d}(y)$ which exists on any homotopy module (see \cite[Section 5.1]{A1-alg-top} and Example~\ref{ex:twisted-transfers-M-2}), twisted by $\omega_{y/Y}$.

We make use of the following result in transfer arguments. It is inspired by (and reduces to) \cite[Proposition B.1.4]{EHKSY}. Here and elsewhere, given a morphism $f: Y \to X \in \Sm_k$ and a trivialization $\omega_f \stackrel{\gamma}{\wequi} \scr O$, we denote the by $\tr_f^\gamma: C^*(Y, M) \to C^*(X, M)$ the morphism obtained from the absolute transfer $\tr_f: C^*(Y, M(\omega_f)) \to C^*(X, M)$ via the isomorphism $C^*(Y, M(\omega_f)) \wequi C^*(Y, M)$ induced by $\gamma$.

\begin{lemma} \label{lemm:pushpull-d}
Let $Y$ be (essentially) smooth and $X \hookrightarrow \A^1_Y$ be cut out by a monic global section $P$ of degree $n$. Write $f\colon X \to Y$ for the projection, and assume that $f$ is finite étale\NB{probably finite, hence flat, suffices}. Let $\gamma\colon \omega_{X/Y} \wequi \omega_{X/\A^1_Y} \otimes \omega_{\A^1_Y/Y}|_X \wequi \scr O_X$ be the canonical isomorphism induced by $P$ and the coordinate $t$. Then for any $M \in \HI_0(k)$ and a line bundle $\scr L$ on $Y$, the composite
\[ C^{*}(Y, M(\scr L)) \xrightarrow{f^*} C^{*}(X, M(f^* \scr L)) \xrightarrow{\tr_f^\gamma} C^{*}(Y, M(\scr L)) \]
is given by multiplication by $n_\epsilon := \sum_{i=1}^n \lra{(-1)^{i-1}}$.
\end{lemma}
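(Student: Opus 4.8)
The plan is to reduce the assertion to a single Grothendieck--Witt computation, which is exactly \cite[Proposition~B.1.4]{EHKSY}. Both $f^*$ and $\tr_f^\gamma$ are computed term by term on the Rost--Schmid complex: $f^*$ by the explicit formula of Lemma~\ref{lemm:compute-flat-pullback} (valid since $f$ is \'etale, hence smooth), and $\tr_f$ as the sum of the absolute transfers $\tr_{x/y}$ recalled in Section~\ref{sec:RS-transfers}. Since every homotopy module is a module over $\ul{K}_*^{MW}$, the projection (``Frobenius reciprocity'') formula for the absolute transfers \cite[Chapter~5]{A1-alg-top} reads $\tr_{x/y}^\gamma(m|_x) = \tr_{x/y}^\gamma(1)\cdot m$ for a section $m$ of $M_{-d}$ over $y$; applying this summand by summand (over the fibre of $f$ and over all points of $Y$) shows that $\tr_f^\gamma \circ f^*$ is multiplication by the global section $\theta := \tr_f^\gamma(1_X) \in \ul{GW}(Y)$, acting on $C^*(Y, M(\scr L))$ through the $\ul{GW}$-module structure of $M$. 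In particular the line bundle $\scr L$ plays no role, and the lemma is reduced to the identity $\theta = n_\epsilon$ in $\ul{GW}(Y)$.

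To prove $\theta = n_\epsilon$, I would use that $\ul{GW} = \ul{K}_0^{MW}$ is an unramified sheaf, so it suffices to check the identity after restriction to each point $y \in Y$. By compatibility of flat pullback, of $\tr_f$, and of the trivialization $\gamma$ with base change along $y \hookrightarrow Y$, the restriction of $\theta$ to $y$ is $\tr_{f_y}^{\gamma_y}(1) \in \ul{GW}(y)$, where $f_y \colon X_y := X \times_Y y \to y$ is the (still finite \'etale) base change, $X_y \hookrightarrow \A^1_{k(y)}$ is cut out by the reduction $\bar P$ of $P$ (still monic of degree $n$), and $\gamma_y$ is the induced trivialization of $\omega_{X_y/y}$. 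The resulting statement --- that the transfer of the unit form along a finite \'etale monic hypersurface of the affine line, equipped with the trivialization of its relative dualizing sheaf coming from $\bar P$ and the coordinate $t$, equals $n_\epsilon = \sum_{i=1}^n \lra{(-1)^{i-1}}$ --- is precisely \cite[Proposition~B.1.4]{EHKSY}.

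The main obstacle is the first step: one must verify the projection formula \emph{at the level of chain complexes} rather than only on cohomology. This means unwinding Morel's definition of $\tr_f$ term by term, applying the module projection formula for the absolute transfers $\tr_{x/y}$ between residue fields, and carefully tracking the canonical identifications of twists ($\omega_{x/X} \otimes \omega_{X/Y}|_x \wequi \omega_{x/Y} \wequi \omega_{x/y} \otimes \omega_{y/Y}|_x$) together with the behaviour of $\gamma$ under the base change $y \hookrightarrow Y$. Matching the orientation and sign conventions that arise here with those used in \cite[Proposition~B.1.4]{EHKSY} is the only genuinely delicate point.
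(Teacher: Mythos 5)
Your route is sound in outline but genuinely different in the middle from the paper's. The paper also first reduces, by inspecting components and matching twists via flatness (Lemma \ref{lemm:compute-flat-pullback}), to the case where the base is a field and $*=0$; but from there it does \emph{not} invoke a projection formula. Instead it uses that $M$ is a homotopy module to realize $\tr_f^\gamma\circ f^*$ as pullback along a single map of spectra $\beta\colon \Gm\to\Gm$, identifies $[\Gm,\Gm]_{\SHS(k)}\wequi \ul{GW}(k)$ (Morel), and then computes $\beta$ by recognizing the composite collapse map $\P^1\to \P^1/(\P^1\setminus X)\xrightarrow{P} T$ as the framed correspondence attached to $P$, so that \cite[Proposition B.1.4]{EHKSY} together with \cite[Example 3.1.6]{EHKSY2} finishes. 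Your version replaces the spectra-level step by the chain-level Frobenius reciprocity $\tr^\gamma(m|_X)=\tr^\gamma(1)\cdot m$, which is indeed available here (Lemma \ref{lemm:untwisted-projection}(2), Corollary \ref{corr:twisted-projection}(2); all levels $M_{-d}$ qualify because $M$ is a homotopy module), and so avoids $[\Gm,\Gm]_{\SHS(k)}\wequi\ul{GW}(k)$ at the cost of a more hands-on bookkeeping of twists. That trade-off is legitimate.

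Two points need repair. First, do not pass through the global element $\theta=\tr_f^\gamma(1)\in\ul{GW}(Y)$ and unramifiedness: unramifiedness only lets you test sections at \emph{generic} points, whereas the composite acts on the summand of $C^d$ at a codimension-$d$ point $y$ by multiplication by $\theta_y=\sum_{x\mapsto y}\tr_{x/y}^{\gamma_x}(1)\in GW(k(y))$, and identifying $\theta_y$ with the restriction of the global $\theta$ is itself a nontrivial specialization compatibility (compare the restricted hypotheses in Proposition \ref{prop:transfer-specialization}). The fix is easy: argue componentwise at every $y$, where the fibre $X_y\hookrightarrow\A^1_{k(y)}$ is cut out by the monic reduction of $P$ and $\gamma$ restricts to the induced trivialization, so the statement you need is exactly the field case $\theta_y=n_\epsilon$ — no global section and no base-change theorem required. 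Second, the field-level identity is \emph{not} literally \cite[Proposition B.1.4]{EHKSY}: that result computes the class of the framed correspondence (equivalently, of the map $\P^1\to\P^1$ determined by the monic polynomial), whereas you need Morel's absolute transfer of $\lra{1}$ with the $\gamma$-trivialization. Bridging the two requires the homotopy-purity/collapse-map description of Morel's transfer (recalled in Section \ref{subsec:projection-formulas}, cf.\ Lemma \ref{lemm:twisted-transfer-as-collapse}) together with the identification of how framed correspondences act on $\ul{GW}$ (\cite[Example 3.1.6]{EHKSY2}); when $\bar P$ is reducible one must also check that the discrepancy between the $\bar P$-trivialization and the minimal-polynomial trivializations of the individual points (the units $\prod_{j\ne i}P_j(x_i)$) cancels. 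This is precisely the step the paper's proof makes explicit, so your ``sign conventions'' remark is hiding the real remaining content rather than a routine verification.
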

\begin{proof}
Let $y \in Y^{(d)}$. Since $X \to Y$ is smooth, so is $f^{-1}(y) \to y$. In particular, $f^{-1}(y)$ is a finite union of points on $X$ of the same codimension. We have $f^* \omega_{y/Y} \wequi \omega_{f^{-1}(y)/X}$ ($\ast$), since $f$ is flat. According to Lemma \ref{lemm:compute-flat-pullback}, the pullback $f^*\colon C^{*}(Y, M(\scr L)) \to C^{*}(X, M(f^* \scr L))$ is just given on the component corresponding to $y$ by the map \[ M_{-d}(y, \omega_{y/Y} \otimes \scr L) \to M_{-d}(f^{-1}(y), \omega_{f^{-1}(y)/X} \otimes f^* \scr L) \wequi M_{-d}(f^{-1}(y), f^*(\omega_{y/Y} \otimes \scr L)). \] We have $\omega_{f^{-1}(y)/y} \wequi \omega_{X/Y}|_{f^{-1}(y)}$ ($\ast\ast$), again by flatness of $f$. The transfer map \[\tr_f\colon C^{*}(X, M(f^*(\scr L) \otimes \omega_{X/Y})) \to C^{*}(Y, M(\scr L))\] on the components corresponding to $f^{-1}(y)$ can be thus rewritten as 
\[M_{-d}(f^{-1}(y), \omega_{f^{-1}(y)/X} \otimes f^*(\scr L) \otimes \omega_{X/Y}|_{f^{-1}(y)}) \to M_{-d}(y, \omega_{y/Y} \otimes \scr L),\] using ($\ast$) and ($\ast\ast$). In other words, it suffices to prove the result in the case where $Y$ is the spectrum of a field extension of $k$ and $*=0$. In particular we may assume that $\scr L \wequi \scr O_Y$.

Observe that the pullback and transfer actually come from maps defined over $Y$.
We may thus base change everything to $Y$ and assume that $k=Y$.\footnote{$Y$ need not be the spectrum of a \emph{perfect} field, but this does not affect the argument below.}

We can view $M \in \SHS(k)$. The composite $\tr_f^\gamma \circ f^*\colon M_{-1}(k) \to M_{-1}(k)$ is given by pullback along a map of spectra $\beta \colon \Gm \to \Gm$. We will show that $\beta = n_\epsilon$. This implies what we want since $M$ is a homotopy module, so itself of the form $M'_{-1}$. We know that $[\Gm, \Gm]_{\SHS(k)} \simeq \ul{GW}(k) \simeq [\Gm, \Gm]_{\SH(k)}$ (this is essentially \cite[Theorems 6.39 and 3.37]{A1-alg-top}; see also \cite[Theorem 10.12]{bachmann-norms} and its proof), so it suffices to prove this for the image of $\beta$ in $\SH(k)$. The explicit construction of $\tr_f^\gamma$ using the homotopy purity theorem (reviewed for example in the beginning of Section \ref{subsec:projection-formulas}) shows that $\beta$ corresponds to the map
 \[ \P^1 \to \P^1/(\P^1 \setminus X) \wequi \A^1/(\A^1 \setminus X) \xrightarrow{P} \A^1/(\A^1 \setminus 0) \wequi T.\]
This is precisely the action of the tangentially framed correspondence $* \xleftarrow{f} X \xrightarrow{f} *$ (with the cotangent complex $L_f$ trivialized by $P$) on the sheaf $\ul{GW}$ (see~\cite[Section B.1.1]{EHKSY} for definitions, and Example \ref{ex: Voev transfers} for a closely related statement). Hence the result follows from \cite[Proposition B.1.4]{EHKSY} and~\cite[Example~3.1.6]{EHKSY2}. 
\end{proof}

\begin{lemma} \label{lemm:pushpull-ideal}
Let $r, s \in \Z$ be coprime, $k$ a field. Then $(r_\epsilon, s_\epsilon)$ generate the unit ideal of $GW(k)$.
\end{lemma}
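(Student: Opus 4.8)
The plan is to split the statement into two claims about the ideal $J := (r_\epsilon, s_\epsilon) \subseteq GW(k)$: first that the hyperbolic form $h := \langle 1 \rangle + \langle -1 \rangle$ lies in $J$, and second that $J$ surjects onto $GW(k)/(h)$.

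For the first claim I would use the elementary identity $\langle a \rangle h = \langle a \rangle + \langle -a \rangle = h$, valid in $GW(k)$ for every unit $a$ (the binary form $\langle a, -a\rangle$ is always hyperbolic), which extends $\Z$-linearly to $\alpha h = \operatorname{rk}(\alpha)\, h$ for all $\alpha \in GW(k)$, where $\operatorname{rk}\colon GW(k) \to \Z$ is the rank. Applying this to $\alpha = n_\epsilon$, which has rank $n$, gives $n_\epsilon h = n h$; in particular $r h = r_\epsilon h \in J$ and $s h = s_\epsilon h \in J$. Since $r$ and $s$ are coprime, choose $a, b \in \Z$ with $a r + b s = 1$; then $a (r_\epsilon h) + b(s_\epsilon h) = (ar + bs) h = h \in J$.

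For the second claim, recall that $W(k) = GW(k)/\Z h$, and that the ideal generated by $h$ equals $\Z h$ (again by $\alpha h = \operatorname{rk}(\alpha) h$ and surjectivity of the rank), so $GW(k)/(h) \cong W(k)$. At least one of $r, s$ is odd, say $r = 2m+1$; then $r_\epsilon = m h + \langle 1 \rangle$, whose image in $W(k)$ is $\langle 1 \rangle = 1$. Hence the image of $J$ in $GW(k)/(h) = W(k)$ is a unit ideal, i.e. $J + (h) = GW(k)$. Combining this with $h \in J$ from the first claim yields $J = GW(k)$.

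There is no serious obstacle here; the only points requiring care are the parity case-split (to reduce to an odd subscript) and invoking the standard facts $\langle a \rangle h = h$ and $W(k) = GW(k)/\Z h$.
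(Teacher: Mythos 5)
Your proof is correct and rests on exactly the same ingredients as the paper's: the identity $\langle a \rangle h = h$ (hence $n_\epsilon h = n h$), the parity reduction writing an odd $r = 2m+1$ as $r_\epsilon = mh + \langle 1 \rangle$, and B\'ezout for the coprime pair $(r,s)$. The paper merely packages these into a single explicit identity $1 = (1-ah)\,r_\epsilon - bh\, s_\epsilon$ (choosing $ar+bs=m$), whereas you split the argument into ``$h$ lies in the ideal'' plus ``the ideal is everything modulo $h$'' --- the same argument, just reorganized.
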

\begin{proof}
We may assume that $r$ is odd, say $r = 2n+1$. Then $1 = r_\epsilon - nh$, where $h := 1 + \lra{-1}$. Since $r, s$ are coprime, there are $a, b \in \Z$ with $ar + bs = n$. Consequently (noting that $\lra{-1}h = h$) we have $nh = (a r_\epsilon + b s_\epsilon) h$ and so $1 = (1 - ah) r_\epsilon - bh s_\epsilon$. The result follows.
\end{proof}

\subsection{Functoriality in $M$}
\label{sec:RS-functoriality-M}
We remark that if $M \to N$ is any morphism of strictly homotopy invariant sheaves (respectively (effective) homotopy modules), then there is an induced morphism $C^*(X, M) \to C^*(X, N)$, which is compatible with the flat pullback and boundary morphism (respectively all constructions in the previous subsections).

\section{The $q$-good Rost-Schmid complex}
\label{sec:q-good-RS}

Let $M \in \HI(k)$.
\begin{definition} \label{defn:q-good-cx}
Let $X \in \Sm_k$, $q \ge 0$. Let $\scr C$ be a finite family of irreducible,  closed subsets of $X$.  For notational convenience, we always assume that $X \in \scr
C$.
\begin{enumerate}
\item A closed subset $Z \subset X$ is said to be \emph{in $q$-good position} with respect to $\scr C$ (we abbreviate this to ``$q$-good'') if for every $W \in \scr C$, the codimension of $Z \cap W$ inside $W$ is at least $q$.
\item We put
  \[C^d_{\scr C, q}(X, M) =
        \bigoplus_{\substack{x \in X^{(d)} \\ \bar{x} \qgood}}
             M_{-d}(x, \omega_{x/X}) \subset C^d(X, M). \]
  This is the eponymous ``$q$-good Rost-Schmid complex''.
\end{enumerate}
\end{definition}
Note that since $X \in \scr C$ we have $C^i_{\scr C, q}(X, M) = 0$ for $ i < q$. Note also that
\[ C^d_{\scr C, q}(X, M) = \colim_{Z \qgood} C^d_Z(X, M). \]
In particular $C^*_{\scr C,q}(X, M)$ indeed \emph{is} a subcomplex of $C^*(X, M)$, and it computes $\colim_Z H_Z^*(X, M)$, where the colimit is taken over $q$-good closed subsets.

The aim of this section is to show that the inclusion \[\alpha\colon C^*_{\scr C, q}(X, M) \to C^*(X, M)\] induces an isomorphism on the cohomology groups $h^i$, for $i>q$, at least in favorable cases. Note that if $q = 0$ then $\alpha$ is an isomorphism, so we shall usually assume that $q>0$. For readers familiar with \cite{levine2006chow}, let us point out that the results we are going to prove are analogs of the results in that paper, just for the Rost-Schmid complex instead of the homotopy coniveau tower. Naturally, we will employ similar strategies of proof: using a weak homotopy invariance property and the action by general translations we can deal with the case $X = \A^n$, and then employing general projections we can extend to all smooth affine $X$, provided that $\omega_X \wequi \scr O_X$. This latter condition is a new twist,  which occurs because we need to pushforward in an unoriented situation.

We shall assume throughout this section that $M$ is an effective  homotopy module (i.e. an object in the image of the forgetful functor $\omega^\infty \colon \HI_0(k) \to \HI(k)$). Some of our results hold without this assumption, but this would unnecessarily clutter our notations.

\subsection{Weak homotopy invariance}
In this subsection we establish an analog of the weak homotopy invariance result \cite[Lemma 3.2.3]{levine2006chow}. Throughout we fix $X \in \Sm_k$.

\begin{definition} \label{def:CdC}
We write $p\colon \A^1 \times X \to X$ for the canonical
projection and $i_0\colon X \to \A^1 \times X$ for the inclusion corresponding to $0 \in \A^1$.
Given a closed subset $Z \subset \A^1 \times X$, denote by $\bar{Z} \subset \P^1 \times X$ its projective closure, and write $i_\infty^{-1}(Z) = \bar{Z} \cap \{\infty\} \times X$\NB{would it make more sense to just write $i_\infty^{-1}(\bar Z)$?}, which we also identify with a closed subset of $X$.
We put\NB{It might be more natural to also require $\bar{x}$ to be $q$-good. This changes nothing in the arguments.}
\[ C^d_{\scr C, q}(\A^1 \times X, M)_h =
      \bigoplus_{\substack{x \in (\A^1 \times X)^{(d)} \\
                      i_0^{-1}(\bar{x}), i_\infty^{-1}(\bar{x}), \overline{p(x)} \qgood}}
      M_{-d}(x, \omega_{x/\A^1 \times X})
      \subset C^d(\A^1 \times X, M).
\]
\end{definition}
As before, it is immediate that $C^*_{\scr C, q}(\A^1 \times X, M)_h$ is a subcomplex of $C^*_{\scr C, q}(\A^1 \times X, M)$.

For $Z \subset X$, we have $p(p^{-1}(Z)) = i_0^{-1}(p^{-1}(Z)) = i_\infty^{-1}(p^{-1}(Z)) = Z$. It follows that \[p^*(C^*_{\scr C,q}(X, M)) \subset C^*_{\scr C, q}(\A^1 \times X, M)_h.\] Note that by construction \[i_0^*(C^*_{\scr C, q}(\A^1 \times X, M)_h) \subset C^*_{\scr C, q}(X, M) \subset C^*(X, M).\]

Consider the closed subscheme $i_\infty \colon \{ \infty \} \times X \hookrightarrow (\P^1 \setminus 0) \times X$, cut out by the section  $-1/T$ on $\P^1 \setminus 0$. According to Section~\ref{sec:RS-pullbacks}, there is an induced pullback map:
\[ i_\infty^*\colon C^{*}(\A^1 \times X, M) \to  C^{*}((\P^1 \setminus \{0, \infty\}) \times X, M) \to C^{*}( \{ \infty \} \times X, M) \simeq C^{*}(X, M).\]
  In more detail, one has 
$i_\infty^* = \partial^{\Gm \times X}_{\{\infty\} \times X} \circ ([-1/T] \times) \circ (\Gm \times X \hookrightarrow \A^1 \times X)^*$, where $\omega_{\{\infty\} \times X / (\P^1 \setminus \{0\}) \times X}$ is trivialized via $-1/T$.
As for $i_0^*$, we have by construction \[i_\infty^*(C^*_{\scr C, q}(\A^1 \times X, M)_h) \subset C^*_{\scr C, q}(X, M) \subset C^*(X, M).\]

\begin{proposition} \label{prop:htpy-inv}
Let $M \in \HI_0(k)$. The map $p^*\colon C^*_{\scr C, q}(X, M) \to C^*_{\scr C, q}(\A^1 \times X,M)_h$ is a chain homotopy equivalence with inverse $i_\infty^*$.\NB{I was slightly suspicious about the $*=q$ case. But note that if $Z \subset \A^1 \times X$ is such that $\overline{p(Z)}$ is of codimension $q$, then $Z \subset \A^1 \times \overline{p(Z)} =: Z'$ which is also of codimension $q$. In particular if $Z$ has codimension $q$ then $Z = Z'$. Thus the indexing sets for the sum in $C^q$ are canonically bijective. (The groups themselves are not, but this is fixed by taking the appropriate kernel.)}
\end{proposition}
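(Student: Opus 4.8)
The plan is to imitate the proof of \cite[Lemma~3.2.3]{levine2006chow}, transcribed from the homotopy coniveau tower to the Rost--Schmid complex. The statement has two halves: $i_\infty^*\circ p^*=\id$ on the nose, and $p^*\circ i_\infty^*$ is chain homotopic to the identity.

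First I would dispose of $i_\infty^* p^* = \id$. Unwinding the definition recalled just before the proposition, $i_\infty^*$ is the composite $C^*(\A^1\times X,M)\xrightarrow{j_1^*}C^*(\Gm\times X,M)\xrightarrow{[-1/T]\times}C^*(\Gm\times X,M_{+1})\xrightarrow{\partial^{\Gm\times X}_{\{\infty\}\times X}}C^*(X,M)$, where $j_1$ is the open immersion $\Gm\times X\hookrightarrow\A^1\times X$; the last two arrows are precisely the closed pullback along $i_\infty\colon\{\infty\}\times X\hookrightarrow(\P^1\setminus 0)\times X$ (cut out by $-1/T$), computed on the open complement $\Gm\times X$. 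Since the projections $\A^1\times X\to X$ and $(\P^1\setminus 0)\times X\to X$ restrict to the same morphism on $\Gm\times X$, Lemma~\ref{lemm:flat-pullback-works}(2) lets me replace $j_1^*\circ p^*$ by $j_2^*\circ f^*$, where $f\colon(\P^1\setminus 0)\times X\to X$ is the projection and $j_2$ the open immersion of $\Gm\times X$. Thus $i_\infty^* p^*$ equals the closed pullback along $i_\infty$ applied to $f^*$; as $f$ is smooth of relative dimension $1$ and $i_\infty$ is a section of it, Lemma~\ref{lemm:closed-pullback}(3) gives this composite is $\id$. Compatibility with the $q$-good subcomplexes is exactly the inclusions $p^*(C^*_{\scr C,q}(X,M))\subset C^*_{\scr C,q}(\A^1\times X,M)_h$ and $i_\infty^*(C^*_{\scr C,q}(\A^1\times X,M)_h)\subset C^*_{\scr C,q}(X,M)$ recorded before the statement.

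It remains to produce a chain homotopy $H\colon C^{*}_{\scr C,q}(\A^1\times X,M)_h\to C^{*-1}_{\scr C,q}(\A^1\times X,M)_h$ with $\partial H+H\partial=\id-p^* i_\infty^*$. I would construct $H$ from a single geometric gadget — the $\A^1$-family of automorphisms of $\P^1_t$ fixing $\{0\}$ and $\{\infty\}$, given in an affine parameter $s$ by scaling the coordinate $t$, taken together with its degeneration as $s$ tends to the end that collapses a cycle onto the fibre at $\infty$ — assembled out of the operations of Section~\ref{sec:RS-pullbacks}: flat pullback along the relevant projections, the action $[u]\times$ by a coordinate function, and the boundary morphism $\partial^U_Z$. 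Evaluating the family at the unit end recovers $\id$, while the other end produces $p^* i_\infty^*$; the localization sequence on the $s$-line, combined with the commutations of Lemmas~\ref{lemm:Utimes-commutativity} and~\ref{lemm:boundary-commutativity} (and $\epsilon^2=1$), yields the chain homotopy identity. The hard part will not be any single formula but the verification that $H$ genuinely carries $C^*_{\scr C,q}(\A^1\times X,M)_h$ into itself: under the scaling reparametrization the support of a cochain acquires new components lying over $0$ and over $\infty$, and keeping everything in $q$-good position with respect to $\scr C$ is exactly what the two extra conditions ``$i_0^{-1}(\bar x)$ and $i_\infty^{-1}(\bar x)$ are $q$-good'' in Definition~\ref{def:CdC} are there to guarantee. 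This bookkeeping — together with the check that the two endpoints of the homotopy are $\id$ and $p^* i_\infty^*$ on the nose, twists by $\omega$ included — proceeds just as the analogous arguments in \cite[\S3.2]{levine2006chow}, with algebraic cycles replaced by Rost--Schmid cochains.
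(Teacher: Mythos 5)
The first half of your argument (that $i_\infty^* p^* = \id$, via Lemma \ref{lemm:flat-pullback-works}(2) to identify $j_1^*\circ p^*$ with the restriction of the pullback along $(\P^1\setminus 0)\times X\to X$, then Lemma \ref{lemm:closed-pullback}(3)) is correct and is exactly what the paper does. The gap is in the second half: you never actually construct the chain homotopy $H$. Your ``geometric gadget'' (the scaling family on the $t$-line, degenerating towards $\infty$) is the right heuristic, but turning it into a degreewise operator on the Rost--Schmid complex --- which is a direct sum of groups $M_{-d}(x,\omega_{x/\A^1\times X})$ with no general functoriality --- is precisely the hard technical content here, and it is what Morel does in \cite[Theorem 5.38]{A1-alg-top}. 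The paper does not redo this: it quotes Morel's explicit homotopy between $\id$ and $p^*i_\infty^*$ (a priori defined only on $C^{*\ge 2}$, and extended to all degrees using the hypothesis $M\in\HI_0(k)$ --- a point your sketch never engages with, even though it is the reason the homotopy-module assumption appears in the statement), and then the only new work is the verification that $H$ preserves the subcomplex $C^*_{\scr C,q}(\A^1\times X,M)_h$.

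That verification is also not something you can defer to ``the same bookkeeping as in \cite[\S 3.2]{levine2006chow}'': it uses the explicit summand-wise form of Morel's $H$, namely that $H$ vanishes on the summand of $y$ when $p(y)$ has the same codimension as $y$, and otherwise sends the summand of $y$ to the summand of $\A^1\times\overline{p(y)}$. With that description in hand, the preservation of the $h$-subcomplex follows from the condition that $\overline{p(y)}$ be $q$-good in Definition \ref{def:CdC} (note this is the condition doing the work here, not the conditions on $i_0^{-1}(\bar y)$ and $i_\infty^{-1}(\bar y)$ that you single out; those are what make $i_0^*$ and $i_\infty^*$ land in $C^*_{\scr C,q}(X,M)$). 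Without either constructing an explicit $H$ of your own or invoking Morel's and its formula, the homotopy half of the proposition remains unproved.
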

\begin{proof}
By Lemma \ref{lemm:closed-pullback}(3) applied to $q: (\P^1_X \setminus 0) \to X$ we have $i_\infty^* p^* = i_\infty^* q^* = \id$. It hence suffices to exhibit a homotopy $H\colon C^*_{\scr C, q}(\A^1 \times X,M)_h \to C^{*-1}_{\scr C, q}(\A^1 \times X,M)_h$ between $p^* i_\infty^*$ and $\id$.

Morel proves \cite[Theorem 5.38]{A1-alg-top}\NB{I'm not totally sure that I believe this proof without assuming that $M$ is a homotopy module...}{} that $p^*\colon C^*(X, M) \to C^*(\A^1 \times X, M)$ is a quasi-isomorphism by exhibiting an explicit homotopy $H$ between $p^* i_\infty^*$ and $\id$, defined on $C^{* \ge 2}$. Under our assumption that $M$ is a homotopy module, $H$ extends to all of $C^*$, and remains a homotopy between $p^*i_\infty^*$ and $\id$. In order to conclude, we need to prove that $H(C^{*}_{\scr C, q}(\A^1 \times X, M)_h) \subset C^{*-1}_{\scr C,q}(\A^1 \times X, M)_h$.

Thus let $y \in (\A^1 \times X)^{(d)}$ and $p(y) = y'$. Then $y'$ has codimension $d$ or $d-1$. In the first case, $H$ is defined to be zero on the summand corresponding to $y$, so we are done. In the second case $H$ is defined to take the summand corresponding to $y$ to the summand corresponding to $\A^1 \times y'$, which satisfies the required conditions as soon as $y$ contributes to $C^*_{\scr C, q}(\A^1 \times X, M)_h$ (since then $\overline{y'}$ is $q$-good, by assumption). This concludes the proof.
\end{proof}

\begin{corollary} \label{cor:htpy-inv}
The two maps $i_0^*$, $i_\infty^*\colon C^*_{\scr C, q}(\A^1 \times X, M)_h \to C^*_{\scr C, q}(X, M)$ induce the same maps on cohomology groups.
\end{corollary}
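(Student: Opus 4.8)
The plan is to deduce this formally from Proposition~\ref{prop:htpy-inv} and Lemma~\ref{lemm:closed-pullback}(3), without any further geometric input. Recall from Proposition~\ref{prop:htpy-inv} that $p^*\colon C^*_{\scr C, q}(X, M) \to C^*_{\scr C, q}(\A^1 \times X, M)_h$ is a chain homotopy equivalence with homotopy inverse $i_\infty^*$; in particular $p^* \circ i_\infty^* \simeq \id$ as chain endomorphisms of $C^*_{\scr C, q}(\A^1 \times X, M)_h$, via the explicit homotopy $H$ constructed there.

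First I would apply Lemma~\ref{lemm:closed-pullback}(3) to the smooth morphism $p\colon \A^1 \times X \to X$ of relative dimension $1$ and the closed immersion $i_0\colon X \hookrightarrow \A^1 \times X$. Here $i_0$ is cut out by the coordinate $t$ on $\A^1$, whose zero locus $\{0\} \times X$ is smooth, and $p \circ i_0 = \id_X$, so the lemma yields $i_0^* \circ p^* = \id$ on $C^*(X, M)$. Since the maps $p^*$ and $i_0^*$ carry the $q$-good subcomplexes into one another (as recorded in the paragraphs preceding Proposition~\ref{prop:htpy-inv}), this identity restricts to $i_0^* \circ p^* = \id$ on $C^*_{\scr C, q}(X, M)$.

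Combining the two facts, I compute
\[ i_0^* \;\simeq\; i_0^* \circ (p^* \circ i_\infty^*) \;=\; (i_0^* \circ p^*) \circ i_\infty^* \;=\; i_\infty^* \]
as maps $C^*_{\scr C, q}(\A^1 \times X, M)_h \to C^*_{\scr C, q}(X, M)$, so $i_0^*$ and $i_\infty^*$ are in fact chain homotopic, and a fortiori induce the same maps on all cohomology groups $h^i$. There is essentially no obstacle here beyond bookkeeping: the one point needing care is that all of $p^*$, $i_0^*$, $i_\infty^*$ and the homotopy $H$ respect the $q$-good subcomplexes, which is precisely what was arranged around Definition~\ref{def:CdC} and in the proof of Proposition~\ref{prop:htpy-inv}.
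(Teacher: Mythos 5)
Your proof is correct and takes essentially the same approach as the paper: both deduce the statement purely formally from Proposition \ref{prop:htpy-inv} together with the identity $i_0^* \circ p^* = \id$ supplied by Lemma \ref{lemm:closed-pullback}(3). The only (harmless) difference is that you run the argument at the chain level, getting a chain homotopy $i_0^* \simeq i_\infty^*$, whereas the paper argues on cohomology, noting that $i_0^*$ and $i_\infty^*$ are both inverses of the isomorphism induced by $p^*$ and hence coincide.
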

\begin{proof}
We have $i_0^* p^* = \id$ by Lemma \ref{lemm:closed-pullback}(3). Since $p^*$ induces an isomorphism on cohomology groups by Proposition \ref{prop:htpy-inv}, $i_0^*$ is inverse to $p^*$ on cohomology, and in particular $i_0^* = i_\infty^*$, inverses being unique.
\end{proof}

\subsection{Easy moving}
Our aim in this subsection is to establish the following result.
\begin{theorem}\label{thm:moving}
Let $k$ be a perfect field, $M \in \HI_0(k)$, $n \ge 0, q \ge 0$, $L/k$ a field and $\scr C$ a finite family of closed, irreducible subsets of $\A^n_L$, with $\A^n_L \in \scr C$.

Then the canonical map $\alpha\colon C^*_{\scr C, q}(\A^n_L, M) \to C^*(\A^n_L, M)$ induces a surjection on $h^i$ for $i \ge q$, and an isomorphism for $i \ge q+1$.
\end{theorem}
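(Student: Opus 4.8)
The plan is to follow Levine's proof of the analogous statement for the homotopy coniveau tower (\cite[Lemma 3.5.4 or the corresponding moving result]{levine2006chow}), transported to the Rost-Schmid setting. The core idea is to realize a ``general translation'' action: $\A^n_L$ carries the action of its additive group, and more usefully the group of general affine-linear automorphisms, and one wants to move a given cycle (or rather a class supported on some closed set $Z$) into $q$-good position by translating it generically. Concretely, I would set up the correspondence
\[ \A^n_L \xleftarrow{\ \pi\ } \A^N_L \times \A^n_L \xrightarrow{\ m\ } \A^n_L, \]
where $\pi$ is the projection, $m$ is the action map for a suitable parameter family of translations (or of affine automorphisms), and $\A^N_L$ parametrizes the translations. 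Pulling back along $m$ and then using a specialization to a generic parameter versus the identity parameter $0 \in \A^N_L$, I obtain two maps $C^*(\A^n_L, M) \to C^*(\A^n_L, M)$ which agree on cohomology by the weak homotopy invariance established above (Proposition \ref{prop:htpy-inv} and Corollary \ref{cor:htpy-inv}, applied iteratively in the $N$ coordinates of $\A^N$, reducing to one $\A^1$-factor at a time). The specialization at $0$ is the identity; the specialization at a generic point lands (up to the usual colimit over $q$-good $Z$) inside $C^*_{\scr C,q}$ because a generic translate of any fixed closed $Z$ meets each $W \in \scr C$ in the expected codimension — this is the elementary Kleiman-type transversality / dimension count for the transitive-enough action of translations on $\A^n$.

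The precise steps: (1) Recall the general-translation correspondence on $\A^n$ and verify that $m$ is smooth (indeed an $\A^N$-bundle composed with an automorphism in each fiber), so that $m^*$ and $\pi^*$ are the flat pullbacks of Section \ref{sec:RS-pullbacks}, and $i_0^* m^* = \id$ where $i_0$ is the zero-parameter section. (2) Show that for a generic parameter the pulled-back support is $q$-good: given $W \in \scr C$ of codimension $c$ and a closed $Z \subset \A^n_L$, the locus in $\A^N_L \times \A^n_L$ of the translated $Z$ meets $\A^N \times W$ in codimension $\geq c + q$... more carefully, one shows that after translating by a generic point of $\A^N$ the intersection $Z^t \cap W$ has codimension $\geq q$ in $W$ (which holds as long as $N$ is large enough, e.g. $N = n$ with $\A^n$ acting by translation, since the union of translates of $Z$ that meet $W$ badly is a proper closed subset of $\A^n$). (3) Combine with Corollary \ref{cor:htpy-inv} applied coordinate-by-coordinate to $\A^N = \A^1 \times \cdots \times \A^1$: the evaluation at the generic point and at $0$ of the parameter induce the same map on $h^i$; the first factors through $\colim_{Z \qgood} C^*_Z(\A^n_L, M) = C^*_{\scr C,q}(\A^n_L, M)$ and the second is the identity. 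This gives a section of $\alpha$ on cohomology in the relevant degrees, hence surjectivity of $h^i(\alpha)$ for all $i$ (and the degree restriction $i \geq q$ is automatic since $C^{<q}_{\scr C,q} = 0$). (4) For injectivity in degrees $i \geq q+1$: run the same argument one dimension up, i.e. apply the surjectivity statement to $\A^1 \times \A^n_L$ (or to the relevant ``double complex'' / cone) to kill classes in the kernel; this is the standard trick where injectivity in degree $i$ is deduced from surjectivity in degree $i-1$ of a related complex, using that a cycle which becomes a boundary after moving was already, up to the homotopy, a boundary in the $q$-good complex. Concretely I expect to form the mapping cone of $\alpha$ and show its cohomology vanishes in degrees $> q$, with the vanishing in degree $q+1$ coming from surjectivity of the degree-$q$ part of a translated version.

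The main obstacle I anticipate is bookkeeping the degree shift and the ``$i \geq q$ vs $i \geq q+1$'' discrepancy correctly — i.e. making the cone/specialization argument produce exactly surjectivity down to degree $q$ but isomorphism only from $q+1$ up, which reflects the fact that in degree $q$ the group $C^q_{\scr C,q}$ already sees all codimension-$q$ points and the only failure is that the $q$-good condition on a codimension-$q$ cycle forces its closure to lie in $\A^1 \times \overline{p(Z)}$ (cf. the footnote to Proposition \ref{prop:htpy-inv}). A secondary technical point is ensuring that the homotopy $H$ from Proposition \ref{prop:htpy-inv}, which is only a homotopy of complexes (not compatible with $\alpha$ on the nose), can be organized so that the specialization maps genuinely land in the subcomplex $C^*_{\scr C,q}$ rather than merely in $C^*_{\scr C,q}$ after passing to cohomology; this is handled by the colimit description $C^d_{\scr C,q}(X,M) = \colim_{Z \qgood} C^d_Z(X,M)$ together with the transversality of step (2), but needs to be stated with care. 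The use of homotopy modules enters (beyond what is already assumed in this section) only through the earlier lemmas on pullbacks and the extension of Morel's homotopy $H$ to all of $C^*$.
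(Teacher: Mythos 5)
There is a genuine gap. Since $H^i(\A^n_L, M) = 0$ for $i>0$ by strict homotopy invariance, surjectivity of $h^i(\alpha)$ is essentially free; the entire content of the theorem is the vanishing $h^i(C^*_{\scr C,q}(\A^n_L,M)) = 0$ for $i>q$, i.e.\ the injectivity part. Your argument is organized around producing a ``section of $\alpha$ on cohomology'' by moving an arbitrary class into $q$-good position via a generic translation (Kleiman-type transversality), which addresses only the trivial surjectivity half, and your step (4) for injectivity (``form the mapping cone and kill its cohomology, with the degree-$(q+1)$ vanishing coming from surjectivity of a translated version'') is a hope rather than an argument --- no mechanism is given for why a $q$-good class of codimension $>q$ should actually die. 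The mechanism the paper uses is different in kind: for a class $a$ represented by a cycle $c$ supported on a $q$-good $Z$ of codimension $i>q$, one chooses a single general direction $v$ and spreads $c$ out over the one-parameter translation family $h_v\colon \A^1\times\A^n \to \A^n$. Two geometric facts are needed (Proposition \ref{prop:geom}(2),(3)): because $\codim Z \ge q+1$, the set $p(h_v^{-1}(Z))$ is still $q$-good, so $h_v^*(c)$ lies in the restricted complex $C^*_{\scr C,q}(\A^1\times\A^n_L, M)_h$ of Definition \ref{def:CdC}; and the fiber of $h_v^{-1}(Z)$ over $t=\infty$ is \emph{empty} --- the cycle escapes to infinity. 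Then Corollary \ref{cor:htpy-inv} together with Lemma \ref{lemm:closed-pullback}(3) gives $a = i_0^* h_v^*(a) = i_\infty^* h_v^*(a) = 0$. Your proposal never produces this ``push to infinity'' step, and generic transversality of a translate of $Z$ against the $W\in\scr C$ cannot substitute for it; note also that the $q$-goodness of $p(h_v^{-1}(Z))$ genuinely uses $\codim Z \ge q+1$, which is exactly why the vanishing stops at degree $q$, not because of any cone bookkeeping.

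A second gap: your generic-parameter argument requires rational points in nonempty open subsets of the parameter space, hence $L$ infinite, but the theorem allows $L$ finite (e.g.\ $L=k=\F_p$). The paper handles this by a transfer argument: pass to an infinite $\ell$-extension, descend to a finite subextension $L_0/L$ by continuity, and use the transfer $\tr^0_q$ (with $\omega_{L_0/L}$ trivialized by the minimal polynomial) together with Lemma \ref{lemm:pushpull-d} to get $[L_0:L]_\epsilon\, a = 0$, then repeat with a second prime and apply Lemma \ref{lemm:pushpull-ideal}; Lemma \ref{lemm:finite-good} is needed to see that pullback and transfer preserve $q$-goodness. This case is absent from your proposal, and it is also where the hypothesis $M\in\HI_0(k)$ is used beyond the homotopy $H$ of Proposition \ref{prop:htpy-inv}.
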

If $n=0$ or $q=0$, $\alpha$ is an isomorphism, so we do not need to treat these cases. Of course $H^i(\A^n_L, M) = 0$ for $i > 0$, so the theorem just says that $h^i(C^*_{\scr C, q}(\A^n_L, M)) = 0$ for $i > q$.\NB{The story of the proof goes like this: Proposition \ref{prop:htpy-inv} shows that for a good cycle on $\A^1 \times X$, the fiber over zero is cohomologous to the ``infinitely far fiber''. Given a cycle of codimension $>0$ on $\A^n$, we can find a translation which ``as time goes to $\infty$ moves the cycle to $\infty$''. Since varieties are typically unbounded there is something to prove here, but essentially the only way this can go wrong is if we stupidly translate in an ``eventual tangent'' direction of our cycle. Moreover if the codimension is $>q$ and the cycle is $q$-good, we can ensure that its pullback to $\A^1 \times \A^n$ is also good: this is Proposition \ref{prop:geom}. Combining the two statements we find that our cycle is cohomologous to the empty cycle.}

We now recall the \emph{translation operations} \cite[Section 3.3]{levine2006chow}. That is, given a $k$-scheme $X$ and $v \in \A^n(k)$, we define the map \[h_v \colon \A^1 \times X \times \A^n \to X \times \A^n, (t, x, a) \mapsto (x, tv+a).\] Let $p \colon \A^1 \times X \times \A^n \to X \times \A^n$ be the projection. By construction, $h_v \circ i_0 = \id$, and $h_v \circ i_1 =: s_v$ is the translation along $v$. Note that $h_v, s_v$ are smooth: $s_v$ is an isomorphism, and $h_v$ is the composite of the isomorphism $\A^1 \times X \times \A^n \to \A^1 \times X \times \A^n, (t, x, a) \mapsto (t, x, tv+a)$ and the smooth projection $\A^1 \times X \times \A^n \to X \times \A^n$. The following is the main geometric input of our argument. It is an adaptation of \cite[Lemma 3.3.2]{levine2006chow}. Recall that if $V$ is an irreducible variety, we say that a certain property holds for a \emph{general point} of $V$ if there exists a non-empty open subset $U \subset V$ such that the property holds for all points of $U$. \NB{but $U$ may not have rational points}
\begin{proposition} \label{prop:geom}
Let $k$ be a field and $\scr C$ be a finite family of irreducible, closed subsets of $\A^n_k$. Let $X$ be a $k$-scheme of finite type. For $Z \subset X \times \A^n$ closed, we say that $Z$ is $q$-good if it is $q$-good with respect to $\{X \times W \mid W \in \scr C\}$.
\begin{enumerate}
\item Let $Z \subset X \times \A^n$ have codimension $\ge q$. Then for general $v$, $s_v^{-1}(Z) \subset X \times \A^n$ is $q$-good.
\item Let $Z \subset X \times \A^n$ have codimension $\ge q + 1$ and be $q$-good. Then for general $v$, the set $p(h_v^{-1}(Z))$ is $q$-good.
\item Suppose that $X = \spk$. Let $Z \subset \A^n$ be a proper closed subset. Then for general $v$, the set $p(h_v^{-1}(Z))$ is closed, and $i_\infty^{-1}(h_v^{-1}(Z))$ is empty (so in particular $q$-good).
\end{enumerate}
\end{proposition}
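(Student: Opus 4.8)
The plan is to reduce all three assertions to dimension estimates for certain universal families over the affine space $\A^n$ of directions $v$, and then to apply Chevalley's theorem on the dimension of generic fibres. This follows the pattern of \cite[Section~3.3]{levine2006chow}: a ``general'' translate, or line-sweep, of $Z$ meets the fixed finite configuration $\{X\times W : W\in\scr C\}$ in the expected codimension, the only source of failure being a ``tangency locus'' which turns out to be (a copy of) $Z\cap(X\times W)$ --- and that is precisely where the hypothesis that $Z$ is $q$-good gets spent.

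For \emph{(1)}, fix $W\in\scr C$ and consider
\[ \widetilde{\scr W}_W := \{(v,x,a)\in\A^n\times X\times\A^n : (x,a)\in Z,\ a-v\in W\}, \]
together with the projection $\rho\colon\widetilde{\scr W}_W\to\A^n$ recording $v$. The assignment $(v,x,a)\mapsto\big((x,a),a-v\big)$ is an isomorphism $\widetilde{\scr W}_W\xrightarrow{\sim}Z\times W$, so $\dim\widetilde{\scr W}_W=\dim Z+\dim W\le\dim(X\times W)+n-q$, using $\codim(Z,X\times\A^n)\ge q$. By Chevalley's theorem there is a dense open $U_W\subset\A^n$ such that for $v\in U_W$ the fibre $\rho^{-1}(v)=Z\cap\big(X\times(v+W)\big)$ has dimension $\le\dim(X\times W)-q$; since translation by $v$ identifies $\rho^{-1}(v)$ with $s_v^{-1}(Z)\cap(X\times W)$, this exactly says that $s_v^{-1}(Z)$ is $q$-good. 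Intersecting the finitely many $U_W$, and arguing componentwise in $X$, yields the general $v$ we want. Part \emph{(2)} is the same argument with lines replacing points: fixing $W$, set
\[ \scr V_W := \{(v,t,x,a)\in\A^n\times\A^1\times X\times\A^n : a\in W,\ (x,a+tv)\in Z\}, \]
and note that $p(h_v^{-1}(Z))\cap(X\times W)$ is the image in $X\times\A^n$ of the fibre of $\scr V_W\to\A^n$ over $v$, so it suffices to prove $\dim\scr V_W\le\dim(X\times W)+n-q$ and conclude via Chevalley as before. Now the morphism $\scr V_W\to Z\times W$, $(v,t,x,a)\mapsto\big((x,a+tv),a\big)$, has fibres of dimension $\le 1$ (the pairs $(v,t)$ solving $tv=c-w$ form a $\Gm$) away from the locus $\{c=w\}$, over which the fibres have dimension $n$; but $\{c=w\}$ is a copy of $Z\cap(X\times W)$, of dimension $\le\dim(X\times W)-q$ by $q$-goodness, whereas on its complement one uses $\codim(Z,X\times\A^n)\ge q+1$. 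Summing the two contributions gives the desired bound on $\dim\scr V_W$.

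For \emph{(3)}, where $X=\spk$ and $h_v\colon\A^1_t\times\A^n_a\to\A^n$ is $(t,a)\mapsto a+tv$, I would pass to projective closures. Let $\bar Z\subset\P^n$ be the closure of $Z$ and $H_\infty\cong\P^{n-1}$ the hyperplane at infinity. Since $Z\subsetneq\A^n$, no component of $\bar Z$ lies in $H_\infty$, so $\bar Z\cap H_\infty$ is a proper closed subset of $H_\infty$, and for general $v$ the associated point $[v]\in H_\infty$ satisfies $[v]\notin\bar Z$. Let $\bar\Gamma\subset\P^1_t\times\A^n_a\times\P^n_c$ be the closure of the graph $\{(t,a,a+tv)\}$. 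Both $\bar\Gamma\to\P^1_t\times\A^n_a$ and $\bar\Gamma\to\A^n_a$ are proper, and a direct computation in the chart $s=1/t$ shows that the fibre of $\bar\Gamma\to\P^1_t\times\A^n_a$ over any point with $t=\infty$ is the single point $c=[v]$. Hence $\bar\Gamma\cap(\{\infty\}\times\A^n\times\bar Z)=\emptyset$, i.e. $\bar\Gamma\cap(\P^1\times\A^n\times\bar Z)$ lies entirely over $t\in\A^1$. Its image in $\A^n_a$ is therefore closed and coincides with $p(h_v^{-1}(Z))$, and its image in $\P^1_t\times\A^n_a$ is closed, contains $\overline{h_v^{-1}(Z)}$, and misses $\{\infty\}\times\A^n$; the latter gives $i_\infty^{-1}(h_v^{-1}(Z))=\emptyset$.

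I expect the main obstacle to be bookkeeping rather than anything conceptual: pinning down exactly the ``tangency'' locus in (2) against which the $q$-goodness of $Z$ is consumed, and checking that the dimension inequalities survive when $X$ is reducible or not equidimensional (treated componentwise) and when the general $v$ is a non-closed point of $\A^n$ (which is automatic, as everything is obtained by restriction from the universal families over $\A^n$). One also has to track the compatibility of the various ``$q$-good'' conditions with the base changes involved, but that is routine.
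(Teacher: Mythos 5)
Your proof is correct. Parts (1) and (3) are essentially the paper's own argument: in (1) your $\widetilde{\scr W}_W$ is exactly the paper's incidence variety $\Sigma\cong Z\times W$ over the space of translation vectors, with the same generic-fibre dimension count, and in (3) your closed graph $\bar\Gamma$ is just the graph of the paper's extension $\overline{h_v}\colon\P^1\times\A^n\to\P^n$, $(\infty,a)\mapsto\overline v$, with the same use of $[v]\notin\bar Z$ for general $v$ and properness of the projection to $\A^n$.

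Part (2) is where you genuinely deviate, and your route works. The paper stays with the $n$-parameter family $\Sigma\cong Z\times W$ and does a line-sweep: $q$-goodness of $Z$ enters as the statement that $0\notin B^2$ (the locus where the fibre dimension jumps by $2$), so a general line $L$ through the origin avoids $B^2$ and meets $\overline{B^1}$ in only finitely many points, and one bounds $p(h_v^{-1}(Z))\cap(X\times W)=\pi'(\Sigma_L)$ fibrewise over $L$. You instead build the larger incidence variety $\scr V_W$ over $\A^n$ with the sweep parameter $t$ built in, bound its total dimension by splitting $Z\times W$ along the diagonal locus $\{c=w\}\cong Z\cap(X\times W)$ (where $q$-goodness is spent against the $n$-dimensional fibres $\{tv=0\}$) and its complement (where $\codim(Z)\ge q+1$ is spent against the $\Gm$-fibres), and then run the same generic-fibre argument directly for $\scr V_W\to\A^n$. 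Your version treats (1) and (2) uniformly by a single dimension count; the paper's line-sweep recycles the family from (1) and makes the role of the point $0\in\A^n$ (i.e.\ of $Z$ itself) geometrically visible. Two points you should make explicit when writing this up: the ``Chevalley'' step needs, besides constructibility of the fibre-dimension loci, the inequality $\dim(\text{total space})\ge\dim(\text{fibre over }\eta)+\dim\overline{\{\eta\}}$ applied at generic points of the bad locus --- this is precisely Lemma \ref{lemm:fiber-dimenision-bound} of the paper, included there for lack of a reference; and since $s_v,h_v$ require $v$ to be a rational vector, the good $v$ must form a nonempty open subset of $\A^n$, which your construction (complement of the closure of a small constructible bad locus, intersected over the finitely many $W\in\scr C$) does provide, just as the paper's does.
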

\begin{proof}
All the statements we wish to prove hold for $Z$ if and only if they hold for all irreducible components of $Z$. We may thus assume that $Z$ is irreducible.

Fix $W \in \scr C$, and consider the following closed subset:
$$\Sigma = \{(v, (z_1, z_2), w) \, | \, w + v = z_2\} \subset \A^n \times Z \times W,$$
where $(z_1, z_2) \in X \times \A^n$ are the corresponding coordinates of $z \in Z$.
Under the projection $\pi \colon \Sigma \to \A^n$ the fiber over $v_0 \in \A^n$ is given by $\Sigma_{v_0} := \pi^{-1}(\{v_0\}) \simeq s^{-1}_{v_0}(Z) \cap (X \times W)$\NB{$s^{-1}_v$ is just $s_{-v}$}. Let
\[ \Sigma^j = \{s \in \Sigma \mid \dim_s \Sigma_{\pi(s)} \ge \dim{W} + \dim{X} - c_Z + j\} \subset \Sigma. \]
Here $c_Z$ is the codimension of $Z$ in $\A^n_X$ (which is well-behaved since $Z$ is irreducible). In other words $\cup_{j>0} \Sigma^j$ consists of the points where the fiber has bigger dimension than expected, i.e. this is the ``bad locus'' where $s^{-1}_{v_0}(Z)$ meets $X \times W$ in unexpectedly low codimension.

By upper semi-continuity of fiber dimension \cite[02FZ]{stacks}, $\Sigma^j$ is closed in $\Sigma$. We put $B^j = \pi(\Sigma^j)$. By Chevalley's theorem \cite[Tag 054K]{stacks}, $B^j$ is constructible and in particular contains the generic points of its closure $\overline{B^j}$. Applying Lemma \ref{lemm:fiber-dimenision-bound} 
\NB{apply the lemma to $\pi \colon \Sigma \to \A^n$ in a generic point $\eta$}
below to these generic points, and using that $\Sigma \wequi Z \times W$, we find that
\[ \dim{\overline{B^j}} \le \dim{\Sigma} - \dim{\Sigma_\eta} \le \dim{Z} + \dim{W} - (\dim{W} + \dim{X} - c_Z + j) = n - j. \]
In particular $\overline{B^1}$ has positive codimension in $\A^n$, and (1) immediately follows.

For (2), since $\overline{B^2}$ has codimension $\ge 2$ and $0 \not\in B^2$, we find that a general line $L$ through the origin in $\A^n$ does not meet $B^2$ and is not contained in $\overline{B^1}$\NB{The map $B^2 \setminus 0 \to \P^{n-1}$ cannot be dominant for dimension reasons; hence the first statement. The set of lines contained in $\overline{B^1}$ is closed in $\P^{n-1}$, and not equal to $\P^{n-1}$, hence the second statement.}. Let $0 \ne v \in L$. Write $\pi'\colon \Sigma \to X \times W$ for the projection $(v, (z_1, z_2), w) \mapsto (z_1, w)$. Then $p(h_v^{-1}(Z)) \cap X \times W = \pi'(\Sigma_L)$, where $\Sigma_L := \pi^{-1}(L)$. Since $L$ avoids $B^2$ and meets $B^1$ in finitely many points, all but finitely many fibers of $\pi \colon \Sigma_L \to L$ have codimension $\ge q+1$ in $X \times W$ (after applying $\pi'$), and the others have codimension $q$. It follows that $p(h_v^{-1}(Z)) \cap X \times W$ has codimension $\ge q$ in $X \times W$, as needed.

It remains to establish (3), so now $X = \spk$.  We need to show that for $Z \subset \A^n$ of $\codim(Z, \A^n) \ge 1$ and a general $v$ the subset $p(h_v^{-1}(Z)) \subset \A^n$ is also closed, and that $i_\infty^{-1}(h_v^{-1}(Z)) = \emptyset$. We show both at the same time. Consider the embedding $\A^n \subset \P^n$, and let $\P^{n-1}_{\infty}$ be the hyperplane at infinity. For a vector $0 \ne v \in \A^n$ let $\{\overline{v}\} = (\A^1 \cdot v) \cap \P^{n-1}_{\infty}$. The morphism $h_v$ can be extended to a morphism $\overline{h_v} \colon \P^1  \times \A^n \to  \P^n$ by sending $(\infty, a) \mapsto \overline{v}$. Let $\overline{Z} \subset \P^n$ be the closure of $Z$. Since $Z$ was of codimension $\ge 1$ in $ \A^n$,  for a general vector $v$ one has $\overline{v} \notin \overline{Z}$\NB{If $0 \ne p \in k[x_1, \dots, x_n]$, then by definition the homogenization $P(X_0, \dots, X_n)$ is not divisible by $X_0$, and hence $q := P(0, X_1, \dots, X_n)$ is not the zero polynomial. Thus if $0 \ne p \in I(Z)$ then $0 \ne q \in I(\bar{Z} \cap \P^{n-1}_\infty)$.}. Hence $\overline{h_v}^{-1}(\overline{Z}) = h_v^{-1}(Z)$, and it is a closed subscheme of  $\P^1 \times \A^n$. In particular $i_\infty^{-1}(h_v^{-1}(Z)) = \emptyset$. The projection $\overline{p} \colon \P^1 \times \A^n \to \A^n$ is proper, hence $\overline{p}(h_v^{-1}(Z)) $ is closed in $\A^n$. It remains to notice that $\overline{p}(h_v^{-1}(Z)) = p(h_v^{-1}(Z))$.
\end{proof}

In the above proof we have made use of the following straightforward geometric fact, for which we unfortunately could not locate a reference.
\begin{lemma}\label{lemm:fiber-dimenision-bound} \todo{surely this must be citable??}
Let $f\colon X \to Y$ be a morphism of finite type $k$-schemes and $y \in Y$ with $X_y \ne \emptyset$. Then $\dim{X} \ge \dim{X_y} + \dim\bar{y}$.
\end{lemma}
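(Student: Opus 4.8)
The plan is to reduce to the case where $Y$ is irreducible with generic point $y$, i.e. to the statement $\dim X \ge \dim X_y$ when $y$ is the generic point of an integral base $Y$; from there the general case follows by applying this to the closure $\bar y$ with its reduced structure (since $\dim\bar y$ is exactly the dimension of that base, and $X_y$ is unchanged). So first I would replace $Y$ by $\overline{\{y\}}$ with the reduced induced structure and $X$ by $X\times_Y\overline{\{y\}}$, noting this only shrinks $X$ and does not change the fiber $X_y$; now $y$ is the generic point of an integral scheme $Y$ of finite type over $k$.

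Next, pick an irreducible component $X_0$ of $X$ whose intersection with $X_y$ is nonempty and of dimension $\dim X_y$ (such a component exists because $X_y$ is a finite union of $X_y\cap X_i$), and give it the reduced structure; it suffices to prove $\dim X_0 \ge \dim (X_0)_y + \dim\bar y$ for this integral $X_0$ mapping to the integral $Y$. Since $X_0\to Y$ has nonempty fiber over the generic point $y$, this map is dominant, so we are in the situation of a dominant morphism of integral finite-type $k$-schemes. Here the standard dimension formula for dominant morphisms (e.g.\ \cite[Tag 02JU]{stacks}, using that finite-type $k$-schemes are universally catenary and the generic fiber realizes the maximal fiber dimension) gives $\dim X_0 = \dim Y + \dim (X_0)_y$, and since $\dim Y = \dim\bar y$ we are done. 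The only mild subtlety is making sure the component $X_0$ can be chosen so that $(X_0)_y$ has the full dimension $\dim X_y$, but this is immediate from $X_y = \bigcup_i (X_i)_y$ together with the fact that dimension of a finite union is the max of the dimensions.

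I expect the main (though still routine) obstacle to be bookkeeping the passage from an arbitrary finite-type $k$-scheme $X$—which may be reducible, non-reduced, and whose components need not dominate $Y$—to the clean integral-dominant situation where the textbook fiber-dimension formula applies; once that reduction is set up carefully, the result is a direct citation. An alternative, if one prefers to avoid invoking the full dimension formula, is to induct on $\dim\bar y$: choose a chain $y = y_0 \rightsquigarrow y_1 \rightsquigarrow \cdots \rightsquigarrow y_r$ of specializations in $\bar y$ with $r = \dim\bar y$, and use Krull's Hauptidealsatz together with the going-down behavior of the generic fiber to produce, inside $X$, a chain of irreducible closed subsets of length $\dim X_y + r$; but the reduction-plus-citation route above is shorter.
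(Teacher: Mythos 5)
Your proposal is correct, but it takes a genuinely different route from the paper's. The paper begins with the same reduction (replace $Y$ by $\bar{y}$ and $X$ by $f^{-1}(\bar{y})$, so that $y$ becomes the generic point and $f$ is dominant), but then argues by hand: it shrinks $Y$ using Chevalley's theorem so that $f$ becomes surjective, picks a point $x$ of the fiber with $\dim_x X_y = \dim X_y$, specializes it to a closed point $x'$ lying over a closed point $y'$, invokes upper semicontinuity of fiber dimension to get $\dim_{x'} X_{y'} \ge \dim X_y$, and then concatenates the preimages of a maximal chain of closed subsets through $y'$ in $Y$ with a chain inside $X_{y'}$ to exhibit $\dim X \ge \dim \bar{y} + \dim X_y$. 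You instead exploit the fact that after the reduction the fiber in question is the \emph{generic} fiber: you pass to an irreducible component $X_0$ (with reduced structure) whose generic fiber has full dimension, note that $X_0 \to Y$ is dominant, and quote the standard dimension formula $\dim X_0 = \dim Y + \dim (X_0)_y$ for a dominant morphism of integral finite-type $k$-schemes (equivalently, additivity of transcendence degrees, since $(X_0)_y$ is integral with function field $k(X_0)$). Your route is shorter and replaces the Chevalley/semicontinuity/specialization bookkeeping by one standard citation; the paper's route is more elementary and self-contained, never invoking the integral-dominant dimension formula. Two small remarks: the cleaner reference for the fact you use is the fiber-dimension statement for dominant morphisms of integral finite-type schemes over a field (e.g. Stacks Project Tags 02JX/02FY, or simply $\operatorname{trdeg}$ additivity) rather than the universally-catenary dimension formula of Tag 02JU, though that also suffices; and your parenthetical that ``the generic fiber realizes the maximal fiber dimension'' is not needed and is actually backwards (by upper semicontinuity the generic fiber has the \emph{minimal} fiber dimension over a dense open), but nothing in your argument depends on it.
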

\begin{proof}
Replacing $X$ by $f^{-1}(\bar{y})$ and $Y$ by $\bar{y}$, we may assume that $f$ is dominant and $Y$ is irreducible with generic point $y$. \NB{At this point \cite{eisenbud2013commutative}, Corollary 14.6 and its geometric version explained thereafter conclude} There exists a non-empty open subset $U \subset Y$ with $f(X) \supset U$\NB{E.g. because by Chevalley's theorem $f(X)$ is constructible.}. Since $\dim{U} = \dim{Y}$, we may replace $Y$ by $U$ and $X$ by $f^{-1}(U)$; now $f$ is surjective. Let $x \in X_y$ with $\dim_x X_y = \dim X_y =: e$. Let $x'$ be a closed specialization of $x$. Put $y' = f(x')$; this is a closed point \cite[Tags 00G1, 00GB]{stacks}. By upper semi-continuity of fiber dimension \cite[02FZ]{stacks}, we have $\dim_{x'} X_{y'} \ge \dim_x X_y = e$. By \cite[Tag 00OS]{stacks}, there is a sequence of proper closed subsets $Y = Y_0 \supset Y_1 \supset \dots \supset Y_d = \{y'\}$, with $d= \dim{Y}$. Let $X_i = f^{-1}(Y_i)$. Then $X_i \ne X_{i+1}$, since $f$ is surjective. Let $X_{y'} = X_d \supset X_{d+1} \supset \dots \supset X_{d+e} \ne \emptyset$ be a chain of proper closed subsets in $X_{y'}$; this exists by construction. We have shown that $\dim{X} \ge d+e$, as needed.
\end{proof}

\begin{proof}[Proof of Theorem \ref{thm:moving}.]
We put $h^i := h^i(C^*_{\scr C, q}(\A^n_L, M))$. By strict homotopy invariance of $M$ it suffices to show that $h^i = 0$ for $i > q$.

We first prove this assuming that $L$ is \emph{infinite}. Thus let $a \in h^i$ with $i > q$. We can find a $q$-good subset $Z$ of codimension $i>q$ supporting $a$ and a cycle $c$ supported on $Z$ representing $a$. Choose a vector $v \in \A^n_L(L)$ satisfying the conclusions of Proposition \ref{prop:geom}(2,3) for $Z$ (which is possible since any non-empty open subset of $\A^n$ over an infinite field has a rational point\footnote{We could not find a reference for this well-known statement. A proof is available on MathOverflow at \url{https://mathoverflow.net/a/264212/5181}.}). Then $h_v^*(c) \in C^*_{\scr C, q}(\A^1 \times \A^n_L, M)_h$ and $i_\infty^*(h_v^*(a)) = 0$, so $a = i_0^*(h_v^*(a)) = 0$ by Corollary \ref{cor:htpy-inv} and Lemma \ref{lemm:closed-pullback}(3).

Finally we use a transfer argument to treat the case where $L$ is finite. Let $a \in h^i(C^*_{\scr C, q}(\A^n_L, M))$ with $i > q$. Suppose $L$ has characteristic $p$, and let $l \ne p$ be a different prime. Let $p\colon Spec(L') \to Spec(L)$ be an infinite $l$-extension\NB{We have $L = \mathbb{F}_q$ and may put $L' = \cup_n \mathbb{F}_{q^{l^n}}$.}. The pullback map $p^*\colon C^*(\A^n_L, M) \to C^*(\A^n_{L'}, M)$ maps $C^*_{\scr C, q}(\A^n_L, M)$ into $C^*_{\scr C, q}(\A^n_{L'}, M)$, by Lemma \ref{lemm:finite-good}(3) below. Since the theorem is proved for $L'$, we find that $p^*(a) = 0$. By continuity, there is a finite subextension $Spec(L') \to Spec(L_0) \xrightarrow{q} Spec(L)$ with $q^*(a) = 0$. Since $L$ is perfect, $L_0/L$ is simple and we may choose an embedding $Spec(L_0) \hookrightarrow \A^1_L$. Using the minimal polynomial, we may trivialize $\omega_{L_0/L}$, and hence we obtain $\tr_q^0\colon C^{*}(\A^n_{L_0}, M) \to C^{*}(\A^n_{L}, M)$. By Lemma \ref{lemm:finite-good}(2) below again, the transfer $\tr_q^0$ maps $C^*_{\scr C, q}(\A^n_{L_0}, M)$ into $C^*_{\scr C, q}(\A^n_L, M)$. We have ${0 = \tr_q^0 q^*(a) = [L_0:L]_\epsilon a}$, by Lemma \ref{lemm:pushpull-d}. Applying the same argument again for some $p \ne l' \ne l$ and using Lemma \ref{lemm:pushpull-ideal} yields the desired result.
\end{proof}

In the above proof we made use of the following observation.
\begin{lemma}\label{lemm:finite-good}
Let $f\colon X \to Y$ be a morphism.
\begin{enumerate}
\item Suppose $f$ is finite and surjective. Let $\scr C$ a family of closed subsets of $X$. Denote by $f(\scr C)$ the family $\{f(W) \mid W \in \scr C\}$. Let $Z \subset Y$. Then $Z$ is $q$-good (with respect to $f(\scr C)$) if and only if $f^{-1}(Z)$ is $q$-good (with respect to $\scr C$).
\item Suppose that $f$ is finite, flat and surjective. Let $\scr C$ be a family of closed subsets of $Y$. Denote by $f^{-1}(\scr C)$ the family \[ \{W' \mid W' \text{ a component of $f^{-1}(W)$ for some } W \in \scr C\}. \] Let $Z \subset X$. Then $Z$ is $q$-good (with respect to $f^{-1}(\scr C)$) if and only if $f(Z)$ is $q$-good (with respect to $\scr C$).
\item Suppose that $f$ is flat. Let $Z \subset Y$ be $q$-good with respect to $\scr C$. Then $f^{-1}(Z)$ is $q$-good with respect to $f^{-1}(\scr C)$, defined as (2).
\end{enumerate}
\end{lemma}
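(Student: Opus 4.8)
\emph{Proof sketch.}
The plan is to translate all three assertions into counts of dimensions. As is the case throughout this paper we may assume (replacing each member of $\scr C$ by its irreducible components, which is the only situation relevant here, cf.\ Definition \ref{defn:q-good-cx}) that every member of $\scr C$ is irreducible, and that all schemes in sight are reduced and (essentially) of finite type over a field. Then they satisfy the dimension formula: for an irreducible scheme $V$ and a closed subset $T\subseteq V$ one has $\codim(T,V)=\dim V-\dim T$, and any finite surjective morphism preserves Krull dimension. These are the only inputs beyond elementary manipulations of preimages.

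For (1), fix $W\in\scr C$ and put $g:=f|_W\colon W\to f(W)$; this is finite and surjective, and $f(W)$ is irreducible and closed. One checks directly that $f^{-1}(Z)\cap W=g^{-1}(Z\cap f(W))$ and that $g$ restricts to a finite surjective morphism $f^{-1}(Z)\cap W\to Z\cap f(W)$; hence $\dim(f^{-1}(Z)\cap W)=\dim(Z\cap f(W))$, and together with $\dim W=\dim f(W)$ the dimension formula gives $\codim(f^{-1}(Z)\cap W,W)=\codim(Z\cap f(W),f(W))$. Since the members of $f(\scr C)$ are exactly the $f(W)$, running over $W\in\scr C$ gives the equivalence. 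For (2), fix $W\in\scr C$. Since $f^{-1}(W)\to W$ is flat and $W$ is irreducible, every irreducible component $W'$ of $f^{-1}(W)$ dominates $W$, so $W'\to W$ is finite (a restriction of $f$) and surjective (closed dense image in $W$); hence $\dim W'=\dim W$, and restricting further to $Z\cap W'$ yields $\dim f(Z\cap W')=\dim(Z\cap W')$. Using $f(Z)\cap W=\bigcup_{W'}f(Z\cap W')$ we get
\[\codim(f(Z)\cap W,W)=\dim W-\max_{W'}\dim(Z\cap W')=\min_{W'}\bigl(\dim W'-\dim(Z\cap W')\bigr)=\min_{W'}\codim(Z\cap W',W'),\]
and since the members of $f^{-1}(\scr C)$ are precisely the $W'$, the equivalence follows.

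For (3), base-changing along each $W\hookrightarrow Y$ (and using that $Z\cap W$ is $q$-good for $\{W\}$ and that $f^{-1}(Z)\cap W'=(f|_{f^{-1}(W)})^{-1}(Z\cap W)\cap W'$ for a component $W'$ of $f^{-1}(W)$) reduces us to the case $Y$ integral, $\scr C=\{Y\}$, $\codim(Z,Y)\ge q$, and $f\colon X\to Y$ flat, where we must show $\codim(f^{-1}(Z)\cap X_i,X_i)\ge q$ for every irreducible component $X_i$ of $X$. Here the naive flat-pullback inequality $\codim(f^{-1}(Z),X)\ge\codim(Z,Y)$ does \emph{not} suffice, since a priori a component $X_i$ could have too small a dimension. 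What rescues us is that $f$, being flat (and, in our applications, either of finite presentation or a base change along a field extension), has fibres that are equidimensional of a fixed dimension $d$ over each connected component of $X$; since moreover each $X_i$ dominates $Y$, we obtain $\dim X_i=\dim Y+d$, while $\dim(f^{-1}(Z)\cap X_i)\le\dim f^{-1}(Z)=\dim Z+d$. Hence $\codim(f^{-1}(Z)\cap X_i,X_i)\ge(\dim Y+d)-(\dim Z+d)=\codim(Z,Y)\ge q$.

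The only genuinely delicate point is part (3): one must control \emph{all} components of the possibly reducible preimage $f^{-1}(W)$ at once, so the flat-pullback codimension inequality alone is not enough and one has to invoke equidimensionality of the fibres of a flat morphism. Everything else is routine bookkeeping with the identities $f^{-1}(Z)\cap W=(f|_W)^{-1}(\dots)$ and the dimension formula. \qed
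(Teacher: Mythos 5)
Your proof is correct, and for parts (1) and (2) it is in essence the paper's own argument: both come down to the facts that finite surjective morphisms preserve dimension and that $\codim(T,V)=\dim V-\dim T$ for $V$ irreducible and of finite type over a field. The paper packages this as the single observation that $Z_1\cap f^{-1}(Z_2)$, $f^{-1}(f(Z_1)\cap Z_2)$ and $f(Z_1)\cap Z_2$ all have the same dimension (and then specializes $Z_1$, $Z_2$), while you restrict $f$ to each $W$, resp.\ to each component $W'$ of $f^{-1}(W)$; your use of flatness in (2) --- every component of $f^{-1}(W)$ dominates, hence surjects onto, $W$ --- is exactly what lies behind the paper's remark that all components of $f^{-1}(Z_2)$ have the same dimension. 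The genuine divergence is in (3): the paper restricts to $f^{-1}(W)\to W$ and quotes the fact that flat pullback preserves codimension, whereas you point out, correctly, that this controls the codimension of $f^{-1}(Z\cap W)$ in all of $f^{-1}(W)$ rather than in each individual component $W'$ (codimension at a point only sees the largest component through it), and you close this gap via equidimensionality of the fibres of $f$. That extra care is a fair improvement in precision; note, however, that your equidimensionality claim requires the finite-presentation (or field base change) hypothesis you add parenthetically --- it fails for arbitrary flat morphisms --- so, like the paper's one-liner, your argument really establishes (3) in the situations where it is used, namely base change along an algebraic field extension in the proof of Theorem \ref{thm:moving}, where all components of $f^{-1}(W)=W_{L'}$ have dimension $\dim W$ and the conclusion is immediate. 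Two harmless caveats: replacing the members of $\scr C$ by their irreducible components is not literally an equivalent reformulation when a member is reducible (goodness with respect to the components is the stronger condition), but every family occurring in the paper consists of irreducible subsets (Definition \ref{defn:q-good-cx}); and the dimension formula can fail for merely essentially-of-finite-type schemes, though all schemes actually appearing here are of finite type over some field, so this too is immaterial.
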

\begin{proof}
Suppose $f$ is finite and surjective.
Let $Z_1 \subset X, Z_2 \subset Y$ be closed. Consider the composite $Z_1 \cap f^{-1}(Z_2) \xhookrightarrow{i} f^{-1}(f(Z_1) \cap Z_2) \xrightarrow{f'} f(Z_1) \cap Z_2$. Since $f$ is finite and surjective, so is its restriction $f'$. One checks easily that $f'i$ is also surjective. It follows that all three of $Z_1 \cap f^{-1}(Z_2), f^{-1}(f(Z_1) \cap Z_2)$ and $f(Z_1) \cap Z_2$ have the same dimension. Taking $Z_1 = X$ or $Z_2 = Y$ we in particular find that $\dim{Z_1} = \dim{f(Z_1)}$ and $\dim{Z_2} = \dim{f^{-1}(Z_2)}$.

Now (1) follows by taking $Z_1 \in \scr C$ and $Z_2 = Z$. For (2) we take $Z_1 = Z$ and $Z_2 \in \scr C$, noting that all components of $f^{-1}(Z_2)$ have the same dimension.

For (3) we just note that $f^{-1}(Z \times_X W) = f^{-1}(Z) \times_Y f^{-1}(W)$, and flat pullback preserves codimension \cite[Tags 02NM and 02R8]{stacks}. Hence $Z$ meets all components of $W$ in the same codimension as $f^{-1}Z$ meets $f^{-1}W$.
\end{proof}

\subsection{Hard moving}
We now prove a more general moving lemma. Let us first note the following reformulation.

\begin{lemma} \label{lemm:moving-reformulation}
Let $X \in \Sm_k$, $\scr C$ a family of closed subsets, $q \ge 0$ and $M \in \HI(k)$. Then the following statements are equivalent.
\begin{enumerate}
\item The canonical map $\alpha\colon C^*_{\scr C, q}(X, M) \to C^*(X, M)$ induces a surjection on $h^i$ for $i \ge q$, and an isomorphism for $i \ge q+1$.
\item For each $i \ge q$, $Z \subset X$ $q$-good (and closed) and each $a \in H^i(X \setminus Z, M)$ there exists $Z \subset W \subset X$ $q$-good (and closed) such that $a$ is in the kernel of $H^i(X \setminus Z, M) \to H^i(X \setminus W, M)$.
\end{enumerate}
\end{lemma}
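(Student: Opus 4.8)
The plan is to deduce the equivalence from the cohomology long exact sequence of a pair, after passing to a filtered colimit over $q$-good closed subsets. First I would record that the poset of $q$-good closed subsets of $X$ is filtered: if $Z_1,Z_2$ are $q$-good then so is $Z_1\cup Z_2$, since for $W\in\scr C$ one has $\codim\big((Z_1\cup Z_2)\cap W,W\big)=\min_i\codim(Z_i\cap W,W)\ge q$. Combined with the identity $C^*_{\scr C,q}(X,M)=\colim_{Z \qgood}C^*_Z(X,M)$ and the fact that it computes $\colim_Z H^*_Z(X,M)$ (both noted above), this lets me take exact colimits.

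Next, for each $q$-good $Z$ with open complement $U=X\setminus Z$, the complex $C^*_Z(X,M)$ is by definition the kernel of the degreewise surjection $C^*(X,M)\to C^*(U,M)$, so there is a short exact sequence of complexes $0\to C^*_Z(X,M)\to C^*(X,M)\to C^*(U,M)\to 0$, whose cohomology long exact sequence is the usual $\cdots\to H^i_Z(X,M)\to H^i(X,M)\to H^i(U,M)\to H^{i+1}_Z(X,M)\to\cdots$. Passing to the filtered colimit over $q$-good $Z$ — using that filtered colimits of abelian groups are exact and commute with taking cohomology of complexes, and that $\colim_Z H^i_Z(X,M)=h^i(C^*_{\scr C,q}(X,M))$ — I obtain a long exact sequence
\[ \cdots\to h^i(C^*_{\scr C,q}(X,M))\xrightarrow{h^i(\alpha)} H^i(X,M)\xrightarrow{\rho^i} D^i\to h^{i+1}(C^*_{\scr C,q}(X,M))\to\cdots, \]
where $D^i:=\colim_Z H^i(X\setminus Z,M)$, the transition maps being the restrictions $H^i(X\setminus Z,M)\to H^i(X\setminus W,M)$ for $Z\subseteq W$ $q$-good.

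Then I would observe that, by the description of elements of a filtered colimit, $D^i=0$ if and only if every class $a\in H^i(X\setminus Z,M)$, over every $q$-good $Z$, lies in the kernel of $H^i(X\setminus Z,M)\to H^i(X\setminus W,M)$ for some $q$-good $W\supseteq Z$; that is, $D^i=0$ for all $i\ge q$ is precisely condition (2). It remains to check that condition (1) is equivalent to $D^i=0$ for all $i\ge q$, which is a short diagram chase in the displayed long exact sequence. If $D^i=0$ for $i\ge q$, then for $i\ge q$ the map $h^i(\alpha)$ is surjective (its cokernel injects into $D^i$), and for $i\ge q+1$ it is also injective because $D^{i-1}=0$; this is (1). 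Conversely, if (1) holds, then for $i\ge q$ surjectivity of $h^i(\alpha)$ forces $\rho^i=0$, while injectivity of $h^{i+1}(\alpha)$ (valid since $i+1\ge q+1$) forces the connecting map $D^i\to h^{i+1}(C^*_{\scr C,q}(X,M))$ to vanish, so $\rho^i$ is also surjective; hence $D^i=0$. This completes the argument. There is no real obstacle here; the only mild technical points are the commutation of cohomology with the filtered colimit and the verification that the $q$-good subsets form a filtered system.
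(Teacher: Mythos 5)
Your argument is correct and is essentially the paper's own proof: take the filtered colimit of the long exact sequences of the pairs $(X, X\setminus Z)$ over $q$-good closed subsets $Z$, identify the first term with $h^i(C^*_{\scr C,q}(X,M))$, and read off the equivalence of (1) with the vanishing of $\colim_Z H^i(X\setminus Z, M)$ for $i\ge q$, which is (2). The only difference is that you spell out the filteredness of the poset of $q$-good subsets and the concluding diagram chase, which the paper leaves implicit.
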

\begin{proof}
Since filtered colimits preserve exact sequences of abelian groups, we have a long exact sequence
\[ \dots \to h^i(C^*_{\scr C, q}(X, M)) \to H^i(X, M) \to \colim_{\substack{Z \subset X \\ Z \qgood}} H^i(X \setminus Z, M) =: K^i \to \dots \]
Thus statement (1) is equivalent to $K^i = 0$ for $i \ge q$, which is just a reformulation of (2).
\end{proof}

The main argument of our hard moving result is contained in the following lemma.
\begin{lemma}\label{lemm:hard-moving}
Let $k$ be an infinite perfect field and let $X \in \Sm_k$ be affine with $\omega_X \wequi \scr O_X$. Let $M \in \HI_0(k)$ and $i \ge q \ge 0$.

Let $Z \subset X$ be $q$-good and $a \in H^i(X \setminus Z, M)$. Then there exists $Z \subset W \subset X$ $q$-good and closed such that the image of $a$ in $H^i(X \setminus W, M)$ vanishes.
\end{lemma}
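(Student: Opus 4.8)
The plan is to bootstrap from the affine-space case, already settled in Theorem~\ref{thm:moving}, by a general-projection argument in the style of Levine \cite[\S3.4]{levine2006chow}; the one genuinely new feature is that the pushforward we need is \emph{unoriented}, which is exactly where the hypothesis $\omega_X \wequi \scr O_X$ enters. As preliminary reductions: if $q = 0$ one may take $W = X$, which is $0$-good and has $H^i(X \setminus W, M) = 0$, so we assume $q \ge 1$, hence $i \ge q \ge 1 > 0$ (so $H^i(X, M)$ and $H^i(\A^n, M)$ may be suppressed from the relevant long exact sequences). Enlarging $Z$ by the finitely many supports occurring in a Rost--Schmid cocycle representing $a$ changes nothing, so we may assume $a$ is given by a cocycle supported on $Z$; by Lemma~\ref{lemm:moving-reformulation} it then suffices to produce one $q$-good closed $W \supseteq Z$ on whose complement $a$ vanishes.

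Fix a closed immersion $X \hookrightarrow \A^N$ with $n := \dim X$. Since $k$ is infinite, a sufficiently general linear projection restricts to a morphism $f\colon X \to \A^n$ which is finite and surjective --- hence flat, by miracle flatness (using that $X$ is Cohen--Macaulay, $\A^n$ regular, $f$ equidimensional) --- and generically étale (in positive characteristic one invokes here the classical separability of a general projection over an infinite field). As $\omega_X \wequi \scr O_X$ and $\omega_{\A^n} \wequi \scr O_{\A^n}$, we fix a trivialization $\gamma\colon \omega_f \wequi \scr O_X$ and obtain the transfer $\tr_f^\gamma\colon C^*(X, M) \to C^*(\A^n, M)$; being built pointwise it is compatible with restriction to Zariski opens, hence descends (after removing $f^{-1}(\Sigma)$ upstairs and $\Sigma$ downstairs, for any closed $\Sigma \subseteq \A^n$) to maps on cohomology with or without supports. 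By Lemma~\ref{lemm:finite-good}, $q$-goodness of a closed subset of $\A^n$ relative to the image family $f(\scr C) := \{f(W): W \in \scr C\}$ is equivalent to $q$-goodness of its $f$-preimage relative to $\scr C$; this lets one shuttle the $q$-good condition along $f$, \emph{provided} the projection is chosen general enough that $f^{-1}(f(Z))$ --- and the further auxiliary closed sets appearing below --- remain $q$-good with respect to $\scr C$.

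Granting this, the argument runs as follows. Restrict $a$ to $\bar a \in H^i(X \setminus f^{-1}(f(Z)), M)$ and push it down to $b := \tr_f^\gamma(\bar a) \in H^i(\A^n \setminus f(Z), M)$ (note $f(Z)$ is closed, as $f$ is proper, of codimension $\ge q$). Theorem~\ref{thm:moving} together with $H^i(\A^n, M) = 0$ shows that $\colim_\Sigma H^i(\A^n \setminus \Sigma, M)$ over $q$-good $\Sigma$ vanishes, so $b$ dies in $H^i(\A^n \setminus \Sigma', M)$ for some $q$-good $\Sigma' \supseteq f(Z)$; pulling back, $f^*b = 0$ in $H^i(X \setminus f^{-1}(\Sigma'), M)$, with $f^{-1}(\Sigma') \supseteq f^{-1}(f(Z)) \supseteq Z$ still $q$-good. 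To recover $a$ itself one uses, over the generically-étale locus, the splitting of the diagonal in $X \times_{\A^n} X$: the operator $f^*\tr_f^\gamma$ is the identity plus a transfer along the non-diagonal part $R \subset X \times_{\A^n} X$, whose relevant support is contained in $f^{-1}(f(Z)) \subseteq f^{-1}(\Sigma')$; feeding this correction term back through the same push-down/move/pull-back cycle, as in \cite[\S3.4]{levine2006chow}, yields that $a$ vanishes on $X \setminus W$ for a suitable $q$-good $W \supseteq Z$. The various generic degrees that appear are a priori only elements of $\ul{GW}(k)$ rather than integers; this is dealt with exactly as in the finite-field part of the proof of Theorem~\ref{thm:moving}, by composing two general projections of coprime degrees and applying Lemmas~\ref{lemm:pushpull-d} and~\ref{lemm:pushpull-ideal} to produce a unit.

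The main obstacle is the geometric bookkeeping of $q$-goodness through the (non-isomorphic) projection: one must show that, for a sufficiently general choice of projection (depending on the fixed data $Z$ and $\scr C$) and of translation vector, \emph{all} the auxiliary closed subsets in play --- $f^{-1}(f(Z))$, the non-diagonal correspondence $R$ together with its images, the envelope $\Sigma'$ pulled back from $\A^n$, and the loci over which the translation homotopy on $\A^1 \times X$ must itself be $q$-good in the sense of Definition~\ref{def:CdC} --- remain \emph{simultaneously} in $q$-good position with respect to $\scr C$, $f(\scr C)$ and $f^{-1}(\scr C)$. This is the correct analogue of Proposition~\ref{prop:geom} in the presence of the covering $f$, and is where the substance of the proof lies; by contrast the unoriented, $\ul{GW}$-linear nature of the transfers is only a bookkeeping nuisance, absorbed by the coprimality trick above.
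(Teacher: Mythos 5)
Your overall strategy (push down along a general finite projection, move on $\A^d$ by Theorem \ref{thm:moving}, pull back, control the error of $f^*\tr_f^\gamma-\id$) is the right one, but the proposal has two genuine gaps at exactly the points where the work lies. First, the opening reduction ``enlarging $Z$ by the supports occurring in a Rost--Schmid cocycle representing $a$ changes nothing, so we may assume $a$ is given by a cocycle supported on $Z$'' is circular: the support of a representing cocycle has no reason to be in $q$-good position with respect to $\scr C$, and producing a representative whose support \emph{is} $q$-good is precisely the content of the lemma (via Lemma \ref{lemm:moving-reformulation}, $a$ dies on $X\setminus W$ with $W$ $q$-good iff it admits such a representative). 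The paper's proof gets around this by an induction on $q$: the inductive hypothesis lets one represent $a$ by a cycle $c$ supported on a $(q-1)$-good closed $W_1\supset Z$ of codimension $\ge i$, and then Proposition \ref{prop:hard-moving-geometric}(4) for a general projection $\pi$ gives $\pi^{-1}(\pi(W_1))=W_1\cup W_2$ with the residual piece $W_2$ $q$-good (plus parts (2),(5),(6) to keep $\pi(Z)$, $\pi(W_1\cap R)$, etc.\ $q$-good). You explicitly flag this ``simultaneous $q$-goodness through the projection'' as the main obstacle and leave it unproved; without the one-step-at-a-time induction it is in fact false for an arbitrary support, so the substance of the argument is missing. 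Relatedly, the error term of $f^*\tr_f^\gamma$ applied to a cycle supported on $S$ lives on $f^{-1}(f(S))\setminus S$, not on $f^{-1}(f(Z))$ as you claim, so your containment of the correction in $f^{-1}(\Sigma')$ does not hold.

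Second, the way you propose to handle the unoriented/$\ul{GW}$-valued degrees does not work. Lemma \ref{lemm:pushpull-d} computes $\tr_f^\gamma\circ f^*$ (pullback followed by transfer), which is multiplication by $n_\epsilon$; what your argument needs is $f^*\circ\tr_f^\gamma$, and this composite is \emph{not} multiplication by any element of $GW(k)$ --- it is the identity plus correspondence terms whose coefficients vary from point to point --- so composing two projections of coprime degree and invoking Lemma \ref{lemm:pushpull-ideal} cannot ``produce a unit'' here (that trick is only used in the paper for the finite-field reduction, where one genuinely has $\tr\circ\mathrm{res}$ along a field extension). The paper instead chooses $\pi$ birational on $W_1$ and étale at its generic points (Proposition \ref{prop:hard-moving-geometric}(3)), writes $\det(d\pi)=u\,s$ for the chosen trivialization $s$ of $\omega_X$, and pushes forward the corrected cycle $\lra{u}c$ (after removing $W_3=\pi^{-1}(\pi(Z\cup(W_1\cap R)))$ so that $u$ is a unit near the support): a local computation at the generic points of $W_1$ then shows $c-\pi^*\tr_\pi^s(\lra{u}c)$ has no component along $W_1$, hence is supported on the $q$-good $W_2$. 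This Jacobian twist is where $\omega_X\wequi\scr O_X$ is actually used, and it is not a bookkeeping nuisance absorbable by a coprimality argument; as written, that step of your proof would fail.
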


Following \cite{levine2006chow}, we prove Lemma \ref{lemm:hard-moving} by employing \emph{general projections}. Thus let $X$ be affine of dimension $d$, embedded in $\A^N$. By a general projection, we mean a general element $\pi$ of the space of full rank, affine-linear projections from $\A^N$ onto $\A^d$. The geometric basis of the method of general projections is the following.
\begin{proposition} \label{prop:hard-moving-geometric}
Let $k$ be perfect and $X$ a smooth affine variety.
Possibly after a Veronese re-embedding of degree $\ge 2$ (see e.g. \cite[Section 3.2.2]{kai2015moving} for this notion), the following hold.
\begin{enumerate}
\item A general projection $\pi|_X$ is finite and flat.
\item Let $Z \subset X$ be $q$-good. For a general $\pi$, $\pi(Z) \subset \A^d$ is $q$-good (with respect to $\pi(\scr C)$).
\item Let $Z \subset X$ have positive codimension. Then for a general $\pi$, $Z \to \pi(Z)$ is birational, and $\pi$ is étale around the generic points of $Z$.
\item Let $Z \subset X$ have codimension $\ge q$ and be $r$-good, with $r < q$. Then for a general $\pi$, we have an equality of closed subsets $\pi^{-1}(\pi(Z)) = Z \cup W$, where $W$ has codimension $\ge q$ and is $(r+1)$-good.
\item Let $Z \subset X$ have codimension $\ge q$ and be $r$-good, $r < q$. For general $\pi$, $\pi(Z \cap R)$ is $(r+1)$-good, where $R$ denotes the ramification locus of $\pi$.
\item Let $Z \subset X$. For general $\pi$, $\pi^{-1}(\pi(R))$ meets $Z$ in codimension $\ge 1$.
\end{enumerate}
\end{proposition}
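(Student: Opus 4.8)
The plan is to follow the method of \emph{general projections} of \cite[Section 3.4]{levine2006chow}, keeping careful track of the $q$-goodness bookkeeping that items (4) and (5) demand. Fix a closed embedding $X \hookrightarrow \A^N \subset \P^N$ with closure $\bar X$, so $\dim \bar X = \dim X = d$; affine-linear rank-$d$ projections $\pi\colon \A^N \to \A^d$ are parametrized, after a choice of coordinates, by a dense open of an affine space, and each extends to a linear projection $\P^N \dashrightarrow \P^d$ with center a linear subspace $L \subset \P^N$ of dimension $N-d-1$, so that ``general $\pi$'' means ``$L$ in a dense open of the corresponding Grassmannian''. The Veronese re-embedding of degree $e \ge 2$ serves only to enlarge $N$ and to make the embedding \emph{sufficiently generic}, in the precise sense that the secant variety and the tangential variety of every subvariety of $X$ have the expected dimension (this is standard; see \cite[Section 3.2.2]{kai2015moving}), and we invoke it once at the outset and then suppress it. For item (1): a general $L$ is disjoint from $\bar X$ since $\dim L + \dim \bar X = N-1 < N$, and then projection from $L$ realizes $\bar X$ as finite over $\P^d$, so $\pi|_X\colon X \to \A^d$ is finite; since $X$ is Cohen--Macaulay, $\A^d$ is regular, and $\pi|_X$ is finite with equidimensional $d$-dimensional source and target, the local criterion for flatness (``miracle flatness'') shows $\pi|_X$ is flat.

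\emph{Items (2), (3).} For (3), consider for each positive-codimension component of $Z$ the incidence correspondence of centers $L$ together with a secant pair $(x,x') \in Z \times Z$, $x \neq x'$, whose joining line meets $L$; genericity of the embedding bounds the dimension of the union of secant lines of $Z$ by $2\dim Z + 1$, and a dimension count over the Grassmannian then shows that for general $L$ the map $Z \to \pi(Z)$ is generically injective, hence birational onto its image (which is integral, since $\pi|_X$ is finite). Likewise $\pi$ is ramified at $x \in Z$ only if $L$ meets the embedded projective tangent $d$-plane $\mathbb{T}_x\bar X$, and — again using genericity of the embedding to control the tangential variety of $Z$ — the locus of such $L$ has positive codimension, so $\pi$ is étale near the generic points of $Z$. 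Item (2) is then immediate from item (4) below together with Lemma \ref{lemm:finite-good}(1): $\pi(Z)$ is $q$-good with respect to $\pi(\scr C)$ once we know $\pi^{-1}(\pi(Z)) = Z \cup W$ with both $Z$ and $W$ of codimension $\ge q$ and both meeting every member of $\scr C$ in codimension $\ge q$.

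\emph{Items (4), (5), (6).} These are the substance of the proposition. For (4), set $Z' = \pi^{-1}(\pi(Z))$; since $\pi|_X$ is finite, every component of $Z'$ dominating a component of $Z$ has codimension $\ge q$, so writing $W$ for the union of the components of $Z'$ other than $Z$, the codimension bound is automatic. A generic point of $W$ over $\pi(z)$ (with $z \in Z$ generic) is a point $x' \neq z$ on the line joining $z$ to $L$, i.e.\ a point of the secant/tangential variety of $Z$. To prove $W$ is $(r+1)$-good, fix $V \in \scr C$ and form the incidence variety $\Gamma_V$ of triples $(L, x', z)$ with $x' \in V$, $z \in Z$, and $x'$, $z$ on a common line through $L$ (the degenerate locus $x' = z$ corresponding to tangent lines), and project to the space of centers; feeding in that $Z$ is $r$-good, so that for the member $V'' \in \scr C$ through a given $z$ one has $\dim(Z \cap V'') \le \dim V'' - r$, a dimension count shows the generic fiber of $\Gamma_V \to \{L\}$, pushed to the $x'$-coordinate, has dimension $\le \dim V - (r+1)$ — precisely the gain of one unit in goodness that powers the induction on $q - r$ in the proof of Lemma \ref{lemm:hard-moving}. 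Item (5) is the same count with $V$ replaced by (the closure of) $\pi(Z \cap R)$, where $R$ is the ramification locus of $\pi$; and item (6) follows by combining generic étaleness of $\pi$ (whence $\dim\pi(R) = \dim R \le d-1$, so $\pi^{-1}(\pi(R))$ has codimension $\ge 1$ in $X$) with one more incidence count over the space of centers, ensuring that for general $\pi$ the fixed $Z$ is not contained in $\pi^{-1}(\pi(R))$.

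\emph{Main obstacle.} Items (1)--(3) and (6) are fairly routine general-position arguments. The real work is in (4) and (5): one must organize the incidence varieties so that the locus of ``bad'' centers $L$ has genuinely positive codimension, and — more delicately — so that the extra preimage $W$ is not merely of codimension $\ge q$ but is $(r+1)$-good. This forces the $r$-goodness of $Z$ to enter the dimension estimate at exactly the right place (the secant directions from $Z$ that can return a point into some $V \in \scr C$ form a family whose dimension drops by $\codim(Z \cap V'', V'')$), and it requires careful treatment of the degenerate tangential contributions where secant lines collapse to tangent lines. The Veronese re-embedding is what makes all these naive dimension counts valid, by preventing the relevant secant and tangential varieties from having excess dimension.
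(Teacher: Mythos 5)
Your outline follows the same general-projection philosophy, but it does not contain a proof of the parts that actually carry the proposition. The paper itself disposes of (1), (3) and (4) by citation --- (1) and (3) to \cite{kai2015moving} and \cite[Lemma II.3.5.4]{levine1998mixed}, and (4), which is the real engine, to \cite[Lemma II.3.5.6]{levine1998mixed} --- and then its only original work is the short reduction of (5) to (4) and (6), and the proof of (6). In your write-up the analogues of these cited results, namely the dimension counts showing that the residual set $W$ in (4) is $(r+1)$-good and that the bad locus of centers has positive codimension, are merely asserted (``a dimension count shows\dots''), and you yourself flag them as ``the real work''. So the hardest content is neither proved nor referenced. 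Moreover, the crutch you lean on --- that after a degree $\ge 2$ Veronese re-embedding ``the secant variety and the tangential variety of every subvariety of $X$ have the expected dimension'' --- is not a standard fact and is false in general (the quadratic Veronese surface in $\P^5$ already has a deficient secant variety), so even the intended counts cannot be set up the way you describe; what the re-embedding actually buys in Levine's and Kai's arguments is a more specific separation property feeding into particular incidence correspondences, not expected-dimensionality of secant loci.

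Two further points. For (5), ``the same count with $V$ replaced by the closure of $\pi(Z\cap R)$'' is not a meaningful instruction: $R$ depends on $\pi$, so you cannot run an incidence count over the space of centers with $\pi(Z\cap R)$ as a fixed input. The paper's route is different and cleaner: $\pi^{-1}(\pi(R\cap Z))\subset\pi^{-1}(\pi(Z))=Z\cup W$ with $W$ already $(r+1)$-good by (4), so one only has to check that $\pi^{-1}(\pi(R))$ meets $Z\cap F$ in positive codimension for each $F\in\scr C$ (using that $Z$ is $r$-good), which is exactly statement (6). For (6) itself you only establish that $\pi^{-1}(\pi(R))$ has codimension $\ge 1$ in $X$, which is weaker than the claim, and the promised ``one more incidence count'' again has $R$ varying with $\pi$; the paper instead observes that the condition $Z\not\subset\pi^{-1}(\pi(R))$ is open in $\pi$, enlarges the base field to reduce to the case where $Z$ is a rational point $x$, and then uses that $x\notin\pi^{-1}(\pi(R))$ iff $\pi$ is \'etale along $\pi^{-1}(\pi(x))$, which holds for general $\pi$ by \cite{kai2015moving}. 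As it stands, then, your proposal is a plausible roadmap but has genuine gaps exactly at (4), (5) and (6); to close them you would either have to carry out the incidence-variety estimates in full (essentially reproving \cite[Lemma II.3.5.6]{levine1998mixed}) or cite them as the paper does.
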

\begin{proof}
Statement (1) is for example mentioned in \cite[last sentence before Section 3.2.1]{kai2015moving}. Statement (2) follows from statement (4) with $r=q-1$, and Lemma \ref{lemm:finite-good}(1). For statement (3), see \cite[Section 3.2.3]{kai2015moving} and \cite[Lemma II.3.5.4]{levine1998mixed}. Statement (4) is the content of \cite[Lemma II.3.5.6]{levine1998mixed}.

(5) By Lemma \ref{lemm:finite-good}(1), we need to show that for general $\pi$, $\pi^{-1}(\pi(R \cap Z))$ is $(r+1)$-good. For general $\pi$ we have $\pi^{-1}(\pi(R \cap Z)) \subset \pi^{-1}(\pi(Z)) = Z \cup W$, where $W$ is $(r+1)$-good, by (4). Hence it suffices that $\pi^{-1}(\pi(R \cap Z)) \cap Z \subset \pi^{-1}(\pi(R)) \cap Z$ is $(r+1)$-good. Let $F \in \scr C$. Since $Z$ is $r$-good, $Z \cap F$ has codimension $\ge r$ in $F$. It thus suffices that $\pi^{-1}(\pi(R))$ meets $Z \cap F$ in positive codimension. We have thus reduced to (6).

(6) We may assume that $Z$ is irreducible. We thus need to show that $Z \not\subset \pi^{-1}(\pi(R))$, for general $\pi$. The condition is open, so we need only show that the set of such projections $\pi$ is non-empty. For this we may enlarge the base field, so we may assume that $Z$ has a rational point in each component. Thus we may assume that $Z$ is a rational point $x$. A general projection is finite and flat, and hence we have $\pi^{-1}(\pi(R)) \not\ni x$ if and only if $\pi$ is étale along $\pi^{-1}(\pi(x))$. This is true for general $\pi$, by \cite[last paragraph of Section 3.2.2]{kai2015moving}.
\end{proof}

\begin{proof}[Proof of Lemma \ref{lemm:hard-moving}.]
We prove the result by induction on $q$. For $q=0$ there is nothing to prove, since we may take $W=X$. Hence assume that the result has been proved for some $q-1$, we shall prove it for $q$. Since $i \ge q \ge q-1$, by induction there exists $Z \subset W_1 \subset X$ $(q-1)$-good such that $a$ vanishes when restricted to $X \setminus W_1$. The exact sequence $H^i_{W_1 \setminus Z}(X \setminus Z, M) \to H^i(X \setminus Z, M) \to H^i(X \setminus W_1, M)$ shows that $a$ can be supported on $W_1$. In other words there exists a cycle $c \in C^i_{W_1}(X \setminus Z, M)$ representing $a$. Because of the form of the Rost-Schmid complex, we may also assume that $W_1$ has codimension $\ge i \ge q$. Using Proposition \ref{prop:hard-moving-geometric} and the fact that $k$ is infinite (so a non-empty open subset of affine space has a rational point), we find a projection $\pi\colon X \to \A^d$ with the following properties:
\begin{enumerate}
\item $\pi$ is finite flat,
\item $\pi(Z)$ is $q$-good,
\item $\pi$ is birational on $W_1$ and étale around the generic points of $W_1$,
\item $\pi^{-1}(\pi(W_1)) = W_1 \cup W_2$ with $W_2$ $q$-good,
\item $\pi(W_1 \cap R)$ is $q$-good, where $R$ is the ramification locus of $\pi$.
\end{enumerate}
Put $W_3' = \pi(Z \cup [W_1 \cap R])$. By (2), (5) and Lemma \ref{lemm:finite-good}(1) we know that $W_3 := \pi^{-1}(W_3')$ is $q$-good. Choose a trivialization of $\omega_X$, i.e. a nowhere vanishing section $s \in \omega_X(X)$. We can write \[det(d\pi) := d\pi_1 \wedge \dots \wedge d\pi_d = u s, \quad u \in \scr O(X).\] By definition, the vanishing locus of $u$ is precisely $R$. In particular $u$ is a unit around the generic points of $W_1$, and so there is a well-defined chain $\lra{u} c$. Moreover $\partial(\lra{u} c)|_{X \setminus W_3} = \lra{u} \partial(c)|_{X \setminus W_3} = 0$, since $u$ is a unit around all points of $W_1 \setminus R \supset supp(c|_{X \setminus W_3})$. The trivialization $s$ of $\omega_X$ and the canonical trivialization of $\omega_{\A^d}$ provide us with a trivialization of $\omega_{X/\A^d}$, and hence via the construction from Section \ref{sec:RS-transfers} with a transfer map $\tr^s_\pi\colon C^{*}(X \setminus W_3, M) \to C^{*}(\A^d \setminus W_3', M)$. We obtain a cycle $\tr^s_\pi(\lra{u}c|_{X \setminus W_3})$ on $\A^d \setminus W_3'$. By easy moving, i.e. Theorem \ref{thm:moving}, and the reformulation in Lemma \ref{lemm:moving-reformulation}, we know that the result we want holds for $\A^d$. It follows that there is a $q$-good subset $W_3' \subset W_4' \subset \A^d$ such that $[\tr^s_\pi(\lra{u}c|_{X \setminus W_3})]|_{\A^d \setminus W_4'} = 0$. Let $W_4 = \pi^{-1}(W_4')$; this is $q$-good by Lemma \ref{lemm:finite-good}(1) again. We conclude that $c|_{X \setminus W_4}$ is cohomologous to \[c' := c - \pi^*(\tr^s_\pi(\lra{u}c|_{X \setminus W_4})),\] using that transfers are compatible with pullback along open immersions, as is clear from the pointwise definition of both operations. Here $\pi^*$ denotes the flat pullback from Section \ref{sec:RS-pullbacks}. We claim that $c'$ is supported on $W_2$; this will conclude the proof (taking $W = W_4 \cup W_2$).
\NB{Main idea of the proof: subtract from $c$ its push-pull to affine space, by easy moving the resulting cycle is cohomologous to $c$, by the miracle of general projections it is supported in better codimension. Because of the twists occuring in push-pull operations, we need the scaling by $\langle u \rangle$, and for this reason we need to make sure that ramification locus of a general projection is of reasonably humble size.}

To establish the claim, we need to compute $\pi^*(\tr^s_\pi(\lra{u}c|_{X \setminus W_4}))$, or at least its components at the generic points $\eta \in W_1$ of codimension $i$ in $X$. Let $b \in C^i_{W_1}(X, M)$ be any chain. We shall compute $\pi^*(\tr^s_\pi(b))_\eta$ in terms of $b_\eta \in M_{-i}(\eta, \omega_{\eta/X})$. Since $\pi$ is birational on $W_1$, $\eta \to \pi(\eta)$ is an isomorphism, and $\eta$ is the only point of $W_1$ mapping to $\pi(\eta)$. It follows that $\tr_\pi^s(b)_{\pi(\eta)} \in M_{-i}(\pi(\eta), \omega_{\pi(\eta)/\A^d})$ is the element corresponding to $b_\eta$ under the isomorphisms $\eta \wequi \pi(\eta)$ and \[ \omega_{\pi(\eta)/\A^d} \wequi \omega_{\eta/\A^d} \wequi \omega_{\eta/X} \otimes \omega_{X/\A^d} \wequi \omega_{\eta/X} \otimes \omega_X \otimes \omega_{\A^d}^\vee \wequi \omega_{\eta/X}, \]
where in the last isomorphism we have trivialized $\omega_X$ via $s$ and $\omega_{\A^d}$ via $\tau := dt_1 \wedge \dots \wedge dt_d$. Indeed this follows from the definition of the transfer on the Rost-Schmid complex (Section \ref{sec:RS-transfers}), our choice of trivialization of $\omega_{X/\A^d}$ and the observation that if $L/K$ is an isomorphism of fields, then the corresponding transfer is inverse to pullback\NB{this is essentially obvious. I thought I had seen a referencable statement like this in \cite{A1-alg-top} somewhere, but I can't find it right now.}. If $b' \in C^i(\A^d, M)$ and $\eta \in W_1$ as before, then since $\pi$ is étale and hence smooth around $\eta$ we know (from Lemma \ref{lemm:compute-flat-pullback}) that $\pi^*(b')_\eta \in M_{-i}(\eta, \omega_{\eta/X})$ corresponds to $b'_{\pi(\eta)} \in M_{-i}(\pi(\eta), \omega_{\pi(\eta)/X})$ under the isomorphisms $\eta \wequi \pi(\eta)$ and $\omega_{\eta/X} \wequi \pi^* \omega_{\pi(\eta)/\A^d}|_\eta$. All in all we find that $\pi^*(\tr_\pi^s(b))_\eta$ corresponds to $b_\eta \in M_{-i}(\eta, M)$, \emph{twisted by an automorphism of $\omega_{\eta/X}$}. Tracing through the above explanation, the automorphism is
\[ \omega_{\eta/X} \stackrel{(i)}{\wequi} \omega_{\eta/\A^d} \otimes \omega_{X/\A^d}^\vee \stackrel{(ii)}{\wequi} \omega_{\eta/\A^d} \stackrel{(iii)}{\wequi} \omega_{\eta/X}, \] where (i) is the tautological isomorphism, (ii) comes from $\omega_{X/\A^d} \wequi \scr O_X$ via $s$, and (iii) is induced by $\pi^*$. 

Let us show that this automorphism is precisely multiplication by $u(\eta)^{-1}$. To do so, we write everything in terms of the absolute sheaves $\omega_\eta$ and so on; the automorphism is thus \[ \omega_\eta \otimes \omega_X^\vee \wequi \omega_\eta \otimes \omega_{\A^d}^\vee \otimes \omega_{\A^d} \otimes \omega_X^\vee \wequi \omega_\eta \otimes \omega_{\A^d}^\vee \xrightarrow{\id \otimes \pi^*} \omega_\eta \otimes \omega_X^\vee. \] Here the first equivalence is via the canonical isomorphism $\omega_{\A^d}^\vee \otimes \omega_{\A^d} \wequi \scr O_X$ and the second one uses $\omega_{\A^d} \wequi \scr O_X$ via $\tau$ and $\omega_X^\vee \wequi \scr O_X$ via $s^\vee$. We can write any element in the source group as $\alpha \otimes s^\vee$ (for some $\alpha \in \omega_\eta$), which is mapped to $\alpha \otimes \tau^\vee \otimes \tau \otimes s^\vee$ under the first equivalence (note that this is the same as $\alpha \otimes \tau'^\vee \otimes \tau' \otimes s^\vee$ for generator $\tau' \in \omega_{\A^d}$). Then the second equivalence takes this to $\alpha \otimes \tau^\vee$, and the third one to $\alpha \otimes \pi^*(\tau)^\vee$. Hence the automorphism is multiplication by the unit $v$ such that $\pi^*(\tau)^\vee = v s^\vee$. Since $\pi^*(\tau) = det(d\pi)$, we obtain $v = u^{-1} = u(\eta)^{-1}$.

Hence $\pi^*(\tr_\pi^s(b))|_\eta = \lra{u(\eta)} b_\eta$, and consequently $\pi^*(\tr_\pi^s(\lra{u}c))_\eta = c_\eta$. This concludes the proof.
\end{proof}

\begin{theorem} \label{thm:hard-moving}
Let $k$ be a perfect field, $M \in \HI_0(k)$, $X \in \Sm_k$ affine, $q \ge 0$. Assume that $\omega_X \wequi \scr O_X$. Let $\scr C$ be a finite family of irreducible, closed subsets of $X$ (as always with $X \in \scr C$). Then the canonical map $\alpha\colon C^*_{\scr C,q}(X, M) \to C^*(X, M)$ induces a surjection on $h^i$ for $i \ge q$, and an isomorphism for $i \ge q+1$.
\end{theorem}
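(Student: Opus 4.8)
The plan is to deduce the theorem from the two moving results already established in this section: the ``easy'' case over affine space (Theorem~\ref{thm:moving}) and the ``hard'' statement over an infinite base (Lemma~\ref{lemm:hard-moving}). First I would invoke Lemma~\ref{lemm:moving-reformulation} to replace the assertion about $\alpha$ by its equivalent form~(2) of that lemma: for every $i \ge q$, every $q$-good closed $Z \subset X$ and every class $a \in H^i(X \setminus Z, M)$, there exists a $q$-good closed $W$ with $Z \subset W \subset X$ whose removal kills $a$, i.e. $a \mapsto 0$ in $H^i(X \setminus W, M)$. When $k$ is infinite this is exactly Lemma~\ref{lemm:hard-moving}, so there is nothing more to do. This is the geometrically substantial case, and it is there---in the general-projection argument and the scaling-by-$\lra{u}$ computation---that the hypothesis $\omega_X \wequi \scr O_X$ enters, namely to construct the unoriented transfer $\tr^s_\pi$.

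It remains to treat a finite field $k$, say of characteristic $p$; here I would copy the transfer argument from the end of the proof of Theorem~\ref{thm:moving}. Fix $i \ge q$, a $q$-good closed $Z \subset X$ and $a \in H^i(X \setminus Z, M)$. Choose two distinct primes $l_1, l_2$ different from $p$, and for $j = 1, 2$ an infinite algebraic $l_j$-extension $L_j/k$. Base change along $Spec(L_j) \to Spec(k)$ is flat, so $X_{L_j}$ is smooth affine over the infinite perfect field $L_j$ with $\omega_{X_{L_j}} \wequi \scr O$, and $Z_{L_j}$ remains $q$-good with respect to the base-changed family (take all irreducible components of the $W_{L_j}$ for $W \in \scr C$, including those of $X_{L_j}$) by Lemma~\ref{lemm:finite-good}(3). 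Pulling $a$ back and applying the already-settled infinite case over $L_j$, we obtain a $q$-good closed $W'_j \subset X_{L_j}$ with $Z_{L_j} \subset W'_j$ on which the pullback of $a$ vanishes. Since $X$ is of finite type, cohomology (with and without supports), the $q$-good subcomplexes, and the relevant geometric data all commute with the cofiltered limit $X_{L_j} \wequi \lim_{L_0} X_{L_0}$ over finite subextensions $L_0/k$; hence by continuity there is such an $L_0 = L_0(j)$, of degree $n_j := [L_0:k]$ (a power of $l_j$), together with a $q$-good closed $W_{0,j} \subset X_{L_0}$ satisfying $Z_{L_0} \subset W_{0,j}$, such that the image of $a$ in $H^i(X_{L_0} \setminus W_{0,j}, M)$ vanishes.

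Finally I would transfer back to $X$. As $k$ is perfect, $L_0$ is generated by a single element, so we may embed $Spec(L_0) \hookrightarrow \A^1_k$, hence $X_{L_0} \hookrightarrow \A^1_X$ as the zero locus of the monic minimal polynomial; write $f_j \colon X_{L_0(j)} \to X$ for the resulting finite \'etale projection and trivialize $\omega_{f_j}$ by that polynomial. Put $W_j := f_j(W_{0,j})$; this is $q$-good with respect to $\scr C$ by Lemma~\ref{lemm:finite-good}(2), and it contains $Z = f_j(f_j^{-1}(Z))$ because $f_j$ is surjective. The transfer $\tr^0_{f_j}$ is compatible with open immersions, so $\tr^0_{f_j} \circ f_j^*$ kills $a$ once restricted to $X \setminus W_j$; but by Lemma~\ref{lemm:pushpull-d} this composite is multiplication by $(n_j)_\epsilon$, whence $(n_j)_\epsilon \cdot a \mapsto 0$ in $H^i(X \setminus W_j, M)$. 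Since $n_1$ and $n_2$ are coprime, Lemma~\ref{lemm:pushpull-ideal} provides $\lambda, \mu \in GW(k)$ with $1 = \lambda (n_1)_\epsilon + \mu (n_2)_\epsilon$. Setting $W := W_1 \cup W_2$, which is $q$-good and contains $Z$ (a finite union of $q$-good subsets is $q$-good, since codimension of a union of closed subsets of an irreducible scheme is the minimum of the codimensions), we conclude that $a = \lambda (n_1)_\epsilon a + \mu (n_2)_\epsilon a$ maps to $0$ in $H^i(X \setminus W, M)$, as required.

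I expect the finite-field reduction to be the step demanding the most care: one must check that $q$-goodness genuinely survives base change to $L_j$, descent to $L_0$, and pushforward along $f_j$ (all covered by Lemma~\ref{lemm:finite-good}), and that the unoriented transfer $\tr^0_{f_j}$ interacts correctly with the line-bundle twists---precisely the point at which $\omega_X \wequi \scr O_X$ is indispensable. All the genuinely new geometric input, however, is already packaged into Lemma~\ref{lemm:hard-moving}.
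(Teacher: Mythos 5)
Your proposal is correct and follows essentially the same route as the paper: the infinite-field case is exactly Lemma \ref{lemm:hard-moving} via the reformulation of Lemma \ref{lemm:moving-reformulation}, and the finite-field case is handled by the same transfer argument (base change to infinite $l$-extensions for two primes $l \ne p$, descent to a finite subextension by continuity, pushforward of the good supports via Lemma \ref{lemm:finite-good}(2), the minimal-polynomial trivialization feeding Lemma \ref{lemm:pushpush-d}-type push-pull, i.e.\ Lemma \ref{lemm:pushpull-d}, and coprimality via Lemma \ref{lemm:pushpull-ideal}). The only differences are cosmetic bookkeeping (you enlarge the support to $W_1 \cup W_2$ and write out the $GW$-linear combination, where the paper instead replaces $Z$ by the image $Z_3$), so there is nothing further to add.
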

\begin{proof}
If $k$ is infinite, then the result follows via the reformulation in Lemma \ref{lemm:moving-reformulation} from Lemma \ref{lemm:hard-moving}. It thus remains to deal with $k$ finite.

We shall reduce to the case of infinite fields by a transfer argument. Let $Z \subset X$ be $q$-good and $a \in H^i(X \setminus Z, M)$. Suppose $char(k) = p$. Let $l \ne p$ be another prime, and $k'/k$ an infinite $l$-extension. Since the theorem is proven for $k'$, there exists $Z_{k'} \subset Z_1 \subset X_{k'}$ $q$-good such that $a|_{X_{k'} \setminus Z_1} = 0$. By continuity, there exists a finite subextension $k \subset k_0 \subset k'$ such that $Z_1 = Z_2 \times_{k_0} k'$, for some $Z_{k_0} \subset Z_2 \subset X_{k_0}$ $q$-good, and $a|_{X_{k_0} \setminus Z_2} = 0$. By Lemma \ref{lemm:finite-good}(2), the image $Z_3$ of $Z_2$ in $X$ is $q$-good. Replacing $Z$ by $Z_3$, we may assume that $a|_{X_{k_0} \setminus Z_{k_0}} = 0$. We have a transfer $\tr^0\colon C^*(X_{k_0} \setminus Z_{k_0}, M) \to C^*(X \setminus Z, M)$, obtained via embedding $Spec(k_0)$ into $\A^1_k$ and trivializing $\omega_{k_0/k}$ using the minimal polynomial. Now $0 = \tr^0 a|_{X_{k_0} \setminus Z_{k_0}} = [k_0:k]_\epsilon a$, by Lemma \ref{lemm:pushpull-d}. Applying the same argument for some $l \ne l' \ne p$ yields the desired result, using Lemma \ref{lemm:pushpull-ideal}.
\end{proof}

\section{The Bloch-Levine-Rost-Schmid complex}
\label{sec:BLRS}
Recall the cosimplicial scheme $\Delta^\bullet$ \cite[Section 2.1]{levine2008homotopy}.
\begin{definition} \label{def:BLRS}
Let $X$ be a smooth scheme, $M$ a strictly homotopy invariant sheaf and $q \ge 0$. Call a closed set $Z \subset X \times \Delta^n$ \emph{$q$-good} if it meets all faces in codimension $\ge q$. We define the weight $q$ \emph{Bloch-Levine-Rost-Schmid complex} $C^*(X, M, q)$ of $M$ on $X$ as follows. We put \[C^n(X, M, q) = C^n(X, M) \, \text{  if } \, n \ge q.\] For $n = q - i$ with $i > 0$ we put \[ C^n(X, M, q) = \colim_{\substack{Z \subset X \times \Delta^i \\ Z \qgood}} H^q_Z(X \times \Delta^i, M), \]
which is the same as $q$-th cohomology of the $q$-good Rost-Schmid complex $C^*_{\scr C,q}(X\times \Delta^i, M)$ where $\scr C$ is the family of all faces $X\times \Delta^j, \, j \leq i$. 
The differential $C^n(X, M, q) \to C^{n+1}(X, M, q)$ for $n \ge q$ is the Rost-Schmid differential. For $n < q-1$ it is obtained as the alternating sum of the pullbacks along the face maps. For $n=q-1$, note that $Z \subset X \times \Delta^0$ is $q$-good just when it has codimension $\ge q$; hence
\[ \colim_{\substack{Z \subset X \times \Delta^0 \\ Z \qgood}} H^q_Z(X, M) \wequi \ker(C^q(X, M) \to C^{q+1}(X, M)). \]
We define the differential as the composite
\[ C^{q-1}(X, M, q) =  \colim_{\substack{Z \subset X \times \Delta^1 \\ Z \qgood}} H^q_Z(X \times \Delta^1, M) \xrightarrow{i_1^* - i_0^*} \colim_{\substack{Z \subset X \times \Delta^0 \\ Z \qgood}} H^q_Z(X, M) \hookrightarrow C^q(X, M). \]
\end{definition}
It is immediate from the definition that $C^*(X,M,q)$ is indeed a complex. Moreover, by homotopy invariance of $M$, we find that $C^*(X, M, 0)$ is chain homotopy equivalent to $C^*(X, M)$.

\begin{example} \label{ex:higher-chow}
If $M = \ul{K}^M_q$, then $C^*(X, \ul{K}^M_q, q)$ coincides up to a shift with Bloch's cycle complex \cite{bloch1986algebraic}. Indeed, the Rost-Schmid part vanishes because $C^n(X, \ul{K}^M_q) = 0$ for $n>q$ (since $\ul{K}_i^M = 0$ for $i<0$), and the Bloch-Levine part is exactly Bloch's cycle complex because for $Z \subset X \times \Delta^i$ of codimension $q$ one computes that $H^q_Z(X \times \Delta^i, \ul{K}^M_q)$ is the free abelian group on points of codimension $q$ in $X \times \Delta^i$ which lie in $Z$. Consequently we find that $\CH^q(X, n) \wequi h^{q-n}(C^*(X, \ul{K}^M_q, q))$, where $\CH^q(X, n)$ denotes Bloch's higher Chow groups.
\end{example}

\begin{definition}\label{def:higher-chow-witt}
Guided by Example \ref{ex:higher-chow}, we put $\wt\CH^q(X, n) := \mathbb{H}^{q-n}_\Zar(C^*(X, \ul{K}^{MW}_q, q))$ and call this the \emph{higher Chow-Witt groups} of $X$.
\end{definition}

\begin{remark}
In~\cite[Example~4.1.12(2)]{DJK}, the term ``higher Chow-Witt groups" is used as the name for the Borel-Moore homology theory, associated to Milnor-Witt motivic cohomology. In favorable cases this definition agrees with ours, see~Corollary~\ref{corr:MW-mot-coho}.
\end{remark}

\begin{remark}
In contrast to our definition, Bloch defines his higher Chow groups as just the cohomology groups of his cycle complex, not the Zariski hypercohomology. Since Bloch's cycle complex satisfies Zariski descent (as a consequence of the localization theorem \cite[Corollary 3.2.2]{levine2008homotopy}), taking hypercohomology would not change the answer in Bloch's case. The comparison Theorem \ref{thm:comparison} below implies that for $X$ affine with $\omega_X \wequi \scr O_X$, taking hypercohomology is unnecessary, too.
\end{remark}

\subsection{Comparison with the homotopy coniveau tower}
Recall the homotopy coniveau tower from \cite{levine2008homotopy}. In brief, for $E \in \SHS(k)$ one puts\footnote{Observe that the indexing category is filtered, so this is automatically a homotopy colimit.} \[ E^{(q)}(X, n) = \colim_{\substack{Z \subset \Delta^n_X \\ \qgood}} E_Z(\Delta^n_X), \] where $E_Z(\Delta^n_X)$ means $E(\Delta^n_X/\Delta^n_X \setminus Z)$. The above construction is clearly functorial in $n \in \Delta^\op$; hence we obtain a simplicial spectrum. Its geometric realization is denoted by $E^{(q)}(X)$. Since a $(q+1)$-good subset is $q$-good, there is an evident map $E^{(q+1)}(X, \bullet) \to E^{(q)}(X, \bullet)$ (hence the name ``tower''). Moreover all of these construction can be made functorial in $X$, in an appropriate sense (but this is highly nontrivial; see the reference).

Hence in particular, for any $q \ge 0$ and $M \in \HI(k)$ we obtain a simplicial spectrum $M^{(q)}(X, \bullet)$ whose geometric realization is denote by $M^{(q)}(X)$. \NB{It is clear by construction that $M^{(q)}(X, \bullet)$ is in fact a simplicial $H\Z$-module.}

\begin{remark}
\label{rem: coniveau model}
Note that the Rost-Schmid complex is not functorial in pullbacks along closed embeddings of faces\footnote{Even if $M$ is a homotopy module, so that we can use the closed pullback construction from Section \ref{subsec:closed-pullback}, the pullback morphism depends on a choice of equations for the face and so if $i, j$ are composable face inclusions we cannot ensure that $(i \circ j)^* = j^* \circ i^*$, on the level of complexes. See also \cite[Section 13]{rost1996chow}.}, so we cannot use it as a strict model for $M^{(q)}(X, \bullet)$. Nevertheless, each separate term $R\Gamma_Z(\Delta^n_X, M)$ is equivalent to the complex $C^*_Z(\Delta^n_X, M)$.
\end{remark}

\begin{theorem} \label{thm:comparison}
Let $k$ be a perfect field, $q \ge 0$, $M \in \HI(k)$, $X \in \Sm_k$ affine such that $\omega_X \wequi \scr O_X$. Assume that $M_{-q}$ is a homotopy module.
Then there is a canonical equivalence of spectra 
\[M^{(q)}(X) \wequi C^*(X, M, q).\]
Here we treat $C^*(X,M,q)$ as an $H\Z$-module, and hence spectrum, in the usual way.
\end{theorem}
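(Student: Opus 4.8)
The plan is to compare the two sides as objects of $\mathcal{D}(\Ab)$, realizing $M^{(q)}(X)$ from the simplicial spectrum $M^{(q)}(X,\bullet)$ and feeding in the hard moving lemma of Section~\ref{sec:q-good-RS} as the essential input. \emph{Reduction.} The homotopy coniveau tower $M^{(q)}$ (built from the $R\Gamma_Z$ with $Z$ of codimension $\ge q$), the complex $C^*(X,M,q)$, and the $q$-good Rost--Schmid complexes of the schemes $\Delta^n_X$ all depend on $M$ only through the contraction $M_{-q}$ together with its canonical $\ul{GW}$-module structure and its transfers. As $M_{-q}$ is assumed to be a homotopy module, every result of Sections~\ref{sec:RS}--\ref{sec:q-good-RS} stated for effective homotopy modules applies here, with $M_{-q}$ and its iterated contractions in the role of the successive contractions of a homotopy module; so we may argue as if $M\in\HI_0(k)$.

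\emph{Identification of the terms.} By Remark~\ref{rem: coniveau model}, $M_Z(\Delta^n_X)\wequi C^*_Z(\Delta^n_X,M)$ compatibly with the open-immersion functoriality defining the filtered colimit, so $M^{(q)}(X,n)\wequi C^*_{\scr C_n,q}(\Delta^n_X,M)$, where $\scr C_n$ is the family of faces. Hence each $M^{(q)}(X,n)$ is an $H\Z$-module with $\pi_{-p}M^{(q)}(X,n)=h^p\bigl(C^*_{\scr C_n,q}(\Delta^n_X,M)\bigr)=\colim_{Z\,\qgood}H^p_Z(\Delta^n_X,M)$, which vanishes for $p<q$ and, for $p=q$, equals $C^{q-n}(X,M,q)$ in the notation of Definition~\ref{def:BLRS} (the colimit at $n=0$ being $\ker(C^q(X,M)\to C^{q+1}(X,M))$). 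Since $\Delta^n_X\cong X\times\A^n$ is affine with $\omega_{\Delta^n_X}\wequi\scr O$, Theorem~\ref{thm:hard-moving} shows that for $p\ge q+1$ the inclusion $C^*_{\scr C_n,q}(\Delta^n_X,M)\hookrightarrow C^*(\Delta^n_X,M)$ is an isomorphism on $h^p$, and by homotopy invariance $h^p(C^*(\Delta^n_X,M))\cong H^p(X,M)$; by Remark~\ref{rmk:closed-pullback-correct} the face pullbacks induce the canonical maps on cohomology, so the simplicial abelian group $n\mapsto\pi_{-p}M^{(q)}(X,n)$ is the constant one on $H^p(X,M)$ for $p\ge q+1$, while for $p=q$ its normalized chain complex is exactly the good truncation of $C^*(X,M,q)$, namely $\bigl(\cdots\to C^{q-1}(X,M,q)\xrightarrow{i_1^*-i_0^*}\ker(C^q(X,M)\to C^{q+1}(X,M))\bigr)$, by the definition of the differentials in Definition~\ref{def:BLRS}. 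Feeding this into the realization spectral sequence $E^1_{n,-p}=\pi_{-p}M^{(q)}(X,n)\Rightarrow\pi_{n-p}M^{(q)}(X)$: the rows $p\ge q+1$ contribute only $E^2_{0,-p}=H^p(X,M)=h^p(C^*(X,M,q))$, and the row $p=q$ contributes $E^2_{n,-q}=h^{q-n}(C^*(X,M,q))$; these lie in complementary total degrees, so $E^2=E^\infty$ and $\pi_mM^{(q)}(X)\cong h^{-m}(C^*(X,M,q))$ for all $m$.

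\emph{Promotion to a canonical equivalence.} Because $M^{(q)}(X,n)$ has homotopy concentrated in degrees $\le-q$, the degreewise connective-cover map yields a cofibre sequence of simplicial $H\Z$-modules $A_\bullet\to M^{(q)}(X,\bullet)\to B_\bullet$ with $A_n=\tau_{\ge-q}M^{(q)}(X,n)$ Eilenberg--Mac\,Lane and $B_n=\tau_{\le-q-1}M^{(q)}(X,n)$. Realizing: via Dold--Kan, $|A_\bullet|$ is canonically the good truncation $\tau_{\le q}C^*(X,M,q)$, by the previous paragraph; and the forget-supports maps $M^{(q)}(X,n)\to R\Gamma(\Delta^n_X,M)\wequi R\Gamma(X,M)$ exhibit $B_\bullet$ as, canonically, the constant simplicial object on $\tau_{\ge q+1}C^*(X,M)=\tau_{\ge q+1}C^*(X,M,q)$, so $|B_\bullet|\wequi\tau_{\ge q+1}C^*(X,M,q)$. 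Thus $M^{(q)}(X)$ sits in a cofibre sequence $\tau_{\le q}C^*(X,M,q)\to M^{(q)}(X)\to\tau_{\ge q+1}C^*(X,M,q)\xrightarrow{\delta}\Sigma\tau_{\le q}C^*(X,M,q)$, and $C^*(X,M,q)$ sits in the analogous standard truncation triangle with connecting map $\delta'$. Now $\delta$ and $\delta'$ both live in $[\tau_{\ge q+1}C^*(X,M,q),\Sigma\tau_{\le q}C^*(X,M,q)]$, and in fact the whole mapping spectrum between these two objects is contractible: the source is coconnective in degrees $\le-q-1$, the target is connective in degrees $\ge-q+1$, and $\Z$ has global dimension $1$, so the relevant $\mathrm{Ext}^{\ge 2}_\Z$ vanish. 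Hence each cofibre sequence splits canonically, yielding canonical equivalences $M^{(q)}(X)\wequi\tau_{\le q}C^*(X,M,q)\oplus\tau_{\ge q+1}C^*(X,M,q)\wequi C^*(X,M,q)$.

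The main obstacle is the non-strict functoriality of the Rost--Schmid complex in pullbacks along faces (Remark~\ref{rem: coniveau model}): there is no naive chain map $M^{(q)}(X,\bullet)\to C^*(X,M,q)$, so the comparison must be assembled from the two truncated pieces. The real content is then the collapse in the second paragraph — verifying via Theorem~\ref{thm:hard-moving} that all higher attaching data of the simplicial spectrum die and that the single surviving one is exactly the differential $C^{q-1}(X,M,q)\xrightarrow{i_1^*-i_0^*}\ker(C^q(X,M)\to C^{q+1}(X,M))\hookrightarrow C^q(X,M)$ built into Definition~\ref{def:BLRS}.
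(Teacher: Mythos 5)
Your overall skeleton (identify the simplicial levels via Remark~\ref{rem: coniveau model}, use hard moving plus strict homotopy invariance to see that $\pi_{-p}M^{(q)}(X,\bullet)$ is constant for $p\ge q+1$ and is the Bloch--Levine row for $p=q$, then run the realization spectral sequence) is the same as the paper's, but two steps do not hold up. First, the ``Reduction'' is a genuine gap: the individual spectra $R\Gamma_Z(\Delta^n_X,M)$ with $\codim Z\ge q$ do only depend on $M_{-q}$, but the simplicial structure maps of $M^{(q)}(X,\bullet)$ are the sheaf-theoretic pullbacks along the (non-flat) face inclusions, and it is \emph{not} known that these are determined by $M_{-q}$ together with its $\ul{GW}$-module structure and transfers --- this is exactly the point of Remark~\ref{rmk: helpless}; if your reduction were available one could already conclude that $\ul{\pi}_0(M^{(q)})$ is a homotopy module whenever $M_{-q}$ is, which the paper explicitly cannot do. The actual proof circumvents this: for $i>q$ the forget-support maps $\colim_{Z\qgood}H^i_Z(\Delta^r_X,M)\to H^i(\Delta^r_X,M)$ have source, target and map determined by $M_{-q}$ (only Rost--Schmid degrees $\ge q$ are involved), so Theorem~\ref{thm:hard-moving} applied to an auxiliary homotopy module $\tilde M$ with $\tilde M_{-q}\wequi M_{-q}$ gives that they are isomorphisms for $M$ itself, while strict homotopy invariance of $M$ handles the unsupported cohomology; your uses of hard moving occur only in degrees $>q$, so they could be repaired this way, but as written the blanket replacement of $M$ by a homotopy module is unjustified.

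Second, the ``promotion to a canonical equivalence'' fails as stated. The mapping spectrum $\mathrm{map}(\tau_{\ge q+1}C,\Sigma\tau_{\le q}C)$ is not contractible: its $\pi_0$ and negative homotopy vanish (the relevant $\mathrm{Ext}^{\ge 2}_\Z$ are zero), but $\pi_1$, i.e.\ $[\tau_{\ge q+1}C,\tau_{\le q}C]$, is built from $\mathrm{Ext}^1_\Z$-groups such as $\mathrm{Ext}^1_\Z(H^{p}(X,M),h^{p-1}(C^*(X,M,q)))$ and has no reason to vanish. So $\delta$ and $\delta'$ are indeed null and both triangles split, but the splittings form a torsor under this group: you obtain \emph{some} equivalence $M^{(q)}(X)\wequi C^*(X,M,q)$, not a canonical one, and in particular no natural comparison map --- which is precisely what Corollary~\ref{corr:coniveau hom module}, Lemmas~\ref{lemm:BLRS-flat-pullback}(2) and \ref{lemm:BLRS-closed-pullback}(2), and Remark~\ref{rmk:reconstruction} require. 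The missing idea is the paper's truncation $\tau$: apply $\tau_{\ge -q}$ degreewise only in positive (semi)simplicial degrees, leaving the $0$-simplices $M^{(q)}(X,0)$ untouched; the realization of the truncated object is then \emph{literally} the BLRS complex, with the Rost--Schmid tail already glued to the Bloch--Levine part in simplicial degree $0$, and the counit to $M^{(q)}(X,\bullet)$ provides the canonical map, which the $E^2$-comparison (your second paragraph, essentially) proves to be an equivalence. By truncating the $0$-th level as well, your decomposition severs exactly that gluing datum and then has to reconstruct it by a non-canonical choice.
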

We will prove a stronger version in Proposition \ref{prop:second-comparison} below.
\begin{remark}
The equivalence depends on neither the isomorphism $\omega_X \wequi \scr O_X$ nor the choice of a homotopy module structure on $M_{-q}$.
Ultimately these assumptions are only used to apply Theorem \ref{thm:hard-moving} to deduce that certain canonical maps are isomorphisms.
\end{remark}

\begin{corollary}
\label{corr:coniveau hom module}
Let $k$ be a perfect field, $q \ge 0$, $M \in \HI(k)$ such that $M_{-q}$ is a homotopy module. Then there is a canonical equivalence of spectra  \[M^{(q)}(X) \wequi L_{\Zar} C^*(X, M, q),\] where $L_\Zar$ denotes the Zariski-localization functor on the category of presheaves of chain complexes (also known as taking hypercohomology).
\end{corollary}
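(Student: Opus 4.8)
The statement follows from Theorem~\ref{thm:comparison} by a Zariski-descent argument. First, $X \mapsto M^{(q)}(X)$ satisfies Nisnevich, hence Zariski, descent, as a consequence of Levine's localization theorem \cite[Corollary 3.2.2]{levine2008homotopy} (the same input used above for Bloch's cycle complex); thus the canonical map $M^{(q)}(-) \to L_\Zar M^{(q)}(-)$ is an equivalence. On the other hand $L_\Zar C^*(-, M, q)$ is a Zariski sheaf of $H\Z$-modules by construction. Hence it will suffice to produce a natural transformation $\theta\colon C^*(-, M, q) \to M^{(q)}(-)$ of presheaves of $H\Z$-modules on $\Sm_k$ that is a Zariski-local equivalence: applying $L_\Zar$ and composing with the inverse of $M^{(q)}(-) \xrightarrow{\wequi} L_\Zar M^{(q)}(-)$ then yields the desired canonical equivalence.

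Such a $\theta$ is furnished by (the proof of) Theorem~\ref{thm:comparison}; see also the stronger Proposition~\ref{prop:second-comparison}. Here one uses that both sides are functorial along flat (in particular, open) morphisms: the Rost-Schmid part $C^{*\ge q}(-, M)$ by Lemma~\ref{lemm:flat-pullback-works}, the Bloch-Levine part by functoriality of cohomology with supports for flat pullback (Section~\ref{sec:RS-pullbacks}), and the homotopy coniveau tower by restriction along open immersions (whose functoriality is elementary, in contrast with the general case). The comparison map $\theta$ respects this functoriality, as is clear from its construction, and by Theorem~\ref{thm:comparison} the map $\theta_U$ is an equivalence whenever $U \in \Sm_k$ is affine with $\omega_U \wequi \scr O_U$.

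It remains to see that $\theta$ is a Zariski-local equivalence, i.e. induces an equivalence on all Zariski stalks. The stalk of $\theta$ at a point $x$ of some $X \in \Sm_k$ is the filtered colimit of the maps $\theta_U$ over open neighbourhoods $U$ of $x$; since $\omega_X$ is a line bundle and line bundles are Zariski-locally trivial, the affine opens $U$ with $\omega_U \wequi \scr O_U$ are cofinal among such neighbourhoods, and on each of them $\theta_U$ is an equivalence. Hence $\theta$ is a Zariski-local equivalence and we are done. The only step that demands real attention is the naturality of $\theta$ for open immersions; granting that, the rest of the argument is formal, and it is exactly what makes meaningful the idea that the two presheaves ``agree on the basis of affine opens with trivial canonical bundle''.
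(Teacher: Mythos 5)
Your proposal is correct and follows essentially the same route as the paper: one takes the canonical comparison map $C^*(\ph,M,q)\to M^{(q)}(\ph)$ from the proof of Proposition~\ref{prop:second-comparison}, functorial in open immersions, observes via Theorem~\ref{thm:comparison} that it is a Zariski-local equivalence (affine opens with $\omega$ trivial being cofinal), and concludes using that $M^{(q)}$ is a Zariski sheaf. Your write-up merely makes explicit the stalkwise cofinality argument and the descent input for $M^{(q)}$ that the paper leaves implicit.
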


\begin{proof}
The proof of Proposition \ref{prop:second-comparison} below shows that for any $X$, there is a canonical map $\alpha\colon C^*(X, M, q) \to M^{(q)}(X)$, which is functorial in $X$ in open immersions. By Theorem \ref{thm:comparison}, $L_\Zar(\alpha)$ is an equivalence. Since the right hand side is a Zariski sheaf, the statement follows.
\end{proof}

\begin{corollary} \label{corr:MW-mot-coho}
Suppose that $char(k) \ne 2$ and $q \ge 0, p \in \Z$. Then there is a canonical isomorphism \[\wt{\CH}^q(X, 2q-p) \wequi H^{p,q}(X, \tilde\Z)\] between the higher Chow-Witt groups of a smooth $k$-scheme $X$ and the Milnor-Witt motivic cohomology of $X$ in the sense of \cite[Definition 6.6]{calmes2014finite}.
\end{corollary}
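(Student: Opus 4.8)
The plan is to combine the cycle-complex presentation of Milnor-Witt motivic cohomology established by Calmès-Fasel with the comparison Theorem \ref{thm:comparison}. First I would recall that, by \cite[Definition 6.6 and the subsequent computations]{calmes2014finite}, the Milnor-Witt motivic cohomology $H^{p,q}(X, \tilde\Z)$ is computed by a cycle complex whose construction runs in parallel with Bloch's cycle complex but using the Milnor-Witt K-theory sheaf $\ul K^{MW}_q$ in place of $\ul K^M_q$; more precisely, one identifies $\tilde\Z(q)$ with (a shift of) the homotopy coniveau tower $(\ul K^{MW}_q)^{(q)}$, or equivalently with $(\Hh^{MW})^{(q)}$ for the appropriate effective spectrum, evaluated Zariski-locally. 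The key point is that $\tilde\Z(q)$ as defined in \cite{calmes2014finite} is, by construction, the $q$-th slice-type object whose Zariski hypercohomology is $H^{*,q}$, and this object agrees with $L_\Zar$ of the homotopy coniveau tower of $\ul K^{MW}_q$ in weight $q$.

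Next I would invoke Corollary \ref{corr:coniveau hom module}: since $\ul K^{MW}_q$ is a homotopy module (it is the $q$-th term of the homotopy module $\ul K^{MW}_*$, hence its own $(-q)$-contraction $\ul K^{MW}_0 = \ul{GW}$ is a homotopy module — indeed all contractions of a homotopy module are homotopy modules), the hypothesis ``$M_{-q}$ is a homotopy module'' of Theorem \ref{thm:comparison} is satisfied with $M = \ul K^{MW}_q$. Therefore $L_\Zar C^*(X, \ul K^{MW}_q, q) \wequi (\ul K^{MW}_q)^{(q)}(X)$ canonically, for every smooth $X$, without any affineness or trivial-canonical-bundle hypothesis. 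Taking cohomology and using Definition \ref{def:higher-chow-witt}, $\wt\CH^q(X, n) = \Hh^{q-n}_\Zar(C^*(X, \ul K^{MW}_q, q)) = \pi_{n-q}\big((\ul K^{MW}_q)^{(q)}(X)\big)$, so with $n = 2q - p$ we get $\wt\CH^q(X, 2q-p) \wequi \pi_{q-p}\big((\ul K^{MW}_q)^{(q)}(X)\big) \wequi H^{p,q}(X, \tilde\Z)$, the last isomorphism being the identification of the homotopy coniveau tower with $\tilde\Z(q)$ recalled in the first step.

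The main obstacle I expect is not in our comparison theorem — that is already in hand — but in pinning down precisely the definitional identification $H^{p,q}(X,\tilde\Z) \wequi \pi_{q-p}\big((\ul K^{MW}_q)^{(q)}(X)\big)$ from \cite{calmes2014finite}: one must check that their $\tilde\Z(q)$, built via finite Milnor-Witt correspondences and the cube/simplex construction, matches Levine's homotopy coniveau model for the relevant effective motivic spectrum in this weight, with matching indexing conventions. This is essentially a bookkeeping comparison of two presentations of the same slice (the ``motivic Eilenberg-MacLane'' object of Milnor-Witt K-theory), but the indexing shifts ($C^n(X,M,q)$ sits in degrees related to $q-n$, Bloch's complex uses $2q-n$, and Calmès-Fasel use yet another normalization) require care; the restriction $char(k)\ne 2$ enters only through the good properties of $\ul K^{MW}$ and finite Milnor-Witt correspondences used in \cite{calmes2014finite}, not through anything in our argument. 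Once the conventions are aligned, the corollary follows formally.
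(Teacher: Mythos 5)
Your overall route is the same as the paper's: identify $H^{p,q}(X,\tilde\Z)$ with $\pi_{q-p}$ of $(\ul K_q^{MW})^{(q)}(X)$ and then invoke Corollary \ref{corr:coniveau hom module} with $M=\ul K_q^{MW}$, whose $q$-fold contraction $\ul{GW}$ is a homotopy module; that second half of your argument is exactly what the paper does. The gap is in the first half. The identification of the Calm\`es--Fasel object $\tilde\Z(q)$ with (the Zariski localization of) the homotopy coniveau tower of $\ul K_q^{MW}$ is neither ``by construction'' nor a bookkeeping comparison of two models of the same slice: in \cite{calmes2014finite} the motivic cohomology $H^{p,q}(X,\tilde\Z)$ is defined as a Hom group in a derived category of Milnor--Witt motives built from finite MW-correspondences, and relating it to the homotopy module $\ul K^{MW}$ is a substantive theorem, not a normalization issue. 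The paper supplies precisely the two inputs you leave unjustified: the representability statement that MW-motivic cohomology is represented by $f_0\ul K^{MW}$, the effective cover of the homotopy module of Milnor--Witt $K$-theory \cite[Theorem 5.2]{bachmann-criterion}, and Levine's theorem that the homotopy coniveau tower implements the slice/effectivity tower \cite[Theorem 9.0.3]{levine2008homotopy}, which together give $(f_0\ul K^{MW}\wedge\Gmp{q})(X)\wequi(\ul K_q^{MW})^{(q)}(X)$. Without these (or equivalent) results your assertion that $\tilde\Z(q)$ ``agrees with $L_\Zar$ of the homotopy coniveau tower of $\ul K_q^{MW}$'' is exactly the content that needs proving, and your plan to resolve it by aligning indexing conventions would not succeed.

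A smaller omission: the paper's proof begins with a continuity argument, extending MW-motivic cohomology along pro-smooth extensions so as to make sense of, and reduce, the statement for arbitrary base fields of characteristic $\ne 2$ to the case of a perfect base field, where the representability and coniveau results are available. If you intend the statement only over perfect fields this step can be skipped, but as written your argument should either include such a reduction or state the perfectness hypothesis explicitly.
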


\begin{proof}
MW-motivic cohomology is often only defined for smooth schemes over perfect fields. However, the constructions given turn cofiltered limits with smooth affine transition maps into filtered colimits, and hence it makes sense to define MW-motivic cohomology of pro-smooth (often called essentially smooth) $k$-schemes as a colimit. Since any field is pro-smooth over its prime subfield, this allows us to define MW-motivic cohomology of smooth schemes over any field. Our definition of higher Chow-Witt groups works over any field, and also turns pro-smooth cofiltered limits into colimits. Hence to prove the claim it suffices to treat the case where $k$ is perfect.

MW-motivic cohomology is represented by the spectrum $f_0 \ul{K}^{MW}$, which is the effective cover of the homotopy module of Milnor-Witt $K$-theory \cite[Theorem 5.2]{bachmann-criterion}. Since the homotopy coniveau tower implements the slice tower \cite[Theorem 9.0.3]{levine2008homotopy}, we have $(f_0 \ul{K}^{MW} \wedge \Gmp{q})(X) \wequi (\ul{K}_q^{MW})^{(q)}(X)$. Hence the result follows from Corollary~\ref{corr:coniveau hom module}.
\end{proof}

\begin{remark}
Note that for any $p, q \in \Z$ such that $p \geq 2q-1$ \NB{the case $p = 2q-1$ is always  confusing for me} it was known that $H^{p,q}(X, \tilde\Z) \wequi H^{p-q}(X, \ul{K}_q^{MW})$ for a smooth scheme $X$~\cite[Theorem~4.2.4]{deglise-fasel}. This comparison agrees with our result since the right-hand side can be computed via the Rost-Schmid complex.
\end{remark}

\subsection{Functoriality and the proof of comparison}
\label{sec:BLRS-pullback}
Let $f\colon Y \to X$ be a flat map. Then we have obvious maps $f^*\colon C^n(X, M, q) \to C^n(Y, M, q)$ for $n<q$. We also have such maps for $n \ge q$, via the construction from Section \ref{sec:RS-pullbacks}. Thus altogether we obtain a map of graded abelian groups \[f^*\colon C^*(X, M, q) \to C^*(Y, M, q).\]

\begin{lemma} \label{lemm:BLRS-flat-pullback} ${}$ 
\begin{enumerate}
\item The map $f^*\colon C^*(X, M, q) \to C^*(Y, M, q)$ is a morphism of chain complexes.
\item The following diagram commutes in the derived category
\begin{equation*}
\begin{CD}
C^*(X, M, q) @>>> M^{(q)}(X) \\
@Vf^*VV           @Vf^*VV \\
C^*(Y, M, q) @>>> M^{(q)}(Y). \\
\end{CD}
\end{equation*}
\end{enumerate}
\end{lemma}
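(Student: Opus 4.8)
The plan is to treat the two parts separately, reducing each to statements about the pieces out of which $C^*(X,M,q)$ is assembled. For part (1) the complex $C^*(X,M,q)$ has three kinds of differentials: the Rost--Schmid differential in degrees $n \ge q$, the alternating sum of face pullbacks in degrees $n < q-1$, and the ``splicing'' differential in degree $q-1$. Flat pullback $f^*$ is defined degreewise, compatibly with these three regimes. First I would check commutativity with the Rost--Schmid differential in the range $n \ge q$: this is exactly Lemma \ref{lemm:flat-pullback-works}(1) applied to the flat map $f$ (and its base changes to the faces), since there $f^*$ is the Gersten/coniveau-complex pullback. Next, in the range $n < q-1$, both $f^*$ and the differential are built from pullbacks along the (flat) face maps $i_j\colon X\times\Delta^{i-1}\to X\times\Delta^i$ and their base changes along $f$; commutativity is then functoriality of the pullback on cohomology-with-support in the obvious commuting square $Y\times\Delta^i \to X\times\Delta^i$, i.e.\ another application of Lemma \ref{lemm:flat-pullback-works}(2). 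The only delicate spot is the splicing differential in degree $q-1$, where the target involves the Rost--Schmid group $C^q(X,M)$ and the colimit over $q$-good $Z\subset X\times\Delta^0$ is identified with $\ker(C^q(X,M)\to C^{q+1}(X,M))$; here one must check that $f^*$ intertwines $i_1^*-i_0^*$ with the flat pullback on the kernel. Since all maps in sight are the canonical pullbacks on cohomology with support, this is again functoriality of the latter, using that $f$ is flat so that $q$-good subsets pull back to $q$-good subsets (as in the proof that the $q$-good Rost--Schmid complex is functorial, cf.\ Lemma \ref{lemm:finite-good}(3)). So part (1) is essentially a bookkeeping exercise over the already-established flat functoriality of the Rost--Schmid/Gersten complex and of cohomology with support.

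For part (2), I would first note that the horizontal maps $C^*(X,M,q)\to M^{(q)}(X)$ are precisely those constructed in the proof of Proposition \ref{prop:second-comparison}; I may assume that construction and its naturality. The strategy is to build the comparison map termwise as a zig-zag through the coniveau tower $M^{(q)}(X,\bullet)$, using Remark \ref{rem: coniveau model} that each term $R\Gamma_Z(\Delta^n_X,M)$ is equivalent to $C^*_Z(\Delta^n_X,M)$, and that the $H\Z$-module $C^*(X,M,q)$ maps to $M^{(q)}(X)$ after a truncation/splice. The point is that the homotopy coniveau tower $M^{(q)}(-)$ has a genuine flat-pullback functoriality $f^*\colon M^{(q)}(X)\to M^{(q)}(Y)$ from \cite{levine2008homotopy}, which on each term $E_Z(\Delta^n_X)$ is induced by the map of Thom-type quotients $\Delta^n_Y/(\Delta^n_Y\setminus f^{-1}Z)\to \Delta^n_X/(\Delta^n_X\setminus Z)$. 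Under the equivalence $R\Gamma_Z(\Delta^n_X,M)\simeq C^*_Z(\Delta^n_X,M)$, this pullback is identified with the flat pullback on Rost--Schmid complexes of Section \ref{sec:RS-pullbacks}, by Lemma \ref{lemm:compute-flat-pullback} (the comparison on cohomology being the canonical one, Remark \ref{rmk:closed-pullback-correct}); compatibility of this identification with the face/splice structure is what makes the two $f^*$'s agree. Thus the square commutes already at the level of the zig-zag defining the comparison map, hence in the derived category.

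The main obstacle, as usual with the homotopy coniveau tower, is the failure of strict functoriality along closed embeddings of faces (Remark \ref{rem: coniveau model}): the comparison map $C^*(X,M,q)\to M^{(q)}(X)$ of Proposition \ref{prop:second-comparison} is itself only defined up to coherent homotopy, so part (2) cannot be proven by a naive diagram chase of chain maps — it is genuinely a statement in the derived (or $\infty$-) category. I expect the cleanest route is to observe that the entire construction in the proof of Proposition \ref{prop:second-comparison} is natural in $X$ for flat maps (all the intermediate complexes and the comparison maps between them are built from the flat-pullback functoriality of the Rost--Schmid complex established in Section \ref{sec:RS-pullbacks}, together with the coniveau-tower functoriality of \cite{levine2008homotopy}), so that part (2) is not an extra verification but a recorded consequence of carrying out that construction naturally. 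Concretely, one should arrange the proof of Proposition \ref{prop:second-comparison} so that it outputs, for each flat $f$, a commuting square, and then part (2) is immediate; I would phrase the proof as ``this was checked in the course of proving Proposition \ref{prop:second-comparison},'' with the essential input being Lemma \ref{lemm:flat-pullback-works} and Lemma \ref{lemm:compute-flat-pullback}.
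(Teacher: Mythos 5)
Your proposal is correct and takes essentially the same route as the paper: part (1) is verified degreewise against the three differential regimes, using Lemma \ref{lemm:flat-pullback-works} in the Rost--Schmid range and functoriality of pullbacks on cohomology with support in commuting squares for the simplicial and splicing differentials (the paper's square $(\ast)$, applied near generic points of $Z$ for the degree-$(q-1)$ splice), and part (2) is, exactly as you suggest, the observation that the comparison map constructed in Proposition \ref{prop:second-comparison} is natural in flat maps. (One small slip: the face maps are closed immersions, not flat, but your argument only uses contravariance of cohomology with support with pulled-back supports, so nothing breaks.)
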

\begin{proof}
(1) Note that if we have a commutative square in $\Sm_k$ as follows
\begin{equation*}
\begin{CD}
B' @>h>> B \\
@Ai'AA  @AiAA \\
A' @>g>> A,
\end{CD}
\end{equation*}
and given $Z \subset B$, then the following square also commutes ($\ast$)
\begin{equation*}
\begin{CD}
H^q_{h^{-1}(Z)}(B', M) @<h^*<< H^q_Z(B, M) \\
@Vi'^*VV @Vi^*VV   \\
H^q_{i'^{-1} h^{-1}(Z)}(A', M) @<g^*<< H^q_{i^{-1}(Z)}(A, M).
\end{CD}
\end{equation*}

Applying this with $A = X \times \Delta^n$, $B = X \times \Delta^{n+1}$, $A' = Y \times \Delta^n$, $B' = Y \times \Delta^{n+1}$, $g, h$ induced by $f$ and $i, i'$ corresponding to an inclusion of a face, we deduce that the map $f^*\colon C^{* \le q-1}(X, M, q) \to C^{* \le q-1}(Y, M, q)$ is a morphism of chain complexes. The map $f^*\colon C^{* \ge q}(X, M, q) \to C^{* \ge q}(Y, M, q)$ is also a morphism of chain complexes; this follows from Lemma \ref{lemm:flat-pullback-works}. Let us put \[ C'^q(X, M, q) = \colim_{\substack{Z \subset X \times \Delta^0 \\ Z \qgood}} H^q_Z(X, M), \] and similarly for $Y$. Consider the diagram
\begin{equation*}
\begin{CD}
C^{q-1}(X, M, q) @>f^*>> C^{q-1}(Y, M, q) \\
@V{\partial}VV             @V{\partial}VV \\
C'^{q}(X, M, q) @>f^*>> C'^{q}(Y, M, q)    \\
@VjVV                       @VjVV           \\
C^q(X, M, q)    @>f^*>> C^q(Y, M, q).
\end{CD}
\end{equation*}
Here the middle horizontal morphism $f^*$ is the one coming from the functoriality of $H_Z(X, M)$ in $(X,Z)$; just like the top horizontal morphism.
The map $\partial$ denotes the boundary map of the homotopy coniveau tower, i.e. an alternating sum of maps of the form $i^*$, and $j$ denotes the canonical inclusion. In particular the vertical composites are the boundaries in the BLRS complex; what we need to show is that the diagram commutes. Commutativity of ($\ast$) immediately implies that the upper square commutes, so it suffices to show that the lower square commutes. By construction of the flat pullback from Section~\ref{ssec: flat pullback}, this follows again from ($\ast$), applied with $B = X$, $B' = Y$, $Z$ of codimension $q$ and $A=U$ an open neighbourhood in $X$ of a generic point $\eta$ of $Z$, $A'$ an open neighbourhood of a generic point of $f^{-1}(Z)$ over $\eta$. (If $f$ is smooth, this can also be seen more directly by appealing to Lemma \ref{lemm:compute-flat-pullback}.)

(2) The construction of the map $C^*(X,M,q) \to M^{(q)}(X)$ in the proof of Proposition \ref{prop:second-comparison} below is easily seen to be functorial in flat maps.
\end{proof}

In order to define pullbacks along closed immersions, we need to further adapt the BLRS complexes.

\begin{definition}
Let $W \subset X$ be a closed subset. We call $Z \subset X \times \Delta^n$ \emph{$(W,q)$-good} if $Z$ meets all faces of $X \times \Delta^n$ in codimension $\ge q$, and also meets all faces of $W \times \Delta^n$ in codimension $\ge q$. In particular, $Z \subset X$ is $(W,q)$-good if $Z \subset X$ and $Z \cap W \subset W$ are of codimension $\ge q$. We put
\[ C^n(X, M, q)_W = \colim_{\substack{Z \subset X \\ Z\,(W,q)\text{-good}}} C^n_Z(X, M) \, \text{ if } \, n \ge q. \]
For $n = q-i$ with $i>0$ we put
\[ C^n(X, M, q)_W = \colim_{\substack{Z \subset X \times \Delta^i \\ Z\,(W,q)\text{-good}}} H^q_Z(X \times \Delta^i, M). \]
\end{definition}
There is an obvious map $C^*(X,M,q)_W \to C^*(X, M, q)$ which is in fact an injection, and exhibits $C^*(X,M,q)_W$ as a subcomplex of $C^*(X,M,q)$. Recall that there is a similarly adapted homotopy coniveau tower $M^{(q)}(X)_W$ \cite[Section 7.4]{levine2006chow}. Note that if $W = \emptyset$, then $C^*(X,M,q)_W = C^*(X, M, q)$, and similarly for the homotopy coniveau tower. Consequently the next proposition is indeed a strengthening of Theorem \ref{thm:comparison}.

\begin{proposition} \label{prop:second-comparison}
Let $k$ be a perfect field, $M \in \HI(k)$, $q \ge 0$ such that $M_{-q}$ is a homotopy module,  $X \in \Sm_k$ affine such that $\omega_X \wequi \scr O_X$, $W \subset X$ a closed subset. Then there is a canonical equivalence \[C^*(X,M,q)_W \xrightarrow{\sim} M^{(q)}(X)_W,\] making the following diagram commute
\begin{equation*}
\begin{CD}
C^*(X,M,q)_W @>>> M^{(q)}(X)_W \\
@VVV               @VVV        \\
C^*(X,M,q)   @>>> M^{(q)}(X).
\end{CD}
\end{equation*}
\end{proposition}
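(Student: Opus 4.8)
The plan is to exhibit both sides as realizations of semisimplicial $H\Z$-modules (note that $M_{-q}$ being a homotopy module makes everything in sight an $H\Z$-module), to construct a canonical comparison map, and to prove it is an equivalence by computing its cofiber.

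\emph{Constructing $\alpha$.} Write $P_\bullet = M^{(q)}(X,\bullet)_W$ for the simplicial $H\Z$-module with $P_n = \colim_Z R\Gamma_Z(\Delta^n_X, M)$, the colimit over $(W,q)$-good closed $Z$, so that $M^{(q)}(X)_W = |P_\bullet|$, which agrees with the fat realization $\|P_\bullet\|$. Since $q$-good subsets have codimension $\ge q$, each $P_n$ has cohomology concentrated in degrees $\ge q$; by Remark \ref{rem: coniveau model} and the definitions, $P_0 \simeq \sigma^{\ge q}C^*(X,M,q)_W$ and $\tau^{\le q}P_n = h^q(P_n)[-q] = C^{q-n}(X,M,q)_W[-q]$ for $n\ge1$. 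I would then assemble the semisimplicial $H\Z$-module $D_\bullet$ with $D_0 = \sigma^{\ge q}C^*(X,M,q)_W$ and $D_n = \tau^{\le q}P_n$ for $n\ge1$, together with levelwise maps to $P_\bullet|_{\Delta_{\mathrm{inj}}}$ given in level $0$ by the equivalence $D_0\simeq P_0$, in levels $\ge 1$ by the truncation inclusions $\tau^{\le q}P_n\hookrightarrow P_n$, and with the face maps $D_1\to D_0$ induced by $\tau^{\le q}P_1 \hookrightarrow P_1 \xrightarrow{d_i} P_0\simeq D_0$ (which land in $\ker(d^q)$ and are therefore strict chain maps, despite the non-functoriality flagged in Remark \ref{rem: coniveau model}). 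Comparing with Definition \ref{def:BLRS}, the Moore (total) complex of $D_\bullet$ is \emph{on the nose} $C^*(X,M,q)_W$ --- in particular the junction differential $C^{q-1}\to C^q$ factors as $i_1^*-i_0^*$ followed by $\ker(d^q)\hookrightarrow C^q$, and the lower differentials are the alternating sums of face pullbacks. Realizing the map of semisimplicial objects gives the required $\alpha\colon C^*(X,M,q)_W=\|D_\bullet\|\to\|P_\bullet\|=M^{(q)}(X)_W$; this construction makes sense for arbitrary $X$, and is visibly functorial in open immersions and compatible with the map forgetting $W$.

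\emph{$\alpha$ is an equivalence.} By exactness of realization, $\mathrm{cofib}(\alpha)\simeq\|N_\bullet\|$, where $N_\bullet$ is the semisimplicial $H\Z$-module that is $0$ in level $0$ and $\tau^{\ge q+1}P_n$ in level $n\ge1$. Here the hypotheses enter: because $\Delta^n_X$ is affine with $\omega_{\Delta^n_X}\simeq\scr O$, and because the $q$-good Rost--Schmid complex only ever involves the contractions $M_{-d}$ with $d\ge q$, which are all contractions of the homotopy module $M_{-q}$ --- so the proof of Theorem \ref{thm:hard-moving} applies --- the forget-supports map $P_n\to R\Gamma(\Delta^n_X,M)$ is an isomorphism on $h^p$ for $p\ge q+1$. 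Together with strict homotopy invariance (so that the simplicial $H\Z$-module $R\Gamma(\Delta^\bullet_X,M)$ is equivalent to the constant one on $R\Gamma(X,M)$, since every structure map of $\Delta^\bullet$ commutes with the projections to the point) this identifies $\tau^{\ge q+1}P_\bullet$ with the constant simplicial object on $\tau^{\ge q+1}R\Gamma(X,M)$. Hence $N_\bullet$ is equivalent to the cofiber of the level-$0$ inclusion $\mathrm{sk}_0\to(\text{this constant object})$ of semisimplicial $H\Z$-modules, whose realization is $\mathrm{cofib}(\mathrm{id}_{\tau^{\ge q+1}R\Gamma(X,M)})=0$. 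Thus $\mathrm{cofib}(\alpha)=0$, so $\alpha$ is an equivalence, and the square in the statement commutes because every construction above is natural in removing $W$.

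\emph{Main obstacle.} The delicate step is the first one: writing $\alpha$ down and checking that the total complex of $D_\bullet$ is literally the BLRS complex of Definition \ref{def:BLRS}. Since the Rost--Schmid complex is not strictly functorial for pullbacks along closed immersions of faces (Remark \ref{rem: coniveau model}), one cannot model $M^{(q)}(X,\bullet)_W$ by Rost--Schmid complexes in all simplicial degrees simultaneously; replacing $P_n$ by its truncation $\tau^{\le q}P_n$ in positive simplicial degrees is exactly what sidesteps this, but one must still work in the derived category when constructing the face maps of $D_\bullet$ and when matching the resulting Moore differentials with those of Definition \ref{def:BLRS}.
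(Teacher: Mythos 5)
Your argument is correct and essentially the paper's own: both truncate the semisimplicial object $M^{(q)}(X,\bullet)_W$ in positive simplicial degrees so that its realization becomes $C^*(X,M,q)_W$ on the nose, and both deduce that the comparison map realizes to an equivalence from Theorem~\ref{thm:hard-moving} (applied, exactly as you indicate, via a homotopy module $\tilde M$ with $\tilde M_{-q}\wequi M_{-q}$, since the relevant groups only depend on $M_{-q}$) together with strict homotopy invariance. The only presentational differences are that the paper constructs the truncation as a right adjoint on $\Fun(\scr D^\op, H\Z\Mod)$, which disposes of the coherence issue you flag in your last paragraph, and it finishes by comparing the spectral sequences of the two semisimplicial objects (with a convergence check) rather than by your levelwise cofiber computation.
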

\begin{remark}
Note that the vertical map $M^{(q)}(X)_W \to M^{(q)}(X)$ is an equivalence if $M$ is a homotopy module or $k$ is infinite \cite[Theorem 2.6.2(2)]{levine2006chow}. Consequently so is the other vertical map $C^*(X,M,q)_W \to C^*(X,M,q)$. Thus we think of $C^*(X,M,q)_W$ as a version of the BLRS complex adapted to the geometry of $W \subset X$.
\end{remark}
\begin{proof} 
\NB{Main idea of the proof: consider $E_1$-page of the spectral sequence for homotopy groups of the simplicial spectrum $M^{(q)}(X, \bullet)$ and note that in degrees $i>q$ those homotopy groups are constant in the simplicial direction, thanks to the moving lemma and strict homotopy invariance of $M$. Hence on $E_2$ the spectral sequence has only (cohomology of) BLRS complex and thus degenerates. In s.s. BL part is vertical in coordinate $q$, RS part is horizontal in coordinate $0$ (hopefully). $RS$ part of BLRS corresponds to homotopy groups of $M^{(q)}$ in degrees $-q$ and lower.}
Throughout this proof we employ the theory of $\infty$-categories.
We identify ordinary $1$-categories with appropriate $\infty$-categories, via the nerve functor.
All categories are $\infty$-categories, all functors are $\infty$-functors, and so on.

Let $\scr D$ denote the ``fat simplex'' category; in other words the category of finite totally ordered sets and injections. There is a canonical functor $\scr D \to \Delta$ which is coinitial \cite[Lemma 6.5.3.7]{HTT}. Hence if $F\colon \Delta^\op \to \scr C$ is any functor (with $\scr C$ an $\infty$-category), then $\colim_{\Delta^\op} F \wequi \colim_{\scr D^\op} F$. 

Denote by $\Fun(\scr D^\op, H\Z\Mod)' \subset \Fun(\scr D^\op, H\Z\Mod)$ those functors $F$ such that $F([n]) \in H\Z\Mod_{\ge -q}$ for $n > 0$.\footnote{Here $H\Z\Mod_{\ge -q} \subset H\Z\Mod$ denotes the full subcategory on those chain complexes $C_*$ such that $H_i(C_*) = 0$ for $i < -q$.} Since colimits in diagram categories are computed sectionwise, the canonical inclusion $\Fun(\scr D^\op, H\Z\Mod)' \to \Fun(\scr D^\op, H\Z\Mod)$ has a right adjoint $\tau$, by the adjoint functor theorem. We determine $\tau$, as follows. For each $n$ we have the functor $ev_n\colon \Fun(\scr D^\op, H\Z\Mod) \to H\Z\Mod, F \mapsto F([n])$. This functor preserves limits so has a left adjoint $F_n$, given by left Kan extension \cite[Proposition 4.3.2.17]{HTT}. The (defining) formula for the left Kan extension says that for $X \in H\Z\Mod$ we have $F_n(X)([m]) \wequi \colim_{[m] \hookrightarrow [n]} X = \Map_{\scr D}([m], [n]) \otimes X$. This implies that $F_0(H\Z\Mod) \subset \Fun(\scr D^\op, H\Z\Mod)'$ and also for $n>0$ we have $F_n(H\Z\Mod_{\ge -q}) \subset \Fun(\scr D^\op, H\Z\Mod)'$. From this we deduce that for $F \in \Fun(\scr D^\op, H\Z\Mod)$ we have $\tau(F)([0]) = F([0])$ and $\tau(F)([n]) = \tau_{\ge -q}(F([n]))$ for $n > 0$. We verify similarly that the semi-simplicial structure maps in $\tau(F)$ are the canonical ones, and that the adjunction morphism $\tau(F) \to F$ is the canonical one.

Applying this to $F = M^{(q)}(X, \bullet)_W$ we obtain a morphism of semi-simplicial objects \[\alpha\colon \tau(M^{(q)}(X, \bullet)_W) \to M^{(q)}(X, \bullet)_W.\] We claim that (1) $|\alpha| := \colim_{\scr D^\op} \alpha$ is an equivalence and (2) $|\tau(M^{(q)}(X, \bullet)_W)| \wequi C^*(X, M, q)_W$. All results follow from this.

To prove claim (1), we shall utilize the spectral sequence of a semisimplicial object \cite[Proposition 1.2.4.5, Variant 1.2.4.9]{HA} (applied to $\scr C = H\Z\Mod$ with its standard $t$-structure). We thus obtain a morphism of spectral sequences \[\hat{\alpha}\colon E_*^{**}(\tau(M^{(q)}), X) \to E_*^{**}(M^{(q)}, X).\] On the $E_1$-page we see the semisimplicial abelian groups $A^{(i)}_\bullet = \pi_{-i}(M^{(q)}(X, \bullet)_W)$. \NB{$-i$ reflects the fact that cohomology $h^i$ of a complex corresponds to its $\pi_{-i}$ as a spectrum.} More specifically $E_1^{*,-i}(M^{(q)})$ is the unnormalized chain complex associated with $A^{(i)}_\bullet$. Also by construction we have
\begin{equation*}
  E_1^{*,-i}(\tau(M^{(q)})) = \begin{cases} E_1^{*,-i}(M^{(q)}), \, -i \le q \text{ or } * \le 0  \\ 0, \, \text{else.}  \end{cases}
\end{equation*}

The spectral sequence $E_*^{**}(M^{(q)})$ wants to converge to $\pi_*(M^{(q)}(X)_W)$ and similarly $E_*^{**}(\tau(M^{(q)}))$ wants to converge to $\pi_*(\tau(M^{(q)}(X)_W))$. We shall show that the spectral sequences converge and $\hat{\alpha}$ induces an isomorphism on the $E_2$-pages. This implies that $|\alpha|$ is an equivalence, by spectral sequence comparison. In order to get convergence, we wish to know that $M^{(q)}(X, \bullet)_W\colon \scr D^\op \to H\Z\Mod$ takes values in $H\Z\Mod_{\ge -N}$ for some $N$, and similarly for $\tau(M^{(q)})$. In other words we need to show that $E_1^{*,-i} = 0$ for $i > N$. In order to get the isomorphism of $E_2$-pages, it suffices to show the following condition: ($\ast$) for $i>q$ the semisimplicial abelian group $A^{(i)}_\bullet$ is constant. This also implies the vanishing we need for convergence: indeed for $i > \max\{q,\dim{X}\} =: N$ we get $A^{(i)}_* \wequi A^{(i)}_0 \wequi H^i_W(X, M) = 0$.

It is thus enough to show ($\ast$): each of the semi-simplicial structure maps of $A^{(i)}$ for $i>q$ is an isomorphism. Let $j\colon F \times X \to \Delta^r \times X$ be the inclusion of a codimension $1$ face; we have to show that $j^*\colon H^i_{q, \scr F}(\Delta^r \times X, M) \to H^i_{q, \scr F}(F \times X, M)$ is an isomorphism. Here $\scr F$ denotes the union of the families $F' \times X$ and $F' \times W$, where $F'$ runs through all faces of $\Delta^r$. Let $\tilde{M}$ be a homotopy module with $\tilde{M}_{-q} \wequi M_{-q}$. We have a diagram
\begin{equation*}
\begin{CD}
H^i_{q, \scr F}(\Delta^r \times X, \tilde M) @= H^i_{q, \scr F}(\Delta^r \times X, M) @>{j^*}>> H^i_{q, \scr F}(F \times X, M) @= H^i_{q, \scr F}(F \times X, \tilde M) \\
@VVV                                        @VVV                                  @VVV                     @VVV                     \\
H^i(\Delta^r \times X, \tilde{M})            @= H^i(\Delta^r \times X, M)             @>{j^*}>> H^i(F \times X, M)             @= H^i(F \times X, \tilde{M}).  \\
\end{CD}
\end{equation*}
The horizontal identifications come from the fact that by looking at the Rost-Schmid complex, we see that $H^i(Y, M)$ for $i > q$ only depends on $M_{-q} \wequi \tilde M_{-q}$. The vertical maps are the natural ones from cohomology with support to cohomology without support; in particular the diagram commutes. Since $i>q$, the outer vertical maps are isomorphisms by Theorem \ref{thm:hard-moving}. The lower middle horizontal map is an isomorphism by strict homotopy invariance of $M$. Hence $j^*$ is an isomorphism.

To prove claim (2), it remains to observe that we have
\begin{gather*}
 \tau(M^{(q)}(X, \bullet)_W)_n = \tau_{\ge -q}(M^{(q)}(X, n)_W) \simeq C^{q-n}(X, M, q)_W \quad \text{if } n>0; \\
\tau(M^{(q)}(X, \bullet)_W)_0 = M^{(q)}(X, 0)_W \simeq C^{*\ge q}(X, M, q)_W
\end{gather*}
essentially by construction. Indeed by Remark~\ref{rem: coniveau model}, we may compute $M^{(q)}(X, n)_W$ as a colimit of complexes $C^*_Z(\Delta^n_X, M)$ where $Z$ runs through $(W,q)$-good subsets of $\Delta^n_X$.
We have that $C^*_Z(\Delta^n_X, M) = 0$ for $*<q$ (since $Z$ has codimension $q$), hence $\tau_{\ge -q} C^*_Z(\Delta^n_X, M)$ is given by the chain complex concentrated in cohomological degree $q$ with value the abelian group $H^q_Z(\Delta^n_X, M)$.
The truncation commutes with filtered colimits, so the first equivalence follows; the other one is similar.

The geometric realization of the semi-simplicial object $\tau(M^{(q)}(X, \bullet)_W)_*$ is the associated unnormalized chain complex, which is isomorphic to $C^*(X,M,q)_W$ by inspection. This concludes the proof. 
\end{proof}

Now let $i\colon W \hookrightarrow X$ be a closed immersion with $W$ \emph{smooth}. For $n < q$ we have an obvious map \[i^*\colon C^n(X, M, q)_W \to C^n(W, M, q),\] induced by contravariance of cohomology with support. In fact, note that \[ \left( \tau_{\ge -q}C^*(X, M, q)_W \right)^q = \colim_{\substack{Z \subset X \times \Delta^0 \\ Z \qgood}} H^q_Z(X, M), \] and consequently the construction extends to $ \tau_{\ge -q} C^*(X, M, q)_W$, by the same formula.
\begin{lemma} \label{lemm:BLRS-closed-pullback} ${}$ 
\begin{enumerate}
\item The map of graded abelian groups $i^*\colon  \tau_{\ge -q}C^*(X, M, q)_W \to  \tau_{\ge -q}C^*(W, M, q)$ is a morphism of chain complexes.
\item The above construction induces a commutative diagram in the derived category
\begin{equation*}
\begin{CD}
 \tau_{\ge -q} C^*(X, M, q)_W @>>> M^{(q)}(X)_W \\
@Vi^*VV                                @Vi^*VV  \\
 \tau_{\ge -q} C^*(W, M, q) @>>> M^{(q)}(W)     \\
\end{CD}
\end{equation*}
\end{enumerate}
\end{lemma}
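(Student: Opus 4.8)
The plan is to reduce both assertions to the functoriality of cohomology with support — exactly as in the proof of Lemma~\ref{lemm:BLRS-flat-pullback} — and, for part (2), to exhibit the square as the geometric realization of a square of semisimplicial spectra, mirroring the proof of Proposition~\ref{prop:second-comparison}. Note in advance that, unlike the closed pullback on the \emph{full} Rost--Schmid complex, the truncated $i^*$ involves no trivializations and no homotopy-module hypothesis: that is precisely the point of passing to $\tau_{\ge -q}$.

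For part (1), I would argue as follows. After truncation, every term of $\tau_{\ge -q}C^*(-,M,q)_?$ is a filtered colimit of cohomology-with-support groups $H^q_Z$ — over $(W,q)$-good $Z\subset X\times\Delta^{q-n}$ for $n<q$, and over $(W,q)$-good $Z\subset X$ of codimension $\ge q$ in degree $n=q$, the latter colimit being $C'^q(X,M,q)_W$ — and every differential is an alternating sum of pullbacks along face inclusions; in particular the differential $C^{q-1}(-,M,q)_? \to C'^q(-,M,q)_?$ is just $i_1^*-i_0^*$, since in the truncated complex the target is already $C'^q$. The closed pullback $i^*$ is, by construction, the pullback on cohomology with support along $i\times\id$. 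I would first observe, straight from the definition of $(W,q)$-good, that $i^{-1}$ sends $(W,q)$-good subsets of $X\times\Delta^n$ to $q$-good subsets of $W\times\Delta^n$ and commutes with intersection against faces, so that $i^*$ is defined on the relevant colimit systems and is compatible with the face pullbacks appearing in the differentials. With this in place, (1) is precisely the statement that pullbacks on cohomology with support commute around the commutative squares obtained by applying $i\times\id$ over a face inclusion $\Delta^n\hookrightarrow\Delta^{n+1}$, i.e.\ the square ($\ast$) in the proof of Lemma~\ref{lemm:BLRS-flat-pullback}(1), taken with $B'=W\times\Delta^{n+1}$, $A'=W\times\Delta^n$, $B=X\times\Delta^{n+1}$, $A=X\times\Delta^n$ and horizontal maps induced by $i$; together with the analogous square at the seam $n=q-1$.

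For part (2), I would work with semisimplicial spectra. Write $T(E_\bullet)_n := \tau_{\ge -q}(E_n)$ for the levelwise connective truncation of a semisimplicial $H\Z\Mod$-valued object, and apply it to $E_\bullet=M^{(q)}(X,\bullet)_W$ and $E_\bullet=M^{(q)}(W,\bullet)$. By Remark~\ref{rem: coniveau model} one may compute $M^{(q)}(X,n)_W$ as $\colim_Z C^*_Z(\Delta^n_X,M)$ over $(W,q)$-good $Z$ of codimension $\ge q$; such complexes vanish in cohomological degrees $<q$, so $T(M^{(q)}(X,n)_W)$ is the complex concentrated in degree $q$ with value $\colim_Z H^q_Z(\Delta^n_X,M)=\colim_Z\pi_{-q}M_Z(\Delta^n_X)$, using (as in the proof of Proposition~\ref{prop:second-comparison}) that $\tau_{\ge -q}$ commutes with filtered colimits. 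Hence the unnormalized chain complex of $T(M^{(q)}(X,\bullet)_W)$ — i.e.\ its geometric realization — is canonically identified, differentials included, with $\tau_{\ge -q}C^*(X,M,q)_W$, and the levelwise truncation map $T(M^{(q)}(X,\bullet)_W)\to M^{(q)}(X,\bullet)_W$ realizes to the comparison map of the statement (equivalently, to the one of Proposition~\ref{prop:second-comparison} precomposed with the inclusion $\tau_{\ge -q}C^*(X,M,q)_W\hookrightarrow C^*(X,M,q)_W$, as one checks by comparison with the functor $\tau$ used there). Now the closed pullback $i^*\colon M^{(q)}(X,\bullet)_W\to M^{(q)}(W,\bullet)$ of \cite[Section~7.4]{levine2006chow} is a map of semisimplicial spectra, given in simplicial degree $n$ by the maps $M_Z(\Delta^n_X)\to M_{i^{-1}(Z)}(\Delta^n_W)$ induced by $i\times\id\colon W\times\Delta^n\hookrightarrow X\times\Delta^n$; since $\tau_{\ge -q}$ is functorial, $i^*$ lifts to $T(i^*)\colon T(M^{(q)}(X,\bullet)_W)\to T(M^{(q)}(W,\bullet))$ and fits into a commutative square of semisimplicial spectra whose two vertical maps are the truncation maps. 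Taking geometric realizations then yields exactly the square of (2): the horizontal maps are the comparison maps, the right vertical is $i^*$ on the coniveau tower, and the left vertical is the chain map $i^*$ of part (1) — the last identification because, after $\tau_{\ge -q}$ (i.e.\ on $\pi_{-q}$), the levelwise pullback on $M_Z(\Delta^n_X)$ is precisely the cohomology-with-support pullback $H^q_Z(\Delta^n_X,M)\to H^q_{i^{-1}(Z)}(\Delta^n_W,M)$ from (1).

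The main obstacle is bookkeeping rather than mathematical content: one must pin down the models so that the comparison maps constructed here coincide with the ones implicitly appearing in the statement (hence with those of Proposition~\ref{prop:second-comparison}), and so that the closed pullback on the homotopy coniveau tower of \cite{levine2006chow} is the levelwise contravariant-functoriality map displayed above — this is what makes the square of semisimplicial spectra the correct one to realize. Once these identifications are in place, both (1) and (2) follow formally from the functoriality of cohomology with support and of connective truncation.
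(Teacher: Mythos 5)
Your proposal is correct and follows essentially the same route as the paper: part (1) is reduced, exactly as in the paper, to the functoriality of cohomology with support via the square $(\ast)$ from the proof of Lemma~\ref{lemm:BLRS-flat-pullback}, and part (2) is obtained by levelwise $\tau_{\ge -q}$-truncation of the semisimplicial objects $M^{(q)}(X,\bullet)_W$ and $M^{(q)}(W,\bullet)$, realizing the resulting commutative square — your functor $T$ is precisely the paper's truncation $\sigma$ (there packaged as a right adjoint refining the $\tau$ of Proposition~\ref{prop:second-comparison}, a purely cosmetic difference). The identifications you flag (realization of the truncated object with $\tau_{\ge -q}C^*(\ph,M,q)_W$, and of the realized vertical map with the chain map of part (1)) are the same ones the paper invokes.
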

\begin{proof}
(1) Essentially the same as the proof of Lemma \ref{lemm:BLRS-flat-pullback}(1).

(2) We use ideas and notation from the proof of Proposition \ref{prop:second-comparison}. Given $F\colon \scr D^\op \to H\Z\Mod$, we had the truncation $\tau(F) \to F$. We can form a further truncation $\sigma(F) \to \tau(F)$, where $\sigma(F)_0 = \tau_{\ge -q} \tau(F)_0$. This is constructed precisely as before: let $\sigma$ be the right adjoint to $\Fun(\scr D^\op, H\Z\Mod)'' \hookrightarrow \Fun(\scr D^\op, H\Z\Mod)'$, where $F \in \Fun(\scr D^\op, H\Z\Mod)''$ exactly when $F([0]) \in H\Z\Mod_{\ge -q}$ as well. Thus we obtain a commutative diagram
\begin{equation*}
\begin{CD}
\sigma(M^{(q)}(X, \bullet)_W) @>>> \tau(M^{(q)}(X, \bullet)_W) @>>> M^{(q)}(X, \bullet)_W \\
@VVV                                         @VVV                                        @Vi^*VV                      \\
\sigma(M^{(q)}(W, \bullet)) @>>> \tau(M^{(q)}(W, \bullet)) @>>> M^{(q)}(W, \bullet). \\
\end{CD}
\end{equation*}
We have $|\sigma(M^{(q)}(X, \bullet)_W)| =  \tau_{\ge -q} C^*(X, M, q)_W$ and similarly for $W$, and the left hand vertical map is the one constructed above. The result follows. \NB{Here $M^{(q)}(X, \bullet)$ is the whole simplicial complex (bigraded thing), the truncation $\tau$ leaves the L-shape BLRS complex, and further truncation $\sigma$ leaves only the BL part.}
\end{proof}

\begin{remark} \label{rmk:reconstruction}
Employing the usual strictification procedures (see e.g.~\cite[Theorem 4.1.1]{levine2008homotopy}) the construction $X \mapsto  \tau_{\ge -q} C^*(X, M, q)$ can be promoted to a functor $F\colon (\Sm_k)^\op \to H\Z\Mod$ together with a natural transformation $\alpha\colon F \Rightarrow M^{(q)}$. Since $M^{(q)}$ is $-1$-connected in the Nisnevich topology (see \cite[Proposition 3.2(1)]{levine-slice}) and $F(X) \wequi \tau_{\ge -q} C^*(X, M, q) \wequi \tau_{\ge -q} M^{(q)}(X)$ Nisnevich locally on $X$, we see that $\alpha$ is a Nisnevich-equivalence (i.e. induces an isomorphism on homotopy sheaves). Since $M^{(q)}$ is a sheaf in the Nisnevich topology, we conclude that
\[ M^{(q)} \wequi L_\Nis F \in \Fun((\Sm_k)^\op, H\Z\Mod). \]
In other words, we have reconstructed $M^{(q)}$ from the complexes $ \tau_{\ge -q}C^*(\ph, M, q)$ together with the smooth and closed pullback maps constructed above. More concretely, we have reconstructed $M^{(q)}$ from the sheaf $M_{-q}$ together with the (smooth and closed) pullback maps on cohomology of $M$ with support in codimension $q$ (which as a group only depends on $M_{-q}$). 
\end{remark}

\begin{remark} \label{rmk: helpless}
If $M$ is a homotopy module, then the closed pullback on cohomology with support can be computed using the construction from Section \ref{subsec:closed-pullback}. In particular, this operation only depends on the sheaf $M_{-q}$ together with its structure of transfers and $\ul{GW}$-module. For a general $M$ we also can define a closed pullback by the construction from Section \ref{subsec:closed-pullback} (at least when $q \ge 2$), but we do not know if it coincides with the sheaf-theoretic pullback on cohomology with support. For this reason, even if $M_{-q}$ is a homotopy module we cannot conclude that $\ul{\pi}_0 M^{(q)}$ is a homotopy module.
\NB{If the pullbacks would coincide, $M^{(q)}$ would only depend on $M_{-q}$, so for $q \ge 2$ we would choose a homotopy module $M'$ such that $M_{-q} \simeq M'_{-q}$, and get $\ul{\pi}_0 M^{(q)} \simeq \ul{\pi}_0 M'^{(q)}$, where RHS is a homotopy module because effectivity tower commutes with $\omega^\infty$.}
\end{remark}

\section{Strictly homotopy invariant sheaves with generalized transfers}
\label{sec:generalized-transfers}

\subsection{Presheaves with $\A^1$-transfers}
Recall that if $M \in \Pre(\Sm_k)$ is a presheaf of sets on $\Sm_k$ and $X$ is an essentially smooth $k$-scheme (e.g. the spectrum of a finitely generated field extension of $k$) then we can make unambiguous sense of $M(X)$, by taking a colimit.
\begin{definition}
By a \emph{presheaf with $\A^1$-transfers} we mean a presheaf of abelian groups $M \in \Ab(\Sm_k)$ together with for each finitely generated field $K/k$ the structure of a $\ul{GW}(K)$-module on $M(K)$, and for each point $x \in (\A^1_K)^{(1)}$ (i.e. each monogeneous extension $K(x)/K$) a \emph{transfer map} \[\tau_x\colon M(K(x)) \to M(K).\] A morphism of presheaves with $\A^1$-transfers from $M_1$ to $M_2$ is a morphism of presheaves $\phi\colon M_1 \to M_2$ such that $\phi$ is compatible with the $\ul{GW}$-module structures and the transfers. We denote the category of presheaves with $\A^1$-transfers by $\Ab(\Sm_k)^\Atr.$ We write $\HI(k)^\Atr \subset \Ab(\Sm_k)^\Atr$ for the full subcategory on those presheaves with $\A^1$-transfers such that the underlying presheaf is strictly homotopy invariant.
\end{definition}

We note that in the definition of the category $\Ab(\Sm_k)^\Atr$ (and consequently also $\HI(k)^\Atr$) we do not ask for many of the usual compatibilities: we do not require $M$ to be a presheaf of $\ul{GW}$-modules, we do not require any base change or projection formulas, and so on. In cases of practical interest, these additional properties will usually hold, of course.

\begin{example} \label{ex:M-1-transfers}
The functor $\HI(k) \to \HI(k), M \mapsto M_{-1}$ factors canonically through the forgetful functor $\HI(k)^\Atr \to \HI(k)$, yielding $\HI(k) \to \HI(k)^\Atr, M \mapsto M_{\wh{-1}}$: for the structure of the transfers, see \cite[Section 4.2]{A1-alg-top}; for the structure of a module over $\ul{GW} = \ul{K}_0^{MW}$, see \cite[Lemma 3.49]{A1-alg-top}. The constructions of these structures make it clear that if $M \to M' \in \HI(k)$ then $M_{-1} \to M'_{-1}$ respects the transfers and $\ul{GW}$-module structure; hence we indeed have an induced morphism $M_{\wh{-1}} \to M'_{\wh{-1}}$. For the iterated contractions $M_{-n}$, we similarly denote by $M_{\wh{-n}} = (M_{-n+1})_{\wh{-1}}$ the canonical lift to $\HI(k)^\Atr$.
\end{example}

\begin{example} \label{ex:sheffheart-transfers}
The functor $\omega^\infty\colon \SH(k)^{\eff\heart} \to \SHS(k)^\heart \wequi \HI(k)$ factors canonically through the forgetful functor $\HI(k)^\Atr \to \HI(k)$, yielding $\wh{\omega^\infty}\colon \SH(k)^{\eff\heart} \to \HI(k)^\Atr$. Indeed, ${\omega^\infty(E) \wequi (\omega^\infty((E \wedge \Gm)_{\le 0}))_{-1}}$, and so we can use the factorization from Example \ref{ex:M-1-transfers}. \NB{$E \wedge \Gm$ as a spectrum need not be in the heart, so we truncate it; $(E \wedge \Gm)_{\le 0}$ corresponds to the shifted homotopy module.}
\end{example}

\subsection{Twisted transfers}
\label{subsec:twisted-transfers}

\begin{definition}
Let $M \in \Ab(\Sm_k)^\Atr$. We say that $M$ \emph{satisfies the first projection formula} if for every monogeneous extension $K(x)/K$, every $a \in GW(K)$ and $m \in M(K(x))$ we have $\tau_x(a|_{K(x)} m) = a \tau_x(m)$. 
\end{definition}

Suppose that $M \in \Ab(\Sm_k)^\Atr$. For $X$ the spectrum of a finitely generated field extension, denote by $M(\omega)(X)$ the group $M(\omega_X)(X) = M(X) \otimes_{\Z[\scr O^\times(X)]} \Z[\omega_X^\times]$. Assume that $M$ satisfies the first projection formula. Given a finite monogeneous extension $K(x)/K$, define \[\tr^x_{K(x)/K}\colon M(\omega)(K(x)) \to M(\omega)(K)\] as follows. Denote by $x \in \A^1_K$ the closed point corresponding to $K(x)/K$. We have $\omega_x \wequi \omega_{x/\A^1_K} \otimes \omega_{\A^1_K}|_x$. The coordinate $t$ on $\A^1$ induces $\omega_{\A^1_K} \wequi \omega_K|_{\A^1_K}$. The second fundamental exact sequence induces $\omega_{x/\A^1_K} \wequi (m_x/m_x^2)^*$, which we may trivialize by the (dual of the) minimal polynomial of $x$. Consequently we have found an isomorphism $\omega_x \wequi \omega_K|_x$. Now define $\tr^x_{K(x)/K}$ as $\tau_x$, twisted by the above isomorphism of line bundles. In other words $\tr^x_{K(x)/K}(m \otimes \alpha|_{K(x)}) = \tau_x(m) \otimes \alpha$; this is well-defined by the projection formula assumption. See also \cite[Section 5.1]{A1-alg-top}.

\begin{remark}
Suppose that $K(x)/K$ is separable. Let $df_1 \wedge \dots \wedge df_r$ generate $\omega_K$, so that the $df_1 \wedge \dots \wedge df_r|_x$ generates $\omega_x$, by separability. Tracing through the definitions, the above isomorphism $\omega_x \wequi \omega_K|_x$ sends $df_1 \wedge \dots \wedge df_r$ to $P'(x) df_1 \wedge \dots \wedge df_r|_x$\NB{or $1/P'(x)$?}, where $P$ denotes the minimal polynomial of $x$. Hence we get the formula \[\tr(a \otimes df_1 \wedge \dots \wedge df_r|_x) = \tau_x(\lra{P'(x)} a) \otimes df_1 \wedge \dots \wedge df_r.\] This recovers the construction of the cohomological transfers from the geometric ones in \cite[Section 4.2]{A1-alg-top}. In general, our construction coincides with Morel's \emph{absolute transfers}; see \cite[Remark 5.6(2)]{A1-alg-top}.
\end{remark}

More generally, given a finite extension $K(x_1, \dots, x_n)/K$, define recursively \[ \tr^{x_1, \dots, x_n}_{K(x_1, \dots, x_n)/K} = \tr^{x_1, \dots, x_{n-1}}_{K(x_1, \dots, x_{n-1})/K} \circ \tr^{x_n}_{K(x_1, \dots, x_n)/K(x_1, \dots, x_{n-1})}\colon M(\omega)(K(x_1, \dots, x_n)) \to M(\omega)(K). \]

\begin{definition} \label{def:twisted-transfers}
Let $M \in \Ab(\Sm_k)^\Atr$. We will say that $M$ \emph{admits twisted transfers} if $M$ satisfies the first projection formula and for any finite extension $L/K$ and any two sets of generators $x_1, \dots, x_n; y_1, \dots, y_m \in L$ we have $\tr^{x_1, \dots, x_n}_{L/K} = \tr^{y_1, \dots, y_m}_{L/K}$. We denote by $\Ab(\Sm_k)^\tw \subset \Ab(\Sm_k)^\Atr$ the full subcategory on those presheaves with $\A^1$-transfers that admit twisted transfers. We also put $\HI(k)^\tw = \HI(k)^\Atr \cap \Ab(\Sm_k)^\tw$. For $M \in  \Ab(\Sm_k)^\tw$ we put $\tr_{L/K} := \tr_{L/K}^{x_1, \dots, x_n}$, for any choice of generators $x_1, \dots, x_n \in L$.
\end{definition}

We stress that $\HI(k)^\tw$ is still ``too large'', i.e. contains objects that are not well-behaved. For example, we did not require the individual $GW(K)$-module structures to come from a $\ul{GW}$-module structure. Nonetheless we find $\HI(k)^\tw$ useful as a book-keeping device.

\begin{example} \label{ex:twisted-transfers-M-2}
For $n \ge 2$ and $M \in \HI(k)$ we have $M_{\wh{-n}} \in \HI(k)^\tw \subset \HI(k)^\Atr$. This is the content of \cite[Section 5.1]{A1-alg-top} (for the projection formula, see Lemma \ref{lemm:untwisted-projection}(3)).
\end{example}

\begin{example} \label{ex:twisted-transfer-sheffheart}
For $E \in \SH(k)^{\eff\heart}$ we have $\widehat{\omega^\infty}(E) \in \HI(k)^\tw \subset \HI(k)^\Atr$. Indeed we have $\omega^\infty(E) = (\omega^\infty((E \wedge \Gmp{2})_{\le 0}))_{-2}$.
\end{example}

As we have seen in Example \ref{ex:twisted-transfers-M-2}, if $M \in \HI(k)$ then $M_{-2}$ admits twisted transfers. The following lemma provides a geometric explanation for a weaker statement.
\begin{lemma} \label{lemm:twisted-transfer-as-collapse}
Let $M \in \HI(k)$, $K/k$ finitely generated and $Spec(L) \in (\A^n_K)^{(n)}$. Consider the collapse map
\begin{gather*}
  p\colon (\P^1)^{\wedge n} \wedge Spec(K)_+ \to (\P^1)^{\wedge n} \wedge Spec(K)_+ / \left[ (\P^1)^{\wedge n} \wedge Spec(K)_+ \setminus Spec(L) \right] \\
     \wequi \A^n_K / \A^n_K \setminus Spec(L) \wequi Th(N_{Spec(L)/\A^n_K}).
\end{gather*}
Then $(p[-n])^* = \tr_{L/K}\colon M_{-n}(L, \omega_{L/K}) \to M_{-n}(K)$.
\end{lemma}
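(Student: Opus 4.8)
The plan is to induct on $n$, relying at each stage on Morel's identification of the Pontryagin--Thom collapse with the transfer on a single contraction. For the base case $n=1$, write $x:=t|_L$, so that $Spec(L)$ is the closed point of $\A^1_K$ cut out by the minimal polynomial $P$ of $x$. The footnote in Section~\ref{sec:RS} gives $\det N_{Spec(L)/\A^1_K}\wequi\omega_{Spec(L)/\A^1_K}$, which the coordinate $t$ identifies with $\omega_{L/K}$; under the resulting identifications $M(Th(N_{Spec(L)/\A^1_K}))[-1]\wequi M_{-1}(L,\omega_{L/K})$ and $M(\P^1\wedge Spec(K)_+)[-1]\wequi M_{-1}(K)$, the map $(p[-1])^*$ is precisely the collapse used in \cite[Section~4.2]{A1-alg-top} to define the geometric transfer on $M_{-1}$, which by the Remark after Definition~\ref{def:twisted-transfers} (and \cite[Remark~5.6(2)]{A1-alg-top}) agrees with the absolute transfer $\tr^x_{L/K}$. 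The only point needing care is that the trivialization of $m_x/m_x^2$ used to present the Thom space coincides with the one used to define $\tr^x_{L/K}$ in Section~\ref{subsec:twisted-transfers}; this is exactly the bookkeeping recorded in that Remark.

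For the inductive step, I would let $\pi\colon\A^n_K\to\A^{n-1}_K$ forget the last coordinate $t_n$, put $x_i:=t_i|_L$, $K':=K(x_1,\dots,x_{n-1})\subseteq L$, and $y:=\pi(Spec(L))\in(\A^{n-1}_K)^{(n-1)}$, whose residue field is $K'$; then $Spec(L)\hookrightarrow\pi^{-1}(y)=\A^1_{K'}$ is the codimension-$1$ point with residue coordinate $x_n$. As $\pi$ is smooth with $\pi^{-1}(y)=\A^1_{K'}$, one has $N_{\A^1_{K'}/\A^n_K}\wequi\pi^*N_{y/\A^{n-1}_K}$, whence transitivity of normal bundles along $Spec(L)\subset\A^1_{K'}\subset\A^n_K$ gives $N_{Spec(L)/\A^n_K}\wequi N_{Spec(L)/\A^1_{K'}}\oplus N_{y/\A^{n-1}_K}|_{Spec(L)}$, compatibly with the canonical isomorphism $\omega_{L/K}\wequi\omega_{L/K'}\otimes\omega_{K'/K}|_L$. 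Using compatibility of the homotopy purity equivalences with composition of closed immersions and with base change along $Spec(L)\to y$ (cf.\ \cite[Section~3.5]{hoyois-equivariant}), the collapse map $p$ should factor, up to these canonical identifications and a reordering of smash factors, as
\[ (\P^1)^{\wedge n}\wedge Spec(K)_+ \;\xrightarrow{\,p_y\wedge\id\,}\; Th(N_{y/\A^{n-1}_K})\wedge\P^1 \;\xrightarrow{\,\id\wedge p_L\,}\; Th(N_{y/\A^{n-1}_K})|_{Spec(L)}\wedge Th(N_{Spec(L)/\A^1_{K'}}) \;\wequi\; Th(N_{Spec(L)/\A^n_K}), \]
where $p_y$ is the codimension-$(n-1)$ collapse map for $y\subset\A^{n-1}_K$, $p_L$ the codimension-$1$ collapse map for $Spec(L)\subset\A^1_{K'}$, and the middle $\P^1$ is read relatively over $y$. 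Applying $M$ and desuspending $n$ times, the first arrow yields by induction (applied to the sheaf $M_{-1}$, using $(M_{-1})_{\wh{-(n-1)}}=M_{\wh{-n}}$ from Example~\ref{ex:M-1-transfers}) the map $\tr^{x_1,\dots,x_{n-1}}_{K'/K}$, and the second arrow yields by the base case (applied to the sheaf $M_{-(n-1)}$) the map $\tr^{x_n}_{L/K'}$. The twist $\omega_{L/K}\wequi\omega_{L/K'}\otimes\omega_{K'/K}|_L$ needed to compose these is the one from the normal-bundle splitting above, so by the recursion preceding Definition~\ref{def:twisted-transfers} the composite is $\tr^{x_1,\dots,x_n}_{L/K}=\tr_{L/K}$.

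The main obstacle will be the geometric factorization in the displayed composite and the accompanying bookkeeping of Thom-space identifications and twists: one must verify that the homotopy purity presentations of $p$, $p_y$ and $p_L$ are mutually compatible under composition of closed immersions and under the base change $Spec(L)\to y$, and that the minimal-polynomial trivializations of $N_{Spec(L)/\A^1_{K'}}$ and $N_{y/\A^{n-1}_K}$, together with the coordinate trivializations of the relative canonical bundles of the affine spaces involved, assemble to the isomorphism $\omega_{L/K}\wequi\omega_{L/K'}\otimes\omega_{K'/K}|_L$. None of this is conceptually hard, but it is exactly the sort of sign/unit/orientation tracking that Section~\ref{sec:RS} and \cite[Chapters~4 and 5]{A1-alg-top} are designed to handle, so it is best organized as a reduction to those results rather than carried out by hand.
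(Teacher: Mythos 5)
Your overall architecture coincides with the paper's: both proofs reduce to the $n=1$ case (where the statement holds by the very definition of the transfer) by factoring the total collapse through the intermediate stage corresponding to the subextension $K'=K(x_1,\dots,x_{n-1})$; the paper just iterates this one coordinate at a time, factoring $p$ through the filtration $Z_{n}\hookrightarrow \P^1\wedge Z_{(n-1)+}\hookrightarrow\cdots\hookrightarrow (\P^1)^{\wedge n}\wedge Z_{0+}$ with $Z_i=Spec(K(x_1,\dots,x_i))$, whereas you package the first $n-1$ steps into an inductive hypothesis applied to $M_{-1}$. So the decomposition, the identification of what each piece must give, and the use of the recursive definition of $\tr^{x_1,\dots,x_n}_{L/K}$ all match.

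The gap is in the step you defer as ``bookkeeping''. The identification of your second arrow with $\id\wedge p_L$, where $p_L$ is the collapse $\P^1_{K'}\to \P^1_{K'}/(\P^1_{K'}\setminus Spec(L))$ \emph{over the intermediate field} $K'$, is not a formal consequence of the functoriality of purity in morphisms of smooth closed pairs cited from \cite[Section 3.5]{hoyois-equivariant}: that gives naturality squares, not the transitivity statement needed for the flag $Spec(L)\subset \A^1_{K'}\subset \A^n_K$, and a priori the partial collapse is a map between quotients of $(\P^1)^{\wedge n}\wedge Spec(K)_+$ formed over $K$, with no product structure in sight near the fibre over $y$. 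Since the transfer $\tr^{x_n}_{L/K'}$ on $M_{-n}$ is \emph{defined} as pullback along the collapse over $K'$, this identification is precisely the content of the lemma at the inductive step, so it cannot be waved through. The paper's proof supplies the missing geometric input: by Nisnevich excision the relevant quotients only depend on a Nisnevich neighbourhood of $\P^1\wedge y_+$, so one may replace the ambient $\A^{n-1}_K$ by the henselization $X$ of $y$ in it, and then a smooth retraction $X\to \A^{n-1}_{K'}$ (a Nisnevich neighbourhood of the zero section, with $\pi^{-1}(0)=y$) splits the neighbourhood as a product over $K'$; only after these two reductions does the collapse split as $\id_{T^{n-1}}\wedge q$ by the formal smash identity, and only then does the comparison of twists via $\omega_{L/K}\wequi\omega_{L/K'}\otimes\omega_{K'/K}|_L$ become a matter of tracing definitions. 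To complete your argument you would need either to reproduce this henselization-plus-retraction argument or to prove (not just cite) a transitivity/base-change compatibility of purity equivalences strong enough to yield the displayed factorization.
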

Let us clarify the statement a bit. The embedding $Spec(L) \hookrightarrow \A^n_K$ provides us with generators $x_1, \dots, x_n$ of $L$ over $K$. We are claiming that $(p[-n])^* = \tr_{L/K}^{x_1, \dots, x_n} \otimes \omega_{K}^{-1}$ in the notation of Definition \ref{def:twisted-transfers}. Technically speaking, this only makes sense if $M_{-n}$ satisfies the projection formula, which we do not have if $n=1$. In this case by $\tr_{L/K}\colon M_{-1}(L, \omega_{L/K}) \to M_{-1}(K)$ we mean the map $M_{-1}(L, \omega_{L/K}) \wequi M_{-1}(L) \to M_{-1}(K)$, where we are given a generator $x \in L$, the first equivalence is via the minimal polynomial of $x$ and the second map is $\tau_x$, defined for $M_{-1}$ (see Example~\ref{ex:M-1-transfers}).
\begin{proof}
In the case $n=1$ the map $(p[-1])^*$ is the transfer map by construction, so we will reduce to this case.

Let us first clarify the following standard abuse of notation. By definition, $(\P^1)^{\wedge n} \wedge Spec(K)_+$ is the (pre)sheaf obtained from $(\P^1)^{\times n} \times Spec(K)$ by contracting down the sub(pre)sheaf $\partial (\P^1)^{\times n} \times Spec(K)$.

Now $Spec(L) \in \A^n_K \subset (\P^1)^{\times n} \times Spec(K)$ is disjoint from $\partial (\P^1)^{\times n} \times Spec(K)$, so it makes sense to define $(\P^1)^{\wedge n} \wedge Spec(K)_+ / \left[ (\P^1)^{\wedge n} \wedge Spec(K)_+ \setminus Spec(L) \right]$ as $(\P^1)^{\times n} \times Spec(K) / [(\P^1)^{\times n} \times Spec(K) \setminus Spec(L)]$. Throughout this proof, we will commit similar notational abuses without comment.

We denote $P^n := (\P^1)^{\wedge n}$.
We put $Z_i = Spec(K(x_1, \dots, x_i))$ and hence obtain a tower of finite morphisms $Spec(L) = Z_n \to Z_{n-1} \to \dots \to Z_1 \to Z_0 = Spec(K)$, together with closed embeddings $Z_{i+1} \hookrightarrow \A^1_{Z_i}$. Since $Z_{i+1}$ is finite over $Z_i$, the induced map $Z_{(i+1)+} \hookrightarrow P^1 \wedge Z_{i+}$ ``is also a closed embedding'', in the sense that $Z_{i+1} \hookrightarrow \P^1 \times Z_i$ is a closed embedding with image disjoint from $\{\infty\} \times Z_i$. Smashing with $P^{n-i-1}$ we obtain a filtration
\[ Z_{n} \hookrightarrow P^1 \wedge Z_{(n-1)+} \hookrightarrow P^2 \wedge Z_{(n-2)+} \hookrightarrow \dots \hookrightarrow P^n \wedge Z_{0+}. \]
We have thus factored the total collapse map $p$ into the composite of partial collapse maps
\[ p_i\colon P^n \wedge Z_{0+} \sslash P^n \wedge Z_{0+} \setminus P^{n-i} \wedge Z_{i+} \to P^{n} \wedge Z_{0+} \sslash P^{n} \wedge Z_{0+} \setminus P^{n-i-1} \wedge Z_{(i+1)+}, \]
where for a pointed set $A$ and (not necessarily pointed) subset $B \subset A$ we put $A \sslash B = A / (B \cup \{*\})$. \NB{I.e. ``crush $B$ down to the base point''.}

Note that for pointed sets $A, B, C$ with $A \subset B$ we have 
\[ (\ast) \quad C \wedge B \sslash (C \wedge B \setminus C \wedge A) \wequi C \wedge [B \sslash B \setminus A]. \]
Applying this with $C = P^{n-i}$, $B = P^i \wedge Z_{0+}$ and $A = Z_{i+}$ we find that the source of $p_i$ is \[P^{n-i} \wedge [P^i \wedge Z_{0+} \sslash P^i \wedge Z_{0+} \setminus Z_{i+}] \wequi P^{n-i} \wedge Th(N_{Z_{i} \subset P^i \wedge Z_{0+}}).\] Noting that $[P^{n-i} \wedge Th(N_{Z_{i} \subset P^i \wedge Z_{0+}}), M[n]] \wequi M_{-n}(Z_i, \omega_{Z_i/Z_0})$ we find that $(p_i[-n])^*$ takes the form
 \[\alpha := (p_i[-n])^*\colon M_{-n}(Z_{i+1}, \omega_{Z_{i+1}/Z_0}) \to M_{-n}(Z_{i}, \omega_{Z_{i}/Z_0}). \] We shall prove that this is precisely the twisted transfer for the extension $Z_{i+1}/Z_i$ (twisted by $\omega_{Z_0}$), which implies the desired result.

Noting that ($\ast$) is functorial, we find that $p_i = \id_{P^{n-i-1}} \wedge p_i'$, where
\[ p_i'\colon P^{i+1} \wedge Z_{0+} \sslash P^{i+1} \wedge Z_{0+} \setminus P^1 \wedge Z_{i+} \to P^{i+1} \wedge Z_{0+} \sslash P^{i+1} \wedge Z_{0+} \setminus Z_{(i+1)+} \]
is the canonical collapse map.
Let $X \to \A^i_{Z_0}$ be the henselization of $Z_i$ in $\A^i_{Z_0}$. Then $P^1 \wedge X_+ \to P^{i+1} \wedge Z_{0+}$ is a Nisnevich neighbourhood of $P^1 \wedge Z_{i+}$ and hence $p_i'$ is canonically isomorphic to the collapse map
\[ p_i''\colon P^1 \wedge X_+ \sslash P^1 \wedge X_+ \setminus P^1 \wedge Z_{i+} \to P^1 \wedge X_+ \sslash P^1 \wedge X_+ \setminus Z_{(i+1)+}. \]
Now $Z_i \hookrightarrow X$ has a smooth retraction (see e.g. \cite[Corollary
5.11]{deglise-regular-base}), so we obtain $\pi\colon X \to \A^i_{Z_i}$ with $\pi^{-1}(0) = Z_i$. In other words $\pi$ is a Nisnevich neighbourhood of zero. It follows that $p_i''$ is (using these choices) isomorphic to
\[ p_i'''\colon P^1 \wedge \A^i_{Z_i+} \sslash P^1 \wedge \A^i_{Z_i+} \setminus P^1 \wedge Z_{i+} \to P^1 \wedge \A^i_{Z_i+} \sslash P^1 \wedge \A^i_{Z_i+} \setminus Z_{(i+1)+}. \]
Using ($\ast$) once more, we find that $p_i''' = \id_{T^i} \wedge q_i$, where \[q_i \colon P^1 \wedge Z_{i+} \to P^1 \wedge Z_{i+} \sslash P^1 \wedge Z_{i+} \setminus Z_{(i+1)+}\] is the collapse map. This gives us precisely the definition of the twisted transfer \[\beta\colon M_{-n}(Z_{i+1}, \omega_{Z_{i+1}/Z_{i}}) \to M_{-n}(Z_{i}).\] Tracing through the definitions we find that $\alpha = \beta(\omega_{Z_{i}/Z_0})$, as needed.\tom{I am still not entirely happy with this argument. Think about it again before submission to a journal.}
\end{proof}

\begin{remark} 
If $M$ is a homotopy module, the above argument can be phrased more succinctly in the language of tangentially framed correspondences \cite[Section 2.3]{EHKSY}. Let $f\colon X \to Y$ be a finite flat, and hence syntomic, morphism of semilocal, essentially smooth $k$-schemes. Taking determinants induces a bijection between the set of homotopy classes of trivializations of $[L_f]$ in the K-theory space $K(X)$ and trivializations of the line bundle $\omega_f$. \NB{Here we are comparing $(K(X)^0)_{\le 1}$ and $Pic(X)$ as $1$-groupoids via $det$. Both have $\pi_0 = 0$ and $\pi_1= \scr O^\times$ on semilocal rings.}
For every trivialization $\tau$ of $[L_f]$ we obtain the tangentially framed transfer $\tr^{[\tau]}\colon M(X) \to M(Y)$, and the untwisted transfer $\tr^{det([\tau])}\colon M(X) \wequi M(X, \omega_{X/Y}) \to M(Y)$. 

We claim that $\tr^{det([\tau])} = \tr^{[\tau]}$ and that if $X \hookrightarrow \A^n_Y$ then $\tr^{[\tau]}$ is the pullback along the collapse map \[(\P^1_Y)^{\wedge n} \to (\P^1_Y)^{\wedge n} / (\P^1_Y)^{\wedge n} \setminus X \wequi Th(N_{X/\A^n_Y}) \wequi T^n \wedge X_+,\] where the last equivalence is via $\tau$. Note that the first claim follows from the second: if $X/Y$ is monogeneous, then pullback along the collapse map is the definition of $\tr^{det([\tau])}$, and for the general case note that both sides are compatible with composition, and at least if $Y$ is a field then $f$ can be factored into a sequence of monogeneous extensions.

To prove the second claim, we may assume given a Nisnevich neighbourhood $U$ of $X$ in $\A^n_Y$, a smooth retraction $r\colon U \to X$ and global sections $f_1, \dots, f_n \in \scr O(U)$ cutting out $X$ and inducing $[\tau]$. Then $\tr^{[\tau]}$ is defined, as in Example~\ref{ex: Voev transfers}, to be the pullback along the composite \[(\P^1_Y)^{\wedge n} \to (\P^1_Y)^{\wedge n} / (\P^1_Y)^{\wedge n} \setminus X \wequi U/U \setminus X \xrightarrow{(f_\bullet, r)} T^n \wedge X_+.\] It suffices to observe that ($L_\mot$ of) the map $(f_\bullet, r)$ is homotopic to the purity equivalence. Altogether we get the same description of $\tr^{det([\tau])}$ as stated in Lemma~\ref{lemm:twisted-transfer-as-collapse}.
\tom{I haven't checked this in detail. Think about it again before submitting to journal.}
\end{remark}

\subsection{Framed transfers}
We recall the following definitions from~\cite[Section~2.1]{EHKSY}, originally they are due to Voevodsky~\cite{voevodsky2001notes}.
\begin{definition}
Let $X$, $Y \in \Sm_k$. 
An \emph{(equationally) framed correspondence} $(Z, U, f_\bullet, g)$ from $X$ to $Y$ of level $n \geqslant 0$ consists of a closed subscheme $Z \subset \A^n_X$, finite over $X$, an étale neighbourhood $U \to \A^n_X$ of $Z$, a morphism $(f_1, \dots, f_n) \colon U \to \A^n$ such that $Z = f_\bullet^{-1}(0)$ as schemes, and a morphism $g\colon U \to Y$. 
Two framed correspondences are equivalent if they have the same support and the morphisms are the same up to refining the étale neighbourhoods. We denote by $\Fr_n(X, Y)$ the set of framed correspondences of level $n$ up to equivalence; it is pointed by the correspondence with empty support. 

Composition of framed correspondences is defined via maps
$\Fr_n(X, Y) \times \Fr_m(Y, V) \to \Fr_{n+m}(X, V)$ by sending
$((Z, U, f_\bullet, g), (Z', U', f'_\bullet, g')) \mapsto (Z \times_Y Z', U \times_Y U', (f_\bullet, f'_\bullet), g' \circ \pr_{U'}).$
With this composition we get a category $\Fr_*(k)$, where objects are  smooth $k$-schemes and morphisms are given by $\Fr_*(X, Y) = \vee_{n=0}^{\infty} \Fr_n(X, Y)$. There is a canonical functor $\Sm_k \to \Fr_*(k)$, sending morphisms of $k$-schemes to framed correspondences of level $0$. 
\end{definition}

\begin{definition}
A \emph{presheaf with framed transfers} is a presheaf on the category $\Fr_*(k)$.
A presheaf with framed transfers is called $\A^1$-invariant (respectively a Nisnevich sheaf) if its restriction to $\Sm_k$ is.
A presheaf with framed transfers $F$  is \emph{stable} if $F(\sigma_X) = \id_{F(X)}$, where $\sigma_X = (X, \A^1_X, \pr_{\A^1}, \pr_X) \in \Fr_1(X, X)$.
\end{definition}

\begin{example} \label{ex: Voev transfers}
Let $E \in \SH(k)$. Then the sheaf $\Omega^\infty(E)$ acquires canonical framed transfers as follows (see~\cite[Section~3.2.6]{EHKSY2}). A framed correspondence $\alpha = (Z, U, f_\bullet, g) \in \Fr_n(X, Y)$ induces a map of quotient sheaves
\[ (\P^1)^{\wedge n} \wedge X_+ \to (\P^1)^{\wedge n} \wedge X_+ / (\P^1)^{\wedge n} \wedge X_+ \setminus Z \wequi U/U \setminus Z \xrightarrow{(f_\bullet, g)} \A^n/\A^n \setminus 0 \wedge Y_+, \]
which in turn induces a map of spectra \[ \overline{\alpha} \wedge (\P^1)^{\wedge n} \colon \Sigma^\infty X_+ \wedge (\P^1)^{\wedge n} \to \Sigma^\infty Y_+ \wedge \A^n/\A^n \setminus 0 \wequi \Sigma^\infty Y_+ \wedge (\P^1)^{\wedge n} . \]
Then  $\alpha^* \colon \Omega^\infty(E)(Y) \to \Omega^\infty(E)(X)$ is given by the pullback in $E$ along $\overline{\alpha}$.
\end{example}

Note that if $E \in \SH(k)^{\eff\heart}$, then $\Omega^\infty(E) \wequi \ul{\pi}_0(E)_0$, and so $\ul{\pi}_0(E)_0$ acquires framed transfers.
\begin{theorem} \label{thm:identify-effective-heart}
Let $k$ be a perfect field. Then the functor 
\[ \underline{\pi}_0(\ph)_0 \colon \SH(k)^{\eff} \to \sideset{}{^{\fr}}\prod(k) \] 
induces an equivalence between $\SH(k)^{\eff\heart}$ and the category $\prod^{\fr}(k)$ of $\A^1$-invariant stable Nisnevich sheaves with framed transfers of abelian groups.
\end{theorem}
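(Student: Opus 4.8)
The plan is to deduce the statement from the reconstruction theorem of \cite{EHKSY}. Write $\SH^{\fr}(k)$ for the $\infty$-category of framed motivic spectra, i.e.\ the $\Gm$-stabilization of the $\infty$-category of $\A^1$-invariant Nisnevich-local presheaves of spaces with framed transfers on $\Sm_k$. The reconstruction theorem of \cite{EHKSY} provides an equivalence of $\infty$-categories $\SH^{\fr}(k) \wequi \SH(k)^{\eff}$, compatible with infinite loop spaces: under this equivalence the functor sending a framed motivic spectrum to its underlying framed presheaf corresponds to $E \mapsto \Omega^\infty(E)$ endowed with its framed transfers as in Example~\ref{ex: Voev transfers}. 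Both $\SH^{\fr}(k)$ and $\SH(k)^{\eff}$ carry homotopy $t$-structures whose connective parts are generated by the (framed) suspension spectra $\Sigma^\infty_+ X$, $X \in \Sm_k$ --- for $\SH(k)^{\eff}$ this is \cite[Proposition~4]{tom2017slices}, with heart $\HI_0(k)$. Since the reconstruction equivalence matches these sets of generators, it is $t$-exact, hence restricts to an equivalence of hearts $\HI_0(k) \wequi \SH^{\fr}(k)^\heart$. It therefore remains to identify $\SH^{\fr}(k)^\heart$, via $\ul{\pi}_0$ of the underlying sheaf, with $\prod^{\fr}(k)$; granting this, the compatibility with $\Omega^\infty$ recorded above shows that the composite $\SH(k)^{\eff\heart} \wequi \SH^{\fr}(k)^\heart \wequi \prod^{\fr}(k)$ is exactly the functor $\ul{\pi}_0(\ph)_0$.

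For the heart computation I would argue as follows. An object of $\SH^{\fr}(k)$ lies in the heart iff all of its homotopy sheaves vanish outside degree $0$. The framed analogues of Morel's connectivity and strict homotopy invariance theorems, which are among the ``results of \cite{EHKSY}'' and ultimately rest on the Garkusha--Panin theory of framed motives, guarantee that framed suspension spectra are connective, that $\ul{\pi}_0$ of a framed motivic spectrum agrees with $\ul{\pi}_0$ of its underlying Nisnevich sheaf, and that the latter is automatically strictly $\A^1$-invariant (here $\Gm$-stability is essential) and stable; hence $\ul{\pi}_0$ carries a framed motivic spectrum in the heart to an object of $\prod^{\fr}(k)$. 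Conversely, every $M \in \prod^{\fr}(k)$ is the $\ul{\pi}_0$ of a heart object --- obtained via the framed Eilenberg--MacLane construction, corresponding under reconstruction to the inclusion $\HI_0(k) \hookrightarrow \SH(k)^{\eff}$ of the heart --- and an object of the heart is recovered from its $\ul{\pi}_0$ together with its framed transfers, which is itself the transfer structure on that $\ul{\pi}_0$. These two constructions are mutually inverse, yielding the equivalence $\SH^{\fr}(k)^\heart \wequi \prod^{\fr}(k)$.

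The main obstacle is precisely this last identification, and inside it the two deep inputs from \cite{EHKSY} (and the framed-motives machinery it rests on): the \emph{connectivity theorem} for framed suspension spectra, without which the $\ul{\pi}_0$ of a heart object need not be computable as its underlying sheaf and need not be strictly homotopy invariant; and the \emph{strict homotopy invariance theorem} for $\A^1$-invariant framed Nisnevich sheaves after $\Gm$-stabilization, which is what forces the heart to be $\prod^{\fr}(k)$ rather than some larger category of merely $\A^1$-invariant framed sheaves. Once these are granted, the remaining steps --- the $t$-exactness of the reconstruction equivalence, passage to hearts, and matching the resulting functor with $\ul{\pi}_0(\ph)_0$ --- are formal bookkeeping with adjunctions and homotopy $t$-structures.
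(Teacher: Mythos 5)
Your argument rests on a misquotation of the reconstruction theorem of \cite{EHKSY}: that theorem identifies the category of framed motivic spectra (the $T$-stabilization of framed motivic spaces, which is what your ``$\Gm$-stabilization'' produces) with the \emph{whole} of $\SH(k)$, not with $\SH(k)^{\eff}$. This is not a harmless slip. With the correct statement, the heart of the homotopy $t$-structure on $\SH^\fr(k) \wequi \SH(k)$ is the category of homotopy modules $\HI_*(k)$ (sheaves together with delooping/contraction data), which is strictly larger than $\SH(k)^{\eff\heart} \wequi \HI_0(k)$; so your ``match the generators, pass to hearts'' argument, run with the actual reconstruction equivalence, identifies the wrong heart. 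To repair it you would have to restrict to the effective (or connective) part on the framed side and identify it with grouplike framed motivic spaces --- but that is exactly the recognition principle $\HH^\fr(k)^\gp \wequi \SH(k)^{\veff}$ \cite[Theorem~3.5.14]{EHKSY}, which is the input the paper actually uses, followed by the observation $\SH(k)^{\eff\heart} = \SH(k)^{\veff}_{\le 0}$; no $t$-exactness comparison of stabilized categories is needed or used.

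Even granting a corrected equivalence, the step you defer --- identifying the framed heart with $\prod^{\fr}(k)$ --- is asserted rather than proved: ``these two constructions are mutually inverse'' is precisely the content of the theorem. The paper's proof of this point is a truncation argument: $0$-truncated objects of $\HH^\fr(k)^\gp$ are presheaves of sets on the homotopy category $\h\Cor^\fr(\Sm_k)$ (by \cite[Proposition~1.2.3.1]{HTT}), grouplike discrete objects of a semiadditive category are abelian group objects, and finally tangentially framed correspondences are compared with Voevodsky's equationally framed ones as in \cite[Remark~3.4.10]{EHKSY} --- this last comparison is where the stability condition $F(\sigma_X) = \id$ in the definition of $\prod^{\fr}(k)$ enters, a point your sketch never addresses. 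Likewise you never explain why a heart object is recovered from its $\ul{\pi}_0$ with its framed transfers, which is the crux. Finally, the Garkusha--Panin strict homotopy invariance theorem you present as an essential ingredient is not used in the paper's proof of this equivalence; it only shows a posteriori (see the remark following the theorem) that objects of $\prod^{\fr}(k)$ are automatically strictly $\A^1$-invariant. The genuinely deep input is the recognition principle itself.
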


\begin{remark}
By~\cite[Theorem~1.1]{GarkushaPaninStrictHomInv}, $\A^1$-invariant stable Nisnevich sheaves with framed transfers of abelian groups are necessarily strictly $\A^1$-invariant, at least when the base field $k$ is perfect. This result was proved in \textit{loc. cit.} only for perfect fields $k$ that are infinite and of characteristic not $2$; these additional assumptions were later removed in~\cite{DruzhininKyllingOddChar} and~\cite{DruzhininPaninChar2} respectively.
\end{remark}

Similar results to Theorem~\ref{thm:identify-effective-heart} have also been proved in \cite[Proposition 3.11]{anan-neshitov2017transfers} and \cite[Proposition 29]{bachmann-tambara}.
\begin{proof}
We freely use the language of $\infty$-categories in this proof.

We use the motivic recognition principle \cite[Theorem 3.5.14]{EHKSY}, which holds over any perfect field. To recall it briefly, there is a semiadditive $\infty$-category $\Cor^\fr(\Sm_k)$ under $\Sm_k$. Write $\HH^\fr(k)$ for the full subcategory of $\PSh(\Cor^\fr(\Sm_k))$ consisting of $\A^1$-invariant presheaves of spaces on $\Cor^\fr(\Sm_k)$ that satisfy Nisnevich descent after restriction to $\Sm_k$ via the canonical functor $\Sm_k \to \Cor^\fr(\Sm_k)$. Since $\Cor^\fr(\Sm_k)$ is semiadditive, each object of $\HH^\fr(k)$ is a presheaf of $\E_\infty$-monoids in a natural way. Denote by $\HH^\fr(k)^\gp \subset \HH^\fr(k)$ the subcategory of presheaves of grouplike $\E_\infty$-monoids. Then there is a canonical equivalence $\HH^\fr(k)^\gp \wequi \SH(k)^\veff$; this is the recognition principle.

Recall that for any $\infty$-category $\scr C$, there is the subcategory $\scr C_{\le 0}$ of $0$-truncated objects, i.e. those objects $E \in \scr C$ such that for every $F \in \scr C$ the space $\Map(F, E)$ is $0$-truncated. With this notation, $\SH(k)^{\eff\heart} = \SH(k)^\veff_{\le 0}$ and consequently $\SH(k)^{\eff\heart} \wequi \HH^\fr(k)^\gp_{\le 0}$. In other words $\SH(k)^{\eff\heart}$ is equivalent to the category of presheaves on $\Cor^\fr(\Sm_k)$ which are $0$-truncated, Nisnevich sheaves, $\A^1$-invariant and grouplike. For an $\infty$-category $\scr C$, we have $\PSh(\scr C)_{\le 0} \wequi \Fun(\scr C^\op, \Spc_{\le 0}) \wequi \Pre(\h\scr C)$ \cite[Proposition 1.2.3.1]{HTT}, where $\h\scr C$ denotes the homotopy category of $\scr C$, and $\Pre$ means presheaves of sets. Hence $\SH(k)^{\eff\heart}$ is equivalent to the subcategory of $\Pre(\h\Cor^\fr(\Sm_k))$ consisting of Nisnevich sheaves (of sets) that are grouplike and $\A^1$-invariant. This is the same thing as $\A^1$-invariant Nisnevich sheaves of abelian groups on $\h\Cor^\fr(\Sm_k)$. This category is equivalent to $\prod^\fr(k)$ by the same argument as in~\cite[Remark~3.4.10]{EHKSY} (replacing Zariski descent with Nisnevich descent in \textit{loc.cit.}). 
\end{proof}

Suppose we are given a framed correspondence $(Z, U, f_\bullet, g) \in \Fr_n(X, Y)$. We recall the construction of the colomology class $c(f_\bullet) \in H^n_Z(U, \ul{K}^{MW}_n)$, corresponding to the Koszul complex of the regular sequence $f_\bullet$ (see~\cite[p.~12]{deglise-fasel}).
Denote by $|f_i|$ the vanishing locus of $f_i$; then $Z = |f_1| \cap \dots \cap |f_n|$ as a set. Each $[f_i] \in \oplus_{u \in U^{(0)}} K_1^{MW}(u)$ gives an element $\partial [f_i] \in \oplus_{x \in U^{(1)}} K_0^{MW}(x, \omega_{x/U})$ supported on $|f_i|$, which  defines a cycle $c(f_i) \in H^1_{|f_i|}(U, \ul{K}_1^{MW})$. One defines then \[c(f_\bullet) = c(f_1) \times \ldots \times c(f_n) \in H^n_Z(U, \ul{K}_n^{MW}).\] 

If furthermore $Z_\red$ is \emph{smooth}, using the canonical isomorphism $\omega_{Z_\red/U} \wequi \omega_{Z_\red/X}$ we find that 
\[c(f_\bullet) \in H^0(Z_\red, \ul{GW}(\omega_{Z_\red/X})) \simeq H^n_Z(U, \ul{K}^{MW}_n).\]

\begin{lemma} \label{lemm:framed-transfer-via-twisted}
Let $E \in \SH(k)^{\eff\heart}$ and $\alpha = (Z, U, f_\bullet, g) \in \Fr_n(X, Y)$ such that $Z_\red$ is (essentially) smooth. Then $\alpha^*\colon E(Y) \to E(X)$ is given by the composite
\[ E(Y) \xrightarrow{i^*g^*} E(Z_\red) \xrightarrow{\times c(f_\bullet)} H^0(Z_\red, E(\omega_{Z_\red/X})) \xrightarrow{\tr_{Z_\red/X}} E(X). \]
Here $i\colon Z_\red \to U$ denotes the closed immersion and $\tr_{Z_\red/X}$ denotes the twisted transfer from Section \ref{sec:RS-transfers}. \NB{There is defined twisted transfer map on RS-complexes, which after taking $h^0$ gives transfer on $E$.}
\end{lemma}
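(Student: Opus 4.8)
The plan is to unwind the definition of $\alpha^*$ given in Example~\ref{ex: Voev transfers} and to match the resulting composite, term by term, with the three maps in the statement. Since $(\P^1)^{\wedge n}$ is $\otimes$-invertible, the stable map $\overline\alpha$ of \emph{loc.\ cit.} corresponds to a map $\Sigma^\infty_+ X \to \Sigma^\infty_+ Y$ in $\SH(k)$, and $\alpha^*$ is precomposition with it on $E(-) = [\Sigma^\infty_+(-), E]$; moreover $\overline\alpha$ is the $(\P^1)^{\wedge n}$-suspension of the composite of pointed motivic spaces
\[ (\P^1)^{\wedge n}\wedge X_+ \xrightarrow{c} (\P^1)^{\wedge n}\wedge X_+ \big/ \big((\P^1)^{\wedge n}\wedge X_+ \setminus Z\big) \xleftarrow{\wequi} U/U\setminus Z \xrightarrow{(f_\bullet,g)} \A^n/(\A^n\setminus 0)\wedge Y_+. \]
First I would replace $Z$ by $Z_\red$ everywhere (the collapse maps and cohomology with supports only depend on the underlying closed set) and use that $Z$ is finite over $X$, hence disjoint from the infinity locus of $(\P^1_X)^{\wedge n}$, in order to apply homotopy purity: the middle object becomes $Th(N_{Z_\red/\A^n_X})$, compatibly with $U/U\setminus Z \wequi Th(N_{Z_\red/U})$ (and $N_{Z_\red/U} \wequi N_{Z_\red/\A^n_X}$ on $Z_\red$, via the étale map $U \to \A^n_X$).

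Next I would analyse the collapse half $c$. After the $(\P^1)^{\wedge n}$-desuspension implicit in $\alpha^*$, the Thom spectrum of the rank-$n$ bundle $N_{Z_\red/\A^n_X}$ is a twist of $\Sigma^\infty_+ Z_\red$, so mapping it into the $\ul{GW}$-module $E$ (which is a $\ul{GW}$-module since $E \in \SH(k)^{\eff\heart} \wequi \HI_0(k)$) yields, via the Thom isomorphism, the group $E(Z_\red, \omega_{Z_\red/\A^n_X}) \wequi E(Z_\red, \omega_{Z_\red/X})$ — the last identification via the coordinates $t_1,\dots,t_n$ on $\A^n$, set up with the sign/trivialization conventions of Section~\ref{sec:RS-transfers}. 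Under these identifications, precomposition with $c$ becomes exactly the twisted transfer $\tr_{Z_\red/X}\colon E(Z_\red,\omega_{Z_\red/X}) \to E(X)$: for $X = \spk$ this is precisely Lemma~\ref{lemm:twisted-transfer-as-collapse}, and the general case follows by naturality of the collapse map and of the twisted transfer in $X$ (working Nisnevich-locally, and ultimately fibrewise, over $X$; cf.\ the Remark following Lemma~\ref{lemm:twisted-transfer-as-collapse}).

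For the $(f_\bullet,g)$ half, I would use that $c(f_\bullet) \in H^n_Z(U, \ul{K}^{MW}_n) \wequi H^0(Z_\red, \ul{GW}(\omega_{Z_\red/X}))$ is, by its construction out of the classes $\partial[f_i]$, the pullback $f_\bullet^*(\mathrm{th})$ of the canonical orientation-with-support class $\mathrm{th} \in H^n_0(\A^n, \ul{K}^{MW}_n)$ (see \cite[p.~12]{deglise-fasel}), i.e.\ the Thom class implementing the Thom isomorphism of $\A^n/(\A^n\setminus 0) = Th(\scr O^n)$; and that the $Y_+$-component of $(f_\bullet,g)$ restricts, up to an $\A^1$-homotopy linearising along $Z_\red$ (via a smooth retraction of a Nisnevich neighbourhood of $Z_\red$ in $U$, cf.\ \cite[Corollary~5.11]{deglise-regular-base} — this is where smoothness of $Z_\red$ enters), to $g\circ i\colon Z_\red \to U \to Y$. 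Together these show that precomposition with $(f_\bullet,g)$, followed by the Thom isomorphism of the previous step, carries $e \in E(Y)$ to $c(f_\bullet)\cdot i^*g^*(e) \in E(Z_\red, \omega_{Z_\red/X})$ via the $\ul{GW}$-module structure, i.e.\ it is $(\times c(f_\bullet))\circ i^*g^*$. Composing the two halves yields $\alpha^*(e) = \tr_{Z_\red/X}\big(c(f_\bullet)\cdot i^*g^*(e)\big)$, as claimed.

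The hard part will be the bookkeeping in the last two paragraphs: getting the purity isomorphisms, the Thom isomorphisms, and the identification $\omega_{Z_\red/U}\wequi\omega_{Z_\red/X}$ (with the idiosyncratic conventions recalled in Section~\ref{sec:RS-transfers}) to cohere, so that the class that appears is literally $c(f_\bullet)$ and the transfer is literally the twisted transfer with the correct twist. A subsidiary point to check with care is the non-reduced case $Z\ne Z_\red$ (so that $c(f_\bullet)$ need not be a unit): this causes no trouble, since we work throughout with $Z_\red$ and cohomology with supports only sees the support.
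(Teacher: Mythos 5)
Your proposal is correct and follows essentially the same route as the paper's proof: both split $\alpha^*$ into the collapse map, identified with the twisted transfer via Lemma~\ref{lemm:twisted-transfer-as-collapse}, and the $(f_\bullet,g)$-part, identified with $(\times c(f_\bullet))\circ i^*g^*$ by recognizing $c(f_\bullet)$ as the pullback of the Thom class $t_n=\partial_1[y_1]\times\dots\times\partial_n[y_n]$. The only cosmetic difference is that the paper reduces to $U=Y$ and pulls back along the graph embedding $(f_\bullet,\id_U)$, so the retraction/linearization homotopy you invoke for the $Y_+$-component is unnecessary — the factor $i^*g^*$ simply falls out of the identification $H^n_Z(U,F)\wequi H^0(Z_\red, E(\omega_{Z_\red/U}))$ and the module structure.
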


\begin{proof}
Put $F = \ul{\pi}_0(E \wedge \Gmp{n})$; so in particular $E \wequi F_{-n}$. By Example~\ref{ex: Voev transfers}, 
$\alpha^*$ is the pullback in $F[n]$ along the following map
\[ (\P^1)^{\wedge n} \wedge X_+ \to (\P^1)^{\wedge n} \wedge X_+ / (\P^1)^{\wedge n} \wedge X_+ \setminus Z \wequi U/U \setminus Z \xrightarrow{(f_\bullet, g)} \A^n/\A^n \setminus 0 \wedge Y_+. \]
In particular we may assume that $U=Y$; then $h = (f_\bullet, \id_U) \colon U \to \A^n \times U$ is a closed embedding. 
In that case we express the pull-back along $h$ in $F[n]$ as the following composition:
\[ 
\beta\colon E(U) \xrightarrow{\sim} H^n_{\{0\} \times U}(\A^n \times U, F) \xrightarrow{h^*} H^n_Z(U, F) \wequi H^0(Z_\red, E(\omega_{Z_\red/U})),
 \]
and apply the canonical isomorphism $\omega_{Z_\red/U} \simeq  \omega_{Z_\red/X}$.
The left hand arrow is the Thom isomorphism, and it factors as 
\[E(U) \xrightarrow{\pr_U^*} H^0(\A^n \times U, E) \xrightarrow{\times t_n}  H^n_{\{0\} \times U}(\A^n \times U, F).\] Here $t_n  \in H^n_{\{0\} \times U}(\A^n \times U, \ul{K}_n^{MW})$ is the oriented Thom class of the trivial vector bundle over $U$ of rank $n$ (see~\cite[Definition~3.4]{levine-enum}). Let $y_i$ denote the coordinate functions on $\A^n$.
Note that $t_n = \partial_1[y_1] \times \dots \times \partial_n[y_n]$ (where $\partial_i = \partial^{\A^n \times U}_{\{y_i =0\}}$); this holds since the Thom class is multiplicative with respect to direct sums of vector bundles \cite[Prop.~3.7(2)]{levine-enum} and $t_1 = \partial_1[y_1]$ by construction \cite[p.~29]{levine-enum}. 

Let $a \in E(U)$; then $h^*(t_n \times a) = h^*(t_n) \times h^*(a) = h^*(t_n) \times a$, because $h^* \circ \pr_U^* = (\pr_U \circ h)^* = \id$. Since $U \simeq h(U) \subset \A^n \times U$ is cut out by the equations $\{f_i = y_i\}_{i=1}^n$, we observe that 
\[h^*(t_n) = h^*(\partial_1[y_1]) \times \dots \times h^*(\partial_n[y_n]) = \partial'_1[f_1] \times \dots \times \partial'_n[f_n] = c(f_\bullet)\] (where $\partial'_i = \partial^{U}_{\{f_i =0\}}$ in the Rost-Schmid complex for $\ul{K}_n^{MW}$; i.e. this is the boundary map in the long exact sequence of cohomology with support). This holds because the long exact sequence of cohomology with support is compatible with pullbacks. We have thus shown that ${\beta(a) = c(f_\bullet) \times i^*(a)}$.

Finally, the map $H^0(Z_\red, E(\omega_{Z_\red/X})) \xrightarrow{\tr_{Z_\red/X}} E(X)$ was computed in Lemma \ref{lemm:twisted-transfer-as-collapse} as the pullback along the collapse map $\gamma\colon (\P^1)^{\wedge n} \wedge X_+ \to (\P^1)^{\wedge n} \wedge X_+ / (\P^1)^{\wedge n} \wedge X_+ \setminus Z$ (here we reduce to the case of $X$ the spectrum of a field by unramifiedness of $E$).

Since $\alpha^* = \gamma^* \circ \beta$ as pointed out at the beginning, this concludes the proof. 
\end{proof}

\begin{corollary} \label{corr:omega-infty-hat-ff}
Let $k$ be a perfect field. Then the functor $\wh{\omega^\infty}\colon \SH(k)^{\eff\heart} \to \Ab(\Sm_k)^\Atr$ is fully faithful.
\end{corollary}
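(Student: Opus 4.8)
The plan is to reduce everything to the identification $\SH(k)^{\eff\heart}\wequi\prod^\fr(k)$ of Theorem~\ref{thm:identify-effective-heart} and to the description of framed transfers provided by Lemma~\ref{lemm:framed-transfer-via-twisted}. Throughout I use that a morphism in $\Ab(\Sm_k)^\Atr$ is, by definition, a morphism of the underlying presheaves of abelian groups which happens to be compatible with the $\ul{GW}$-actions and with the transfers $\tau_x$; in particular it is completely determined by its underlying morphism of presheaves on $\Sm_k$. Faithfulness of $\widehat{\omega^\infty}$ is then immediate: under the equivalence of Theorem~\ref{thm:identify-effective-heart}, $\omega^\infty$ corresponds to the functor $\prod^\fr(k)\to\HI(k)$ forgetting framed transfers, which is faithful (a natural transformation of presheaves on $\Fr_*(k)$ is determined by its values on objects, i.e. by the underlying natural transformation of presheaves on $\Sm_k$), and $\widehat{\omega^\infty}$ has the same underlying morphisms as $\omega^\infty$.

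For fullness, let $E,F\in\SH(k)^{\eff\heart}$, regarded as objects of $\prod^\fr(k)$, i.e. as $\A^1$-invariant Nisnevich sheaves of abelian groups equipped with framed transfers, and let $\phi\colon\widehat{\omega^\infty}(E)\to\widehat{\omega^\infty}(F)$ be a morphism in $\Ab(\Sm_k)^\Atr$, with underlying morphism $\bar\phi\colon\omega^\infty(E)\to\omega^\infty(F)$ of strictly homotopy invariant sheaves. It suffices to show that $\bar\phi$ is compatible with all framed transfers: then $\bar\phi$ \emph{is} a morphism $E\to F$ in $\prod^\fr(k)\wequi\SH(k)^{\eff\heart}$, and its image under $\widehat{\omega^\infty}$ has the same underlying morphism as $\phi$, hence equals $\phi$. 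Now $\bar\phi$ is compatible with pullbacks, with the $\ul{GW}$-module structures — hence with twists by line bundles and with multiplication by global sections of twisted $\ul{GW}$ — and with the transfers $\tau_x$ along monogeneous field extensions; since the twisted transfers $\tr_{L/K}$ of Definition~\ref{def:twisted-transfers} are built from the $\tau_x$ together with the $\ul{GW}$-action and, on global sections of the unramified sheaves in play, are computed componentwise at generic points, $\bar\phi$ is compatible with them as well. Lemma~\ref{lemm:framed-transfer-via-twisted} now expresses the transfer $\alpha^*$ attached to any $\alpha=(Z,U,f_\bullet,g)\in\Fr_n(X,Y)$ with $Z_\red$ (essentially) smooth as the composite $\tr_{Z_\red/X}\circ(\times\, c(f_\bullet))\circ i^*g^*$, each factor of which is respected by $\bar\phi$ (the middle map being multiplication by the class $c(f_\bullet)\in H^0(Z_\red,\ul{GW}(\omega_{Z_\red/X}))$). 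So $\bar\phi$ commutes with $\alpha^*$ whenever the reduced support of $\alpha$ is smooth.

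It remains to upgrade this to compatibility with \emph{all} framed transfers. The point is that, to test whether a morphism of $\A^1$-invariant Nisnevich sheaves with framed transfers respects all of $\Fr_*(k)$, it is enough to test it against framed correspondences with smooth reduced support. Indeed, since $\bar\phi$ is a morphism of Nisnevich sheaves between unramified sheaves, the identity $\bar\phi\circ\alpha^*=\alpha^*\circ\bar\phi$ (an equality of maps $E(Y)\to F(X)$) may be checked after base change along $\mathrm{Spec}\,k(X)\to X$ for each generic point of $X$, reducing to the case where the base is the spectrum of a finitely generated field $k'$. There the support $Z$ is a finite $k'$-scheme, and a deformation argument — perturbing the equations cutting out $Z$ over an auxiliary affine line, which changes neither side of the identity by $\A^1$-invariance of $E$ and $F$, together with the transfer trick of Lemma~\ref{lemm:pushpull-ideal} to descend from an $\ell$-extension when $k'$ is finite (using that $\bar\phi$ already commutes with the transfers $\tr_{k''/k'}$) — replaces $\alpha$ by an $\A^1$-homotopic framed correspondence whose support is finite étale, in particular smooth, over $k'$. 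Alternatively one may appeal directly to the structure of the $\infty$-category of tangentially framed correspondences of \cite{EHKSY}. This yields compatibility of $\bar\phi$ with all framed transfers, and hence the proof.

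The main obstacle is precisely this last step. Lemma~\ref{lemm:framed-transfer-via-twisted} only applies to correspondences with smooth reduced support, and in positive characteristic over imperfect residue fields one really must deform a general framed correspondence to one with étale support (and, over finite fields, descend via a transfer argument): mere generic smoothness is insufficient, as Frobenius-type finite flat maps are everywhere ramified. Everything else is a matter of bookkeeping with the structures carried by $\HI(k)^\tw\subset\Ab(\Sm_k)^\Atr$.
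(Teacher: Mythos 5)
Your overall strategy is the paper's: identify $\SH(k)^{\eff\heart}$ with framed sheaves via Theorem~\ref{thm:identify-effective-heart}, reduce the compatibility of $\bar\phi$ with a framed correspondence $\alpha=(Z,U,f_\bullet,g)$ to the generic points of $X$ by unramifiedness, and then invoke Lemma~\ref{lemm:framed-transfer-via-twisted} together with compatibility of $\bar\phi$ with the $\ul{GW}$-action and the (twisted) transfers. But your final step --- deforming a general framed correspondence over a field to an $\A^1$-homotopic one with finite \'etale support, plus a transfer descent over finite fields --- is a genuine gap: it is exactly the step you yourself flag as ``the main obstacle,'' and it is only asserted, not carried out. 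Perturbing the equations $f_\bullet$ so that the support becomes \'etale over the base field, within a fixed $\A^1$-homotopy class of framed correspondences, is a nontrivial presentation-lemma type claim (especially in characteristic $p$), and nothing in the paper or in your sketch establishes it.

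The point you are missing is that this step is unnecessary, because the hypothesis of Lemma~\ref{lemm:framed-transfer-via-twisted} is that $Z_\red$ be (essentially) smooth \emph{over the perfect ground field $k$}, not smooth or \'etale over $X$. Once you have restricted to the generic points, $X$ is a finite disjoint union of spectra of finitely generated field extensions of $k$, so $Z_\red$, being finite and reduced over $X$, is a finite disjoint union of spectra of finitely generated field extensions of $k$; since $k$ is perfect, these are separably generated over $k$ and hence essentially smooth $k$-schemes --- even when $Z_\red\to X$ is purely inseparable (the ``Frobenius-type'' maps you worry about). Morel's twisted (absolute) transfers $\tr_{Z_\red/X}$ are defined for arbitrary finite field extensions, and $\wh{\omega^\infty}(E)\in\HI(k)^\tw$ by Example~\ref{ex:twisted-transfer-sheffheart}, so Lemma~\ref{lemm:framed-transfer-via-twisted} applies verbatim to every framed correspondence after the reduction to generic points, and the proof closes there. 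So your concern that ``mere generic smoothness is insufficient'' rests on a misreading of the smoothness hypothesis; dropping the deformation paragraph and quoting the lemma directly repairs the argument and recovers the paper's proof.
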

\begin{proof}
Under our assumptions on $k$, by Theorem~\ref{thm:identify-effective-heart} we have an equivalence between $\SH(k)^{\eff\heart}$ and the category of (strictly) homotopy invariant sheaves with an action by framed correspondences. It is thus enough to show the following. If $E, F \in \SH(k)^{\eff\heart}$ and $\phi\colon \wh{\omega^\infty}(E) \to \wh{\omega^\infty}(F) \in \Ab(\Sm_k)^\Atr$ is a morphism of the corresponding sheaves, preserving the $\ul{GW}$-action and (twisted) transfers, then $\phi$ preserves the action by framed correspondences.

Let $\alpha = (Z, U, f_\bullet, g)$ be a framed correspondence from $X$ to $Y$. Let $X^{(0)}$ be the set of generic points of $X$; this is an essentially smooth scheme. Denote by $\alpha_0$ the restriction of $\alpha$ to $X^{(0)}$; this is a framed correspondence from $X^{(0)}$ to $Y$. The way composition of framed correspondences is set up implies that the following diagram commutes
\begin{equation*}
\begin{CD}
F(Y) @>{\alpha^*}>> F(X) \\
@|                  @VVV \\
F(Y) @>{\alpha_0^*}>> F(X^{(0)}),
\end{CD}
\end{equation*}
and similarly for $E$. By unramifiedness, the restriction $F(X) \to F(X^{(0)})$ is injective. It follows that $\phi$ preserves the action of $\alpha$ if and only if it preserves the action of $\alpha^0$. Consequently we may assume that $X$ is a finite disjoint union of spectra of fields. Then $Z_\red$ is essentially smooth, and hence by Lemma \ref{lemm:framed-transfer-via-twisted} the action of $\alpha$ is determined in terms of the twisted transfers and the $\ul{GW}$-action. By assumption, $\phi$ preserves the latter two, hence $\phi$ preserves the action of $\alpha$. This was to be shown.
\end{proof}

\begin{definition}
We denote by $\HI(k)^\fr \subset \HI(k)^\tw \subset \Ab(\Sm_k)^\Atr$ the essential image of the (fully faithful) functor $\wh{\omega^\infty}\colon \SH(k)^{\eff\heart} \to \Ab(\Sm_k)^\Atr$.
\NB{A priori $\HI(k)^\fr$ is not the same category as $\prod^{\fr}(k)$ because here morphisms don't necessarily have to preserve framed transfers. However, they turn out to be equivalent.}
\end{definition}

In a rather roundabout fashion, we have now arrived at the main theorem of this section.
\begin{theorem} \label{thm:M-2-delooping}
Let $k$ be a perfect field and $M \in \HI(k)$. Then $M_{\wh{-3}} \in \HI(k)^\fr \subset \Ab(\Sm_k)^\Atr$. If $char(k)=0$, also $M_{\wh{-2}} \in \HI(k)^\fr \subset \Ab(\Sm_k)^\Atr$.
\NB{-3 appears appears in Prop~\ref{prop:transfer-specialization}}
\end{theorem}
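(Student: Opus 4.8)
The plan is to exhibit $M_{\wh{-3}}$ (and, in characteristic $0$, $M_{\wh{-2}}$) as lying in the essential image of $\wh{\omega^\infty}$. Since $\wh{\omega^\infty}$ is fully faithful (Corollary~\ref{corr:omega-infty-hat-ff}) and $\SH(k)^{\eff\heart} \wequi \prod^\fr(k)$ by Theorem~\ref{thm:identify-effective-heart}, this comes down to two tasks: \textbf{(i)} promote the strictly homotopy invariant sheaf $M_{-3}$ (resp.\ $M_{-2}$) to an $\A^1$-invariant stable Nisnevich sheaf with framed transfers; and \textbf{(ii)} check that the resulting object of $\Ab(\Sm_k)^\Atr$ — i.e.\ the $\ul{GW}$-module structure together with the transfers along monogeneous field extensions extracted from the framed transfers — is the canonical one $M_{\wh{-3}}$ (resp.\ $M_{\wh{-2}}$) of Examples~\ref{ex:M-1-transfers} and~\ref{ex:twisted-transfers-M-2}.

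For task \textbf{(i)} I would \emph{define} the framed transfers by the formula of Lemma~\ref{lemm:framed-transfer-via-twisted}. Given $\alpha = (Z, U, f_\bullet, g) \in \Fr_n(X, Y)$, one first uses that $M_{-3} \in \HI(k)$ is unramified to reduce (as in the proof of Corollary~\ref{corr:omega-infty-hat-ff}) to the case in which $X$ is a finite disjoint union of spectra of fields; then $Z_\red$ is essentially smooth, the formula below is visibly natural in such $X$, and for general $X$ the transfers are determined by naturality and unramifiedness. One sets
\[ \alpha^* \;=\; \Big( M_{-3}(Y) \xrightarrow{\ i^* g^*\ } M_{-3}(Z_\red) \xrightarrow{\ \times c(f_\bullet)\ } M_{-3}(Z_\red,\, \omega_{Z_\red/X}) \xrightarrow{\ \tr_{Z_\red/X}\ } M_{-3}(X) \Big), \]
where $c(f_\bullet) \in H^n_Z(U, \ul{K}^{MW}_n)$ is the class of the regular sequence $f_\bullet$ and $\tr_{Z_\red/X}$ is the twisted transfer, available because $M_{\wh{-3}} \in \HI(k)^\tw$ (Example~\ref{ex:twisted-transfers-M-2}, valid since $3 \ge 2$; in characteristic $0$ one uses $M_{\wh{-2}} \in \HI(k)^\tw$ instead). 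It remains to verify that this assignment is a presheaf on $\Fr_*(k)$: independence of the étale neighbourhood $U$ and of the chosen equations $f_\bullet$; compatibility with composition $\Fr_n(X,Y) \times \Fr_m(Y,V) \to \Fr_{n+m}(X,V)$; stability, i.e.\ $\sigma_X$ acts by the identity (here $Z \cong X$, so the transfer is along an isomorphism, and the rest is the defining property of contraction); and Nisnevich descent plus $\A^1$-invariance, which are automatic since the underlying presheaf is $M_{-3}$. Task \textbf{(ii)} is then essentially formal: restricting along $\Sm_k \to \Fr_*(k)$ recovers the canonical pullbacks of $M_{-3}$; restricting to correspondences of the form $* \xleftarrow{f} \mathrm{Spec}\,L \to *$ recovers $\tr_{L/K}$ by the computation of Lemma~\ref{lemm:twisted-transfer-as-collapse}; and the $\ul{GW}$-action matches by construction.

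The main obstacle is the well-definedness-and-composition check inside task \textbf{(i)}, and it is precisely there that the dichotomy ``$-3$ in general, $-2$ in characteristic $0$'' originates. Both changing the equations $f_\bullet$ and forming the fibre product $Z \times_Y Z'$ produce supports that are a priori neither reduced nor smooth, so one must degenerate them to smooth supports in a one-parameter family and compare the twisted transfers along the degeneration; this is the content of Proposition~\ref{prop:transfer-specialization}, whose proof wants one further contraction's worth of codimension room, so that the relevant cohomology-with-support groups depend only on $M_{-3}$ and vary well in $\A^1$-families. In characteristic $0$ one can instead resolve the singular supports outright by resolution of singularities, so that two contractions suffice — this accounts for the hypothesis $\mathrm{char}(k)=0$ in the sharper statement. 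Once Proposition~\ref{prop:transfer-specialization} is available, the remaining verifications are routine diagram chases in the Rost--Schmid/Gersten description of the groups involved.
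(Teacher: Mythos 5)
Your overall strategy is the same as the paper's: define the action of a framed correspondence $\alpha=(Z,U,f_\bullet,g)$ on $M_{-3}$ (resp.\ $M_{-2}$) by the formula $\tr_{Z_\red/X}\bigl(c(f_\bullet)\times i^*g^*(-)\bigr)$, reduce to $X$ a finite disjoint union of field spectra via unramifiedness, verify the presheaf-on-$\Fr_*(k)$ axioms, and then conclude via Theorem \ref{thm:identify-effective-heart} and Corollary \ref{corr:omega-infty-hat-ff}. However, two of your key claims do not hold up. First, "for general $X$ the transfers are determined by naturality and unramifiedness" only gives \emph{uniqueness} of the extension from generic points, not \emph{existence}: one must prove that the element $\alpha^*(a)^{(0)}\in F(X^{(0)})$ actually lies in the subgroup $F(X)$. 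This is a genuine step (Lemma \ref{lemm:framed-correspondence-integral} in the paper), proved by passing to the normalization $\tilde Z$ of $Z_\red$ over a codimension-one localization, using that the twisted transfer is a map of Rost--Schmid complexes, the projection formula, and the fact that $c(f_\bullet)$ is a cocycle in $C^*_Z(U,\ul K_n^{MW})$, so its boundary vanishes. Your proposal never addresses this.

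Second, your diagnosis of where the dichotomy "$-3$ in general, $-2$ in characteristic $0$" comes from is incorrect. There is no well-definedness problem in the choice of $f_\bullet$ (the equations are part of the data of a framed correspondence, up to refining \'etale neighbourhoods), no singular supports survive the reduction to fields (finite reduced schemes over fields finitely generated over a perfect $k$ are essentially smooth), and no degeneration-in-families or resolution-of-singularities argument occurs anywhere. The actual source of the dichotomy is Proposition \ref{prop:transfer-specialization}: in checking compatibility of $\alpha^*$ with composition one must commute the finite transfer $\tr_{\tilde p}$ with the twisted specialization map $s^{[t]}=\partial([t]\cdot-)$ along a codimension-one regular immersion (Lemma \ref{lemm:framed-correspondences-compose}, after factoring an arbitrary $h$ into smooth maps and regular immersions and using the smooth base change of Lemma \ref{lemm:smooth-base-change}). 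That commutation holds unconditionally when the sheaf in question is itself a contraction, i.e.\ case (1) $M=M'_{-1}$ of Proposition \ref{prop:transfer-specialization} --- this is exactly why a \emph{third} contraction suffices over any perfect field (the commutation of $\tr$ with $\partial$ from Morel's Corollary 5.30 plus the twisted projection formula apply); for $M_{-2}$ one instead invokes case (3), where $\mathrm{char}(k)=0$ is used only to split a finite extension of a henselian dvr into monogeneous pieces and reduce to case (2). So while your architecture matches the paper, the two substantive verifications (integrality over codimension-one points, and transfer-vs-specialization commutation with its contraction/characteristic hypotheses) are either missing or justified by a mechanism that is not the one that works.
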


\begin{proof}
Let $F = M_{-2}$ or $M_{-3}$.
Let $\alpha = (Z, U, f_\bullet, g)$ be a framed correspondence from $X$ to $Y$. Let $\eta \in X$ be a generic point. For $a \in F(Y)$, define
\[ \alpha^*(a)_\eta := \tr_p(c(f_\bullet)_\eta i^*(g^*(a))) \in F(\eta). \]
Here $p\colon (Z_\eta)_\red \to \eta$ is the canonical finite projection, and $i\colon (Z_\eta)_\red \to U$ is the inclusion. The elements $\alpha^*(a)_\eta$ for various $\eta$ define an element $\alpha^*(a)^{(0)} \in F(X^{(0)})$. We claim that $\alpha^*(a)^{(0)} \in F(X) \subset F(X^{(0)})$ (here we use that $F$ is unramified). By the strong form of unramifiedness, it suffices to show that $\alpha^*(a)^{(0)} \in F(X_x)$ for each $x \in X^{(1)}$ (where $X_x$ denotes the localization in $x$). This is proved in Lemma \ref{lemm:framed-correspondence-integral} below. We denote by $\alpha^*(a) := \alpha^*(a)^{(0)} \in F(X) \subset F(X^{(0)})$ this element.

It remains to prove that this action is compatible with composition of framed correspondences. Denote by $\beta = (Z', V, h_\bullet, k)$ a further framed correspondence from $Y$ to $W$. The following diagram illustrates some of these schemes and maps
\begin{equation*}
\begin{tikzcd}
Z_{12} \ar[d] \ar[rr] & & Z' \ar[r, hookrightarrow] \ar[d] & V \ar[r, "k"] \ar[dl] & W \\
Z_\red \ar[d] \ar[r, hookrightarrow] & U \ar[r, "g"] \ar[dl] & Y \\
X.
\end{tikzcd}
\end{equation*}
Here $Z_{12} := (Z \times_Y Z')_\red$.

Let $b \in F(W)$. We need to show that $\alpha^* \beta^* (b) = (\beta \circ \alpha)^*(b)$. Composition on both sides is compatible with open immersions, so we may assume that $X$ is connected. Using unramifiedness, we may further assume that $X$ is the spectrum of a field, and in particular that $Z_\red$ is essentially smooth. Denote the composite $Z_\red \hookrightarrow U \xrightarrow{g} Y$ by $r$.
Consider the composed correspondence \[ \beta \circ r = ( Z_\red \times_Y Z', Z_\red \times_Y V, h_\bullet \circ \pr_V, k \circ \pr_V). \] In particular the reduced scheme corresponding to the support of $\beta \circ r$ is $Z_{12}$. Note that $Z_{12}$ is essentially smooth, being finite and reduced over $Z_\red$, which is a finite disjoint union of spectra of fields by our assumption on $X$. We have \[ (\beta \circ r)^*(b) = \tr_{Z_{12}/Z_\red}(c(h_\bullet \circ \pr_V) \times b) \] (we denote by $b$ also its restriction (pullback) to any scheme over $W$, in particular $Z_{12}$). 
By Lemma \ref{lemm:framed-correspondences-compose} below, we find that $r^*\beta^*(b) = (\beta \circ r)^*(b)$. Using the projection formula (Corollary \ref{corr:twisted-projection}(3)) and compatibility of transfers with composition, we find that
\[ \alpha^*\beta^*(b) = \tr_{Z_\red/X}(c(f_\bullet) \times (\beta \circ r)^*(b)) = \tr_{Z_{12}/X}(\pr_U^*(c(f_\bullet)) \times c(h_\bullet \circ \pr_V) \times b). \]  
Since $\pr_U^*(c(f_\bullet)) = c(f_\bullet \circ \pr_U)$,
 we obtain that
\[ \alpha^*\beta^*(b) = \tr_{Z_{12}/X}(c(f_\bullet \circ \pr_U, h_\bullet \circ \pr_V) \times b). \]
This is precisely $(\beta \circ \alpha)^*(b)$, as was to be shown.
\end{proof}

\begin{lemma}\label{lemm:framed-correspondence-integral}
Notation as in the proof of Theorem \ref{thm:M-2-delooping}.

Let $C$ be the localization of a smooth $k$-scheme in a point of codimension $1$, and let $\alpha = (Z, U, f_\bullet, g)$ be a framed correspondence from $C$ to $Y$. Write $\eta \in C$ for the generic point. Then for $a \in F(Y)$ we have $\alpha^*(a)_\eta \in F(C) \subset F(\eta)$.
\end{lemma}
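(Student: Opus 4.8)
The plan is to exhibit an element $\widehat\zeta\in F(C)$ restricting to $\alpha^*(a)_\eta$ under $F(C)\hookrightarrow F(\eta)$; this is exactly the assertion. Write $c$ for the closed point of $C$, so $C$ is a discrete valuation ring, essentially smooth over $k$. I would first record the geometry of $Z$: since $f_\bullet$ is a regular sequence, $Z=f_\bullet^{-1}(0)\subset\A^n_C$ is a local complete intersection, pure of codimension $n$ in $U$, hence finite over $C$ of relative dimension $0$; as no component of the $1$-dimensional scheme $Z$ can contract to a point, every component dominates $C$, so by miracle flatness $Z\to C$ is finite flat, and the reduced generic fibre $(Z_\eta)_\red$ is essentially smooth over $k$ (being finite and reduced over the essentially smooth $\eta$).

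Next I would work in the Rost--Schmid complex of the \emph{smooth} scheme $U$. By construction $c(f_\bullet)$ is a cocycle $c(f_\bullet)=(c(f_\bullet)_z)_z\in C^n_Z(U,\ul{K}^{MW}_n)=\bigoplus_{z\in Z^{(0)}}\ul{GW}(k(z),\omega_{z/U})$ (sum over the generic points $z$ of $Z$). Using the $\ul{GW}$-module structure on $F$, set
\[ \zeta=\bigl(\,c(f_\bullet)_z\cdot(g^*a)|_{k(z)}\,\bigr)_{z\in Z^{(0)}}\in\bigoplus_{z\in Z^{(0)}}F(k(z),\omega_{z/U}). \]
The claim I would check is that $\zeta$ is a cocycle in the Gersten complex $\widetilde{C}^{\,*}(Z,F):=\bigl(\bigoplus_{z\in Z^{(j)}}F_{-j}(k(z),\omega_{z/U})\bigr)_{j\ge0}$ of the lci scheme $Z$ with coefficients in $F$, twisted by $\omega_{Z/U}=\det(\mathcal{I}/\mathcal{I}^2)^\vee$: the boundary along a specialization $z\rightsquigarrow w$ (with $z\in Z^{(0)}$, $w\in Z^{(1)}$) is computed, after passing to the normalization of $\overline{z}$ as in Morel's definition of the differential, by a Leibniz rule; since $(g^*a)|_{k(z)}$ is the restriction of a \emph{global} section (so has vanishing residue), this boundary equals $\partial(c(f_\bullet)_z)\cdot(g^*a)|_{k(w)}$, and summing over the $z$ specializing to a given $w$ — and using that $(g^*a)|_{k(w)}$ does not depend on the branch — gives $\bigl(\partial c(f_\bullet)\bigr)|_w\cdot(g^*a)|_{k(w)}=0$ because $c(f_\bullet)$ is a cocycle.

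I would then push $\zeta$ forward along $Z\to C$. The transfer construction of Section~\ref{sec:RS-transfers}, available for $F=M_{-2}$ (resp.\ $M_{-3}$) because $F$ and all its contractions admit twisted transfers (Example~\ref{ex:twisted-transfers-M-2}), should give a morphism of complexes $\tr_{Z/C}\colon\widetilde{C}^{\,*}(Z,F)\to C^{*}(C,F)$, where one uses the canonical isomorphism $\omega_{Z/C}\cong\omega_{Z/U}$ coming from the coordinates of $\A^n$. Since $\zeta$ is a cocycle concentrated in degree $0$, $\widehat\zeta:=\tr_{Z/C}(\zeta)$ is a cocycle in $C^0(C,F)$, i.e.\ $\widehat\zeta\in\ker\bigl(F(\eta)\xrightarrow{\partial}F_{-1}(k(c),\omega_{c/C})\bigr)=F(C)$, the last equality being unramifiedness of $F$. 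Finally, base-changing along the pro-open immersion $\eta\to C$ identifies $\widehat\zeta|_\eta$ with the transfer along $Z_\eta\to\eta$ of $\zeta|_{Z_\eta}$; since $(Z_\eta)_\red$ is smooth, purity rewrites $\zeta|_{Z_\eta}$ as $c(f_\bullet)_\eta\cdot i^*g^*a\in F\bigl((Z_\eta)_\red,\omega_{(Z_\eta)_\red/\eta}\bigr)$ and the transfer along $(Z_\eta)_\red\to\eta$ as the map $\tr_p$ of Lemma~\ref{lemm:twisted-transfer-as-collapse}, so that $\widehat\zeta|_\eta=\tr_p(c(f_\bullet)_\eta\cdot i^*g^*a)=\alpha^*(a)_\eta$, as desired.

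The hard part will be the third step: Morel's transfer on Rost--Schmid complexes is set up for finite flat morphisms of \emph{smooth} schemes, whereas here the source $Z$ is a possibly singular lci scheme, and $F$ is merely a $\ul{GW}$-module with transfers rather than a homotopy module, so neither Morel's statement nor Rost's general cycle-module pushforward applies verbatim; one must verify directly that $\widetilde{C}^{\,*}(Z,F)$ is a complex and that Morel's componentwise construction (a sum of absolute twisted transfers on $F$ and its contractions, together with the projection formula of Example~\ref{ex:twisted-transfers-M-2}) commutes with the differential. Because only the differentials $\widetilde{C}^{0}(Z,F)\to\widetilde{C}^{1}(Z,F)$ and $C^0(C,F)\to C^1(C,F)$ enter, this comes down to a compatibility between transfers along finite extensions of discrete valuation rings and the residue maps, which should follow by rerunning Morel's argument, since it only uses the local behaviour of the absolute transfers and not smoothness of the source. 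It is worth stressing that one cannot circumvent this by passing to the normalization $\tilde Z\to Z$: the residues of $c(f_\bullet)$ at the separated branches of $\tilde Z$ need not individually vanish, and it is essential to work with $\zeta$ on $Z$, where the Leibniz rule makes them recombine to zero. A secondary nuisance is the bookkeeping of the various line-bundle twists ($\omega_{z/U}$, $\omega_{z/C}$, $\omega_{Z/U}$ and the coordinate trivializations) and the $\epsilon$-signs in the Leibniz rule.
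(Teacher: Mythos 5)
Your overall skeleton (Leibniz rule for the boundary, projection formula, and the cocycle property of $c(f_\bullet)$, followed by a pushforward to $C$ and the unramifiedness characterization $F(C)=\ker(\partial_x)$) is the right one, and your endgame identification of the restriction to $\eta$ is fine. But the step you defer as ``the hard part'' --- that your Gersten-type complex $\widetilde C^{\,*}(Z,F)$ on the possibly singular lci scheme $Z$ admits a transfer to $C^*(C,F)$ commuting with the differentials (equivalently, that the cocycle condition on $\zeta$ survives the pushforward) --- is not a routine verification to be waved through by ``rerunning Morel's argument''; it is exactly the content of the lemma, and as written your proof does not establish it. Worse, the remark you add to justify the detour --- that one \emph{cannot} argue via the normalization $\tilde Z\to Z_\red$ because the residues of $c(f_\bullet)$ at the separate branches need not individually vanish --- is mistaken, and it is precisely the normalization that closes your gap. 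The paper's proof passes to $\tilde Z$ (essentially smooth, finite over $C$), where Morel's transfer $\tr_{\tilde p}\colon C^*(\tilde Z, F(\omega_{\tilde Z/C}))\to C^*(C,F)$ \emph{is} a morphism of complexes by \cite[Corollary 5.30]{A1-alg-top}; one then writes $\alpha^*(a)_\eta=\tr_{\tilde p}(ac)$, uses Lemma \ref{lemm:boundary-product-formula}(4) to get $\partial(ac)=a\,\partial(c)$ (no vanishing of $\partial(c)$ on $\tilde Z$ is needed), factors $\tr_{\tilde p}$ in degree one as $\tr_2\circ\tr_1$ through $Z$, pulls $a$ out of $\tr_1$ by the projection formula (Corollary \ref{corr:twisted-projection}), and observes that $\tr_1\circ\partial$ is by definition the Rost--Schmid differential $\partial'$ of the complex computing $H^n_Z(U,\ul{K}^{MW}_n)$, where $c(f_\bullet)$ is a cocycle, so $\tr_1(\partial c)=\partial'(c)=0$. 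The ``recombination across branches'' you worried about happens exactly at this point, inside $\tr_1$.

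Note also that if you did try to fill in your deferred step honestly, you would be led back to the same argument: the degree $0\to1$ differential of your complex on singular $Z$ is itself defined via the normalization of the closures of generic points, so the chain-map property of $\tr_{Z/C}$ reduces, via compatibility of transfers with composition, to the smooth case $\tilde Z\to C$ of \cite[Corollary 5.30]{A1-alg-top} --- i.e.\ to the paper's proof re-expressed. So the proposal is salvageable, but as it stands the central commutation statement is asserted rather than proved, and the one concrete methodological claim you make about how \emph{not} to prove it points away from the actual solution.
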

\begin{proof}
We shall denote by $a$ also its restriction to other schemes, when no confusion can arise (for example, we may say that $a \in F(U)$). We may assume that $Y=U$ and $g=\id$.

Let $x$ be the closed point of $C$. We are attempting to prove that $0 = \partial_x(\alpha^*(a)_\eta) \in F_{-1}(x, \omega_{x/C})$. \NB{$0 \to F(C) \to F(\eta) \to F_{-1}(x, \omega_{x/C})$ is exact.}
Let $\tilde{Z}$ denote the normalization of $Z_\red$. Then $\tilde{Z}$ is essentially smooth and $\tilde{p}\colon \tilde{Z} \to C$ is finite \cite[Tags 035R, 0335, 032S]{stacks}. In particular there is a transfer map $\tr_{\tilde{p}}\colon C^*(\tilde{Z}, F(\omega_{\tilde{Z}/C})) \to C^*(C, F)$ which is a morphism of complexes \cite[Corollary 5.30]{A1-alg-top}. We have $a \in C^0(\tilde{Z}, F)$ and $c := c(f_\bullet) \in C^0(\tilde{Z}, \ul{GW}(\omega_{\tilde{Z}/C}))$ (here we use that $(\omega_{\tilde{Z}/C})_\eta \wequi (\omega_{Z_\red/C})_\eta$, $\tilde{Z}$ and $Z$ being birational). We obtain $ac \in C^0(\tilde{Z}, F(\omega_{\tilde{Z}/C}))$ and by definition, $\alpha^*(a)_\eta = \tr_{\tilde{p}}(ac)$. Since $\tr_{\tilde{p}}$ is a morphism of complexes, what we need to show is that $0 = \tr_{\tilde{p}}(\partial(ac))$. Here $\partial\colon C^0(\tilde{Z}, F(\omega_{\tilde{Z}/C})) \to C^1(\tilde{Z}, F(\omega_{\tilde{Z}/C}))$ is the Rost-Schmid differential, which in degree zero is just the canonical boundary map of a strictly homotopy invariant sheaf. Since $a \in F(\tilde{Z})$ we get $\partial(ac) = a \partial(c)$ (see Lemma \ref{lemm:boundary-product-formula}(4) in the next subsection). 

The transfer map $\tr_{\tilde{p}}\colon C^1(\tilde{Z}, F(\omega_{\tilde{Z}/C})) \to C^1(C, F)$ factors as\NB{$Z$ isn't smooth so $C^*(Z, M)$ and $\omega_{Z/C}$ are not really defined. But $C^1(Z, F(\Omega_{Z/C}))$ still makes sense. Not sure if we should elaborate on this. At that point we no longer care about the complex, only the transfer on fields twisted by stuff.}
\[ C^1(\tilde{Z}, F(\omega_{\tilde{Z}/C})) \xrightarrow{\tr_1} C^1(Z, F(\omega_{Z/C})) \xrightarrow{\tr_2} C^1(C, F), \]
transfers being compatible with composition.
Since $a$ is pulled back from $Z$, by the projection formula (Corollary \ref{corr:twisted-projection}(3)) we have $\tr_1(a\partial c) = a\tr_1(\partial c)$. It thus suffices to show that $\tr_1(\partial c) = 0$. But $\tr_1 \circ \partial $ is precisely the definition of the boundary map $\partial'$ in the Rost-Schmid complex computing $H^n_Z(U, \ul{K}_n^{MW})$ \cite[Definition 5.11]{A1-alg-top}. In other words, by definition the following diagram commutes
\begin{equation*}
\begin{CD}
                               @. C^0(\tilde{Z}, \ul{GW}(\omega_{\tilde{Z}/U})) @>{\partial}>>  C^1(\tilde{Z}, \ul{GW}(\omega_{\tilde{Z}/U})) \\
    @.                               @A{\wequi}AA                                  @V{\tr_1}VV  \\
\ul{GW}(Z^{(0)}, \omega_{Z/U}) @= C^n_Z(U, \ul{K}_n^{MW}) @>{\partial'}>> C^{n+1}_Z(U, \ul{K}_n^{MW}) @= \ul{W}(Z^{(1)}, \omega_{Z/U}).
\end{CD}
\end{equation*}
Since actually $c = c(f_\bullet) \in H^n_Z(U, \ul{K}_n^{MW})$ by construction, we must have $\partial'(c) = 0$, which is what we wanted to show.
\end{proof}

\begin{lemma}\label{lemm:framed-correspondences-compose}
Notation as in the proof of Theorem \ref{thm:M-2-delooping}.

Let $\alpha = (Z, U, f_\bullet, g)$ be a framed correspondence from $X \to Y$ and $h\colon X' \to X$ a morphism of smooth schemes. Denote by $\alpha \circ h$ the correspondence $(Z', U \times_X X', f_\bullet \circ h, g \circ h)$. Then for $a \in F(Y)$ we have $h^*(\alpha^*(a)) = (\alpha \circ h)^*(a)$.
\end{lemma}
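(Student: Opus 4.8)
The plan is to reduce the identity, using that $F$ is unramified, to a comparison at the generic points of $X'$; this turns it into the statement $\psi^*(\alpha^*(a)) = (\alpha\circ\psi)^*(a)$ for an arbitrary morphism $\psi\colon\operatorname{Spec} L\to X$ out of a finitely generated field extension $L/k$, which we then split into a field extension followed by the inclusion of a point. Concretely: since $F$ is strictly homotopy invariant, hence unramified, $F(X')\hookrightarrow\prod_{\tau\in X'^{(0)}}F(\tau)$ is injective, so it suffices to compare the two sides after restriction to each generic point $\tau$ of $X'$. Base change of framed correspondences is compatible with composition, so the restriction of $\alpha\circ h$ along $\operatorname{Spec} k(\tau)\to X'$ is $\alpha\circ\psi$ with $\psi\colon\operatorname{Spec} k(\tau)\to X'\xrightarrow{h}X$; by construction $(\alpha\circ h)^*(a)|_\tau$ is exactly the defining formula applied to $\alpha\circ\psi$ at its unique generic point, and $h^*(\alpha^*(a))|_\tau=\psi^*(\alpha^*(a))$. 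Writing $x=\psi(\tau)$, the morphism $\psi$ factors canonically as $\operatorname{Spec} k(\tau)\xrightarrow{\phi}\operatorname{Spec} k(x)\xrightarrow{\iota_x}X$ with $\phi$ a field extension and $\iota_x$ the canonical inclusion of the point, so it is enough to treat $\psi=\phi$ and $\psi=\iota_x$ separately.

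For $\psi=\phi$, both sides are computed by the defining formula $\gamma^*(a)=\tr_{(Z_\gamma)_{\mathrm{red}}}(c(g_\bullet)\cdot i^*e^*(a))$ at the generic point, and each ingredient is compatible with flat base change along $\operatorname{Spec} k(\tau)\to\operatorname{Spec} k(x)$: pullbacks compose, the boundary maps $[g_j]\mapsto\partial[g_j]$ defining $c(g_\bullet)$ commute with flat pullback by Lemma~\ref{lemm:flat-pullback-works}, and the Rost--Schmid transfer commutes with flat base change (as already used in the proof of Lemma~\ref{lemm:pushpull-d}); moreover $(Z_x)_{\mathrm{red}}\times_{k(x)}k(\tau)$ and the support of $\alpha\circ\phi$ agree up to nilpotents, which $F$ does not see. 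This disposes of the field-extension case.

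For $\psi=\iota_x$ we induct on $d=\operatorname{codim}(x,X)$. Replacing $X$ by $\operatorname{Spec}\mathcal O_{X,x}$ is legitimate, since $\alpha^*(a)|_{\mathcal O_{X,x}}$ and $(\alpha|_{\mathcal O_{X,x}})^*(a)$ both lie in $F(\mathcal O_{X,x})$ and agree on generic points, hence coincide by unramifiedness; so we may assume $X$ local with closed point $x$ and $\dim X=d$. The case $d=0$ is trivial. For $d\ge1$ we shrink $X$ so that some $t\in\mathcal O(X)$ (part of a regular system of parameters at $x$) cuts out a smooth divisor $X_1\ni x$; then $\iota_x$ factors through $X_1$, and by the inductive hypothesis applied to $X_1$ (dimension $d-1$) we are left with the codimension-one closed immersion case $i^*(\alpha^*(a))=(\alpha\circ i)^*(a)$ for $i\colon X_1\hookrightarrow X$. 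Checking this at the generic point $z$ of $X_1$ and localizing $X$ at $z$, it becomes: for $C$ a one-dimensional local essentially smooth scheme with closed point $z$ and generic point $\eta$, and $\alpha$ a framed correspondence from $C$ to $Y$, the value at $z$ of the element $\alpha^*(a)_\eta\in F(C)\subset F(\eta)$ --- whose existence in $F(C)$ is Lemma~\ref{lemm:framed-correspondence-integral} --- equals $\alpha_z^*(a)$.

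This last point is the main obstacle, and it is handled by pushing the proof of Lemma~\ref{lemm:framed-correspondence-integral} one step further. There $\alpha^*(a)_\eta$ is exhibited as $\tr_{\tilde p}(ac)$ with $\tilde p\colon\tilde Z\to C$ the normalization of $Z_{\mathrm{red}}$ composed with the projection (a finite morphism of essentially smooth schemes, as $\dim\le 1$) and $\tr_{\tilde p}\colon C^*(\tilde Z,F(\omega_{\tilde Z/C}))\to C^*(C,F)$ a morphism of Rost--Schmid complexes, and it is shown that $\tr_{\tilde p}(\partial(ac))=0$, i.e.\ that $\alpha^*(a)_\eta$ is a $0$-cocycle of $C^*(C,F)$. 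Since $\tr_{\tilde p}$ is a chain map, it is compatible with the passage from $\eta$ to $z$ built into these complexes; combining this with the factorization $\tr_{\tilde Z/C}=\tr_{Z_{\mathrm{red}}/C}\circ\tr_{\tilde Z/Z_{\mathrm{red}}}$ (the second factor being generically an isomorphism), the projection formula applied to the globally defined class $a$ (Corollary~\ref{corr:twisted-projection}), and the fact that $c(f_\bullet)$ restricts over $z$ to the class of the still-regular sequence $f_\bullet|_{U_z}$, one identifies $\alpha^*(a)_\eta|_z$ with $\tr_{(Z_z)_{\mathrm{red}}/z}(c(f_\bullet|_{U_z})_z\cdot i_z^*g_z^*(a))=\alpha_z^*(a)$. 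The one genuinely delicate bookkeeping point is that $ac$ need not itself be a cocycle on $\tilde Z$, so the value at $z$ must be extracted through the quotient $C^0/\operatorname{im}\partial$ rather than on cocycles; everything else is formal, and combining the three cases finishes the proof.
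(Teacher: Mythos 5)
There is a genuine gap, and it sits exactly at the step you call ``formal''. Your overall skeleton (reduce by unramifiedness to generic points, handle a field extension by flat/smooth base change, then induct on codimension down to a codimension-one specialization on a one-dimensional local scheme $C$) parallels the paper's reduction (factor $h$ into a smooth morphism, handled by Lemma \ref{lemm:smooth-base-change} and naturality of $c(f_\bullet)$, and a regular immersion of codimension one). But in the codimension-one case your argument breaks down: you claim that because $\tr_{\tilde p}\colon C^*(\tilde Z, F(\omega_{\tilde Z/C}))\to C^*(C,F)$ is a chain map, it is ``compatible with the passage from $\eta$ to $z$ built into these complexes''. The restriction of an element of $F(C)\subset F(\eta)=C^0(C,F)$ to the closed point $z$ is \emph{not} encoded in the differential of the Rost--Schmid complex (the differential kills $F(C)$); it is computed by Corollary \ref{corr:pullback-formula} as $i^*(m)=\partial^{t}([t]m)$ for a uniformizer $t$. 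So what you actually need is that the operation $s^{[t]}=\partial([t]\cdot)$ commutes with $\tr_{\tilde p}$, and this is precisely Proposition \ref{prop:transfer-specialization} --- a genuinely non-formal statement whose proof requires either one extra contraction ($M=M'_{-1}$, i.e.\ $F=M_{-3}$), or a reduction to monogeneous extensions of a dvr via henselization, which in general needs $\mathrm{char}(k)=0$ (so $F=M_{-2}$ only in characteristic zero). Your proposal never invokes this commutation, nor the hypotheses on $F$ at all; a proof of this lemma that makes no use of $F$ being a second or third contraction should already be suspect, since these hypotheses are exactly what Theorem \ref{thm:M-2-delooping} is calibrated to.

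Two further symptoms of the same problem: your ``delicate bookkeeping point'' about extracting the value at $z$ through the quotient $C^0/\operatorname{im}\partial$ is not how restriction to a closed point works in this formalism (again, it is $\partial^t([t]\cdot)$, not any cokernel of the differential); and your identification of the restricted class with $c(f_\bullet|_{U_z})$ is asserted without the mechanism the paper uses, namely that after applying the projection formula and Lemma \ref{lemm:boundary-product-formula}(2) one recognizes $\tr_1(\partial([t]c))$ as the boundary $\partial'([t]c)$ in the complex computing $H^n_Z(U,\ul K^{MW}_n)$, which by Remark \ref{rmk:closed-pullback-correct} is the closed pullback $h^*(c)=c(f_\bullet\circ h)$, using that the same $t$ cuts out $U_x\subset U$. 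Repairing your argument essentially forces you to reinstate the chain of Corollary \ref{corr:pullback-formula}, Proposition \ref{prop:transfer-specialization}, Corollary \ref{corr:twisted-projection}, Lemma \ref{lemm:boundary-product-formula}(2) and Remark \ref{rmk:closed-pullback-correct}, which is the paper's proof.
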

\begin{proof}
If the result is true for composable morphisms $h_1, h_2$, then it is also true for $h_1 \circ h_2$. We may assume that $g=\id$. Note that essentially by definition, the result holds if $h$ is an open immersion. In particular we may assume that $X'$ and $X$ are connected. As usual we may factor $h$ into a smooth morphism and a regular immersion, at least locally on $X'$ \cite[Tags 069M and 07DB]{stacks}, so we deal with those in turn.

Suppose now that $h$ is smooth. Then $h$ is dominant (being open between connected schemes \cite[Tag 01UA]{stacks}) and thus using unramifiedness of $F$ we may replace $X, X'$ by their generic points. Since $h$ is smooth, the following square is cartesian\NB{i.e. $X' \times_X Z_\red \to Z_\red$ is a smooth morphism to a union of spectra of fields, so the source must be reduced}
\begin{equation*}
\begin{CD}
Z'_\red @>>> Z_\red \\
@VVV          @VVV  \\
X'      @>h>> X.
\end{CD}
\end{equation*}
The result now follows by naturality of $c(f_\bullet)$ (i.e. $h^*c(f_\bullet) = c(f_\bullet \circ h)$) and the smooth base change formula from Lemma \ref{lemm:smooth-base-change}.

Now suppose that $h$ is a regular immersion. We may replace $X'$ by its generic point $x$ and so in particular factor $h = i \circ h'$, where $i$ is a regular immersion of codimension $1$ and $h'$ is still a regular immersion (e.g. use \cite[Tag 00NQ]{stacks}). Thus we may assume that $h$ is of codimension $1$. We may further replace $X$ by its localization $C$ in $x$. From now on we use the notation from the proof of Lemma \ref{lemm:framed-correspondence-integral}.

For any essentially smooth $1$-dimensional scheme $W$ with closed point $x$ and generic point $\eta$, given $a \in \scr O(\eta)^\times$ we can consider the operation $s^{[a]} = \partial([a] \bullet)\colon F(\eta) \to F(x, \omega_{x/X})$. We will need to use some technical properties of this operation established in the next section. In particular by Lemma \ref{lemm:boundary-product-formula}, the map $s^{[a]}$ is $\ul{GW}$-linear, and hence can be twisted.

Now we get back to the proof. Recall that $h$ is a regular immersion (of codimension $1$). We know that $\alpha^*(a) = \tr_{\tilde{p}}(ac) \in F(C)$. Then by Corollary \ref{corr:pullback-formula} we have $h^*(\alpha^*(a)) = \partial^t_x([t] \tr_{\tilde{p}}(ac))$, where $t$ denotes a chosen uniformizer of $C$ in $x$. In other words \[h^*(\alpha^*(a)) = s^{[t]} \tr_{\tilde{p}}(ac) \in F(x, \omega_{x/X}) \wequi F(x),\] where $\omega_{x/X} \wequi \scr O_x$ via $t$. By Proposition \ref{prop:transfer-specialization}, we get $h^*(\alpha^*(a)) = \tr_{\tilde{p}}(s^{[t]} a c)$. As before we factor $\tr_{\tilde{p}}$ on $C^1$ as $\tr_2 \circ \tr_1$ and use the projection formula (Corollary \ref{corr:twisted-projection}(2)) and Lemma \ref{lemm:boundary-product-formula}(2) to obtain
\[ h^*(\alpha^*(a)) = \tr_2(a \tr_1(\partial ([t] c))), \]
and again we have $\tr_1(\partial ([t] c)) = \partial'([t] c)$, where $\partial'$ denotes the boundary map on $C^n_Z(U, \ul{K}_n^{MW})$. Since $t$ also cuts out $U_x \subset U$, it follows from Remark \ref{rmk:closed-pullback-correct} 
 that $\tr_1(\partial ([t] c)) = h^*(c) = c(f_\bullet \circ h)$. We have thus found
\[ h^*(\alpha^*(a)) = \tr_2(a c(f_\bullet \circ h)), \]
which is precisely the definition of $(\alpha \circ h)^*(a)$.
\end{proof}

\subsection{More about the $M_{-2}$}
In the proof of Theorem \ref{thm:M-2-delooping} we made use of some technical facts regarding specialization and transfer maps on sheaves of the form $M_{-2}$. We collect them here. We are not trying to be exhaustive; instead we only record the facts that we really use. Throughout $M$ denotes a strictly homotopy invariant sheaf.

\subsubsection{Boundary formulas}
Recall first that for any essentially smooth local 1-dimensional scheme $X$ with generic point $\eta$ and closed point $x$, there is the canonical \emph{boundary map} $\partial\colon M(\eta) \to M_{-1}(x, \omega_{x/X})$ \cite[Lemma 5.10]{A1-alg-top}. Moreover $M_{-1}$ is automatically a sheaf of $\ul{GW}$-modules \cite[Lemma 3.49]{A1-alg-top}. Furthermore we have a bilinear pairing $\ul{K}_1^{MW} \otimes M_{-1} \to M$. Applying this to $M_{-1}$ we obtain a pairing $\ul{K}_1^{MW} \otimes M_{-2} \to M_{-1}$ which is in fact $\ul{GW}$-linear \cite[Lemma 3.49]{A1-alg-top}. Finally by contracting the $\ul{GW}$-action $\ul{K}_0^{MW} \otimes M_{-1} \to M_{-1}$ we obtain $\ul{K}_{-1}^{MW} \otimes M_{-1} \to M_{-2}$. The following result shows that all of these actions behave sensibly with respect to the boundary maps.

$ $

\begin{lemma} \label{lemm:boundary-product-formula} ${}$ 
\begin{enumerate}
\item For $a \in \ul{K}_1^{MW}(X)$ and $m \in M_{-1}(\eta)$ we have $\partial(a m) = \epsilon \bar{a} \partial(m)$, where $\bar{a} \in K_1^{MW}(x)$ denotes the restriction of $a$ from $X$ to $x$.
\item For $a \in \ul{K}_1^{MW}(\eta)$ and $m \in M_{-1}(X)$ we have $\partial(a m) = \partial(a) \bar{m}$, where $\bar{m} \in M_{-1}(x)$ denotes the restriction of $m$ from $X$ to $x$.
\item For $b \in \ul{GW}(X)$ and $m \in M_{-1}(\eta)$ we have $\partial(b m) = \bar{b} \partial(m)$, where $\bar{b} \in GW(x)$ denotes the restriction of $b$ from $X$ to $x$.
\item For $b \in \ul{GW}(\eta)$ and $m \in M_{-1}(X)$ we have $\partial(b m) = \partial(b) \bar{m}$.
\end{enumerate}
\end{lemma}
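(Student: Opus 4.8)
The plan is to recognize all four identities as instances of a single graded Leibniz rule for the boundary map, and to reduce that rule to statements already in \cite{A1-alg-top}. Recall that $\partial\colon M(\eta)\to M_{-1}(x,\omega_{x/X})$ is the connecting morphism of the localization sequence attached to $x\hookrightarrow X$, read through the purity equivalence $X/(X\setminus x)\wequi Th(N_{x/X})$; the boundary maps occurring in (1)--(4) with $M_{-1}$ or $M_{-2}$ in place of $M$ are the connecting morphisms of the same sequence for these sheaves. All the pairings in play ($\ul K_1^{MW}\otimes M_{-1}\to M$, $\ul K_1^{MW}\otimes M_{-2}\to M_{-1}$, $\ul{GW}\otimes M_{-1}\to M_{-1}$ and $\ul K_{-1}^{MW}\otimes M_{-1}\to M_{-2}$) arise from the mapping-object description $M_{-n}(Y)=[\Gmp{n}\wedge Y_+,M]$ together with the diagonal of $Y$, the maps $Y\to\Gm$ given by units, and the Hopf map $\eta\colon\Gmp{2}\to\Gm$. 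The key claim is then
\[ \partial(\alpha\cdot m)=\partial(\alpha)\cdot\bar m+\epsilon^{\,|\alpha|}\,\bar\alpha\cdot\partial(m), \]
where $\alpha$ is a section of $\ul K_1^{MW}$ or of $\ul{GW}$ over $\eta$ (of degree $|\alpha|$ equal to $1$ or $0$), $m\in M_{-1}(\eta)$, bars denote restriction to $x$, and $\partial(\beta)=0$ whenever $\beta$ extends to $X$. Granting this, (1) is the case $\alpha=a\in\ul K_1^{MW}(X)$ (so $\partial(a)=0$, leaving $\epsilon\,\bar a\,\partial(m)$), (2) the case $m\in M_{-1}(X)$ (so $\partial(m)=0$, leaving $\partial(a)\,\bar m$), (3) the case $\alpha=b\in\ul{GW}(X)$ (so $\partial(b)=0$, and $|b|=0$ leaves $\bar b\,\partial(m)$), and (4) the case $\alpha=b\in\ul{GW}(\eta)$ with $m\in M_{-1}(X)$ (so $\partial(m)=0$, leaving $\partial(b)\,\bar m$, which lands in $M_{-2}$ via the contracted pairing).

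To prove the Leibniz rule I would first treat the generating case: $\alpha=[u]$ a symbol with $u\in\scr O_X^\times$ or $u\in\scr O(\eta)^\times$. By naturality of connecting morphisms, $\partial$ applied to $[u]\cdot m=m\circ([u]\wedge\id)\circ\Delta$ is governed by the way the diagonal interacts with the attaching map $Th(N_{x/X})[-1]\to\Sigma^\infty\eta_+$; this is exactly the computation underpinning Morel's residue formulas, and in the present setting it is the local form of our Lemma~\ref{lemm:Utimes-commutativity} (with reference \cite[just before Lemma 5.36]{A1-alg-top}) when $[u]$ comes from a global unit, together with its evident variant when $u$ is only generic (which also yields the specialization $\partial([u]\,m)=\bar m$ up to the twist, i.e.\ \cite[Lemma 5.36]{A1-alg-top}). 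The $\ul{GW}$-linearity of $\partial$ for constant scalars used in (3) is part of the construction of the $\ul{GW}$-module structure on $M_{-1}$ in \cite[Lemma 3.49]{A1-alg-top}, and I would invoke it directly. Finally, to pass from symbols to arbitrary sections one uses that over the local ring $\scr O_{X,x}$ and over $\eta$ the group $\ul K_1^{MW}$ is generated, as a $\ul{GW}$-module, by the symbols $[u]$, while $\ul{GW}$ is generated as a group by the classes $\lra{u}=1+\eta[u]$; together with additivity of $\partial$ and of all the pairings, this bootstraps the symbol case to the general one.

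I expect the genuine difficulty to be bookkeeping of a delicate kind rather than a conceptual obstruction: tracking the signs $\epsilon=-\lra{-1}$ (present for the degree-one factor in (1) and for $\eta$-multiplication, absent for the degree-zero factors in (3) and (4), and consistent with the conventions of Lemmas~\ref{lemm:Utimes-commutativity} and~\ref{lemm:boundary-commutativity}), and checking that the four pairings above, which live in slightly different gradings and are each defined somewhat ad hoc, really do assemble into one graded Leibniz pattern over $\ul K_*^{MW}$ --- in particular that the contracted pairing $\ul K_{-1}^{MW}\otimes M_{-1}\to M_{-2}$ behaves correctly under $\partial$. A secondary caveat is that for a general $M$ (not a homotopy module) only the non-positively graded part of the $\ul K_*^{MW}$-action on $\bigoplus_{n\ge0}M_{-n}$ is available, so one must make sure the reductions above never call for multiplying $M$ itself by a positive-degree class.
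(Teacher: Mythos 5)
Your plan has a genuine gap, concentrated exactly at the two parts of the lemma that are new, namely (2) and (4). First, the ``single graded Leibniz rule'' you state is not actually well-formed: $\bar\alpha$ and $\bar m$ are restrictions from $X$ to $x$, so each only makes sense when the corresponding factor extends to $X$; hence the formula can only be a mnemonic and the four cases must be proved separately anyway. That is harmless in itself, but it means the real content of your proposal is the symbol computation, and there the reductions you invoke do not cover what is needed. For (2) and (4) the section $a\in\ul K_1^{MW}(\eta)$ (resp.\ $b\in\ul{GW}(\eta)$) is generically defined, so after writing it in terms of symbols you must compute $\partial([u]\,m)$ and $\partial(\lra{u}m)$ for $u\in\scr O(\eta)^\times$ \emph{not} a unit at $x$ and $m\in M_{-1}(X)$. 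The results you cite do not supply this: Lemma \ref{lemm:Utimes-commutativity} is stated for $M\in\HI_0(k)$ and for units defined on all of $X$, and Morel's specialization statement (his Lemma 5.36) likewise lives in a context where the relevant pairings and transfers are already in place; the ``evident variant when $u$ is only generic'' is precisely the assertion to be proved, not a citation. Moreover, bootstrapping from symbols to arbitrary sections forces you to regroup products such as $(\eta[w][u])\cdot m=[w]\cdot(\eta[u]\cdot m)$ and to commute $\partial$ past $\eta$-multiplication; these compatibilities among the several ad hoc pairings ($\ul K_1^{MW}\otimes M_{-1}\to M$, $\ul{GW}\otimes M_{-1}\to M_{-1}$, $\ul K_{-1}^{MW}\otimes M_{-1}\to M_{-2}$) are exactly what you flag as a ``caveat'' but never establish for a general $M\in\HI(k)$, and without them the induction on symbols does not close. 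Finally, the sign: you attribute $\epsilon$ to the degree of the factor in a graded rule, but nothing in your argument actually produces it; its source has to be computed somewhere, and deferring it to ``bookkeeping'' leaves the one nontrivial sign in the lemma unjustified. (For (1) and (3) your route also needs generation of $\ul K_1^{MW}(X)$ by symbols of units over the local scheme $X$, a nontrivial input; but those two cases are already in Morel, as the paper notes.)

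For comparison, the paper's proof avoids symbols entirely and is essentially formal: the sheaf pairings $\ul K_1^{MW}\otimes M_{-1}\to M$ and $\ul{GW}\otimes M_{-1}\to M_{-1}$ induce actions of $\ul K_1^{MW}(X)$ (resp.\ of $H^1_x(X,\ul K_1^{MW})$, $\ul{GW}(X)$, etc.) on the long exact sequence of cohomology with support, and the connecting homomorphism $\partial'\colon M(\eta)\to H^1_x(X,M)$ is linear for the factor defined over $X$ by the general module-structure compatibility of connecting maps \cite[II.10.2]{iversen2012cohomology}. This gives $\partial'(am)=\bar a\,\partial'(m)$ and $\partial'(am)=\partial'(a)\bar m$ with no computation; the only non-formal point is that the identification $H^1_x(X,M_{-1})\wequi M_{-2}(x,\omega_{x/X})$ is not $\ul K_1^{MW}(X)$-linear but involves a switch on $\Gm\wedge\Gm$, which is exactly where the $\epsilon$ in (1) comes from (and why no sign appears in (2)). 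If you want to salvage your computational route, you would need to first prove the associativity/compatibility statements for the pairings and the residue formulas for non-unit symbols acting on extendable classes; at that point you will have redone, less efficiently, what the formal argument gives for free.
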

\begin{proof}
We prove (1) and (2) at the same time, by explaining that these results are essentially formal. At the end we explain how to adapt the argument to (3) and (4). (We note also that (1) and (3) were proved in \cite[Lemma 5.10]{A1-alg-top}, but our proof would not be simplified by only establishing (2) and (4).) 

We have a morphism of sheaves $\ul{K}_1^{MW} \otimes M_{-1} \to M$. This induces a morphism \[\ul{K}_1^{MW}(X) \otimes H^1_x(X, M_{-1}) \to H^1_x(X, M),\] and similarly $H^1_x(X, \ul{K}_1^{MW}) \otimes M_{-1}(X) \to H^1_x(X, M)$. Write $\partial'\colon M(\eta) \to H^1_x(X, M)$ for the boundary map in the long exact sequence with support. Then in case (1) we have $\partial'(am) = \bar{a}\partial'(m)$ and in case (2) we have $\partial'(am) = \partial'(a)\bar{m}$, where all the actions are the ones just constructed \cite[II.10.2]{iversen2012cohomology}. This is the formal part of the argument. There are moreover canonical isomorphisms $H^1_x(X, M) \wequi M_{-1}(x, \omega_{x/X})$ and so on, and through this isomorphism $\partial'$ and $\partial$ correspond. However the map $\theta\colon H^1_x(X, M_{-1}) \to M_{-2}(x)$ is \emph{not} a morphism of $\ul{K}_1^{MW}(X)$-modules; instead tracing through the definitions on finds that a switch on $\Gm \wedge \Gm$ is involved, and hence $\theta(am) = \epsilon a \theta(m)$. This explains the factor of $\epsilon$ in (1). For (2), no switch is involved, and no $\epsilon$ appears.

We can prove (3) and (4) similarly, by using instead the morphism of sheaves $\ul{GW} \otimes M_{-1} \to M_{-1}$.
\end{proof}

If $\pi$ is a uniformizer for $X$, then $\pi$ defines a trivialization of $\omega_{x/X}$, and hence $\partial\colon M(\eta) \to M_{-1}(x, \omega_{x/X})$ becomes a map $\partial^\pi\colon  M(\eta) \to M_{-1}(x)$.
\begin{corollary} \label{corr:pullback-formula}
Let $m \in M_{-1}(X)$ and write $i\colon x \to X$ for the closed inclusion. Pick a uniformizer $\pi$ for $X$. Then $i^*(m) = \partial^\pi ([\pi] m)$.
\end{corollary}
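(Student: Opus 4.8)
The plan is to deduce this from the product formula of Lemma~\ref{lemm:boundary-product-formula}(2) together with the standard computation of the residue of a uniformizer in Milnor--Witt $K$-theory.

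Since $\pi$ is a uniformizer it becomes invertible on the generic point $\eta$, so $[\pi] \in \ul{K}_1^{MW}(\eta)$, and $[\pi] m \in M(\eta)$ denotes the image of (the restriction of) $m$ under the pairing $\ul{K}_1^{MW}(\eta) \otimes M_{-1}(\eta) \to M(\eta)$. Applying Lemma~\ref{lemm:boundary-product-formula}(2) with $a = [\pi]$ yields
\[
\partial([\pi] m) = \partial([\pi]) \cdot \bar m \in M_{-1}(x, \omega_{x/X}),
\]
where $\partial([\pi]) \in \ul{K}_0^{MW}(x, \omega_{x/X}) = \ul{GW}(x, \omega_{x/X})$, $\bar m = i^*(m) \in M_{-1}(x)$, and the product is induced by the $\ul{GW}$-module structure on $M_{-1}$, twisted by $\omega_{x/X}$.

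It remains to observe that, after the trivialization of $\omega_{x/X}$ determined by $\pi$ (the same one used to pass from $\partial$ to $\partial^\pi$), one has $\partial^\pi([\pi]) = \lra{1} \in GW(x)$. This is the defining normalization of the residue map for $\ul{K}^{MW}$; see the construction of $\partial$ in \cite[Lemma~5.10]{A1-alg-top} and the residue homomorphism for $\ul{K}^{MW}$ in \cite[Chapter~3]{A1-alg-top}. Since $\lra{1}$ acts as the identity on $M_{-1}(x)$, the displayed identity becomes
\[
\partial^\pi([\pi] m) = \partial^\pi([\pi]) \cdot \bar m = \lra{1} \cdot \bar m = \bar m = i^*(m),
\]
which is the assertion.

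The main point requiring care is the bookkeeping with the twist by $\omega_{x/X}$, i.e.\ checking that the trivialization turning $\partial$ into $\partial^\pi$ is precisely the one under which $\partial([\pi])$ equals $\lra{1}$; this is a matter of unwinding conventions rather than a substantive difficulty. There is no real obstacle beyond locating the normalization $\partial^\pi([\pi]) = \lra{1}$, which is immediate from Morel's construction.
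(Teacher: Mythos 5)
Your argument is correct and follows the same route as the paper: apply Lemma~\ref{lemm:boundary-product-formula}(2) with $a=[\pi]$ to get $\partial^\pi([\pi]m)=\partial^\pi([\pi])\,i^*(m)$, and then use that $\partial^\pi([\pi])=1$ (the paper cites \cite[Theorem~3.15]{A1-alg-top} for this normalization). No gaps.
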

\begin{proof}
By Lemma \ref{lemm:boundary-product-formula} we get $\partial^\pi([\pi]m) = \partial^\pi([\pi]) \bar{m} = \partial^\pi([\pi]) i^*(m)$. It remains to observe that $\partial^\pi([\pi]) = 1$ \cite[Theorem 3.15]{A1-alg-top}.
\end{proof}

\subsubsection{Projection formulas}
\label{subsec:projection-formulas}
Now we come to the transfers. Recall that given a finite monogeneous extension $L/K$ with chosen generator $x \in L$, the transfer $\tr\colon M_{-1}(L, \omega_{L/K}) \to M_{-1}(K)$ can be defined in two ways. The generator $x$ provides an embedding $Spec(L) \hookrightarrow \A^1_K \subset \P^1_K$, and we can consider the composition $\P^1_K \to \P^1_K / \P^1_K \setminus Spec(L) \wequi Th(\omega_{L/K})$ of the collapse map and the purity equivalence. 

There is an equivalent algebraic definition: given $m \in M_{-1}(L, \omega_{L/K})$ one has to find $m' \in M(K(T))$ such that $\partial_x(m') = m$ and for $x \ne y \in (\A^1_K)^{(1)}$ we have $\partial_y(m') = 0$. Then $\tr(m) = \partial_\infty(m')$. Here $\partial_\infty$ corresponds to the point at infinity of $\P^1$, and the uniformizer $-1/T$. This result is explained in \cite[Section 4.2, p.99]{A1-alg-top}. It turns out that this construction is independent of the choice of $x$ \cite[Section 5.1]{A1-alg-top}. Note also that $x$ canonically trivializes $\omega_{L/K}$, so we have an untwisted transfer $\tau_x\colon M_{-1}(L) \to M_{-1}(K)$.

\begin{lemma}[untwisted projection formulas] \label{lemm:untwisted-projection}
Given $L = K(x)/K$, the transfer $\tau_x\colon M_{-1}(L) \to M_{-1}(K)$ satisfies the following.
\begin{enumerate}
\item For $a \in K_1^{MW}(K)$ and $m \in M_{-2}(L)$ we have $\tau_x(am) = a \tau_x(m)$.
\item For $b \in GW(L)$ and $m \in M_{-1}(K)$ we have $\tau_x(bm) = \tau_x(b) m$.
\end{enumerate}
Furthermore the transfer $\tau_x\colon M_{-2}(L) \to M_{-2}(K)$ satisfies the following.
\begin{enumerate}
\setcounter{enumi}{2}
\item For $b \in GW(K)$ and $m \in M_{-2}(L)$ we have $\tau_x(bm) = b\tau_x(m)$.
\end{enumerate}
\end{lemma}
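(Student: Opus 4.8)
The plan is to reduce all three formulas to the residue calculus on $\P^1_K$ underlying the algebraic description of the transfer recalled above, fed by the boundary product formulas of Lemma~\ref{lemm:boundary-product-formula}. Recall that to compute the transfer of a class $n$ over $L$ one produces a lift $n'$ over $K(T)$, a section of a strictly homotopy invariant delooping one level up, whose residue at $x$ recovers $n$ (after the trivialisation of $\omega_{x/\A^1_K}$ by the minimal polynomial of $x$) and whose residues at all other closed points of $\A^1_K$ vanish; then the transfer of $n$ equals $\partial_\infty(n')$, with $\infty$ given the uniformiser $-1/T$. For (3), I would start from such a lift $m''\in M_{-1}(K(T))$ of $m\in M_{-2}(L)$ (applying this to the strictly homotopy invariant sheaf $M_{-1}$, whose contraction is $M_{-2}$), so that $\tau_x(m)=\partial_\infty(m'')$. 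Since $b\in GW(K)$ is defined over $K$, it gives rise to sections of $\ul{GW}$ over the local rings of $\A^1_K$ at $x$ and of $\P^1_K$ at $\infty$, restricting to $b|_L$ at $x$ and to $b$ at $\infty$; forming $bm''\in M_{-1}(K(T))$ via the $\ul{GW}$-action, Lemma~\ref{lemm:boundary-product-formula}(3) computes $\partial_x(bm'')=(b|_L)m$, $\partial_y(bm'')=0$ for $y\in(\A^1_K)^{(1)}$ with $y\ne x$, and $\partial_\infty(bm'')=b\,\tau_x(m)$. As $bm''$ is thus a legitimate lift for the transfer of $(b|_L)m$, one concludes $\tau_x((b|_L)m)=b\,\tau_x(m)$, which is (3).

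Part (1) follows the same recipe, now using the pairing $\ul{K}_1^{MW}\otimes M_{-1}\to M$ and Lemma~\ref{lemm:boundary-product-formula}(1); the difference is that the latter introduces a twist by $\epsilon$. Starting again from a lift $m''\in M_{-1}(K(T))$ of $m$, the class $am''\in M(K(T))$, with $a\in K_1^{MW}(K)$ viewed by pullback over the relevant local rings, satisfies $\partial_x(am'')=\epsilon\,(a|_L)m$, $\partial_y(am'')=0$ for $y\ne x$, and $\partial_\infty(am'')=\epsilon\,a\,\tau_x(m)$, whence the transfer identity
\[ \tau_x\bigl(\epsilon\,(a|_L)m\bigr)=\epsilon\,a\,\tau_x(m). \]
Since this holds for every $a\in K_1^{MW}(K)$, I would substitute $\epsilon a$ for $a$ and use $\epsilon^2=1$ in $GW$, together with the $\ul{GW}$-module structures on $M_{-1}(L)$ and $M_{-2}(K)$, to cancel both factors of $\epsilon$ and obtain the asserted $\tau_x((a|_L)m)=a\,\tau_x(m)$.

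For (2) the scalar $b\in GW(L)$ is not defined over the base field, so the roles of the two factors are exchanged: I would lift $b$, via the identification $\ul{GW}=(\ul{K}_1^{MW})_{-1}$, to a section $b'\in\ul{K}_1^{MW}(K(T))$ with $\partial_x(b')=b$ and $\partial_y(b')=0$ for $y\ne x$ (so that $\tau_x(b)=\partial_\infty(b')$), keeping $m\in M_{-1}(K)$ as the constant factor. Then $b'm\in M(K(T))$ via $\ul{K}_1^{MW}\otimes M_{-1}\to M$, and Lemma~\ref{lemm:boundary-product-formula}(2) yields $\partial_x(b'm)=\partial_x(b')\cdot m|_L=b\,m$, $\partial_y(b'm)=0$ for $y\ne x$, and $\partial_\infty(b'm)=\partial_\infty(b')\cdot m=\tau_x(b)\,m$; the transfer identity for the lift $b'm$ is then exactly $\tau_x(bm)=\tau_x(b)\,m$.

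I expect the only genuine obstacle to be bookkeeping: one will have to check that the trivialisations of $\omega_{x/\A^1_K}$ and of $\omega_{\infty/\P^1_K}$ used above are precisely those built into the definitions of $\tau_x$ and of the twisted pairings, and that Lemma~\ref{lemm:boundary-product-formula}, though stated for a strictly homotopy invariant sheaf $M$, may be applied verbatim with $M$ replaced by $M_{-1}$, so that its contraction $M_{-2}$ and the induced $\ul{GW}$- and $\ul{K}_1^{MW}$-actions match the ones occurring in the statement. The stray $\epsilon$ in part (1) is the one point that looks delicate, but it will be harmless thanks to the substitution trick above.
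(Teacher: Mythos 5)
Your proposal is correct and follows essentially the same route as the paper: lift the class to $M_{-1}(K(T))$ (resp.\ lift $b$ to $\ul{K}_1^{MW}(K(T))$), apply the boundary product formulas of Lemma~\ref{lemm:boundary-product-formula} at $x$, the other points, and $\infty$, and read off the transfer. The only cosmetic difference is in (1), where the paper cancels the factor $\epsilon$ by multiplying the lift by $\epsilon$ beforehand, while you derive the identity with $\epsilon$'s and then substitute $\epsilon a$ for $a$ using $\epsilon^2=1$; both are equivalent.
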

\begin{proof}
(1) Pick $m' \in M_{-1}(K(T))$ with $\partial_x(m') = m$ and $\partial_y(m') = 0$ else. Then $\partial_x(a \epsilon m') = a \partial_x(m') = a m$, and $\partial_y(a \epsilon m') = a \partial_y(m') = 0$ else, by Lemma \ref{lemm:boundary-product-formula}(1). Hence $\tau_x(am) = \partial_\infty(a \epsilon m') = a \partial_\infty(m') = a \tau_x(m)$.

(2) Pick $b' \in K_1^{MW}(K(T))$ with $\partial_x(b') = b$ and $\partial_y(b') = 0$ else. Conclude as for (1), appealing to Lemma \ref{lemm:boundary-product-formula}(2).

(3) Pick $m' \in M_{-1}(K(T))$ as in (1). Note that all the boundary maps are $GW(K)$-linear, by Lemma \ref{lemm:boundary-product-formula}(3). Hence $\partial_x(bm') = b\partial_x(m') = bm$ and $\partial_y(bm') = b\partial_y(m') = 0$ for $y \ne x$. Hence $\tau_x(bm) = \partial_\infty(bm') = b\partial_\infty(m') = b \tau_x(m)$.
\end{proof}

Formula (3) above implies that the transfer can be twisted by an arbitrary line bundle: given a line bundle $\scr L$ on $Spec(K)$, there is a transfer $\tau_x\colon M_{-2}(L, \scr L|_L) \to M_{-2}(K, \scr L)$ and similarly for the twisted transfer we have $\tr\colon M_{-2}(L, \omega_{L/K} \otimes \scr L|_L) \to M_{-2}(K, \scr L)$.
\begin{corollary}[twisted projection formulas] \label{corr:twisted-projection}
Let $L/K$ be a finite extension. Let $\scr L_1, \scr L_2$ be line bundles on $Spec(K)$.
Then the transfer $\tr\colon M_{-2}(L, \omega_{L/K} \otimes \scr L_1|_L \otimes \scr L_2|_L) \to M_{-2}(K, \scr L_1 \otimes \scr L_2)$ satisfies the following.
\begin{enumerate}
\item For $a \in K_1^{MW}(K, \scr L_1)$ and $m \in M_{-2}(L, \omega_{L/K} \otimes \scr L_2|_L)$ we have $\tr(am) = a \tr(m)$.
\item For $b \in GW(L, \omega_{L/K} \otimes \scr L_1|_L)$ and $m \in M_{-1}(K, \scr L_2|_L)$ we have $\tr(bm) = \tr(b) m$.
\item For $b \in GW(K, \scr L_1)$ and $m \in M_{-2}(L, \omega_{L/K} \otimes \scr L_2|_L)$ we have $\tr(bm) = b \tr(m)$.
\end{enumerate}
\end{corollary}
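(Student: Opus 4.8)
The plan is to deduce all three formulas from the untwisted projection formulas of Lemma~\ref{lemm:untwisted-projection} by choosing trivializations of the line bundles. Since $Spec(K)$ is the spectrum of a field, the line bundles $\scr L_1$ and $\scr L_2$ are trivial; fix nowhere-vanishing sections $\lambda_1 \in \scr L_1^\times(K)$ and $\lambda_2 \in \scr L_2^\times(K)$. Together with a generator $x$ of $L/K$ and the resulting minimal-polynomial trivialization of $\omega_{L/K}$ used in Section~\ref{subsec:twisted-transfers}, the $\lambda_i$ induce, compatibly on source and target, isomorphisms $M_{-2}(L, \omega_{L/K} \otimes \scr L_1|_L \otimes \scr L_2|_L) \wequi M_{-2}(L)$, $M_{-2}(K, \scr L_1 \otimes \scr L_2) \wequi M_{-2}(K)$, $K_1^{MW}(K, \scr L_1) \wequi K_1^{MW}(K)$, $GW(K, \scr L_1) \wequi GW(K)$, $GW(L, \omega_{L/K} \otimes \scr L_1|_L) \wequi GW(L)$, and the analogous ones with $M_{-2}$ replaced by $M_{-1}$.

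First I would verify that, under these identifications, the twisted module pairings $\ul{K}_1^{MW} \otimes M_{-2} \to M_{-1}$ and $\ul{GW} \otimes M_{-i} \to M_{-i}$ correspond to the untwisted ones; this is immediate from the definition of the twist as $M \mapsto M \otimes_{\Z[\Gm]} \Z[\scr L^\times]$, since these pairings are $\Gm$-equivariant. Next I would check that the twisted transfer $\tr\colon M_{-2}(L, \omega_{L/K} \otimes \scr L_1|_L \otimes \scr L_2|_L) \to M_{-2}(K, \scr L_1 \otimes \scr L_2)$ corresponds to the untwisted transfer $\tau_x\colon M_{-2}(L) \to M_{-2}(K)$: for the $\omega_{L/K}$-part this is the definition recalled in Section~\ref{subsec:twisted-transfers}, and for the $\scr L_1 \otimes \scr L_2$-part this correspondence is independent of the choice of $\lambda_i$ precisely because $\tau_x$ on $M_{-2}$ is $\ul{GW}(K)$-linear by Lemma~\ref{lemm:untwisted-projection}(3) (replacing $\lambda_i$ by $u\lambda_i$ with $u \in \scr O^\times(K)$ multiplies both the source and the target identification by $\lra{u}$, which cancels against the transfer). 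The case $M = \ul{K}_2^{MW}$, so $M_{-2} = \ul{GW}$, gives the corresponding statement for $\tr\colon GW(L, \omega_{L/K} \otimes \scr L_1|_L) \to GW(K, \scr L_1)$, and for the $M_{-1}$-level transfers occurring in (1) and (2) one uses the very same data $(x, \lambda_1, \lambda_2)$ to define them, so there is nothing further to check. With all of this in hand, formulas (1), (2), (3) become verbatim the statements $\tau_x(am) = a\,\tau_x(m)$, $\tau_x(bm) = \tau_x(b)\,m$, $\tau_x(bm) = b\,\tau_x(m)$ of Lemma~\ref{lemm:untwisted-projection}(1), (2), (3) respectively, which finishes the argument.

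The mathematical content is therefore entirely contained in Lemma~\ref{lemm:untwisted-projection}; the only thing requiring care is the bookkeeping — placing the three distinct twists $\omega_{L/K}$, $\scr L_1$, $\scr L_2$ into the correct slots on each side of each formula and keeping the trivialization-induced identifications consistent between the source and target of $\tr$. The one point that is not purely formal is the well-definedness of the twist by $\scr L_1 \otimes \scr L_2$ on the $M_{-2}$-level transfer, which is exactly where Lemma~\ref{lemm:untwisted-projection}(3) does double duty; everything else is a routine unwinding of definitions.
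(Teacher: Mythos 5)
Your reduction to the untwisted Lemma~\ref{lemm:untwisted-projection} via trivializations of $\scr L_1, \scr L_2$ is the right first move, and it matches the start of the paper's argument. But there is a genuine gap: you fix ``a generator $x$ of $L/K$'' and run the whole argument through the single monogeneous transfer $\tau_x$, whereas the corollary is stated for an \emph{arbitrary} finite extension $L/K$. The fields occurring here are finitely generated over a perfect field $k$ and are typically imperfect in positive characteristic, so $L/K$ need not be monogeneous (e.g.\ $L = K(t_1^{1/p}, t_2^{1/p})$ over $K = k(t_1,t_2)$); in that case there is no single $x$ and no minimal-polynomial trivialization of $\omega_{L/K}$, and the twisted transfer $\tr_{L/K}$ is by definition a composite $\tr^{x_1,\dots,x_n}_{L/K}$ of monogeneous steps. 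Your argument therefore only proves the monogeneous case, which is essentially a restatement of Lemma~\ref{lemm:untwisted-projection} with twists; the missing content is the passage from a tower $L/L_1/K$ to $L/K$, i.e.\ showing the projection formulas are stable under composition of transfers while keeping track of the relative twists $\omega_{L/L_1}$ and $\omega_{L_1/K}$ (and of the fact that intermediate twists live over $L_1$, not $K$).

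This composition step is exactly what the paper's proof is organized around: after discarding $\scr L_1,\scr L_2$, it rewrites each statement in terms of Morel's symmetric absolute transfer $\tr\colon M_{-1}(L,\omega_L)\to M_{-1}(K,\omega_K)$, for which $\tr_{L/K} = \tr_{L_1/K}\circ\tr_{L/L_1}$ holds on the nose, so that the statement for $\tr_{L/K}$ follows from the statements for the two monogeneous (or shorter) steps, and only then invokes Lemma~\ref{lemm:untwisted-projection}. To repair your write-up you would either adopt that device, or explicitly prove (in the twisted form you set up, with coefficients in line bundles on the intermediate field) that each of (1)--(3) for $\tr_{L/L_1}$ and $\tr_{L_1/K}$ implies it for their composite, and recall that the composite is independent of the chosen generators (Definition~\ref{def:twisted-transfers}, Example~\ref{ex:twisted-transfers-M-2}). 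As written, the proposal is incomplete outside the monogeneous case.
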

\begin{proof}
We may trivialize $\scr L_1$ and $\scr L_2$, and hence ignore them. We can furthermore pass to the symmetrical absolute transfer $\tr\colon M_{-1}(L, \omega_L) \to M_{-1}(K, \omega_K)$ in each of the above statements. For example to prove (1), it suffices to prove: for $a \in K_1^{MW}(K)$ and $m \in M_{-2}(L, \omega_L)$ we have $\tr(am) = \tr(a) m \in M_{-1}(K, \omega_K)$. The advantage is that if now $L/L_1/K$ is an intermediate extension, in order to prove the statement for $\tr_{L/K}$ it suffices to prove it for $\tr_{L/L_1}$ and $\tr_{L_1/K}$. This way we reduce to monogeneous extensions, i.e. Lemma \ref{lemm:untwisted-projection}.
\end{proof}

\subsubsection{Smooth base change formula}
Recall that if $p\colon X \to Y$ is any finite flat (i.e. componentwise dominant) morphism of essentially smooth schemes, then the transfer on fields $\tr = \tr_p\colon M_{-2}(k(X), \omega_{X/Y}) \to M_{-2}(k(Y))$ induces in fact also $\tr_p\colon M_{-2}(X, \omega_{X/Y}) \to M_{-2}(Y)$ \cite[Corollary 5.30]{A1-alg-top}.

\begin{lemma} \label{lemm:smooth-base-change}
Consider a cartesian square
\begin{equation*}
\begin{CD}
X' @>f'>>  X   \\
@Vp'VV   @VpVV \\
Y' @>f>>   Y
\end{CD}
\end{equation*}
of essentially smooth $k$-schemes, with $p$ finite flat and $f$ smooth. Then \[f^* \tr_p = \tr_{p'} f'^*\colon M_{-2}(X, \omega_{X/Y}) \to M_{-2}(Y').\]
\end{lemma}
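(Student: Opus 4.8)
The plan is to reduce the identity, using unramifiedness of $M_{-2}$, to a base-change statement for the absolute transfers on $M_{-2}$ over a field, and then to prove that via the collapse-map description of those transfers from Lemma~\ref{lemm:twisted-transfer-as-collapse}. First note that both composites take values in $M_{-2}(Y')$ via the canonical isomorphism $\omega_{X'/Y'}\simeq (f')^*\omega_{X/Y}$, which holds because the square is cartesian and $p$ is finite flat, so $L_{X'/Y'}\simeq (f')^*L_{X/Y}$ and hence the same for determinants. Since $M_{-2}$ is strictly homotopy invariant it is unramified, and this persists on essentially smooth schemes; hence the restriction map from $M_{-2}(Y')$ to the product of the $M_{-2}(k(\eta'))$ over the generic points $\eta'$ of $Y'$ is injective, so it suffices to check that $f^*\tr_p(m)$ and $\tr_{p'}(f')^*(m)$ agree after restriction to each such $\eta'$. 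As $f$ is flat, $\eta:=f(\eta')$ is a generic point of $Y$, and the restriction to $\eta'$ factors through $M_{-2}(k(\eta))$ on the one side and through $M_{-2}(k(\eta'))$ on the other in the evident way.

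Next I would invoke the pointwise description of the degree-$0$ part of the Rost--Schmid transfer from Section~\ref{sec:RS-transfers}: for finite flat $p\colon X\to Y$ and a generic point $\eta$ of $Y$ one has $(\tr_p m)|_{k(\eta)}=\tr_{p_\eta}(m|_{X_\eta})$, where $X_\eta:=X\times_Y\eta$ is a finite disjoint union of spectra of finite extensions of $k(\eta)$ (these are essentially smooth over $k$ because $k$ is perfect), $p_\eta\colon X_\eta\to\eta$ is the projection, $\tr_{p_\eta}$ is the corresponding sum of twisted absolute transfers, and the twist is matched via $\omega_{X_\eta/\eta}\simeq\omega_{X/Y}|_{X_\eta}$. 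Applying this to $p$ over $\eta$ and to $p'$ over $\eta'$, and using that $X'\times_{Y'}\eta'=X\times_Y\operatorname{Spec}k(\eta')$ and that pulling back $m$ along $f'$ and then restricting there recovers the restriction of $m$, the lemma reduces to the following: for a finite disjoint union $Z=\coprod_i\operatorname{Spec}K_i$ over a field $K$ and a finitely generated \emph{separable} extension $L/K$ (which is precisely what $k(\eta')/k(\eta)$ is, since $f$ is smooth), the transfer satisfies $(\tr_{Z/K}(\ph))|_L=\tr_{Z_L/L}((\ph)|_{Z_L})$ on $M_{-2}(Z,\omega_{Z/K})$; here $Z_L=Z\times_KL$ is again a finite disjoint union of spectra of fields, precisely because $L/K$ is separable, and the twists are matched through $\omega_{Z_L/L}\simeq\omega_{Z/K}|_{Z_L}$.

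To prove this last statement, recall from Lemma~\ref{lemm:twisted-transfer-as-collapse} that the twisted transfer out of a point is the pullback in $M$ along a collapse map $(\P^1_K)^{\wedge n}\wedge \operatorname{Spec}(K)_+\to Th(N)$, after the identification $(\P^1)^{\wedge n}\simeq\Gmp{n}[n]$ valid in $\SHS(k)$. Base-changing along $\operatorname{Spec}L\to\operatorname{Spec}K$ turns this collapse map and its target Thom space into the corresponding data for $Z_L\subset\A^n_L$, since quotients and Thom spaces commute with base change; because $M$ is a presheaf, pullback along the base-changed map is the restriction of pullback along the original, which yields the displayed formula, with the twist compatibility being the isomorphism recalled above. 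Since Lemma~\ref{lemm:twisted-transfer-as-collapse} ties the number of $\P^1$-factors to the number of contractions, I would run this argument through the multiplicativity of transfers: writing each $K_i/K$ as a tower of monogeneous extensions reduces matters to the case $Z=\operatorname{Spec}K(\alpha)\subset\A^1_K$, where Lemma~\ref{lemm:twisted-transfer-as-collapse} applies directly to the contraction of the sheaf $M_{-1}$ (whose contraction is $M_{-2}$).

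The main obstacle is exactly this last reduction: one must keep all intermediate base extensions separable, so that the relevant tensor products remain reduced — this is where smoothness of $f$, equivalently separable generation of $L/K$, is genuinely used — which requires some care when $K_i/K$ is inseparable (as happens, e.g., when $p$ is a relative Frobenius morphism), and one must track the line-bundle twists $\omega_{-/-}$ through the compositions of transfers. An alternative that avoids base change of Thom spaces would be to use the algebraic description of the transfer via an element of $M_{-1}(K(T))$ with prescribed residues, combined with base change for the canonical boundary maps of a strictly homotopy invariant sheaf along the separable extension $L/K$; this route carries the same twist-bookkeeping overhead.
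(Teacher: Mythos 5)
Your proposal is correct and follows essentially the same route as the paper: reduce by unramifiedness to the case where $Y'$ (and then $Y$) is the spectrum of a field, use compatibility of both sides with composition to reduce to a monogeneous extension, and conclude from the collapse-map/purity description of the transfer together with its compatibility with base change. The separability bookkeeping you flag as the main obstacle dissolves in the paper's formulation, because the reduction is run at the level of essentially smooth schemes with $f$ smooth (base changes of $f$ stay smooth, so the relevant fiber products are automatically finite disjoint unions of spectra of fields), and the monogeneous case is finished by invoking naturality of the purity equivalence for the morphism of smooth closed pairs $(\P^1_{Y'},X')\to(\P^1_Y,X)$, which is exactly your ``Thom spaces commute with base change'' step made precise.
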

\begin{proof}
This is clear if $f$ is an open immersion. By unramifiedness, we may thus assume that $Y'$ is the spectrum of a field. Since $f$ is smooth, the image of $Y'$ in $Y$ is a generic point. We may replace $Y$ by this generic point and hence assume that $Y$ is also the spectrum of a field. Now $X$, being finite over $Y$ and essentially smooth, is a disjoint union of finitely many spectra of fields, and similarly for $X'$. Since both sides are compatible with composition, we can assume that $X$ is monogeneous over $Y$. We then obtain an embedding $X \hookrightarrow \P^1_Y$ and similarly for $X', Y'$, and in fact a morphism of smooth closed pairs (see \cite[Section 3.5]{hoyois-equivariant}) $(\P^1_{Y'}, X') \to (\P^1_{Y}, X)$. Naturality of the purity equivalence in smooth closed pairs implies that the right hand square in the following diagram commutes
\begin{equation*}
\begin{CD}
\P^1_{Y'} @>>> \P^1_{Y'} / \P^1_{Y'} \setminus X' @= Th(N_{X'/\P^1_{Y'}}) \\
@VVV              @VVV                                  @VVV              \\
\P^1_{Y}   @>>> \P^1_{Y} / \P^1_{Y} \setminus X @= Th(N_{X/\P^1_{Y}}).
\end{CD}
\end{equation*}
The left hand square commutes trivially. The horizontal composites induce the transfers, and the vertical maps induce $f^*$ and $f'^*$. This proves the result.
\end{proof}

\subsubsection{Transfer and twisted specialization}
Let $X$ be an essentially smooth scheme and $a \in K_1^{MW}(X^{(0)})$. Consider the operation \[s^a := \partial(a \bullet)\colon C^0(X, M_{-2}) \to C^1(X, M_{-1}).\] By Lemma \ref{lemm:boundary-product-formula}(3), $s^a$ is $\ul{GW}(X)$-linear and thus can be twisted: for any line bundle $\scr L$ on $X$, we obtain $s^a\colon C^0(X, M_{-2}(\scr L)) \to C^1(X, M_{-1}(\scr L))$.

\begin{proposition} \label{prop:transfer-specialization}
Let $p\colon X' \to X$ be a finite flat morphism of essentially smooth schemes and ${a \in K_1^{MW}(X^{(0)})}$. The diagram
\begin{equation*}
\begin{CD}
C^0(X', M_{-2}(\omega_{X'/X})) @>s^a>> C^1(X', M_{-1}(\omega_{X'/X})) \\
@V\tr_pVV                            @V\tr_pVV                \\
C^0(X, M_{-2}) @>s^a>> C^1(X, M_{-1}) \\
\end{CD}
\end{equation*}
commutes in the following cases:
\begin{enumerate}
\item $M = M'_{-1}$;
\item $X$ is the spectrum of a dvr and $X'/X$ is monogeneous;
\item $char(k) = 0$ and $X$ of dimension $1$.
\end{enumerate}
\end{proposition}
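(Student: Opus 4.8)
The plan is to factor $s^a$ through the Rost--Schmid differential and thereby reduce the statement to two ingredients already available: the projection formula, and the fact that the Rost--Schmid transfer is a morphism of complexes. Unwinding the definition, $s^a$ is the composite
\[ C^0(-, M_{-2}(\scr L)) \xrightarrow{a\cdot(-)} C^0(-, M_{-1}(\scr L)) \xrightarrow{\partial} C^1(-, M_{-1}(\scr L)), \]
where $\partial$ is the degree-zero differential of the Rost--Schmid complex $C^*(-, M_{-1}(\scr L))$ and $a\cdot(-)$ is induced by the pairing $\ul K_1^{MW}\otimes M_{-2}\to M_{-1}$. By the twisted projection formula (Corollary \ref{corr:twisted-projection}(1)) the map $a\cdot(-)$ commutes with $\tr_p$ (the transfer on the source being the $M_{-2}$-transfer, on the target the $M_{-1}$-transfer). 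Hence in all three cases it remains to show that $\tr_p\colon C^*(X', M_{-1}(\omega_{X'/X}))\to C^*(X, M_{-1})$ commutes with the differential $C^0\to C^1$.

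In case (1), $M_{-1}=M'_{-2}$ is a two-fold contraction of a strictly homotopy invariant sheaf, and for such sheaves the Rost--Schmid transfer is a genuine morphism of complexes by \cite[Corollary 5.30]{A1-alg-top} (used in exactly this way already in Lemma \ref{lemm:framed-correspondence-integral}); nothing more is needed. For (2) and (3) the sheaf $M_{-1}$ is only a one-fold contraction, for which the chain-map property over a general base is not at our disposal, so we argue by hand. In case (2), monogeneity provides a closed immersion $X'\hookrightarrow\A^1_X\subset\P^1_X$ cut out by a monic polynomial $P$, which trivializes $\omega_{X'/X}$. The transfer $\tr_p$ in degrees $0$ and $1$ is then described, as in \cite[Section 4.2]{A1-alg-top}, by a residue-at-infinity construction on $\P^1_X$, a smooth scheme of dimension $2$. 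The desired compatibility of $\tr_p$ with $\partial$ thus turns into an identity among iterated residues of $M_{-1}$ on $\P^1_X$, which follows from $\partial^2=0$ in the Rost--Schmid complex $C^*(\P^1_X, M_{-1})$ together with the sign bookkeeping of Lemma \ref{lemm:boundary-product-formula}, exactly as in Morel's proof of the reciprocity formula for transfers \cite[Lemma 5.36 and proof of Theorem 5.38]{A1-alg-top}.

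Case (3) reduces to case (2). Since the diagram maps into $C^1(X, M_{-1})=\bigoplus_{x\in X^{(1)}}M_{-2}(x,\omega_{x/X})$, commutativity may be checked after localizing $X$ at each closed point, so $X$ becomes the spectrum of a discrete valuation ring (essentially smooth over $k$); replacing $X'$ by a connected component and using compatibility of $\tr_p$ and $s^a$ with composition, it suffices to treat a single finite flat $X'\to X$ with $X'$ essentially smooth. Because $char(k)=0$, all residue field extensions involved are separable and the extension of function fields is simple, so $\scr O(X')$ admits a power basis over $\scr O(X)$ (the classical statement for discrete valuation rings with separable residue extension); hence $X'\hookrightarrow\A^1_X$ is cut out by a monic polynomial and case (2) applies. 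The main obstacle is the verification in case (2): porting Morel's field-case analysis of the transfer on $M_{-1}$ to a discrete-valuation-ring base and tracking the $\epsilon=-\lra{-1}$ twists coming from the switch on $\Gm\wedge\Gm$; the remaining steps are routine.
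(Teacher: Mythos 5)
Your overall decomposition is the same as the paper's: write $s^a=\partial\circ(a\cdot)$, commute $\tr_p$ past $a\cdot$ by the twisted projection formula (Corollary \ref{corr:twisted-projection}(1)), and reduce to commuting $\tr_p$ with the degree-$0$ Rost--Schmid differential of $C^*(\ph,M_{-1})$; your case (1), via \cite[Corollary 5.30]{A1-alg-top} applied to $M_{-1}=M'_{-2}$, is exactly the paper's argument. The divergence, and the gap, is in cases (2) and (3). For case (2) the statement you are left with --- that for a monogeneous finite extension of (essentially smooth) dvrs the transfer on $M_{-1}$ commutes with the residue map --- is precisely Morel's Theorem 5.19, which the paper simply cites. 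You neither cite it nor prove it: your sketch ("identity among iterated residues on $\P^1_X$ following from $\partial^2=0$, as in Lemma 5.36 and Theorem 5.38") gestures at the kind of argument Morel uses, but those references concern homotopy invariance/reciprocity over a field and do not by themselves yield the dvr-level commutation; you yourself flag this verification as "the main obstacle" and then declare the rest routine. That is the heart of the proposition (compare the remark after it in the paper: the analogous commutation one contraction lower is "much more difficult"), so leaving it at a sketch is a genuine gap rather than a routine omission.

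Your reduction in case (3) is also not correct as written. After merely localizing $X$ at a closed point, $X'$ is only semi-local, and a connected component can still have several closed points; the "classical" monogenicity statement you invoke (power basis when the residue extension is separable) is a theorem about complete (or henselian) dvrs, not about localizations, so you cannot conclude that $\scr O(X')$ is $\scr O(X)[\theta]$ without a further argument (one might salvage this using that the residue fields are infinite in characteristic $0$, via Nakayama and monogenicity of the fibre algebra, but that argument is not in your text). The paper avoids this by passing to the henselization --- justified by compatibility of all the operations with étale base change (Lemma \ref{lemm:smooth-base-change} and the definition of $s^a$) --- where Morel's Remark 5.28 writes $X'/X$ as a composite of two monogeneous extensions, reducing to case (2). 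So: same skeleton as the paper, but the two inputs the paper supplies by citation (Morel's Theorem 5.19 for (2), and henselization plus Remark 5.28 for (3)) are exactly the points where your proposal is incomplete or incorrect.
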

\begin{proof}
(1) Let $m \in C^0(X', M_{-2}(\omega_{X'/X})) = C^0(X', M'_{-3}(\omega_{X'/X}))$. Then $am \in C^0(X', M'_{-2}(\omega_{X'/X}))$ and we get \[ \tr s^a(m) = \tr \partial(am) = \partial \tr(am) = \partial(a \tr m) = s^a(\tr m), \] where we have used \cite[Corollary 5.30]{A1-alg-top} to commute $\tr$ and $\partial$, and Corollary \ref{corr:twisted-projection}(1) to commute $\tr$ and $a$.

(2) Exactly the same argument works, using \cite[Theorem 5.19]{A1-alg-top} instead of \cite[Corollary 5.30]{A1-alg-top}.

(3) All our operations commute with étale base change. It follows that we may replace $X$ by its henselization in a closed point. In this case $X' \to X$ is a composite of two monogeneous extensions \cite[Remark 5.28]{A1-alg-top}, and so we have reduced to (2).
\end{proof}

\begin{remark}
The operation $s^a$ also makes sense on $M_{-1}$, as $s^a\colon C^0(X, M_{-1}) \to C^1(X, M)$, and one may show that it is still $\ul{GW}$-linear. Moreover Proposition \ref{prop:transfer-specialization}(2) remains valid in this more general setting, but the proof is much more difficult.
\end{remark}

\section{$\Gm$-stabilization of $\ul{\pi}_0$}
\label{sec:Gm-stab}

\subsection{Generalities}
Throughout we fix a perfect field $k$.  
Let $n \ge 0$ and write $\SHS(k)(n)$ for the localizing subcategory of $\SHS(k)$ generated by $\Gmp{n} \wedge \SHS(k)$. We have the canonical inclusion $i_n\colon \SHS(k)(n) \hookrightarrow \SHS(k)$ with right adjoint $r_n$. We denote $f_n = i_n \circ r_n$.

We have the functor $\sigma_n\colon \SHS(k) \to \SHS(k)(n), E \mapsto E \wedge \Gmp{n}$ with right adjoint \[\omega_n\colon \SHS(k)(n) \to \SHS(k), \quad E \mapsto \Omega_{\Gm}^n(i_n(E)).\] 

We also have the  adjunction 
\[ \sigma^\infty\colon \SHS(k) \adj \SH(k)^\eff\colon \omega^\infty, \]
induced by stabilization with respect to $\Gm$.
 The functor $\sigma^\infty\colon \SHS(k) \to \SH(k)^\eff$ factors through $\sigma_n$ as 
 \[\sigma^{\infty-n}\colon \SHS(k)(n) \to \SH(k)^\eff, \quad E \mapsto \sigma^\infty(i_n(E)) \wedge \Gmp{-n},\] and $\sigma^{\infty-n}$ has a further right adjoint $\omega^{\infty-n}\colon \SH(k)^\eff \to \SHS(k)(n)$, factoring $\omega^\infty$ through $\omega_n$. We illustrate these factorizations in the pair of adjoint commutative diagrams below.\footnote{In order to see that the right adjoint of $\wedge \Gm\colon \SHS(k)(n) \to \SHS(k)(n+1)$ is really given by $\Omega_\Gm$, it suffices to prove that $\Omega_\Gm(\SHS(k)(n+1)) \subset \SHS(k)(n)$, which follows from \cite[Theorem 7.4.2]{levine2008homotopy}, at least over infinite fields. We will not actually use this fact.}
\begin{equation*}
\begin{tikzcd}
\dots  \ar[rddd, bend left=70, "\sigma^{\infty-n}"] &&&& \dots \ar[d, "\Omega_\Gm"] \ar[ddd, bend right=100, "\omega_n" left] \\
\ar[u, "\wedge \Gm" right]
\SHS(k)(2) \ar[rdd, bend left, "\sigma^{\infty-2}"] &\hspace{0.5in} \ar[rr,leftrightsquigarrow, shift right=0.3in, "\text{pass to}", "\text{adjoints}" below]&&\hspace{0.1in}& \SHS(k)(2) \ar[d, "\Omega_\Gm"] \ar[dd, bend right=60, "\omega_2" left] \\
\ar[u, "\wedge \Gm" right]
\SHS(k)(1) \ar[rd, "\sigma^{\infty-1}"]     &&&& \SHS(k)(1) \ar[d, "\Omega_\Gm" right, "\omega_1" left] \\
\ar[uuu, bend left=100, "\sigma_n"]
\ar[uu, bend left=60, "\sigma_2"]
\ar[u, "\wedge \Gm" right, "\sigma_1" left]       
\SHS(k) \ar[r, "\sigma^\infty"] & \SH(k)^\eff &&& \SHS(k) &
                                  \ar[l, "\omega^\infty" swap]
                                  \ar[lu, "\omega^{\infty-1}" swap]
                                  \ar[luu, "\omega^{\infty-2}" swap, bend right]
                                  \ar[luuu, "\omega^{\infty-n}" swap, bend right=70]
                                  \SH(k)^\eff
\end{tikzcd}
\end{equation*}

Note that we treat $\SHS(k)$ and similar categories as $\infty$-categories; thus in (1) below ``limits and colimits'' means ``homotopy limits and colimits''. Since these are stable categories and stable functors, preservation of colimits or limits is equivalent to preservation of sums or products.

\begin{lemma} \label{lemm:SHSn-basics} ${}$ 
\begin{enumerate}
\item The following functors are conservative and preserve all limits and colimits: $\omega_n$, $\omega^\infty$, $\omega^{\infty - n}$.
\item The composite functor $i_n \omega^{\infty - n}\colon \SH(k)^\eff \to \SHS(k)$ is equivalent to the functor ${E \mapsto \omega^\infty(E \wedge \Gmp{n})}$.
\end{enumerate}
\end{lemma}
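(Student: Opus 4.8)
The plan is to dispose of (1) by formal manipulations with adjunctions and compact generation, and to prove (2) by a chase of the defining adjunctions which reduces to one genuinely non-formal point.

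\emph{Part (1).} Each of $\omega_n$, $\omega^\infty$, $\omega^{\infty-n}$ is by construction a right adjoint (to $\sigma_n$, $\sigma^\infty$, $\sigma^{\infty-n}$ respectively), hence preserves all limits. For colimits, recall that $\SHS(k)$, $\SHS(k)(n)$ and $\SH(k)^\eff$ are compactly generated, by the objects $\Sigma^\infty_{S^1} X_+[m]$, by $\Gmp{j}\wedge\Sigma^\infty_{S^1} X_+[m]$ with $j\ge 0$, and by $\Sigma^\infty_+ X[m]$ respectively ($X\in\Sm_k$, $m\in\Z$). A right adjoint between compactly generated stable $\infty$-categories preserves filtered colimits once its left adjoint preserves compact objects (\cite[Proposition 5.5.7.2]{HTT}), and being exact it then preserves all colimits; since $\sigma^\infty(\Sigma^\infty_{S^1} X_+)=\Sigma^\infty_+ X$, $\sigma_n(\Gmp{j}\wedge\Sigma^\infty_{S^1} X_+)=\Gmp{j+n}\wedge\Sigma^\infty_{S^1} X_+$ and $\sigma^{\infty-n}(\Gmp{n}\wedge\Sigma^\infty_{S^1} X_+)=\Sigma^\infty_+ X$, the left adjoints do preserve compact objects, so all three right adjoints preserve colimits. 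For conservativity, suppose $\omega^\infty E=0$ with $E\in\SH(k)^\eff$; then $[\sigma^\infty Y,E]\simeq[Y,\omega^\infty E]=0$ for every $Y\in\SHS(k)$, so letting $Y$ run over all $\Sigma^\infty_{S^1} X_+[m]$ shows $E$ is right-orthogonal to the generating set $\{\Sigma^\infty_+ X[m]\}$ of the localizing subcategory $\SH(k)^\eff$; as $E$ itself lies there, $E=0$. The identical argument with $\SHS(k)(n)$ and $\sigma_n$ in place of $\SH(k)^\eff$ and $\sigma^\infty$ shows $\omega_n$ is conservative, and $\omega^{\infty-n}$ is conservative because $\omega^\infty\simeq\omega_n\circ\omega^{\infty-n}$.

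\emph{Part (2).} For $E\in\SHS(k)(n)$ and $F\in\SH(k)^\eff$, unwinding $\sigma^{\infty-n}E=\sigma^\infty(i_nE)\wedge\Gmp{-n}$ and using invertibility of $\Gm$ in $\SH(k)$ gives
\[ [E,\omega^{\infty-n}F]\;\simeq\;[\sigma^\infty(i_nE)\wedge\Gmp{-n},F]_{\SH(k)}\;\simeq\;[\sigma^\infty(i_nE),F\wedge\Gmp n]_{\SH(k)}\;\simeq\;[i_nE,\omega^\infty(F\wedge\Gmp n)]_{\SHS(k)}. \]
Since $i_nE$ lies in $\SHS(k)(n)$ and $f_n=i_nr_n$ is the coreflection onto $\SHS(k)(n)$, the last group is corepresented on $\SHS(k)(n)$ by $f_n\omega^\infty(F\wedge\Gmp n)$, so by Yoneda in $\SHS(k)(n)$ we obtain $i_n\omega^{\infty-n}F\simeq f_n\omega^\infty(F\wedge\Gmp n)$. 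It therefore remains only to show that $\omega^\infty(F\wedge\Gmp n)$ already lies in $\SHS(k)(n)$, so that the coreflection $f_n$ acts on it as the identity.

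I expect this last claim, $\omega^\infty(\SH(k)(n))\subseteq\SHS(k)(n)$, to be the main obstacle: everything preceding is adjunction bookkeeping, but this statement is not formal. Using the colimit-preservation from (1) and closure of $\SHS(k)(n)$ under colimits it reduces to the compact generators, i.e. to checking $\omega^\infty\sigma^\infty(\SHS(k)(n))\subseteq\SHS(k)(n)$; in other words one must know that $\Gm$-stabilization is compatible with the $\Gm$-effective (slice) filtrations on $\SHS(k)$ and $\SH(k)$. I would deduce this from Levine's analysis of the homotopy coniveau tower---its identification with the $\Gm$-effective cover tower and its compatibility with $\Gm$-(de)suspension (\cite[Theorems 7.4.2 and 9.0.3]{levine2008homotopy})---which is the one external input the argument really needs.
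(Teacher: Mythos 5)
Your argument is correct and follows essentially the same route as the paper: part (1) by the formal adjunction/compact-generation argument, and part (2) by adjunction bookkeeping identifying $i_n\omega^{\infty-n}$ with $f_n\omega^\infty(\ph\wedge\Gmp{n})$ and then reducing to the inclusion $\omega^\infty(\SH(k)^\eff(n))\subset\SHS(k)(n)$, which both you and the paper obtain from Levine's homotopy coniveau tower results. The only cosmetic differences are that the paper quotes \cite[Theorems 7.1.1 and 9.0.3]{levine2008homotopy} (noting the axiom-A3 remark to cover finite base fields, a caveat you leave implicit) rather than 7.4.2 and 9.0.3, and your list of compact generators of $\SHS(k)(n)$ should read $\Gmp{j}\wedge\Sigma^\infty_{S^1}X_+[m]$ with $j\ge n$, a harmless slip.
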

\begin{proof}
(1) All the functors are stable and have left adjoints preserving compact generating families.  This implies the claim.

(2) The composite $\SHS(k)(n) \xrightarrow{i_n} \SHS(k) \xrightarrow{\sigma^\infty(\ph) \wedge \Gmp{-n}} \SH(k)^\eff$ is $\sigma^{\infty-n}$. By adjunction, it follows that $\omega^{\infty-n}$ is equivalent to $r_n \omega^\infty(\ph \wedge \Gmp{n})$. It is thus enough to prove that \[\omega^\infty(\SH(k)^\eff(n)) \subset \SHS(k)(n) \subset \SHS(k),\] where $\SH(k)^\eff(n)$ is the localizing subcategory of $\SH(k)^\eff$ generated by $\Gmp{n} \wedge \SH(k)^\eff$. This follows from the fact that the effectivity tower commutes with $\omega^\infty$, as proved by Levine \cite[Theorems 7.1.1 and
9.0.3]{levine2008homotopy} (his Theorem 7.1.1 is stated only for infinite perfect fields, but it applies more generally to any $E$ satisfying ``axiom A3'', e.g. any spectrum of the form $\omega^\infty(\ph)$; c.f. Remark 9.0.4 of \emph{loc.cit.}).
\end{proof}

The category $\SHS(k)$ carries a canonical $t$-structure, with the non-negative part generated under colimits and extensions by $\Sigma^\infty_{S^1} X_+$, for $X \in \Sm_k$. This is the homotopy $t$-structure: the proof of \cite[Theorem 2.3]{hoyois-algebraic-cobordism} for the case of $\SH(k)$ applies essentially unchanged also to $\SHS(k)$. We denote the homotopy sheaves by $\ul{\pi}_i(E)$. We similarly put a $t$-structure on $\SHS(k)(n)$ with non-negative part generated by $\sigma_n(\SHS(k)_{\ge 0})$. Finally, we put a $t$-structure on $\SH(k)^\eff$, with non-negative part generated by $\sigma^\infty(\SHS(k)_{\ge 0})$. This is the effective homotopy $t$-structure \cite[Proposition 4(2)]{tom2017slices}. 

\begin{lemma} \label{lemm:SHSn-t} ${}$ 
\begin{enumerate}
\item The following functors are right-$t$-exact: $\sigma_n, i_n, \sigma^\infty, \sigma^{\infty -n}$.
\item The following functors are $t$-exact: $\omega_n, r_n, \omega^\infty, \omega^{\infty - n}$.
\item Let $E \in \SHS(k)(n)$. The following are equivalent: (i) $E \in \SHS(k)(n)_{\ge 0}$, (ii) $\ul{\pi}_i(i_n E)_{-n} = 0$ for $i < 0$, (iii) $\ul{\pi}_i(i_n E) = 0$ for $i < 0$. Similarly, the following are equivalent: (i') $E \in \SHS(k)(n)_{\le 0}$, (ii') $\ul{\pi}_i(i_n E)_{-n} = 0$ for $i > 0$.
\end{enumerate}
\end{lemma}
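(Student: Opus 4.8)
The plan is to reduce everything to two elementary facts about the homotopy $t$-structure on $\SHS(k)$:
\textbf{(P1)} $\Gmp{n}\in\SHS(k)_{\ge 0}$ and the functor $-\wedge\Gmp{n}$ preserves $\SHS(k)_{\ge 0}$; and \textbf{(P2)} $\Omega_\Gm$ preserves $\SHS(k)_{\ge 0}$. For (P1): $\Sigma^\infty_{S^1}\Gm$ is the cofiber of the map of connective objects $\1=\Sigma^\infty_{S^1}(\spk)_+\to\Sigma^\infty_{S^1}(\A^1\setminus0)_+$, hence connective, and for $X\in\Sm_k$ the object $\Gmp{n}\wedge\Sigma^\infty_{S^1}X_+$ is a retract of the generator $\Sigma^\infty_{S^1}((\A^1\setminus0)^{\times n}\times X)_+$; since $-\wedge\Gmp{n}$ is exact and preserves colimits, the full subcategory of $F$ with $F\wedge\Gmp{n}\in\SHS(k)_{\ge0}$ is closed under colimits and extensions and contains all generators, so equals $\SHS(k)_{\ge0}$. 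For (P2): $\ul{\pi}_i(\Omega_\Gm F)\wequi(\ul{\pi}_i F)_{-1}$, and the contraction of the zero sheaf is zero. (Both are well known for $\SH(k)$ and the arguments apply verbatim to $\SHS(k)$; one may also cite Morel.)

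\emph{Part (1).} The functors $\sigma_n$ and $\sigma^\infty$ are right-$t$-exact tautologically, since $\SHS(k)(n)_{\ge0}$ and $\SH(k)^{\eff}_{\ge0}$ are \emph{defined} to be generated by $\sigma_n(\SHS(k)_{\ge0})$ and $\sigma^\infty(\SHS(k)_{\ge0})$. The functors $i_n$ and $\sigma^{\infty-n}$ are exact and preserve colimits, so right-$t$-exactness may be checked on the generators $\sigma_n(F)$, $F\in\SHS(k)_{\ge0}$: one has $i_n\sigma_n(F)=F\wedge\Gmp{n}\in\SHS(k)_{\ge0}$ by (P1), and $\sigma^{\infty-n}\sigma_n(F)=\sigma^\infty(F)\in\SH(k)^{\eff}_{\ge0}$ by definition.

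\emph{Part (2), first half.} Each of $\omega_n,r_n,\omega^\infty,\omega^{\infty-n}$ is right adjoint to one of the right-$t$-exact functors of (1), hence left-$t$-exact. For $\omega_n$ and $\omega^\infty$ I will prove right-$t$-exactness directly, checking on generators (they preserve colimits by Lemma~\ref{lemm:SHSn-basics}(1)): $\omega_n\sigma_n(F)=\Omega_\Gm^n(F\wedge\Gmp{n})$ and $\omega^\infty\sigma^\infty(F)=\colim_m\Omega_\Gm^m(F\wedge\Gmp{m})$ lie in $\SHS(k)_{\ge0}$ by (P1), (P2) and closure of $\SHS(k)_{\ge0}$ under colimits. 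Thus $\omega_n$ and $\omega^\infty$ are $t$-exact.

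\emph{Part (3).} Since $\omega_n$ is $t$-exact it commutes with truncations, so $\omega_n\tau_{\le-1}E=\tau_{\le-1}\omega_n E$ and $\omega_n\tau_{\ge1}E=\tau_{\ge1}\omega_n E$; and $\omega_n$ is conservative (Lemma~\ref{lemm:SHSn-basics}(1)). Hence
\[ E\in\SHS(k)(n)_{\ge0}\ \Leftrightarrow\ \tau_{\le-1}E=0\ \Leftrightarrow\ \omega_n\tau_{\le-1}E=0\ \Leftrightarrow\ \tau_{\le-1}\omega_nE=0\ \Leftrightarrow\ \omega_nE\in\SHS(k)_{\ge0}, \]
and since $\omega_nE=\Omega_\Gm^n i_nE$ we have $\ul{\pi}_i(\omega_nE)=\ul{\pi}_i(i_nE)_{-n}$, giving (i)$\Leftrightarrow$(ii). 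Trivially (iii)$\Rightarrow$(ii), and (i)$\Rightarrow$(iii) because $i_n$ is right-$t$-exact by (1); so (i), (ii), (iii) are equivalent. The same chain with $\tau_{\ge1}$, $\SHS(k)_{\le0}$ and $i>0$ in place of $\tau_{\le-1}$, $\SHS(k)_{\ge0}$, $i<0$ yields (i')$\Leftrightarrow$(ii'). Finally, returning to (2): by (3), $r_nE'\in\SHS(k)(n)_{\ge0}$ iff $\omega_n r_nE'\in\SHS(k)_{\ge0}$, and $\omega_n r_n\wequi\Omega_\Gm^n$ (the fiber of $f_nE'\to E'$ is killed by $\Omega_\Gm^n$, as it is annihilated by $r_n$), so $r_n$ is right-$t$-exact by (P2); likewise $\omega^{\infty-n}E'\in\SHS(k)(n)_{\ge0}$ iff $\omega_n\omega^{\infty-n}E'\wequi\omega^\infty E'\in\SHS(k)_{\ge0}$ (using $\sigma^\infty\wequi\sigma^{\infty-n}\sigma_n$ and passing to right adjoints), which holds since $\omega^\infty$ is right-$t$-exact. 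Hence $r_n$ and $\omega^{\infty-n}$ are $t$-exact as well.

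The only steps requiring genuine (if routine) work are the preliminaries (P1)/(P2) — i.e.\ that smashing with $\Gm$ and applying $\Omega_\Gm$ preserve connectivity in $\SHS(k)$ — and the implication (ii)$\Rightarrow$(i) of part (3); I expect the latter to be the conceptual crux, and the slick route through $t$-exactness plus conservativity of $\omega_n$ is what makes it painless. Everything else is formal bookkeeping with adjunctions.
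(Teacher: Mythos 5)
Your proof is correct and follows essentially the same route as the paper: left-$t$-exactness by adjunction from (1), right-$t$-exactness of the right adjoints via Morel's formula $\ul{\pi}_i(\Omega_\Gm E)\wequi\ul{\pi}_i(E)_{-1}$ together with conservativity and $t$-exactness of $\omega_n$, and (3) from $\ul{\pi}_i(\omega_n E)=\ul{\pi}_i(i_nE)_{-n}$. Your detour through part (3) to handle $r_n$ and $\omega^{\infty-n}$ is just the paper's ``conservative $t$-exact functor detects (right) $t$-exactness'' trick specialized to $\omega_n$, and your explicit (P1)/(P2) and generator checks only spell out what the paper leaves as ``clear by construction''.
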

\begin{proof}
Statement (1) is clear by construction. Hence the functors from (2) are left-$t$-exact. Note that if $F$ is a conservative $t$-exact functor and $G$ is any stable functor such that $FG$ is (right) $t$-exact, then $G$ is (right) $t$-exact\NB{Let $X \ge 0$. We need to show that $GX \ge 0$, i.e. $(GX)_{<0} = 0$. By conservativity and t-exactness of $F$, this is the same as $(FGX)_{<0} = 0$}. For (2), it is thus enough to prove that the following functors are (right) $t$-exact: $\omega_n, \omega^\infty, \omega_n r_n$. The statement about $\omega^\infty$ follows from the description of the $t$-structures in terms of homotopy sheaves. We have $\omega_n r_n = \Omega_\Gm^n$ 
\NB{because left adjoint of $\omega_n r_n$ is $i_n \sigma_n  = \Sigma^n_{\Gm}$}
 and $\ul{\pi}_i(\Omega_\Gm^n E) = \ul{\pi}_i(E)_{-n}$ \cite[Lemma 4.3.11]{morel-trieste}, so $\omega_n r_n$ is also $t$-exact. Finally note that $\omega_n = \Omega_\Gm^n \circ i_n$ is a composite of right-$t$-exact functors, so is right-$t$-exact.

It remains to prove (3). Since $\ul{\pi}_i(\omega_n E) = \ul{\pi}_i(i_n E)_{-n}$ and $\omega_n$ is conservative and $t$-exact, (i) is equivalent to (ii) and (i') is equivalent to (ii'). Since $i_n$ is right-$t$-exact, (i) implies (iii).  Clearly (iii) implies (ii) as well. This concludes the proof.
\end{proof}

\begin{corollary} \label{corr:SHSn-heart} ${}$ 
\begin{enumerate}
\item The functor $\omega_n^\heart\colon \SHS(k)(n)^\heart \to \SHS(k)^\heart \wequi \HI(k)$ is conservative and preserves limits and colimits. In particular it is
  monadic.
\item The functor $i_n^\heart = \ul{\pi}_0 \circ i_n\colon \SHS(k)(n)^\heart \to \SHS(k)^\heart \wequi \HI(k)$ is fully faithful. Its essential image consists of those sheaves $F \in \HI(k)$ such that the canonical map $\ul{\pi}_0(i_n r_n F) \to F$ is an isomorphism.
\end{enumerate}
\end{corollary}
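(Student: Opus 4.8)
\emph{Plan of proof.} Both parts should follow rather formally from Lemmas~\ref{lemm:SHSn-basics} and~\ref{lemm:SHSn-t} by manipulating adjunctions and truncations; the only mildly delicate point is that $i_n$ is merely right-$t$-exact (so $i_n E$ may have nonzero homotopy sheaves in positive degrees), which is compensated by the $t$-exactness of $r_n$.

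For (1): since $\omega_n$ is $t$-exact (Lemma~\ref{lemm:SHSn-t}(2)) it restricts to an additive functor $\omega_n^\heart$ between the abelian hearts. To see $\omega_n^\heart$ is conservative, note that a morphism $f$ of $\SHS(k)(n)^\heart$ is invertible iff $\mathrm{fib}(f)\simeq 0$ in $\SHS(k)(n)$, and $\mathrm{fib}(\omega_n f)\simeq\omega_n\mathrm{fib}(f)$, so conservativity of $\omega_n$ (Lemma~\ref{lemm:SHSn-basics}(1)) suffices. For limits and colimits: $\SHS(k)(n)_{\le 0}$ is closed under limits and $\SHS(k)(n)_{\ge 0}$ under colimits, so a (co)limit taken in the heart is the appropriate truncation of the (co)limit taken in $\SHS(k)(n)$; since $\omega_n$ is $t$-exact and preserves all limits and colimits, $\omega_n^\heart$ commutes with these operations. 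For monadicity I would exhibit $\ul\pi_0\circ\sigma_n$ as a left adjoint to $\omega_n^\heart$ (using that $\sigma_n$ is right-$t$-exact, that $\sigma_n\dashv\omega_n$, and the elementary fact that $[\tau_{\le 0}C,D]\simeq[C,D]$ whenever $C$ is connective and $D$ lies in the heart), and then invoke the $\infty$-categorical Barr--Beck theorem: $\omega_n^\heart$ has a left adjoint, is conservative, and preserves geometric realizations.

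For (2): the plan is to identify a right adjoint to $i_n^\heart$ explicitly. Using full faithfulness of $i_n$, the adjunction $i_n\dashv r_n$, the identification $[\tau_{\le 0}C,D]\simeq[C,D]$ recalled above, and the $t$-exactness of $r_n$ (which carries $\HI(k)=\SHS(k)^\heart$ into $\SHS(k)(n)^\heart$), one gets natural isomorphisms
\[
\Hom_{\HI(k)}(i_n^\heart E,F)\;\simeq\;[i_n E,F]_{\SHS(k)}\;\simeq\;[E,r_n F]_{\SHS(k)(n)}\;\simeq\;\Hom_{\SHS(k)(n)^\heart}(E,r_n F)
\]
for $E\in\SHS(k)(n)^\heart$ and $F\in\HI(k)$, so that $r_n|_{\HI(k)}$ is right adjoint to $i_n^\heart$. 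To prove $i_n^\heart$ is fully faithful, I would check that the unit $E\to r_n\tau_{\le 0}i_n E$ is an equivalence: it is $r_n$ applied to the truncation map $i_n E\to\tau_{\le 0}i_n E$ (composed with the isomorphism $E\simeq r_n i_n E$), whose fibre is $\tau_{\ge 1}i_n E\in\SHS(k)_{\ge 1}$ because $i_n E$ is connective ($i_n$ being right-$t$-exact). Applying the $t$-exact functor $r_n$ sends this fibre into $\SHS(k)(n)_{\ge 1}$, and the long exact sequence of homotopy sheaves for the fibre sequence $r_n(\tau_{\ge 1}i_n E)\to E\to r_n\tau_{\le 0}i_n E$, together with $r_n(\tau_{\ge 1}i_n E)\in\SHS(k)(n)_{\ge 1}$ and $E,\,r_n\tau_{\le 0}i_n E\in\SHS(k)(n)^\heart$, forces the unit to be an isomorphism. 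Finally, a sheaf $F\in\HI(k)$ lies in the essential image of the fully faithful left adjoint $i_n^\heart$ iff the counit $i_n^\heart r_n F\to F$ is an isomorphism; chasing the adjunction identifications above shows this counit is precisely $\ul\pi_0$ applied to the counit $i_n r_n F\to F$ of $i_n\dashv r_n$, i.e. the canonical map $\ul\pi_0(i_n r_n F)\to F$ (note $\ul\pi_0 F=F$ since $F$ is in the heart).

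The step I expect to be the crux — really the only place the argument is not purely formal — is handling the failure of $i_n$ to be left-$t$-exact: $i_n E$ carries unwanted homotopy $\tau_{\ge 1}i_n E$ in positive degrees, and the whole argument hinges on the observation that applying the $t$-exact functor $r_n$ keeps this contribution in degrees $\ge 1$, so that it cannot interfere with the comparison in degree $0$. Beyond this bookkeeping I do not anticipate any genuine difficulty.
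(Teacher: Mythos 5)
Your proposal is correct; all the steps check out, and like the paper it is a purely formal consequence of Lemmas~\ref{lemm:SHSn-basics} and~\ref{lemm:SHSn-t}. The organization differs in two places, though. For (1), the paper gets monadicity "for free" from presentability (conservative + limit- and colimit-preserving between presentable categories gives a left adjoint by the adjoint functor theorem, then Barr--Beck), whereas you exhibit the left adjoint explicitly as $E\mapsto \ul{\pi}_0(\sigma_n E)$; your verification of this adjunction is correct and arguably more informative, but buys nothing essential. For (2), the paper proves full faithfulness by computing $[i_nE,i_nF]=[i_nE,\ul{\pi}_0(i_nF)]$ directly, and the key input there is Lemma~\ref{lemm:SHSn-t}(3)(ii'), which gives $\ul{\pi}_i(i_nF)_{-n}=0$ for $i>0$ and hence $r_n\bigl((i_nF)_{>0}\bigr)=0$; you instead set up the heart-level adjunction $i_n^\heart\dashv r_n^\heart$ and show the unit $E\simeq r_ni_nE\to r_n\tau_{\le 0}i_nE$ is invertible using only full faithfulness of $i_n$, $t$-exactness of $r_n$, and connectivity of $i_nE$, so you never invoke Lemma~\ref{lemm:SHSn-t}(3). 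Your identification of the counit with the canonical map $\ul{\pi}_0(i_nr_nF)\to F$ then recovers the paper's description of the essential image (the paper's converse direction, via $(r_ni_n)^\heart=\id$, is the same content as your unit isomorphism). Both routes are sound; yours packages the statement as a reflective-localization-type adjunction on hearts, the paper's is a slightly more hands-on computation that in passing records the stronger vanishing $r_n\bigl((i_nF)_{>0}\bigr)=0$.
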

\begin{proof}
(1) If $\scr C$ is any presentable stable $\infty$-category with an accessible $t$-structure and heart $i\colon \scr C^\heart \hookrightarrow \scr C$ and $D\colon I \to \scr C^\heart$ is a diagram, then $\colim_I D \wequi \pi_0(\colim_I iD)$, and similarly for limits. Consequently if $F$ is any $t$-exact functor preserving colimits (respectively limits), then $F^\heart$ also preserves colimits (respectively limits). Hence $\omega_n^\heart$ preserves limits and colimits, and is clearly still conservative (since the same holds for $\omega_n$, by Lemma \ref{lemm:SHSn-basics}(1), and $\omega_n$ is $t$-exact by Lemma \ref{lemm:SHSn-t}(2)). It is monadic since all the categories involved are presentable.

(2) Let $F \in \SHS(k)(n)^\heart$. Then $i_n(F) \in \SHS(k)_{\ge 0}$ and moreover $\ul{\pi}_i(i_n F)_{-n} = 0$ for $i > 0$, by Lemma \ref{lemm:SHSn-t}(ii'). It follows that for $E \in \SHS(k)(n)$ we have $[i_n E, i_n F] = [i_n E, \ul{\pi}_0(i_n F)]$\NB{E.g. $[i_n E[*], (i_n F)_{>0}] = [E[*], r_n((i_n F)_{>0})]$, and $r_n((i_n F)_{>0}) = 0$ since $\omega_n r_n((i_n F)_{>0}) = \Omega_\Gm^n (i_n F)_{>0} = 0$}. Suppose that
further $E \in \SHS(k)(n)^\heart$. Then $[i_n E, \ul{\pi}_0(i_n F)] = [(i_n E)_{\le 0}, \ul{\pi}_0(i_n F)]$, since $\ul{\pi}_0(i_n F) \in \SHS(k)_{\le 0}$.  Since $(i_n E)_{\le 0} = i_n^\heart(E)$ ($i_n$ being right-$t$-exact) and $\ul{\pi}_0(i_n F) = i_n^\heart(F)$, this is the fully faithfulness statement we wanted.

It remains to describe the essential image of $i_n^\heart$. Let $F \in \HI(k) \wequi \SHS(k)^\heart$. Since $r_n$ is $t$-exact, $\ul{\pi}_0(i_n r_n F) = i_n^\heart(r_n^\heart F)$ is in the essential image of $i_n^\heart$. Thus if $F \wequi \ul{\pi}_0(i_n r_n F)$, then $F$ is in the essential image of $i_n^\heart$. Conversely, suppose that $F$ is in the essential image of $i_n^\heart$, say $F \wequi i_n^\heart(E)$ for some $E \in \SHS(k)(n)^\heart$. Then $\ul{\pi}_0(i_n r_n F) = i_n^\heart(r_n^\heart F) = i_n^\heart r_n^\heart i_n^\heart E$. Note that if $\alpha, \beta$ are two right-$t$-exact functors, then $\alpha^\heart \beta^\heart \wequi (\alpha \beta)^\heart$. It follows that $r_n^\heart i_n^\heart \wequi (r_n i_n)^\heart \wequi \id^\heart = \id$, and hence $\ul{\pi}_0(i_n r_n F) \wequi i_n^\heart E \wequi F$. This concludes the proof.
\end{proof}

\subsection{Virtual transfers}
Given $M \in \HI(k)$ and a field $K$, recall the group
\[ C^0(K, M, 1) = \bigoplus_{x \in (\A^1_K)^{(1)} \setminus \{0, 1\}} H^1_x(\A^1_K, M). \]
The edge map in the spectral sequence for $M^{(1)}(K, \bullet)$ induces a map $C^0(K, M, 1) \to \ul{\pi}_0 M^{(1)}(K)$. It follows from \cite[Proposition 3.2(2)]{levine-slice} that this map is a surjection. We will construct a $\ul{GW}$-module structure and transfers on $C^0(K, M, 1)$. If $M \in \SHS(k)(1)^\heart$ then $C^0(K, M, 1) \to M(K)$ is surjective, and consequently there is at most one compatible $\ul{GW}$-module structure and transfers on $M(K)$. For this reason we call the structure on $C^0(K,M,1)$ \emph{virtual transfers}.

\begin{definition}
Let $M \in \HI(k)$.
\begin{enumerate}
\item Let $K$ be a field. We give $C^0(K,M,1)$ the structure of a module over $GW(K)$ coming from the isomorphisms $H^1_x(\A^1_K, M) \wequi M_{-1}(x)$ and the $\ul{GW}$-module structure on $M_{-1}$.
\item Let $K(x)/K$ be a monogeneous extension. We define a map $\tau_x\colon C^0(K(x), M, 1) \to C^0(K, M, 1)$ as follows. Suppose given a closed point $y \in \A^1_{K(x)}$ with image $z \in \A^1_K$. We need to define $(\tau_x)^y_z\colon M_{-1}(y) \to M_{-1}(z)$. But $k(y)/k(z)$ is generated by $x$, so we obtain the map $(\tau_x)^y_z$ from the construction $M_{\wh{-1}} \in \HI^{\Atr}$ of Example \ref{ex:M-1-transfers}.
\end{enumerate}
\end{definition}

 Note that the construction $C^*(X, M, q)$ is obviously functorial in $M$, and hence so is $C^0(K, M, 1)$. Then the above definitions are functorial in $M$, in the following sense.

\begin{lemma} \label{lemm:virtual-transfers-functorial}
Let $\alpha\colon M \to N \in \HI(k)$. Then the following hold.
\begin{enumerate}
\item Let $K/k$ be a field. The following diagram commutes
\begin{equation*}
\begin{CD}
C^0(K, M, 1) @>\alpha>> C^0(K, N, 1) \\
@V{\epsilon}VV              @V{\epsilon}VV  \\
M(K)         @>\alpha>> N(K).
\end{CD}
\end{equation*}
\item Let $K/k$ be a field. The morphism $\alpha\colon C^0(K, M, 1) \to C^0(K, N, 1)$ is a morphism of $GW(K)$-modules. In fact, for any $x \in (\A^1_K)^{(1)}$, the morphism $\alpha\colon M_{-1}(x) \to N_{-1}(x)$ is a morphism of $GW(x)$-modules.
\item Let $K(x)/K$ be a monogeneous extension. Then the following diagram commutes
\begin{equation*}
\begin{CD}
C^0(K(x), M, 1) @>\alpha>> C^0(K(x), N, 1) \\
@V{\tau_x}VV              @V{\tau_x}VV \\
C^0(K, M, 1) @>\alpha>> C^0(K, N, 1) \\
\end{CD}
\end{equation*}
\end{enumerate}
\end{lemma}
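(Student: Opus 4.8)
The plan is to observe that all three assertions are pure naturality statements, and to prove each by unwinding the relevant definition into a composite of operations each of which is already known to be functorial in the sheaf variable.

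First I would handle (1). Recall that $\epsilon$ is the composite of the edge map $C^0(K,M,1)\to\ul{\pi}_0 M^{(1)}(K)$ coming from the spectral sequence of the simplicial spectrum $M^{(1)}(K,\bullet)$ with the map $\ul{\pi}_0 M^{(1)}(K)\to M(K)$ induced by the tower map $M^{(1)}\to M^{(0)}$ together with the identification $\ul{\pi}_0 M^{(0)}\simeq M$ (homotopy invariance of $M$). Now $M\mapsto M^{(q)}(X,\bullet)$ is a functor to simplicial spectra, the spectral sequence of a simplicial spectrum is functorial, $\ul{\pi}_0$ is a functor, the tower maps and the identification $\ul{\pi}_0 M^{(0)}\simeq M$ are natural in $M$, and the identification of $C^0(K,M,1)$ with the relevant term of the $E_1$-page (through the purity isomorphisms $H^1_x(\A^1_K,M)\simeq M_{-1}(x)$, which are natural in $M$) is natural. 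Chaining these together gives the square in (1).

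Next I would treat (2) and (3) simultaneously, as both reduce directly to Example \ref{ex:M-1-transfers}. For (2): the $GW(K)$-module structure on $C^0(K,M,1)=\bigoplus_x M_{-1}(x)$ is by definition induced on each summand from the sheaf of $\ul{GW}$-modules $M_{-1}$ via the natural isomorphism $H^1_x(\A^1_K,M)\simeq M_{-1}(x)$; since by Example \ref{ex:M-1-transfers} the morphism $\alpha$ lifts to a morphism $M_{\wh{-1}}\to N_{\wh{-1}}$ in $\HI(k)^\Atr$ — which in particular respects the $\ul{GW}$-module structures — evaluating at the essentially smooth scheme $x$ shows that $\alpha\colon M_{-1}(x)\to N_{-1}(x)$ is $GW(x)$-linear, and then $\alpha$ is $GW(K)$-linear on $C^0(K,M,1)$ because the $GW(K)$-action on the $x$-summand factors through the restriction $GW(K)\to GW(x)$. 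For (3): by construction the component $(\tau_x)^y_z$ of $\tau_x$ is the transfer map of $M_{\wh{-1}}\in\HI(k)^\Atr$ for the monogeneous extension $k(y)/k(z)$ (generated by the image of $x$), and this extension is independent of $M$; since the lifted morphism $M_{\wh{-1}}\to N_{\wh{-1}}$ is by definition a morphism of presheaves with $\A^1$-transfers, hence compatible with the transfer maps, one gets $(\tau_x)^y_z\circ\alpha=\alpha\circ(\tau_x)^y_z$ for every pair $(y,z)$, and summing over $(y,z)$ gives the diagram in (3).

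I do not expect a genuine obstacle here: the lemma is essentially bookkeeping. The only step requiring a little care is (1), where one must pin down the precise definition of $\epsilon$ and verify that the purity/Thom isomorphisms and the spectral-sequence edge map feeding into it are genuinely natural in $M$; once that is recorded, the argument is formal.
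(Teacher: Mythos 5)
Your proposal is correct and takes essentially the same route as the paper: part (1) by naturality of the homotopy coniveau tower, its spectral sequence and the edge map in $M$, and parts (2), (3) by the functoriality of the contraction's $\ul{GW}$-module structure and transfers. The paper phrases the latter as the operations $\lra{a}\cdot$ and $\tau_x$ being given by applying $M$, resp.\ $N$, to a fixed morphism of (pro-)objects in $\SHS(k)$, which is exactly the fact recorded in Example \ref{ex:M-1-transfers} that you invoke.
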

\begin{proof}
(1) follows from the functoriality $M^{(1)}(K, \bullet) \to N^{(1)}(K, \bullet)$ of the homotopy coniveau tower and the associated spectral sequence and the edge map. (2) and (3) follow from the fact that multiplication by $\lra{a}$ and $\tau_x$ are obtained by applying $M$ (respectively $N$) to a morphism (of pro-objects) in $\SHS(k)$.
\end{proof}

Our main observation in this section is that if $M$ is already a homotopy module, then the virtual structures above are indeed the correct ones, in an appropriate sense. In order to prove this we need to understand the map $\epsilon\colon C^0(K, M, 1) \to M(K)$. Here is what we can say for general $M$.

\begin{lemma} \label{lemm:epsilon-computation}
Let $M \in \HI(k)$, $z \in \A^1_K \setminus \{0,1\}$ a closed point and $a \in H^1_z(\A^1_K, M)$. Pick $\tilde{a} \in M(\A^1_K \setminus z)$ with $\partial \tilde{a} = a$. Then $\epsilon(a) = i_1^*(\tilde a) - i_0^*(\tilde a)$.
\end{lemma}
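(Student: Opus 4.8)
The plan is to unwind the definition of the edge map $\epsilon\colon C^0(K,M,1)\to \ul\pi_0 M^{(1)}(K)$ in terms of the spectral sequence of the simplicial spectrum $M^{(1)}(K,\bullet)$, and to identify its composite with the forgetful map $\ul\pi_0 M^{(1)}(K)\to M(K)$ concretely at the level of the Rost--Schmid representatives. First I would recall that $M^{(1)}(K,n)=\colim_{Z\subset\Delta^n_K,\ 1\text{-good}} M_Z(\Delta^n_K)$, so that $\pi_0 M^{(1)}(K,0)=\colim_Z H^1_Z(\A^1_K,M)$ (here $\Delta^1_K\simeq\A^1_K$ via the standard coordinate, with the two vertices $0,1$ playing the role of the faces), and $\pi_0 M^{(1)}(K,1)$ involves $Z\subset\Delta^2_K$ meeting all faces in codimension $\ge1$. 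In the spectral sequence $E_1^{*,0}$ is the unnormalized complex with $E_1^{0,0}=\pi_0 M^{(1)}(K,0)$, $E_1^{1,0}=\pi_0 M^{(1)}(K,1)$, and the $E_1$-differential $d_1$ is the alternating sum $i_0^*-i_1^*+i_2^*$ of the three face maps $\Delta^1_K\to\Delta^2_K$; the edge map $\epsilon$ is the projection onto $E_2^{0,0}=\operatorname{coker}(d_1)\twoheadleftarrow E_1^{0,0}$, followed by the inclusion $E_\infty^{0,0}\hookrightarrow\pi_0 M^{(1)}(K)$.

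Next I would make the composite $C^0(K,M,1)=\bigoplus_{x\ne 0,1}H^1_x(\A^1_K,M)\xrightarrow{\epsilon}\ul\pi_0 M^{(1)}(K)\to M(K)$ explicit. The map to $M(K)$ comes (via the identification of $M^{(1)}$ with $i_1r_1$ and hence with $f_1$, or directly from the natural transformation $M^{(1)}\to M$) from forgetting the support; on $\pi_0 M^{(1)}(K,0)=\colim_Z H^1_Z(\A^1_K,M)$ it is the canonical map $H^1_Z(\A^1_K,M)\to H^1(\A^1_K,M)\simeq H^1(\operatorname{Spec}K,M)=0$ composed with... — but since $\A^1_K$ is $\A^1$-contractible this would be zero, which shows the subtlety: the map to $M(K)$ is \emph{not} simply forgetting support on the $0$-simplices, it genuinely uses the simplicial structure. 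The correct description is that an element $a\in H^1_z(\A^1_K,M)$, lifted to $\tilde a\in M(\A^1_K\setminus z)$ with $\partial\tilde a=a$, represents a class in $M^{(1)}(K)$ whose image in $M(K)\wequi M(\Delta^0_K)$ under the geometric realization / edge-map is computed by comparing the two vertex restrictions: the realization of the simplicial spectrum pairs the $1$-simplex datum (here the complement $\A^1_K\setminus z$ carrying $\tilde a$) against the boundary, and the difference $i_1^*(\tilde a)-i_0^*(\tilde a)\in M(\operatorname{Spec}K)$ is exactly the resulting class. So the key computation is: chasing $a$ through the colimit defining $\pi_0 M^{(1)}(K,0)$, using the exact sequence $H^1_z(\A^1_K,M)\xleftarrow{\partial} M(\A^1_K\setminus z)\leftarrow M(\A^1_K)$ (so $\tilde a$ exists since $H^1(\A^1_K,M)$ surjects appropriately, i.e. $M(\A^1_K\setminus z)\to H^1_z(\A^1_K,M)$ is onto because $H^1(\A^1_K,M)=0$), and tracking the face maps $i_0,i_1\colon\operatorname{Spec}K\to\A^1_K$, identifies the image of $a$ in $M(K)$ with $i_1^*(\tilde a)-i_0^*(\tilde a)$.

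The main obstacle I anticipate is bookkeeping the sign and the precise combinatorics: one must be careful that the edge map of the semisimplicial spectral sequence (Proposition~1.2.4.5 of \cite{HA}), combined with the identification $\Delta^1_K\simeq\A^1_K$ that sends the two vertices to $0$ and $1$, produces exactly $i_1^*-i_0^*$ and not $i_0^*-i_1^*$ or $\pm(i_0^*-i_1^*)$ up to the convention fixed for the differential of the BLRS complex in Definition~\ref{def:BLRS} (where the $q=q-1$ differential is declared to be $i_1^*-i_0^*$). Concretely I would (i) fix the identification of the standard $1$-simplex, noting $Z\subset\A^1_K$ is $1$-good iff $z\notin\{0,1\}$ so that the colimit over $1$-good $Z$ of $H^1_Z$ contains each $H^1_z(\A^1_K,M)$ for $z\ne0,1$; (ii) observe that for such $z$ the localization sequence gives a surjection $M(\A^1_K\setminus z)\twoheadrightarrow H^1_z(\A^1_K,M)$ because $H^1(\A^1_K,M)=0$ by strict homotopy invariance, hence $\tilde a$ as in the statement exists; (iii) identify the class of $a$ in $\operatorname{coker}(d_1)=E_2^{0,0}\hookrightarrow\pi_0 M^{(1)}(K)$ with, after pushing to $M(K)$ along the natural transformation $M^{(1)}\Rightarrow M$ (equivalently evaluating the augmentation of the simplicial object), the element obtained by restricting $\tilde a$ along $i_0$ and $i_1$; the natural transformation $M^{(1)}\to M$ on $M^{(1)}(K,1)$ is "forget support", which on the representative $\tilde a\in M(\A^1_K\setminus z)$ gives back $\tilde a$, and the face maps of the simplicial object $M^{(1)}(K,\bullet)$ in simplicial degree $1\to0$ are exactly pullback along $i_0,i_1$, yielding $\epsilon(a)=i_1^*(\tilde a)-i_0^*(\tilde a)$. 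Finally I would remark this is independent of the choice of $\tilde a$: two choices differ by an element of $M(\A^1_K)$, which by homotopy invariance has equal restrictions along $i_0$ and $i_1$, so the difference $i_1^*-i_0^*$ kills it.
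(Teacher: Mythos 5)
Your overall strategy is the same as the paper's: unwind the edge map for the simplicial object $M^{(1)}(K,\bullet)$, use $H^1(\A^1_K,M)=0$ to produce the lift $\tilde a$, evaluate the two vertex maps, and check independence of the choice of $\tilde a$ (your closing remark matches the paper's closing step exactly). But the central identification is not actually carried out, and the statements you substitute for it are incorrect. First, your spectral sequence indexing is off by one: $M^{(1)}(K,0)$ is formed on $\Delta^0_K=\operatorname{Spec}K$, which admits no nonempty $1$-good closed subsets, so $M^{(1)}(K,0)=0$; the group $\colim_Z H^1_Z(\A^1_K,M)=C^0(K,M,1)$ sits at simplicial level $1$ in homotopy degree $-1$, i.e.\ at $E_1^{1,-1}$, not at $E_1^{0,0}$. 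The vanishing of the level-$0$ term is precisely what makes $a$ automatically a total cycle and the edge map into $\ul{\pi}_0 M^{(1)}(K)$ well defined, and your description loses this. Second, and more seriously, your step (iii) conflates $a$ with $\tilde a$: the forget-support map $M^{(1)}(K,1)\to M^{(0)}(K,1)$ sends (a representative of) $a$ to a class in $H^1(\A^1_K,M)=0$, not to $\tilde a$; the element $\tilde a$ lives in $M(\A^1_K\setminus z)$, which is a term of neither simplicial spectrum. So the sentence ``the natural transformation is forget support, which on the representative $\tilde a$ gives back $\tilde a$, and the face maps are $i_0^*,i_1^*$'' is a restatement of the lemma rather than a proof of it: the whole content is exactly how the nullhomotopy recorded by $\tilde a$ is converted by the realization into $i_1^*\tilde a-i_0^*\tilde a$, and you acknowledge the subtlety (naive forgetting of support gives $0$) without supplying the mechanism that resolves it.

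The paper supplies that mechanism by rigidifying. One chooses an injectively fibrant presheaf of complexes $F$ modelling $R\Gamma(\ph,M)$, so that $F_Z(X)=\ker(F(X)\to F(X\setminus Z))$ is an honest subcomplex computing $R\Gamma_Z(X,M)$, and models $M^{(q)}(K,\bullet)$ by a simplicial chain complex $F^{(q)}_\bullet$. Then $a$ is represented by a cocycle $a'$ in bidegree (simplicial $1$, cochain $1$) of $F^{(1)}$; since $F^{(1)}_0=0$ it is a cycle in the total complex (this is the edge map); its image $\alpha(a')$ in $F^{(0)}_1$ is a horizontal coboundary, $\alpha(a')=d_h\bar a$ with $\bar a\in F^{(0),0}_1$, because $h^1F^{(0)}_1=H^1(\A^1_K,M)=0$; hence in the total complex its class equals the vertical boundary $d_v\bar a=i_1^*\bar a-i_0^*\bar a$, which lies in simplicial level $0$, where the edge identification $h^0F^{(0)}_0\wequi M(K)$ applies, and which depends only on $\tilde a=\bar a|_{\A^1_K\setminus z}$, an element satisfying $\partial\tilde a=a$. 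Some zig-zag of this kind (or an equally careful homotopy-coherent argument with the fiber sequence $M_z(\A^1_K)\to M(\A^1_K)\to M(\A^1_K\setminus z)$) is the missing step; as written, your (iii) does not go through.
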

\begin{proof}
Consider the presheaf $\scr F$ of $H\Z$-modules on $\Sm_K$ given by $X \mapsto R\Gamma(X, M)$. Using the injective model structure on presheaves of chain complexes we obtain $F \in \Fun(\Sm_K^\op, Ch)$ modeling $\scr F$ such that for every open immersion $U \to X$ the pullback $F(X) \to F(U)$ is a fibration (of injective fibrant chain complexes). In particular for $Z \subset X$ closed let $F_Z(X) = ker(F(X) \to F(U))$; then $F_Z(X)$ is equivalent to the homotopy fiber of $F(X) \to F(U)$, and so computes $R\Gamma_Z(X, M)$. We can thus use $F$ to build a strict model $F^{(q)}_\bullet$ of the homotopy coniveau tower $M^{(q)}(K, \bullet)$. In other words, for each $q \ge 0$ and $n \in \Delta^\op$ we have the chain complex
\[ F^{(q)}_n = \colim_{Z \subset \Delta^n_K \qgood} F_Z( \Delta^n_K), \]
these chain complexes fit together into a simplicial chain complex $F^{(q)}_\bullet$, and there are canonical maps $F^{(q+1)}_\bullet \to F^{(q)}_\bullet$.

Note that if $n < q$ then 
$\Delta^n_K$ does not afford non-empty $q$-good closed subschemes, so $F^{(q)}_n = 0$. 
Let $a \in H^1_z(\A^1_K, M)$. Then $a$ defines an element of $h^1 F^{(1)}_1$ and so can be represented by an element $a' \in ker(F^{(1),1}_1 \to F^{(1),2}_1)$. By the previous remark, the vertical boundary of $a'$ is zero, and hence $a'$ represents a class in the cohomology of the total complex (i.e. geometric realization of $F^{(1)}_\bullet$), or in other words an element $\delta(a) \in \pi_0(M^{(1)}(K))$; this is an instance of the ``edge map'' in the spectral sequence corresponding to $F^{(1)}_\bullet$. Let $\alpha\colon F^{(1)}_\bullet \to F^{(0)}_\bullet$ be the canonical map; then $\epsilon(a) = |\alpha|(\delta(a))$. Since $\alpha$ is a map of simplicial chain complexes, $\alpha(a') \in F^{(0),1}_1$ is also a cycle in the total complex of $F^{(0)}_\bullet$; let us denote the cohomology class it represents by $\delta(\alpha(a'))$. Then $\epsilon(a) = \delta(\alpha(a'))$.

Since $h^1 F^{(0)}_1 = H^1(\A^1_K, M) = 0$, there exists $\bar{a} \in F_1^{(0),0}$ with horizontal boundary $d_h(\bar a) = \alpha(a')$. Then $\alpha(a')$ is cohomologous to the vertical boundary $d_v(\bar{a})$ in the total complex of $F^{(0)}_\bullet$, and the edge map $h^0 F^{(0)}_0 \to h^0 |F^{(0)}_\bullet| \wequi M(K)$ is the canonical equivalence. Hence $\epsilon(a) = d_v(\bar{a})$. We can restrict $\bar a$ to $U := \A^1_K \setminus z$. By functoriality, $d_h(\bar a|_U) = \alpha(a'|_U)$. But $a'|_U = 0$ (since it is supported on $\emptyset$), so $d_h(\bar a|_U) = 0$ and $\bar{a}|_U$ defines an element of $H^0(U, M)$. By construction, if $\partial\colon H^0(U, M) \to H^1_z(\A^1_K, M)$ denotes the boundary map in the long exact sequence of cohomology with support, then $\partial(\bar{a}|_U) = a$. Note that $d_v(\bar a) = i_1^*(\bar a) - i_0^*(\bar a)$, and this factors through the restriction to $U$. Hence $\epsilon(a) = i_1^*(\tilde a) - i_0^*(\tilde a)$ for some $\tilde a \in H^0(U, M)$ with $\partial \tilde a = a$, namely $\tilde a = \bar a|_U$.

It remains to observe that if $\tilde a \in H^0(U, M)$ with $\partial \tilde{a} = a$, then $\tilde a = \bar a|_U + b|_U$, for some $b \in H^0(\A^1_K, M) \wequi H^0(K, M)$. In particular $i_0^* b = i_1^* b$ and so $i_1^*(\tilde a) - i_0^*(\tilde a) = i_1^*(\bar a) - i_0^*(\bar a)$. This concludes the proof.
\end{proof}

If $M$ is a homotopy module, we can make the above recipe more concrete, generalizing \cite[Proposition 8.2]{levine-slice} to the inseparable case.
\begin{proposition} \label{prop:epsilon-computation}
Let $M \in \HI_0(k)$, $z \in \A^1_K \setminus \{0,1\}$ a closed point and $a \in H^1_z(\A^1_K, M)$. Then \[\epsilon(a) = \tau_z([1-z^{-1}] a),\] where we identify $H^1_z(\A^1_K, M) \wequi M_{-1}(z)$ by trivializing the normal bundle via the minimal polynomial.
\end{proposition}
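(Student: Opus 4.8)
The plan is to combine Lemma \ref{lemm:epsilon-computation} with the homotopy module structure, which makes the lift appearing there completely explicit. By Lemma \ref{lemm:epsilon-computation} it suffices to produce some $\tilde a \in M(\A^1_K \setminus z)$ with $\partial_z \tilde a = a$ and to evaluate $i_1^*(\tilde a) - i_0^*(\tilde a)$; since $i_1^* - i_0^*$ annihilates the image of the constants $M(K) = M(\A^1_K)$, the choice of $\tilde a$ is irrelevant, and we are free to pick the most convenient one.

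To build a computable $\tilde a$ I would push forward an elementary lift from $\A^1_{K(z)}$. Let $P$ be the minimal polynomial of $z$, so that $z \hookrightarrow \A^1_K$ is cut out by $P$ and the finite field extension $K \hookrightarrow K(z)$ gives a finite flat map $z \to \mathrm{Spec}\, K$. Base changing along $\A^1_K \to \mathrm{Spec}\, K$ produces a finite flat map $\rho \colon \A^1_{K(z)} \to \A^1_K$ under which $z$ becomes the tautological rational point $\delta = \{T = z\} \subset \A^1_{K(z)}$, with $\rho|_\delta \colon \delta \xrightarrow{\sim} z$. Over $\A^1_{K(z)}$ the class $a \in M_{-1}(z) = M_{-1}(K(z))$ extends to the constant section $\bar a$, and $[T - z]\cdot \bar a \in M(K(z)(T))$ has residue $a$ at $\delta$ (using $\partial^{T-z}[T-z] = 1$ from \cite[Theorem 3.15]{A1-alg-top} and Lemma \ref{lemm:boundary-product-formula}(2)) and residue $0$ at every other point of $\A^1_{K(z)}$. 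Applying the transfer $\tr_\rho$ of Section \ref{sec:RS-transfers}, with $\omega_\rho$ trivialised via $P$ as in Lemma \ref{lemm:pushpull-d}, gives an element of $M(K(T))$; since $\tr_\rho$ is a chain map on Rost--Schmid complexes (\cite[Corollary 5.30]{A1-alg-top}) and the boundary of $[T-z]\bar a$ is supported over $z$, this element has all of its residues concentrated at $z$, equal to $a$ up to a fixed unit twist. Rescaling, we obtain the desired $\tilde a \in M(\A^1_K \setminus z)$.

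Then I would compute the two evaluations. For $c \in \{0,1\}$ the rational point $\{T=c\}$ is disjoint from $z$, so by Corollary \ref{corr:pullback-formula} we have $i_c^*(\tilde a) = \partial_c^{T-c}\bigl([T-c]\,\tilde a\bigr)$; since $[T-c]$ is pulled back from $\A^1_K$, the projection formula (Corollary \ref{corr:twisted-projection}) lets us commute it past $\tr_\rho$, and commuting $\tr_\rho$ past the boundary and evaluating as above — the fibre of $\rho$ over $\{T=c\}$ being $K(z)$ with the extension map to $K$ — yields
\[ i_c^*(\tilde a) = \tau_z\bigl([c - z]\,a\bigr), \]
the untwisted $\tau_z$ appearing once one checks that the $P$-trivialisation of $\omega_\rho$ restricts over $\{T=c\}$ to the trivialisation of $\omega_{z/K}$ built into $\tau_z$. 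Subtracting and using additivity of $\tau_z$,
\[ \epsilon(a) = i_1^*(\tilde a) - i_0^*(\tilde a) = \tau_z\bigl(([1-z] - [-z])\,a\bigr), \]
and the identity $[1-z] - [-z] = \langle -z\rangle [1-z^{-1}]$ in $K_1^{MW}(K(z))$ (from $[uv] = [u]+[v]+\eta[u][v]$ and centrality of $K_0^{MW}$), combined with the Steinberg relation $[z^{-1}][1-z^{-1}] = 0$ (which forces $\langle -z\rangle [1-z^{-1}] = \langle -1\rangle [1-z^{-1}]$), reduces the claim to absorbing a single factor of $\langle -1\rangle$ into the identification $H^1_z(\A^1_K, M) \wequi M_{-1}(z)$.

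The main obstacle is exactly this last step: carefully normalising the isomorphism $H^1_z(\A^1_K, M) \wequi M_{-1}(z)$ ``via the minimal polynomial'' and the boundary conventions in Lemma \ref{lemm:epsilon-computation}, so that the various $\langle \pm 1\rangle$'s — including the one hidden in the edge map of the spectral sequence — cancel precisely. The second, more structural, point is that, unlike in the separable case treated by Levine in \cite[Proposition 8.2]{levine-slice} (where one can reduce to a rational point by passing to a splitting field), the argument has to be uniform in $z$; the transfer-theoretic construction above is designed so that it goes through unchanged when $K(z)/K$ is inseparable, which is the new content of the statement.
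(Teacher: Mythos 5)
Your overall route is the same as the paper's: reduce via Lemma \ref{lemm:epsilon-computation} to exhibiting one convenient lift, pass to $\A^1_{K(z)}$ where $z$ acquires a tautological rational point, take the explicit lift $[T-z]\bar a$ there, and push it down along $\rho\colon \A^1_{K(z)}\to\A^1_K$ using the chain-map property of Rost--Schmid transfers, Proposition \ref{prop:transfer-specialization}, Corollary \ref{corr:pullback-formula} and the projection formulas; the paper packages this as a commutative diagram of absolute (twisted) transfers and, after reducing to the rational-point situation, quotes \cite[Proposition 7.1]{levine-slice} for the remaining computation. So the idea is right, but the two places you flag and then wave off are genuine gaps, not routine bookkeeping.

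First, the ``fixed unit twist \ldots rescaling'' step is not harmless. Whatever unit relates $\partial_z\bigl(\tr_\rho([T-z]\bar a)\bigr)$ to $a$ (via the trivializations of $\omega_{\delta/\A^1_{K(z)}}$ by $T-z$, of $\omega_\rho$ by $P$, and of $\omega_{z/\A^1_K}$ by the minimal polynomial) also reappears when you identify the transfer over $\{T=c\}$ with the geometric transfer $\tau_z$; rescaling $\bar a$ merely moves that unit from the residue at $z$ into the values $i_c^*(\tilde a)$, so you must show the two comparisons produce the \emph{same} unit, coherently, for it to cancel. This coherence of trivializations is exactly what the paper's diagram chase establishes, and it is precisely where the inseparable case differs from Levine's: any implicit ``the unit is $P'(z)$''-type argument is unavailable since $P'(z)=0$ there. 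Second, your final Milnor--Witt computation yields $\epsilon(a)=\tau_z\bigl(\lra{-z}[1-z^{-1}]a\bigr)=\tau_z\bigl(\lra{-1}[1-z^{-1}]a\bigr)$, and you propose to ``absorb'' the leftover $\lra{-1}$ into the identification $H^1_z(\A^1_K,M)\wequi M_{-1}(z)$; but that identification is fixed in the statement (trivialization by the minimal polynomial), so a unit cannot be absorbed there. Since $\lra{-1}[u]\neq[u]$ in general, the discrepancy must instead be shown to cancel against the switch/sign conventions entering the boundary maps and the edge map (compare the $\epsilon$ in Lemma \ref{lemm:boundary-product-formula}(1) and the $(-1)^d$ in the definition of $\partial^U_Z$), or one reduces to the rational-point case and invokes \cite[Proposition 7.1]{levine-slice} as the paper does. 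As written, your argument stops one explicit unit short of the asserted formula.
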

\begin{proof}
Let $Z \subset \A^1_{K(z)}$ denote the reduced (and hence essentially smooth) preimage of $z$ and $z \in Z$ the canonical lift.  Consider the following diagram
\begin{equation*}
\begin{CD}
H^0(K(z), M(\omega_{K(z)/K})) @<i_s^*<< H^0(\A^1_{K(z)} \setminus z, M(\omega_{\A^1_{K(z)}/\A^1_K})) @>\partial>> H^1_z(\A^1_{K(z)}, M(\omega_{\A^1_{K(z)}/\A^1_K})) \\
@|                                               @V{j^*}VV                                                         @VVV \\
H^0(K(z), M(\omega_{K(z)/K})) @<i_s^*<< H^0(\A^1_{K(z)} \setminus Z, M(\omega_{\A^1_{K(z)}/\A^1_K})) @>\partial>> H^1_Z(\A^1_{K(z)}, M(\omega_{\A^1_{K(z)}/\A^1_K})) \\
@V{\tr}VV                                        @V{\tr}VV                                                         @V{\tr}VV \\
H^0(K, M)                     @<i_s^*<< H^0(\A^1_{K} \setminus z, M)                                 @>\partial>> H^1_z(\A^1_{K}, M).
\end{CD}
\end{equation*}
Here $s \in \{0,1\}$, $\partial$ denotes the boundary map in the long exact sequence of cohomology with support, $j: \A^1_{K(z)} \setminus Z \to \A^1_{K(z)} \setminus z$ is the inclusion, $\tr$ is the absolute transfer and the unlabelled arrow is extension of support. Note that $Spec(K(z))/Spec(K)$ is flat and hence so is $\A^1_{K(z)}/\A^1_K$. It follows that $i_s^* \omega_{\A^1_{K(z)}/\A^1_K} \wequi \omega_{K(z)/K}$, and the pullbacks $i_s^*$ make sense. It also follows that $\omega_{\A^1_{K(z)}/\A^1_K} \wequi p^* \omega_{K(z)/K}$, where $p\colon \A^1_{K(z)} \to K(z)$ is the canonical projection. 

Proposition \ref{prop:transfer-specialization} and Corollary \ref{corr:pullback-formula} imply that the lower left hand square commutes. The lower right hand square commutes since the absolute transfer is defined on the level of the Rost-Schmid complexes. The upper left hand square commutes since $i_s$ factors through $j$, and the upper right hand square commutes essentially by definition. We are given $a \in H^1_z(\A^1_{K}, M)$ and need to determine $\epsilon(a)$. By Lemma \ref{lemm:epsilon-computation}, we need to find $\tilde a \in H^0(\A^1_{K} \setminus z, M)$ and compute $i_1^*(\tilde a) - i_0^*(\tilde a)$. We shall find $\tilde a_1 \in H^0(\A^1_{K(z)} \setminus z, M(\omega_{\A^1_{K(z)}/\A^1_K}))$ with $\tr(j^* \tilde a_1) = \tilde a$. Consequently $\epsilon(a) = \tr(i_1^*(\tilde a_1) - i_0^*(\tilde a_1))$.

The right hand (composite) transfer $\tr\colon H^1_z(\A^1_{K(z)}, M(\omega_{\A^1_{K(z)}/\A^1_K})) \to H^1_z(\A^1_{K}, M)$ is an isomorphism, which can be seen as follows: \[H^1_z(\A^1_{K(z)}, M(\omega_{\A^1_{K(z)}/\A^1_K})) \wequi M_{-1}(z, \omega_{z/\A^1_{K(z)}} \otimes \omega_{\A^1_{K(z)}/\A^1_K}) \wequi M_{-1}(z, \omega_{z/\A^1_K}),\] and $H^1_z(\A^1_K, M) \wequi M_{-1}(z, \omega_{z/\A^1_K})$ as well. Hence we obtain a unique element $a_1 \in H^1_z(\A^1_{K(z)}, M(\omega_{\A^1_{K(z)}/\A^1_K}))$ with $\tr(a_1) = a$. Suppose we trivialize $\omega_{K(z)/K}$ by using the minimal polynomial. This induces a trivialization of $\omega_{\A^1_{K(z)}/\A^1_K}$, and consequently we can drop all the twists in the upper row of the diagram. The induced trivialization of $\omega_{K(z)/K}$ in the top left corner is the one we started with (i.e. by the minimal polynomial), and so the left hand transfer map turns into the geometric transfer $\tau_z$. We have $H^1_z(\A^1_{K(z)}, M) \wequi M_{-1}(z)$ canonically (since $z \in \A^1_{K(z)}$ is a rational point), and the above discussions shows that the isomorphism \[ M_{-1}(z) \wequi H^1_z(\A^1_{K(z)}, M) \wequi H^1_z(\A^1_{K(z)}, M(\omega_{\A^1_{K(z)}/\A^1_K})) \stackrel{\tr}{\wequi} H^1_z(\A^1_{K}, M) \] is the one  induced by the minimal polynomial.

Everything that remains to be done now only involves the top row of the diagram, without any twists: we need to produce $\tilde a_1 \in H^0(\A^1_{K(z)} \setminus z, M)$ with $\partial \tilde a_1 = a_1$ and show that $i_1^*(\tilde a_1) - i_0^*(\tilde a_1) = [1-z^{-1}]a_1$, where we use that $a_1 \in H^0(\A^1_{K(z)}) \wequi M_{-1}(z)$ canonically, so $[1-z]a_1 \in M(z)$ makes sense. In other words, we have reduced the problem to the situation where $z$ is a rational point. It is not difficult to produce a lift $\tilde a_1$ directly (it is given by $[t-z]a_1$) and verify the claim; but this case was also already dealt with in \cite[Proposition 7.1]{levine-slice}.
\end{proof}

\begin{proposition} \label{prop:virtual-transfers-correct}
Let $M \in \HI_0(k)$ and $K(x)/K$ a monogeneous field extension.
\begin{enumerate}
\item The map $\epsilon\colon C^0(K, M, 1) \to M(K)$ is a morphism of $GW(K)$-modules.
\item Consider the following diagram
\begin{equation*}
\begin{CD}
C^0(K(x), M, 1) @>{\tau_x}>> C^0(K, M, 1) \\
@V{\epsilon}VV            @V{\epsilon}VV \\
M(K(x)) @>{\tau_x}>> M(K),
\end{CD}
\end{equation*}
which does not in general commute. Let $y \in \A^1_{K(x)}$ be a closed point and $a \in H^1_y(\A^1_{K(x)}, M)$. Then $\tau_x(\epsilon(\lra{a_1} a)) = \epsilon(\tau_x(\lra{a_2} a))$ for appropriate $\lra{a_1}, \lra{a_2} \in GW(K(x, y))$ depending only on $x, y$.
\end{enumerate}
\end{proposition}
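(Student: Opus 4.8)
The strategy is to reduce both parts to Proposition \ref{prop:epsilon-computation}, which on each summand $H^1_z(\A^1_K, M) \wequi M_{-1}(z)$ (the identification being via the minimal polynomial) expresses $\epsilon$ as $a \mapsto \tau_z([1-z^{-1}]a)$, and then to feed this through the projection formulas of Lemma \ref{lemm:untwisted-projection} together with compatibility of transfers with composition.

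\emph{Part (1).} The $GW(K)$-module structure on $C^0(K,M,1)$ is, on the summand at a point $z$, given by restriction $GW(K) \to GW(k(z))$ followed by the $\ul{GW}$-action on $M_{-1}$, while the $GW(K)$-module structure on $M(K)$ is the canonical one. Fix $b \in GW(K)$ with image $\bar b \in GW(k(z))$ and $a \in M_{-1}(z)$. By $\ul{GW}$-bilinearity of the pairing $\ul{K}_1^{MW}\otimes M_{-1}\to M$ (cf.\ \cite[Lemma 3.49]{A1-alg-top}) we have $[1-z^{-1}](\bar b a) = \bar b\,([1-z^{-1}]a)$ in $M(z)$; and Lemma \ref{lemm:untwisted-projection}(3) applied to the strictly homotopy invariant sheaf $M_{+2}$ (so that $(M_{+2})_{-2} = M$) gives $\tau_z(\bar b\, n) = b\,\tau_z(n)$ for $n \in M(k(z))$. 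Hence, by Proposition \ref{prop:epsilon-computation}, $\epsilon(ba) = \tau_z([1-z^{-1}]\bar b a) = \tau_z(\bar b\,[1-z^{-1}]a) = b\,\tau_z([1-z^{-1}]a) = b\,\epsilon(a)$, which is the assertion.

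\emph{Part (2).} Let $z \in \A^1_K$ be the image of $y$; we may assume $y$ and $z$ lie away from $\{0,1\}$ (if $z \in \{0,1\}$ the target summand is absent and both sides are arranged to vanish). Since the affine coordinate restricts compatibly along $k(z) \hookrightarrow k(y)$, the element $[1-y^{-1}] \in \ul{K}_1^{MW}(k(y))$ is the restriction of $[1-z^{-1}] \in \ul{K}_1^{MW}(k(z))$. By Proposition \ref{prop:epsilon-computation}, $\epsilon(\lra{a_1}a) = \tau_{k(y)/K(x)}([1-z^{-1}]\lra{a_1}a)$ (transfer on $M$ normalized by the minimal polynomial of $y$ over $K(x)$), so the left-hand side equals $\tau_{K(x)/K}\,\tau_{k(y)/K(x)}([1-z^{-1}]\lra{a_1}a)$. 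On the right, $\tau_x(\lra{a_2}a)$ is by definition $\tau^{M_{-1}}_{k(y)/k(z)}(\lra{a_2}a) \in M_{-1}(z)$, the transfer on $M_{-1}$ for the monogeneous extension $k(y)/k(z)$ generated by the image of $x$; hence Proposition \ref{prop:epsilon-computation} gives $\epsilon(\tau_x(\lra{a_2}a)) = \tau_{k(z)/K}\big([1-z^{-1}]\,\tau^{M_{-1}}_{k(y)/k(z)}(\lra{a_2}a)\big)$. Applying Lemma \ref{lemm:untwisted-projection}(1) to the strictly homotopy invariant sheaf $M_{+1}$ and the extension $k(y)/k(z)$ (so that $(M_{+1})_{-1}=M$ and $(M_{+1})_{-2}=M_{-1}$) yields $[1-z^{-1}]\,\tau^{M_{-1}}_{k(y)/k(z)}(m) = \tau^{M}_{k(y)/k(z)}([1-z^{-1}]m)$ for $m \in M_{-1}(k(y))$, so the right-hand side equals $\tau_{k(z)/K}\,\tau^{M}_{k(y)/k(z)}([1-z^{-1}]\lra{a_2}a)$.

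Both composites $\tau_{K(x)/K}\,\tau_{k(y)/K(x)}$ and $\tau_{k(z)/K}\,\tau^{M}_{k(y)/k(z)}$ compute the canonical twisted transfer $\tr_{k(y)/K}$ on $M$ (well-defined by Example \ref{ex:twisted-transfers-M-2}, since the homotopy module $M$ is of the form $(M_{+2})_{-2}$), but normalized by two different trivializations of $\omega_{k(y)/K}$ assembled from minimal polynomials along the two factorizations $K \subset K(x) \subset k(y)$ and $K \subset k(z) \subset k(y)$. These trivializations differ by a unit $w \in k(y)^\times$ depending only on $x$ and $y$, so the two composites differ by precomposition with multiplication by $\lra{w}$; as $\lra{w}$ commutes with the $\ul{K}_*^{MW}$-action and $\lra{w}^2 = 1$, taking $\lra{a_1} = \lra{w}$ and $\lra{a_2} = 1$ makes both sides equal to $\tau_{k(z)/K}\,\tau^{M}_{k(y)/k(z)}([1-z^{-1}]a)$. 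I expect the main obstacle to be exactly this last step: pinning down the discrepancy unit $w$, verifying it is independent of $M$ and $a$, and keeping the bookkeeping of the various transfers straight, each being normalized by a different chosen generator; the rest is a formal application of the projection formulas and of compatibility of transfers with composition.
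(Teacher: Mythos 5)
Your argument is correct and takes essentially the same route as the paper: both parts reduce to Proposition \ref{prop:epsilon-computation}, the projection formulas of Lemma \ref{lemm:untwisted-projection} and Corollary \ref{corr:twisted-projection}, and the observation that the two composites of monogeneous transfers along $K \subset K(x) \subset K(x,y)$ and $K \subset K(z) \subset K(x,y)$ both compute the canonical (absolute/twisted) transfer up to unit twists coming from the minimal-polynomial trivializations --- exactly the comparison the paper carries out via Morel's absolute transfers and their compositionality. The only (cosmetic) difference is that the paper distributes the discrepancy units over both sides rather than isolating a single unit $w$ with $\lra{a_2}=1$, which is all the statement requires anyway.
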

\begin{proof}
Let $z \in \A^1_K \setminus \{0,1\}$ be a closed point with coordinate $z$ defined over $K(z)$. By Proposition \ref{prop:epsilon-computation}, for $a \in H^1_z(\A^1_K, M)$ we have $\epsilon(a) = \tau_z([z']a)$, where $z' := 1-z^{-1}$.

(1) By the projection formula (Lemma \ref{lemm:untwisted-projection}(3)), we have for $b \in GW(K)$ that $\epsilon(ba) = \tau_z([z'] b a) = b \tau_z([z'] a) = b\epsilon(a)$. This was to be shown.

(2) Let $y \in \A^1_{K(x)} \setminus \{0,1\}$ be closed, with image $z$ in $\A^1_K$, and $a \in H^1_y(\A^1_{K(x)}, M)$. By Proposition \ref{prop:epsilon-computation} we have
\begin{align*}
 \tau_x(\epsilon(a)) &= \tau_{K(x)/K}(\tau_{K(x, y)/K(x)}([y'] a))\text{, and} \\
 \epsilon(\tau_x(a)) &= \tau_{K(z)/K}([z'] \tau_{K(x, y)/K(z)}(a)).
\end{align*}
We note that $[z']|_{K(x,y)} = [y']$.

Now we make use of Morel's \emph{absolute transfers}; see \cite[Section 5.1]{A1-alg-top} and Section \ref{subsec:twisted-transfers}. Briefly, for a homotopy module $M$ and $L/K$ a finite extension of fields, there exists the absolute transfer $\tr_{L/K}\colon M(L, \omega_L) \to M(K, \omega_K)$. These transfers satisfy the following properties:
\begin{enumerate}[(a)]
\item If $L=K(x)/K$ is monogeneous, then $\tr_{L/K}$ is a twist of $\tau_{K(x)/K}$ \cite[Remark 5.6(2)]{A1-alg-top}. In other words there exists a trivialization of $\omega_{L/K}$ (depending on $x$) such that $\tr_{L/K} = \tau_{K(x)/K} \otimes \omega_K$.
\item For $L'/L/K$ holds the following: $\tr_{L'/K} = \tr_{L/K} \circ \tr_{L'/L}$ \cite[Lemma 5.5]{A1-alg-top}.
\end{enumerate}

Let us now choose trivializations of $\omega_K$, $\omega_{K(x)}$, $\omega_{K(z)}$ and $\omega_{K(x,y)}$. Property (a) (together with the projection formula) implies that there exists $a_x \in K(x)^\times$ such that $\tau_{K(x)/K}(\ph) = \tr_{K(x)/K}(\lra{a_x} \ph)$, where on the right hand side we have identified $M(K, \omega_K)$ with $M(K)$ via our trivialization of $\omega_K$, and similarly for $M(K(x), \omega_{K(x)})$. Similarly for the other transfers. The result thus follows from (b) together with the projection formula (Corollary \ref{corr:twisted-projection}(1,3)).
\end{proof}

\subsection{Applications to conservativity of $\Gm$-stabilization}

Recall that by Lemma~\ref{lemm:SHSn-t}(2) the functor $\omega^{\infty-n} \colon \SH(k)^{\eff} \to \SHS(k)(n)$ restricts to the hearts of the homotopy $t$-structures on the corresponding categories. 

\begin{theorem} \label{thm:automatic-transfer-preservation}
Let $k$ be a perfect field. Then for $n \ge 1$ both of the functors
\[ \HI_0(k) \wequi \SH(k)^{\eff\heart} \xrightarrow{\omega^{\infty-n}} \SHS(k)(n)^\heart \xrightarrow{i_n^\heart} \SHS(k)^\heart \wequi \HI(k) \]
are fully faithful.
\end{theorem}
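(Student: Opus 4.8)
The statement factors into two separate fully faithfulness claims, and I would prove them in sequence. First, $i_n^\heart\colon \SHS(k)(n)^\heart \to \HI(k)$ is already shown to be fully faithful in Corollary~\ref{corr:SHSn-heart}(2), with essential image the sheaves $F$ for which $\ul{\pi}_0(i_n r_n F) \to F$ is an isomorphism. So the real content is that $\omega^{\infty-n}\colon \HI_0(k) \to \SHS(k)(n)^\heart$ is fully faithful; the composite then inherits fully faithfulness automatically. Moreover, I would reduce the case of general $n$ to $n=1$: by Lemma~\ref{lemm:SHSn-basics}(2), $i_n \omega^{\infty-n}(E) \wequi \omega^\infty(E \wedge \Gmp{n})$, so a map $\omega^{\infty-n}(E) \to \omega^{\infty-n}(E') \in \SHS(k)(n)^\heart$ is the same data as a compatible map at the level of the underlying sheaves $\omega^\infty(E\wedge\Gmp n) \to \omega^\infty(E'\wedge\Gmp n)$ — i.e. one reduces to understanding when a sheaf morphism respects the structure carried by $i_n^\heart \omega^{\infty-n}(E)$. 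Using Corollary~\ref{corr:omega-infty-hat-ff} (the functor $\wh{\omega^\infty}$ into $\Ab(\Sm_k)^\Atr$ is fully faithful) and Theorem~\ref{thm:M-2-delooping}, the underlying sheaf $M = \omega^\infty(E)$ for $E \in \SH(k)^{\eff\heart}$ comes equipped with canonical transfers and a $\ul{GW}$-module structure, and morphisms in $\SH(k)^{\eff\heart}$ are exactly morphisms of sheaves preserving these. So the task becomes: show that a morphism of sheaves $\phi\colon \omega^{\infty-n}(E) \to \omega^{\infty-n}(E')$ in $\SHS(k)^\heart$ (i.e. one that already respects whatever structure $i_n^\heart$ encodes) automatically respects the transfers on the underlying $M_{-n}$'s.

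\textbf{The $n=1$ case and virtual transfers.} Here I would use the machinery of Section on virtual transfers. For $M \in \HI(k)$ and a field $K$, there is the surjection $\epsilon\colon C^0(K, M, 1) \to \ul{\pi}_0 M^{(1)}(K)$ (from \cite[Proposition 3.2(2)]{levine-slice}), and when $F \in \SHS(k)(1)^\heart$ the target identifies, via Corollary~\ref{corr:SHSn-heart}(2), with $F(K)$ itself (using $\ul{\pi}_0(i_1 r_1 F) \wequi F$); so $\epsilon$ is a surjection $C^0(K,F,1) \twoheadrightarrow F(K)$. The source $C^0(K,F,1)$ carries the canonical $\ul{GW}$-module structure and the virtual transfers $\tau_x$ of the preceding definition, both functorial in $F$ by Lemma~\ref{lemm:virtual-transfers-functorial}. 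Because $\epsilon$ is surjective and functorial, there is \emph{at most one} $\ul{GW}$-module-and-transfer structure on $F(K)$ compatible with it — hence any morphism of such $F$'s in $\SHS(k)^\heart$ that respects $\epsilon$-source structure must respect the induced structure on $F(K)$. Now for $F = \omega^{\infty-1}(E)$ with $E \in \SH(k)^{\eff\heart}$, the underlying sheaf is $M := \omega^\infty(E \wedge \Gm) = $ (a homotopy module, since it equals $\omega^\infty$ of something effective), and by Proposition~\ref{prop:virtual-transfers-correct} the virtual structure on $C^0(K,M,1)$ pushes forward along $\epsilon$ precisely to the genuine $\ul{GW}$-module structure and genuine transfers on $M(K) = F(K)$ — that is, the virtual transfers are the ``correct'' ones. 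Combining: a morphism $\phi\colon \omega^{\infty-1}(E) \to \omega^{\infty-1}(E')$ in $\SHS(k)^\heart$ induces, by functoriality of $C^0(-,-,1)$ and $\epsilon$, a commuting square, and since $\phi$ is compatible with the virtual structures (Lemma~\ref{lemm:virtual-transfers-functorial}) and these coincide with the genuine ones (Proposition~\ref{prop:virtual-transfers-correct}), $\phi$ respects the genuine transfers and $\ul{GW}$-structure. Hence $\phi$ comes from a morphism in $\SH(k)^{\eff\heart}$, giving fully faithfulness of $\omega^{\infty-1}$.

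\textbf{From $n=1$ to general $n$.} For $n \ge 2$ I would bootstrap: $\SHS(k)(n)^\heart$ sits inside $\SHS(k)(1)^\heart$ via the forgetful functors, and $\omega^{\infty-n}$ factors as $\SH(k)^{\eff\heart} \xrightarrow{\omega^{\infty-1}} \SHS(k)(1)^\heart$ followed by the further ``raising'' that detects whether the object is in the image of $r_n^\heart i_1^\heart$. More concretely, using Lemma~\ref{lemm:SHSn-t} and Corollary~\ref{corr:SHSn-heart}, the functor $\SHS(k)(n)^\heart \to \SHS(k)(1)^\heart$ is fully faithful (same proof as Corollary~\ref{corr:SHSn-heart}(2), applied to the inclusion $\SHS(k)(n) \hookrightarrow \SHS(k)(1)$), so $\omega^{\infty-n}$ is fully faithful as soon as its composite into $\SHS(k)(1)^\heart$ is, and that composite is $\omega^{\infty-1}$ precomposed with the identity — wait, more carefully: the composite $\SH(k)^{\eff\heart}\xrightarrow{\omega^{\infty-n}}\SHS(k)(n)^\heart \hookrightarrow \SHS(k)(1)^\heart$ agrees with $\omega^{\infty-1}$ (both compute, after applying $i_1^\heart$, the sheaf $\omega^\infty(E\wedge\Gm)$, using Lemma~\ref{lemm:SHSn-basics}(2) and that $\Omega_\Gm^{n-1}$ of $E\wedge\Gmp n$ is $E\wedge\Gm$). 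Since $\omega^{\infty-1}$ is fully faithful by the previous step and the inclusion $\SHS(k)(n)^\heart \hookrightarrow \SHS(k)(1)^\heart$ is fully faithful, $\omega^{\infty-n}$ is fully faithful. The composite with $i_n^\heart$ is then fully faithful since $i_n^\heart$ is (Corollary~\ref{corr:SHSn-heart}(2)).

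\textbf{Main obstacle.} The delicate point is Proposition~\ref{prop:virtual-transfers-correct}(2): the square relating virtual and genuine transfers does \emph{not} commute on the nose, only up to explicit $\ul{GW}$-units $\lra{a_1},\lra{a_2}$. One must check that this twisted commutation is nonetheless enough to force $\phi$ to preserve transfers — i.e. that the ambiguity is absorbed by the compatibility of $\phi$ with the $\ul{GW}$-module structure (which is part of the virtual-structure data and is handled cleanly). Verifying that bookkeeping carefully, together with confirming that $\epsilon$ really is surjective onto $F(K)$ when $F\in\SHS(k)(1)^\heart$ (which uses the identification $\ul{\pi}_0(i_1 r_1 F)\wequi F$ from Corollary~\ref{corr:SHSn-heart}(2) crucially), is where the real work lies; the reductions in $n$ are formal.
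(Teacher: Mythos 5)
Your core strategy coincides with the paper's: reduce via Corollary~\ref{corr:SHSn-heart}(2) to full faithfulness of the composite, identify the underlying sheaf of $\omega^{\infty-n}(E)$ with (the truncation of) $\omega^\infty(E\wedge\Gmp{n})$, and then use surjectivity of $\epsilon\colon C^0(K,\ph,1)\to(\ph)(K)$ together with Lemma~\ref{lemm:virtual-transfers-functorial}, Proposition~\ref{prop:virtual-transfers-correct} and Corollary~\ref{corr:omega-infty-hat-ff} to show that a morphism of underlying sheaves automatically preserves transfers and the $\ul{GW}$-action. However, your reduction of general $n$ to $n=1$ contains a genuine error. The heart-level forgetful functor you use is the one induced by the inclusion $\SHS(k)(n)\hookrightarrow\SHS(k)(1)$, and since this inclusion and $i_1$ are right-$t$-exact, its composite with $i_1^\heart$ is $i_n^\heart$. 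Hence the composite $\SH(k)^{\eff\heart}\xrightarrow{\omega^{\infty-n}}\SHS(k)(n)^\heart\to\SHS(k)(1)^\heart$ has underlying sheaf $\omega^\infty((E\wedge\Gmp{n})_{\le 0})$ (the $n$-fold twist), whereas $\omega^{\infty-1}(E)$ has underlying sheaf $\omega^\infty((E\wedge\Gm)_{\le 0})$; these are in general non-isomorphic, so your claim that ``the composite agrees with $\omega^{\infty-1}$'' is false. Your parenthetical justification conflates the functor induced by the inclusion with one induced by $\Omega_\Gm^{n-1}$, which is an $\omega$-type (right adjoint) functor, not the one to which the Corollary~\ref{corr:SHSn-heart}(2)-style full faithfulness applies. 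The step is repairable: the composite is $\omega^{\infty-1\heart}$ precomposed with the fully faithful endofunctor $(\ph\wedge\Gmp{n-1})_{\le 0}$ of $\SH(k)^{\eff\heart}$. But the paper avoids the reduction altogether, because its virtual-transfer argument is uniform in $n$: for $F'$ in the essential image of $i_n^\heart$ with $n\ge 1$, the isomorphism $\ul{\pi}_0(f_nF')\wequi F'$ of Corollary~\ref{corr:SHSn-heart}(2) factors through $\ul{\pi}_0(f_1F')\to F'$, which is therefore surjective, and \cite[Proposition 3.2(2)]{levine-slice} then gives surjectivity of $\epsilon\colon C^0(K,F',1)\to F'(K)$ for every such $F'$, with no case distinction on $n$.

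Concerning what you flag as the ``main obstacle'': you correctly identify that Proposition~\ref{prop:virtual-transfers-correct}(2) only yields commutation up to units $\lra{a_1},\lra{a_2}$, but you leave the verification aside. In the paper this is precisely the displayed chain of equalities concluding the proof: one inserts $\lra{a_1}\lra{a_1}=1$, applies Proposition~\ref{prop:virtual-transfers-correct}(2), pushes $\alpha$ through using Lemma~\ref{lemm:virtual-transfers-functorial}(1)--(3) (in particular $GW$-linearity of $\alpha$ on the groups $M_{-1}(y)$), and then undoes the insertion. That bookkeeping does go through, so it is not a gap in substance, but it is the part of the argument you would still need to write out; the actual defect in your proposal is the $n$-reduction described above.
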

\begin{proof}
Since the second functor is fully faithful by Corollary \ref{corr:SHSn-heart}(2), it suffices to show that the composite is fully faithful. By Lemma \ref{lemm:SHSn-basics}(2) and \ref{lemm:SHSn-t}(2), the composite is also given by \[\SH(k)^{\eff\heart} \xrightarrow{(\ph \wedge \Gmp{n})_{\le 0}} \SH(k)^{\eff\heart} \xrightarrow{\omega^\infty} \SHS(k)^\heart.\] The functor $\SH(k)^{\eff\heart} \xrightarrow{(\ph \wedge \Gmp{n})_{\le 0}} \SH(k)^{\eff\heart}$ is fully faithful (with right adjoint/inverse $F \mapsto F_{-n}$). Denote its essential image by $\SH(k)^{\eff\heart}(n)$. Hence it suffices to show that the forgetful functor $\SH(k)^{\eff\heart}(n) \to \HI(k)$ is fully faithful. \NB{note that $\omega^\infty$ itself is not fully faithful}
In other words, by Corollary~\ref{corr:omega-infty-hat-ff}, given $F, G \in \SH(k)^{\eff\heart}(n)$ with underlying sheaves $F', G' \in \HI(k)$, we need to show that any morphism of sheaves $F' \to G'$ already preserves the transfers and $\ul{GW}$-module structure. It follows from Corollary \ref{corr:SHSn-heart}(2) and \cite[Proposition 3.2(2)]{levine-slice} that $\epsilon\colon C^0(K, F', 1) \to F'(K)$ is surjective, and similarly for $G'$. Hence the result follows from Lemma \ref{lemm:virtual-transfers-functorial} and Proposition \ref{prop:virtual-transfers-correct}.

Let us spell this out in a little bit more detail. The preservation of the $\ul{GW}$-module structure is easier, so we focus on the transfers. Let $\alpha: F' \to G'$ be a morphism of sheaves and $K(x)/K$ a monogeneous extension. Pick $t \in F'(K(x))$. We wish to show that \[ \alpha(\tau_x^{F'}(t)) = \tau_x^{G'}(\alpha(t)). \] By surjectivity of $\epsilon$, it suffices to prove this for $t = \epsilon^{F'}(t')$ for \[ t' \in C^0(K(x), F', 1) = \bigoplus_{y \in (\A^1_{K(x)})^{(1)} \setminus \{0,1\}} H^1_y(\A^1_{K(x)}, F'). \] Since all our maps are homomorphisms we may assume that $t' \in H^1_y(\A^1_{K(x)}, F')$ for some $y$. Now we compute
\begin{align*}
\alpha(\tau_x^{F'}(t)) &= \alpha(\tau_x^{F'}(\epsilon(t'))) \\
  &= \alpha(\tau_x^{F'}(\epsilon^{F'}(\lra{a_1}(\lra{a_1} t')))) \\
  &\stackrel{(i)}{=} \alpha(\epsilon^{F'}(\tau_x^{F'}(\lra{a_2}\lra{a_1} t'))) \\
  &\stackrel{(ii)}{=} \epsilon^{G'}(\alpha(\tau_x^{F'}(\lra{a_2}\lra{a_1} t'))) \\
  &\stackrel{(iii)}{=} \epsilon^{G'}(\tau_x^{G'}(\alpha(\lra{a_2}\lra{a_1} t'))) \\
  &\stackrel{(iv)}{=} \epsilon^{G'}(\tau_x^{G'}(\lra{a_2}\lra{a_1} \alpha( t'))) \\
  &\stackrel{(v)}= \tau_x^{G'}(\epsilon^{G'}(\lra{a_1}\lra{a_1} \alpha( t'))) \\
  &= \tau_x^{G'}(\epsilon^{G'}( \alpha( t'))) \\
  &\stackrel{(vi)}{=} \tau_x^{G'}(\alpha(\epsilon^{F'}(t'))) \\
  &= \tau_x^{G'}(\alpha(t)).
\end{align*}
Here $(i)$ is by Proposition \ref{prop:virtual-transfers-correct}(2), $(ii), (iii)$ and $(iv)$ are by Lemma \ref{lemm:virtual-transfers-functorial}(1), (3) and (2) respectively, $(v)$ undoes $(i)$ and $(vi)$ undoes $(ii)$.
\end{proof}

As explained in the introduction, the following strengthening of Theorem \ref{thm:automatic-transfer-preservation} would have very desirable consequences.
\begin{conjecture} \label{conj:equivalence}
Let $k$ be a perfect field. For $n \ge 1$ the functor 
\[\omega^{\infty - n}\colon \SH(k)^{\eff\heart} \to \SHS(k)(n)^\heart\] is an equivalence.
\end{conjecture}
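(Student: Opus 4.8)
Full faithfulness of $\omega^{\infty-n}$ is already contained in Theorem \ref{thm:automatic-transfer-preservation} (a fully faithful functor precomposed with one is fully faithful, and $i_n^\heart$ is fully faithful by Corollary \ref{corr:SHSn-heart}(2)), so the plan is to prove essential surjectivity. By Corollary \ref{corr:SHSn-heart}(2), $i_n^\heart$ identifies $\SHS(k)(n)^\heart$ with the full subcategory $\scr E_n \subset \HI(k)$ of sheaves $F$ for which $\ul{\pi}_0(f_n F) \to F$ is an isomorphism, and by Lemma \ref{lemm:SHSn-basics}(2) together with Lemma \ref{lemm:SHSn-t}(2) the composite $i_n^\heart \circ \omega^{\infty-n}$ is $F \mapsto \omega^\infty((F \wedge \Gmp{n})_{\le 0})$. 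Since Theorem \ref{thm:identify-effective-heart} identifies $\SH(k)^{\eff\heart}$ with $\prod^{\fr}(k)$, the conjecture reduces to two assertions for $F \in \scr E_n$: (a) $F$ admits a structure of $\A^1$-invariant stable Nisnevich sheaf with framed transfers, necessarily unique by Theorem \ref{thm:automatic-transfer-preservation}; and (b) the resulting object $\tilde F \in \SH(k)^{\eff\heart}$ is $n$-effective, i.e.\ lies in $\SH(k)^{\eff\heart}(n)$, so that $\omega^\infty((\tilde F \wedge \Gmp{n})_{\le 0}) \iso F$. Part (b) should follow from part (a) by conservativity of $\omega^\infty$ together with compatibility of the effectivity tower with $\omega^\infty$ \cite[Theorems 7.1.1 and 9.0.3]{levine2008homotopy}, which translate the $n$-effectivity of $\tilde F$ precisely into the hypothesis $\ul{\pi}_0(f_n F) \iso F$.

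\textbf{The case $n=1$.} Here I would follow Section \ref{sec:Gm-stab} closely. For $F \in \scr E_1$ and each field $K/k$, Corollary \ref{corr:SHSn-heart}(2) and \cite[Proposition 3.2(2)]{levine-slice} give a surjection $\epsilon\colon C^0(K, F, 1) \twoheadrightarrow F(K)$, and $C^0(K, F, 1)$ carries the virtual $\ul{GW}$-module structure and virtual transfers $\tau_x$ built functorially from the contraction $F_{\wh{-1}} \in \HI(k)^\Atr$. The key step is to prove that $\ker(\epsilon)$ is stable under the $\tau_x$ and the $\ul{GW}$-action, so that these descend to $F(K)$; surjectivity of $\epsilon$ then shows the descended structure is the only possible one, and functoriality in $K$ together with unramifiedness of $F$ should promote it to a presheaf with $\A^1$-transfers refining $F$. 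After that I would verify the first projection formula and the independence of the twisted transfer from the chosen generators, imitating Lemma \ref{lemm:untwisted-projection} and Corollary \ref{corr:twisted-projection} but \emph{without} assuming $F$ is a homotopy module, placing $F$ in $\HI(k)^\tw$; finally I would run Lemma \ref{lemm:framed-transfer-via-twisted} in reverse: its formula $\alpha^* = \tr_{Z_\red/X}\bigl(c(f_\bullet) \times i^* g^*(\ph)\bigr)$ can be used to \emph{define} an action of framed correspondences, and compatibility with composition is checked exactly as in the proof of Theorem \ref{thm:M-2-delooping}, using the boundary and projection formulas of Section \ref{subsec:projection-formulas} (re-derived in this generality).

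\textbf{The case $n>1$.} I would bootstrap from $n=1$. If $F \in \scr E_n$ then its contractions $F_{-1}, \dots, F_{-(n-1)}$ lie in $\scr E_{n-1}, \dots, \scr E_1$ respectively, so the $n=1$ construction (or, for the higher contractions, the framed structure on $F_{\wh{-3}}$ already available from Theorem \ref{thm:M-2-delooping}) produces a homotopy-module structure on $F_{-1}$, hence an effective homotopy module $\tilde F$ with $\tilde F_{-1} \iso F_{-1}$. One then identifies $\tilde F$ with $F$ itself using $\ul{\pi}_0(f_n F) \iso F$ and the reconstruction of the homotopy coniveau tower from $F_{-n}$ plus pullbacks (Remark \ref{rmk:reconstruction}), and checks that the resulting object of $\SHS(k)(n)^\heart$ maps to $F$ under $\omega^{\infty-n\heart}$, completing essential surjectivity.

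\textbf{Main obstacle.} The crux is the descent step for $n=1$: controlling $\ker\bigl(\epsilon\colon C^0(K, F, 1) \to F(K)\bigr)$ for a \emph{general} $F \in \scr E_1$. Proposition \ref{prop:epsilon-computation} computes $\epsilon$ explicitly only when $F$ is already a homotopy module, which is exactly the conclusion we are after, so a genuinely new input is required here. I would look for it either in a presentation of $\ul{\pi}_0$ of the homotopy coniveau tower $F^{(1)}$ by explicit generators and relations (so that the defining relations of $\ker\epsilon$ can be checked to be transfer-stable directly), or in a moving/deformation argument on $\A^1_K$ showing that the virtual transfer of any relation is again a relation. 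Everything downstream — upgrading a pointwise structure to a sheaf with $\A^1$-transfers, the projection formulas, and the passage from twisted to framed transfers — should be a matter of adapting the machinery already developed in Sections \ref{sec:generalized-transfers} and \ref{sec:Gm-stab}.
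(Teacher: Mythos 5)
This statement is a \emph{conjecture} in the paper: the authors prove only full faithfulness of $\omega^{\infty-n}$ (Theorem \ref{thm:automatic-transfer-preservation}) and reformulate essential surjectivity as the assertion that $\ul{\pi}_0(\Gmp{n} \wedge X_+)$ is a homotopy module for every $X \in \Sm_k$; no proof exists in the paper, so your proposal cannot be matched against one. As an outline it reproduces the paper's own reduction correctly, but it is not a proof, and the gap you flag yourself is precisely the open problem. For general $F \in \scr E_1$ you have no control over $\ker\bigl(\epsilon\colon C^0(K,F,1) \to F(K)\bigr)$: the only computation of $\epsilon$ available (Proposition \ref{prop:epsilon-computation}, via Proposition \ref{prop:transfer-specialization} and the projection formulas) requires $M \in \HI_0(k)$, i.e.\ exactly the homotopy-module structure you are trying to construct, and Lemma \ref{lemm:epsilon-computation} for general $M$ only expresses $\epsilon(a)$ through a non-canonical lift $\tilde a$, which gives no handle on transfer-stability of the relations. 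So ``prove that $\ker(\epsilon)$ is stable under $\tau_x$'' is not a step one can currently carry out by adapting the machinery of Sections \ref{sec:generalized-transfers} and \ref{sec:Gm-stab}; it is the new input the conjecture asks for.

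The bootstrap for $n>1$ has a second genuine obstruction, which the paper records explicitly in Remark \ref{rmk: helpless}: even when $F_{-n}$ is known to be a homotopy module (as it is for $n \ge 2$ or $n \ge 3$ by Theorem \ref{thm:M-2-delooping}), one cannot conclude that $\ul{\pi}_0(f_n F)$ is one. The reconstruction of $F^{(n)}$ in Remark \ref{rmk:reconstruction} uses the \emph{sheaf-theoretic} closed pullback on cohomology with support in codimension $n$, and it is not known that this agrees with the pullback built from the transfers and $\ul{GW}$-module structure on $F_{-n}$ (Section \ref{subsec:closed-pullback}) unless $F$ itself is a homotopy module. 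Hence producing $\tilde F$ with $\tilde F_{-n} \wequi F_{-n}$ does not let you identify $\ul{\pi}_0(f_n F)$ with $\ul{\pi}_0(f_n \tilde F)$, and your step ``one then identifies $\tilde F$ with $F$ itself using $\ul{\pi}_0(f_n F) \wequi F$ and Remark \ref{rmk:reconstruction}'' is exactly where the argument breaks. In short: the reductions you state (via Corollary \ref{corr:SHSn-heart}, Theorem \ref{thm:identify-effective-heart}, Lemma \ref{lemm:SHSn-basics}) are sound and agree with the paper's point of view, but both of the substantive steps you would need — descent of the virtual transfers along $\epsilon$ for general $F$, and the comparison of the two closed pullbacks — remain open, so the conjecture is not proved by this route.
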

For any $F \in \HI(k)$ there exists a map $\alpha\colon R \to F$ with $R \in \HI(k)$ a sum of sheaves of the form $\ul{\pi}_0(\Gmp{n} \wedge X_+)$ such that $\alpha_{-n}$ is surjective. Combining this with Corollary \ref{corr:SHSn-heart}, we find that $\SHS(k)(n)^\heart$ is generated under colimits by $\ul{\pi}_0(\Gmp{n} \wedge X_+)$ for $X \in \Sm_k$. Since $\omega^{\infty-n}$ is fully faithful (by Theorem \ref{thm:automatic-transfer-preservation}) and preserves colimits (by Lemma \ref{lemm:SHSn-basics}(1)), we find that Conjecture \ref{conj:equivalence} is true if and only if for every $X \in \Sm_k$ the sheaf $\ul{\pi}_0(\Gmp{n} \wedge X_+)$ is in the essential image of $\omega^{\infty-n}$; in other words if and only if $\ul{\pi}_0(\Gmp{n} \wedge X_+)$ is a homotopy module. We believe that this reformulation may be more amenable to prove.

\bibliographystyle{alphamod}
\bibliography{bibliography}
\end{document}